\newtheorem{theorem}{Theorem}[subsection]
\newtheorem{proposition}[theorem]{Proposition}
\newtheorem{corollary}[theorem]{Corollary}
\newtheorem{lemma}[theorem]{Lemma}
\newtheorem{observation}[theorem]{Observation}
\theoremstyle{definition}
\newtheorem{definition}[theorem]{Definition}
\newtheorem{remark}[theorem]{Remark}
\newtheorem*{question*}{Motivating Question}
\DeclareRobustCommand{\element}[1]{\@element#1\@nil}
\def\@element#1#2\@nil{%
	#1%
	\if\relax#2\relax\else\MakeLowercase{#2}\fi}
\title[Towards a quantization of the double via the enhanced symplectic ``category"]{Towards a quantization of the double via the enhanced symplectic ``category"}
\author[Peter Crooks]{Peter Crooks}
\author[Jonathan Weitsman]{Jonathan Weitsman}
\address{Department of Mathematics, Northeastern University, 360 Huntington Avenue, Boston, MA 02115, USA}
\email[Peter Crooks]{p.crooks@northeastern.edu}
\email[Jonathan Weitsman]{j.weitsman@northeastern.edu}
\subjclass{53D20 (primary); 57S15 (secondary)}
\keywords{geometric quantization, quasi-Hamiltonian $G$-space, symplectic ``category"}
\begin{document}
	
	\maketitle

\begin{abstract}
This paper considers the enhanced symplectic ``category" for purposes of quantizing quasi-Hamiltonian $G$-spaces, where $G$ is a compact simple Lie group. Our starting point is the well-acknowledged analogy between the cotangent bundle $T^*G$ in Hamiltonian geometry and the internally fused double $D(G)=G\times G$ in quasi-Hamiltonian geometry. Guillemin and Sternberg consider the former, studing half-densities and phase functions on its so-called character Lagrangians $\Lambda_{\mathcal{O}}\subseteq T^*G$. Our quasi-Hamiltonian counterpart replaces these character Lagrangians with the universal centralizers $\Lambda_{\mathcal{C}}\longrightarrow\mathcal{C}$ of regular, $\frac{1}{k}$-integral conjugacy classes $\mathcal{C}\subseteq G$. We show each universal centralizer to be a ``quasi-Hamiltonian Lagrangian" in $D(G)$, and to come equipped with a half-density and phase function. 

At the same time, we consider a Dehn twist-induced automorphism $R:D(G)\longrightarrow D(G)$ that lacks a natural Hamiltonian analogue. Each quasi-Hamiltonian Lagrangian $R(\Lambda_{\mathcal{C}})$ is shown to have a clean intersection with every $\Lambda_{\mathcal{C}'}$, and to come equipped with a half-density and phase function of its own. This leads us to consider the possibility of a well-behaved, quasi-Hamiltonian notion of the BKS pairing between $R(\Lambda_{\mathcal{C}})$ and $\Lambda_{\mathcal{C}'}$. We construct such a pairing and study its properties. This is facilitated by the nice geometric fearures of $R(\Lambda_{\mathcal{C}})\cap\Lambda_{\mathcal{C}'}$ and a reformulation of the classical BKS pairing. Our work is perhaps the first step towards a level-$k$ quantization of $D(G)$ via the enhanced symplectic ``category".      
\end{abstract}

\tableofcontents

\section{Introduction}
\subsection{Context}
It is a premise of the geometric quantization programme that some symmetries of a symplectic manifold $M$ should induce symmetries of the quantization $Q(M)$. This can be captured via the formalism of symplectic ``categories" \cite{WeinsteinCategory}, through which geometric quantizations are related to the intersection theory of Lagrangian submanifolds. Guillemin and Sternberg \cite{GuilleminSternbergBook} study a variant of this construction, replacing the symplectic ``category" with a decorated version called the \textit{enhanced symplectic ``category"}. Lagrangian relations are thereby replaced with arrows of the form $$M_1\overset{(\Lambda,\rho)}\Longrightarrow M_2,$$ where $\Lambda$ is a Lagrangian relation and $\rho$ is a half-density on $\Lambda$. This is exemplified by a class of Lagrangian submanifolds in $T^*G$, where $G$ is a compact $1$-connected simple Lie group having Lie algebra $\mathfrak{g}$. The Lagrangian submanifolds in question are the so-called \textit{character Lagrangians} $\Lambda_{\mathcal{O}}\subseteq T^*G$, where $\mathcal{O}\subseteq\mathfrak{g}^*$ ranges over the regular integral coadjoint orbits of $G$. Guillemin and Sternberg \cite{GuilleminSternbergBook} endow each $\Lambda_{\mathcal{O}}$ with a distsinguished half-density $\rho_{\mathcal{O}}$ and phase function $\psi_{\mathcal{O}}:\Lambda_{\mathcal{O}}\longrightarrow S^1$.

Several papers \cite{Song,Shabazi,MeinrenkenLectures,LoizidesSong1,LoizidesSong2,Laurent,MeinrenkenTwisted,AMW,MeinrenkenQuant} address the quantization of quasi-Hamiltonian $G$-spaces \cite{AMM}. A recurring theme is that the approaches to geometric quantization in Hamiltonian geometry have counterparts in the quasi-Hamiltonian setting. One may therefore expect that the enhanced symplectic ``category" will lend itself to the quantization of quasi-Hamiltonian $G$-spaces. One example is the \textit{internally fused double} $D(G)=G\times G$ {\cite{AMM}, a quasi-Hamiltonian $G$-space and the quasi-Hamiltonian analogue the Hamiltonian $G$-space $T^*G$. The natural analogues of the character Lagrangians $\Lambda_{\mathcal{O}}\subseteq T^*G$ are then the universal centralizers $\Lambda_{\mathcal{C}}\subseteq D(G)$, where $\mathcal{C}\subseteq G$ ranges over the regular, $\frac{1}{k}$-integral conjugacy classes of $G$ and $k$ is a fixed positive integer. The role of $\Lambda_{\mathcal{O}}$ in a quantization of $T^*G$ via the enhanced symplectic ``category" might therefore suggest a role for $\Lambda_{\mathcal{C}}$ in quantizing $D(G)$.              
   
\subsection{Statement of results}
This paper investigates the possibility of quantizing $D(G)$ via universal centralizers $\Lambda_{\mathcal{C}}\subseteq D(G)$ and the enhanced symplectic ``category". We do not construct a geometric quantization itself; our emphasis is instead on the interesting geometric features of $D(G)$ and $\Lambda_{\mathcal{C}}$ that would be relevant to a quantization of $D(G)$ via the enhanced symplectic ``category". One such feature is a Dehn twist automorphism $R:D(G)\longrightarrow D(G)$ that lacks a natural Hamiltonian analogue. Our main technical tool is a generalized Blattner--Kostant--Sternberg (BKS) pairing \cite{Blattner} that applies in both symplectic and quasi-Hamiltonian geometry. The following is a more detailed summary of our results.

\subsubsection{A generalization of the classical BKS pairing}
We generalize the classical BKS pairing to suit our particular purposes. Our generalization applies to pairs $(M,\omega)$ with $\omega$ an arbitrary two-form on $M$. We let $M^{\circ}\subseteq M$ denote the open submanifold of points at which $\omega$ is non-degenerate, noting that $M^{\circ}$ is even-dimensional. A submanifold $\Lambda\subseteq M$ is then declared to be Lagrangian if $\Lambda\subseteq M^{\circ}$, $\dim(\Lambda)=\frac{1}{2}\dim(M^{\circ})$, and $\Lambda$ is isotropic with respect to $\omega$. This leads to the following observation.

\begin{observation}\label{Observation}
Let $(M,\omega)$ consist of a manifold $M$ and an arbitrary two-form $\omega\in\Omega^2(M)$. Suppose that $\Lambda_1,\Lambda_2\subseteq M$ are Lagrangian submanifolds in the sense defined above, and that they come equipped with respective half-densities $\rho_1,\rho_2$ and smooth maps $\psi_1:\Lambda_1\longrightarrow\mathbb{C},\psi_2:\Lambda_2\longrightarrow\mathbb{C}$. Let us also assume that $\Lambda_1$ and $\Lambda_2$ have a clean intersection in $M$, and that $\Lambda_1\cap\Lambda_2$ is compact. The usual definition of the BKS pairing  $$\mathrm{BKS}((\Lambda_1,\rho_1,\psi_1),(\Lambda_2,\rho_2,\psi_2))\in\mathbb{C}$$ for symplectic $(M,\omega)$ generalizes to our setting.
\end{observation}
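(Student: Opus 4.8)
The plan is to reduce the construction to pointwise linear algebra along the clean intersection $Z:=\Lambda_1\cap\Lambda_2$, and then to observe that the classical Blattner--Kostant--Sternberg recipe for producing a density on $Z$ from the half-densities $\rho_1,\rho_2$ uses only the symplectic structure on tangent spaces at points of $Z$ --- all of which lie in $M^{\circ}$, where $\omega$ is non-degenerate by hypothesis.

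First I would fix $z\in Z$ and set $V:=T_zM$, $L_i:=T_z\Lambda_i$. Since each $\Lambda_i\subseteq M^{\circ}$ and $M^{\circ}$ is open in $M$, the form $\omega_z$ is non-degenerate on $V$, so $(V,\omega_z)$ is a symplectic vector space of dimension $\dim M^{\circ}$. By the definition of Lagrangian adopted above, $L_i$ is isotropic of dimension $\tfrac12\dim M^{\circ}=\tfrac12\dim V$, hence a genuine Lagrangian subspace of $(V,\omega_z)$; and the clean intersection hypothesis gives $T_zZ=L_1\cap L_2=:W$.

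Next I would recall and carry out the linear-algebraic heart of the BKS construction at this point. The assignment $(v_1,v_2)\mapsto\omega_z(v_1,v_2)$ descends to a non-degenerate pairing $L_1/W\times L_2/W\longrightarrow\mathbb{R}$ --- here one uses that $L_1$ and $L_2$ are Lagrangian and $W=L_1\cap L_2$ --- and hence a canonical isomorphism $L_2/W\cong(L_1/W)^{*}$. The short exact sequences $0\to W\to L_i\to L_i/W\to 0$ give canonical identifications $|\Lambda|^{1/2}L_i\cong|\Lambda|^{1/2}W\otimes|\Lambda|^{1/2}(L_i/W)$ of the relevant one-dimensional spaces of half-densities. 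Tensoring these for $i=1,2$ and contracting the factor $|\Lambda|^{1/2}(L_1/W)\otimes|\Lambda|^{1/2}(L_2/W)\cong|\Lambda|^{1/2}(L_1/W)\otimes(|\Lambda|^{1/2}(L_1/W))^{*}\cong\mathbb{R}$ by means of the $\omega_z$-pairing, I obtain a canonical map $|\Lambda|^{1/2}L_1\otimes|\Lambda|^{1/2}L_2\longrightarrow|\Lambda|^{1}W$. Evaluating it on $\rho_1(z)\otimes\rho_2(z)$ produces a density on $T_zZ$, which in the transverse case $W=0$ reduces to the familiar number $\langle\rho_1(z),\rho_2(z)\rangle$. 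I would then check that these pointwise densities vary smoothly in $z$ and so assemble into a smooth density $\mu$ on $Z$ --- routine given the naturality of the construction and the smoothness of $\omega$, $\rho_1$, $\rho_2$, and of the clean intersection --- and, using compactness of $Z$, define $\mathrm{BKS}((\Lambda_1,\rho_1,\psi_1),(\Lambda_2,\rho_2,\psi_2)):=\int_Z(\psi_1\overline{\psi_2})|_Z\,\mu$, a convergent integral which specializes to the classical pairing when $\omega$ is symplectic on all of $M$.

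The main thing to be verified carefully --- and the only point at which the generality of $\omega$ could conceivably intrude --- is the assertion that the classical recipe makes no use of non-degeneracy of $\omega$ away from $Z$, nor of any global feature of $(M,\omega)$: every ingredient above (the pairing $L_2/W\cong(L_1/W)^{*}$, the half-density bookkeeping, the compatibility of the three short exact sequences) lives at a single point of $Z\subseteq M^{\circ}$, where $\omega_z$ behaves in every respect like a symplectic form. A subsidiary but worthwhile point is to pin down conventions: how $|\Lambda|^{s}$ of a vector space is identified with $|\Lambda|^{s}$ of its dual (via the tautological pairing, i.e. the cancellation $|\det A|^{s}\cdot|\det A|^{-s}=1$ under change of basis), and which combination of $\psi_1$ and $\psi_2$ is to appear in the integrand, the choices being fixed so that the definition reduces to that of Guillemin--Sternberg in the symplectic case.
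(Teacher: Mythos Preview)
Your argument is correct: the essential point, which you identify clearly, is that the classical BKS recipe is purely pointwise linear algebra on the symplectic vector space $(T_zM,\omega_z)$ for $z\in Z\subseteq M^{\circ}$, and so carries over verbatim once one knows $\omega_z$ is non-degenerate there.

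The paper organizes the linear algebra a little differently. Rather than using the duality pairing $L_1/W\cong(L_2/W)^*$ that you invoke, it works with the quotient $(L_1+L_2)/W$, observes that $\omega_z$ descends to a symplectic form on this quotient, and uses the resulting Liouville half-density to trivialize $\bigl|(L_1+L_2)/W\bigr|^{1/2}\cong\mathbb{C}$. The chain of isomorphisms is then
\[
|L_1|^{1/2}\otimes|L_2|^{1/2}\;\cong\;|L_1\oplus L_2|^{1/2}\;\cong\;|W|^{1/2}\otimes|L_1+L_2|^{1/2}\;\cong\;|W|^{1/2}\otimes|W|^{1/2}\otimes\Bigl|\tfrac{L_1+L_2}{W}\Bigr|^{1/2}\;\cong\;|W|,
\]
built from the two short exact sequences $0\to W\to L_1\oplus L_2\to L_1+L_2\to 0$ and $0\to W\to L_1+L_2\to(L_1+L_2)/W\to 0$. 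The two presentations are equivalent: your $L_1/W$ and $L_2/W$ sit as complementary Lagrangian subspaces of the symplectic quotient $(L_1+L_2)/W$, and your duality pairing is exactly the off-diagonal block of that symplectic form. Your route is arguably more direct; the paper's has the mild advantage of being manifestly symmetric in $\Lambda_1$ and $\Lambda_2$ and of packaging the $\omega$-dependence into a single symplectic volume, which is convenient for the later computations in the paper where one must evaluate the resulting density on explicit bases.
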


Our interest in Observation \ref{Observation} stems from its applicability to quasi-Hamiltonian $G$-spaces $(M,\omega)$.
            
\subsubsection{Lagrangians in the internally fused double}
Let $G$ be a compact, connected, simply-connected, simple Lie group with Lie algebra $\mathfrak{g}$, exponential map $\exp:\mathfrak{g}\longrightarrow G$, and adjoint representation $\mathrm{Ad}:G\longrightarrow\operatorname{GL}(\mathfrak{g})$. Fix a maximal torus $T\subseteq G$ having Lie algebra $\mathfrak{t}\subseteq\mathfrak{g}$ and Weyl group $W:=N_G(T)/T$. Denote by $\langle\cdot,\cdot\rangle:\mathfrak{g}\otimes_{\mathbb{R}}\mathfrak{g}\longrightarrow\mathbb{R}$ the unique $G$-invariant inner product for which each long root has length $\sqrt{2}$, noting that $\langle\cdot,\cdot\rangle$ identifies $\mathfrak{t}$ with $\mathfrak{t}^*$. 

Use the left trivialization and $\langle\cdot,\cdot\rangle$ to identify $T^*G$ with $G\times\mathfrak{g}$. Each regular adjoint orbit $\mathcal{O}\subseteq\mathfrak{g}$ then determines a Lagrangian submanifold $\Lambda_{\mathcal{O}}\subseteq T^*G$, i.e.
$$\Lambda_{\mathcal{O}}:=\{(g,\xi)\in G\times\mathfrak{g}:\xi\in\mathcal{O}\text{ and }\mathrm{Ad}_g(\xi)=\xi\}\subseteq G\times\mathfrak{g}=T^*G.$$ One may instead consider a regular conjugacy class $\mathcal{C}\subseteq G$ and its \textit{universal centralizer} $\Lambda_{\mathcal{C}}\subseteq G\times G=D(G)$, defined by
$$\Lambda_{\mathcal{C}}:=\{(g,h)\in D(G):h\in\mathcal{C}\text{ and }ghg^{-1}=h\}.$$ We show this universal centralizer to a quasi-Hamiltonian counterpart of $\Lambda_{\mathcal{O}}$ in the following sense. 

\begin{theorem}\label{Theorem: Lagrangian}
Let $\mathcal{C}\subseteq G$ be a regular conjugacy class. The universal centralizer $\Lambda_{\mathcal{C}}$ is a Lagrangian submanifold of the internally fused double $D(G)$, i.e. $\Lambda_{\mathcal{C}}\subseteq D(G)^{\circ}$, $\dim(\Lambda_{\mathcal{C}})=n$, and $\Lambda_{\mathcal{C}}$ is isotropic with respect to the quasi-Hamiltonian $2$-form on $D(G)$.
\end{theorem}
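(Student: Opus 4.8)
The notion of Lagrangian submanifold in Observation~\ref{Observation} requires three things of $\Lambda_{\mathcal{C}}$: that $\Lambda_{\mathcal{C}}\subseteq D(G)^{\circ}$, that $\dim\Lambda_{\mathcal{C}}=n=\tfrac12\dim D(G)^{\circ}$, and that $\Lambda_{\mathcal{C}}$ be isotropic for the quasi-Hamiltonian $2$-form $\omega$ on $D(G)$; the plan is to verify these in turn. Write $\Phi:D(G)\longrightarrow G$, $\Phi(g,h)=ghg^{-1}h^{-1}$, for the group-valued moment map, and recall the defining axioms: $\iota_{\xi_{D(G)}}\omega=\tfrac12\Phi^{*}\langle\theta+\bar\theta,\xi\rangle$ for $\xi\in\mathfrak{g}$ (with $\theta,\bar\theta$ the left- and right-invariant Maurer--Cartan forms of $G$), and $\ker\omega_{x}=\{\xi_{D(G)}(x):\Adj_{\Phi(x)}\xi=-\xi\}$ at every point $x$. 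The one fact that drives the whole argument is that $\Phi$ is \emph{constant} on $\Lambda_{\mathcal{C}}$: if $(g,h)\in\Lambda_{\mathcal{C}}$ then $ghg^{-1}=h$, so $\Phi(g,h)=hh^{-1}=e$.

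With this in hand, $\Lambda_{\mathcal{C}}\subseteq D(G)^{\circ}$ is immediate: at $x\in\Lambda_{\mathcal{C}}$ we have $\Adj_{\Phi(x)}=\Adj_{e}=\mathrm{id}$, whose $(-1)$-eigenspace is $0$, so $\ker\omega_{x}=0$ by the non-degeneracy axiom. For the dimension count, fix a regular $t_{0}\in T\cap\mathcal{C}$, so that $Z_{G}(t_{0})=T$, and consider $\phi:G\times T\longrightarrow D(G)$, $\phi(g,s)=(gsg^{-1},gt_{0}g^{-1})$. Its image is precisely $\Lambda_{\mathcal{C}}$: membership is clear, and conversely any $(g,h)\in\Lambda_{\mathcal{C}}$ satisfies $h=g't_{0}g'^{-1}$ and $g\in Z_{G}(h)=g'Tg'^{-1}$, hence $(g,h)=\phi(g',s)$ for some $s\in T$. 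Moreover $\phi(g_{1},s_{1})=\phi(g_{2},s_{2})$ exactly when $g_{1}=g_{2}t$ for some $t\in T$ and $s_{1}=s_{2}$, so the fibres of $\phi$ are the orbits of the free $T$-action $t\cdot(g,s)=(gt,s)$, and $\phi$ descends to a $G$-equivariant injective immersion $\overline{\phi}:(G/T)\times T\longrightarrow D(G)$ (with $G$ acting by left translation on $G/T$ and trivially on $T$). Since $G\times T$ is compact, $\overline{\phi}$ is an embedding, exhibiting $\Lambda_{\mathcal{C}}$ as a compact embedded submanifold diffeomorphic to $(G/T)\times T$, of dimension $\dim G=n$.

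The remaining task, and the substantive one, is to prove $\Lambda_{\mathcal{C}}$ isotropic. Since $\Phi|_{\Lambda_{\mathcal{C}}}\equiv e$, the moment map axiom forces $\iota_{\xi_{D(G)}}(\omega|_{\Lambda_{\mathcal{C}}})=0$ for every $\xi\in\mathfrak{g}$, so $\omega|_{\Lambda_{\mathcal{C}}}$ is a $G$-invariant $2$-form annihilated by all fundamental vector fields; under the identification $\Lambda_{\mathcal{C}}\cong(G/T)\times T$ this makes $\omega|_{\Lambda_{\mathcal{C}}}$ basic for the projection $(G/T)\times T\longrightarrow T$, hence the pullback of some $\bar\omega\in\Omega^{2}(T)$. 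Pulling back along the section $s\mapsto(s,t_{0})$ identifies $\bar\omega$ with $\omega|_{T\times\{t_{0}\}}$, so it suffices to show that $\omega$ pulls back to $0$ along $\iota:T\longrightarrow D(G)$, $\iota(s)=(s,t_{0})$. For this one uses the explicit formula \cite{AMM}: $\omega$ is a sum of terms $\langle\alpha\wedge\beta\rangle$ in which $\alpha,\beta$ are ($\Adj$-possibly-twisted) pullbacks of $\theta$ or $\bar\theta$ along maps $D(G)\longrightarrow G$ built from the coordinate projections by multiplication and inversion (the projections themselves, $(g,h)\mapsto gh$, $\Phi$, and so on). Composing any of these with $\iota$ yields a map $T\longrightarrow T$, so each $\iota^{*}\alpha$ and $\iota^{*}\beta$ is a constant-coefficient $\mathfrak{t}$-valued $1$-form, namely a scalar multiple of the flat Maurer--Cartan form $\sigma$ of $T$ (and $\Adj_{t_{0}}$ acts trivially on $\mathfrak{t}$). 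Hence each term pulls back to a multiple of $\langle\sigma\wedge\sigma\rangle=\sum_{i,j}\langle e_{i},e_{j}\rangle\,\sigma^{i}\wedge\sigma^{j}$, which vanishes because $\langle\cdot,\cdot\rangle$ is symmetric while $\sigma^{i}\wedge\sigma^{j}$ is antisymmetric. Thus $\iota^{*}\omega=0$, so $\omega|_{\Lambda_{\mathcal{C}}}=0$ and $\Lambda_{\mathcal{C}}$ is Lagrangian.

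I expect the final step — the vanishing of $\omega$ on $T\times\{t_{0}\}$ — to be the only non-formal point of the proof: it is where the explicit expression for the quasi-Hamiltonian $2$-form of $D(G)$ must be invoked, and where a little care is needed in checking that every term dies once the second coordinate is frozen and the first is restricted to the maximal torus. Everything else is essentially bookkeeping — the containment in $D(G)^{\circ}$ and the reduction of isotropy to the slice $T\times\{t_{0}\}$ are forced by the quasi-Hamiltonian axioms together with $\Phi|_{\Lambda_{\mathcal{C}}}\equiv e$, and the manifold structure of $\Lambda_{\mathcal{C}}$ is free from compactness. One could instead reach isotropy via a symplectic cross-section of $D(G)$ near $\Phi^{-1}(e)$, identifying it with an open subset of $T^{*}G$ with the canonical symplectic form and observing that $\Lambda_{\mathcal{C}}$ is carried onto a character Lagrangian $\Lambda_{\mathcal{O}}$, thereby reducing the statement to the theorem of Guillemin and Sternberg \cite{GuilleminSternbergBook}; the direct argument has the merit of not requiring one to track the exponential map and of applying uniformly to all regular conjugacy classes.
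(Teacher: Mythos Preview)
Your proof is correct, and for the first two items (containment in $D(G)^{\circ}$ and the dimension count) it is essentially identical to the paper's. For isotropy, however, you take a genuinely different route. The paper passes through quasi-Hamiltonian reduction: it shows that the pullback of $\omega_{D(G)}$ to $\Lambda_{\mathcal{C}}$ factors through the symplectic quotient $\mu_{D(G)}^{-1}(e)_{\text{reg}}/G$, identifies this quotient with $(T\times T_{\text{reg}})/W$ via an explicit symplectomorphism (the paper's Proposition~\ref{Proposition: Symplectomorphism}), and observes that $\Lambda_{\mathcal{C}}/G$ corresponds to $(T\times(T\cap\mathcal{C}))/W$, which is isotropic because $T\cap\mathcal{C}$ is finite. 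You instead use the moment-map axiom directly: since $\Phi\equiv e$ on $\Lambda_{\mathcal{C}}$, the form $\omega|_{\Lambda_{\mathcal{C}}}$ is annihilated by all generating vector fields, hence (together with $G$-invariance) is basic for $(G/T)\times T\to T$, and the problem reduces to the single slice $T\times\{t_0\}$, where you kill $\omega$ by hand.

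Your argument is more self-contained and avoids the reduction machinery of Section~\ref{Subsection: A special}; the paper's approach has the advantage that the symplectic identification $\mu_{D(G)}^{-1}(e)_{\text{reg}}/G\cong(T\times T_{\text{reg}})/W$ is reused heavily in the later BKS computations, so establishing it here costs nothing extra. Both approaches ultimately rest on the same explicit fact about $\omega_{D(G)}|_{T\times T}$ --- you compute it on $T\times\{t_0\}$ directly, the paper computes it on all of $T\times T$ in Proposition~\ref{Proposition: Pullback} and then restricts.
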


\subsubsection{BKS pairings on the internally fused double}
Now choose a collection of simple roots and let $\mathfrak{A}\subseteq\mathfrak{t}$ be the associated fundamental Weyl alcove. Let $\beta(\mathcal{C})$ denote the unique element of $\mathfrak{A}$ satisfying $\exp(\beta(\mathcal{C}))\in\mathcal{C}$, where $\mathcal{C}\subseteq G$ is any conjugacy class. Let us also fix a positive integer $k$ and assume that $\mathcal{C}$ is a regular, $\frac{1}{k}$-integral conjugacy class, i.e. that $k\beta(\mathcal{C})$ is a regular, integral weight in $\mathfrak{t}\cong\mathfrak{t}^*$. 

Write $\mathcal{O}\subseteq\mathfrak{g}$ for the adjoint orbit of $k\beta(\mathcal{C})$, and noting that
$$\Lambda_{\mathcal{O}}\longrightarrow\Lambda_{\mathcal{C}},\quad (g,\xi)\mapsto (g,\exp\left(\frac{1}{k}\xi\right)),\quad (g,\xi)\in\Lambda_{\mathcal{O}}$$ is a diffeomorphism. On the other hand, Guillemin and Sternberg equip $\Lambda_{\mathcal{O}}$ with a canonical half-density $\rho_{\mathcal{O}}$ and phase function $\psi_{\mathcal{O}}:\Lambda_{\mathcal{O}}\longrightarrow S^1$. By means of the above-mentioned diffeomorphism, these determine a half-density $\rho_{\mathcal{C}}$ and function $\psi_{\mathcal{C}}:\Lambda_{\mathcal{C}}\longrightarrow S^1$ on $\Lambda_{\mathcal{C}}$. We will ultimately compute the BKS pairing of $(\Lambda_{\mathcal{C}},\rho_{\mathcal{C}},\psi_{\mathcal{C}})$ with another triple, constructed via the following ideas.

Certain gauge-theoretic considerations naturally endow $D(G)$ with an action of the mapping class group of the once-punctured torus, an action that respects the quasi-Hamiltonian $G$-space structure on $D(G)$. This mapping class group contains two Dehn twist automorphisms, one of which acts on $D(G)$ through the diffeomorphism
$$R:D(G)\longrightarrow D(G),\quad (g,h)\mapsto (g,hg^{-1}),\quad (g,h)\in D(G).$$ It follows that $R(\Lambda_{\mathcal{C}})$ is a Lagrangian submanifold of $D(G)$. Note also that the restricted diffeomorphism
$$R\big\vert_{\Lambda_{\mathcal{C}}}:\Lambda_{\mathcal{C}}\overset{\cong}\longrightarrow R(\Lambda_{\mathcal{C}})$$
identifies $\rho_{\mathcal{C}}$ (resp. $\psi_{\mathcal{C}}$) with a half-density $\nu_{\mathcal{C}}$ (resp. phase function $\vartheta_{\mathcal{C}}$) on $R(\Lambda_{\mathcal{C}})$. We thereby obtain the triple $(R(\Lambda_{\mathcal{C}}),\nu_{\mathcal{C}},\vartheta_{\mathcal{C}})$ alluded to in the previous paragraph. This leads to the following BKS pairing computation.

\begin{theorem}\label{Theorem: Main}
Fix a positive integer $k$ and let $\mathcal{C},\mathcal{C}'\subseteq G$ be regular, $\frac{1}{k}$-integral conjugacy classes. 
\begin{itemize}[leftmargin=20pt]
\item[(i)] The Lagrangian submanifolds $R(\Lambda_{\mathcal{C}})$ and $\Lambda_{\mathcal{C}'}$ have a clean intersection in $D(G)$.
\item[(ii)] The BKS pairing of $(R(\Lambda_{\mathcal{C}}),\nu_{\mathcal{C}},\vartheta_{\mathcal{C}})$ and $(\Lambda_{\mathcal{C}'},\rho_{\mathcal{C}'},\psi_{\mathcal{C}'})$ is given by
$$\mathrm{BKS}((R(\Lambda_{\mathcal{C}}),\nu_{\mathcal{C}},\vartheta_{\mathcal{C}}),(\Lambda_{\mathcal{C}'},\rho_{\mathcal{C}'},\psi_{\mathcal{C}'}))=k^{n-r}C(G,T)\cdot \bigg(\prod_{\alpha\in\Phi_{+}}\alpha(\beta(\mathcal{C}))\alpha(\beta(\mathcal{C}'))\bigg)^{\frac{1}{2}}\left(\sum_{w\in W}e^{2\pi i\lVert w\beta-\beta'\rVert^2}\right),$$
where $n=\dim(G)$, $r=\mathrm{rank}(G)$, $\Phi_{+}\subseteq\mathfrak{t}^*$ is the set of positive roots, $C(G,T)\in\mathbb{R}$ is a constant depending only on the root system of $(G,T)$, and $\lVert w\beta-\beta'\rVert$ is the length of $w\beta-\beta'$ with respect to $\langle\cdot,\cdot\rangle$.
\end{itemize}
\end{theorem}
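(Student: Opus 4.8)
The plan is to make $R(\Lambda_{\mathcal{C}})$ and its intersection with $\Lambda_{\mathcal{C}'}$ completely explicit, then to reduce the pairing to a sum of integrals over the components of the intersection and evaluate each one by equivariance. Substituting $(a,b)=R(g,h)=(g,hg^{-1})$ into the defining conditions of $\Lambda_{\mathcal{C}}$ gives $R(\Lambda_{\mathcal{C}})=\{(a,b)\in D(G):ab=ba,\ ab\in\mathcal{C}\}$, so that $R(\Lambda_{\mathcal{C}})\cap\Lambda_{\mathcal{C}'}=\{(a,b):ab=ba,\ b\in\mathcal{C}',\ ab\in\mathcal{C}\}$. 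Since $\mathcal{C}'$ is regular, commuting with $b\in\mathcal{C}'$ forces $a$ into the maximal torus through $b$; conjugating this torus to $T$ and using that a regular conjugacy class meets $T$ in a single $W$-orbit of regular elements, one finds
$$R(\Lambda_{\mathcal{C}})\cap\Lambda_{\mathcal{C}'}=\bigsqcup_{w\in W} G\cdot p_w,\qquad p_w:=\bigl(\exp(w\beta(\mathcal{C})-\beta(\mathcal{C}')),\exp(\beta(\mathcal{C}'))\bigr),$$
with each $G\cdot p_w\cong G/T$; the components are pairwise disjoint because regularity of $\beta(\mathcal{C})$ together with simple connectedness of $G$ makes the elements $\exp(w\beta(\mathcal{C}))$ pairwise non-congruent modulo the coroot lattice. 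In particular the intersection has dimension $n-r$, so it is far from transverse.

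For cleanness (part (i)), observe that $R$ and all the defining conditions are equivariant for the diagonal conjugation action, and each component of the intersection is a single orbit; so it suffices to verify $T_{p_w}\bigl(R(\Lambda_{\mathcal{C}})\cap\Lambda_{\mathcal{C}'}\bigr)=T_{p_w}R(\Lambda_{\mathcal{C}})\cap T_{p_w}\Lambda_{\mathcal{C}'}$ at one point of each component. At $p_w$ the elements $\exp(\beta(\mathcal{C}'))$ and $\exp(w\beta(\mathcal{C}))$ are regular, so both tangent spaces can be written down explicitly inside $\mathfrak{g}\oplus\mathfrak{g}$ (left trivialization) in terms of the operators $\mathrm{Ad}_{\exp(\beta(\mathcal{C}'))}-1$ and $\mathrm{Ad}_{\exp(w\beta(\mathcal{C}))}-1$ acting root space by root space; comparing them shows their intersection is exactly $T_{p_w}(G\cdot p_w)$, of dimension $n-r$. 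This is elementary linear algebra, the crucial input being regularity of $\beta(\mathcal{C}')$.

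For part (ii), using the reformulation of the classical BKS pairing --- valid on the neighbourhood of the Lagrangians where the quasi-Hamiltonian $2$-form is non-degenerate, so that the generalized pairing of Observation~\ref{Observation} specializes to the usual clean-intersection formula --- one writes
$$\mathrm{BKS}\bigl((R(\Lambda_{\mathcal{C}}),\nu_{\mathcal{C}},\vartheta_{\mathcal{C}}),(\Lambda_{\mathcal{C}'},\rho_{\mathcal{C}'},\psi_{\mathcal{C}'})\bigr)=\sum_{w\in W}\int_{G\cdot p_w}\frac{\nu_{\mathcal{C}}\,\rho_{\mathcal{C}'}}{\delta_w}\,e^{i(\vartheta_{\mathcal{C}}-\psi_{\mathcal{C}'})},$$
where $\delta_w$ is the half-density on $G\cdot p_w$ determined by the non-degenerate pairing that $\omega$ induces between the normal quotients $TR(\Lambda_{\mathcal{C}})/T(G\cdot p_w)$ and $T\Lambda_{\mathcal{C}'}/T(G\cdot p_w)$ (non-degenerate precisely by cleanness). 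Each of $\nu_{\mathcal{C}},\rho_{\mathcal{C}'},\delta_w$ is $G$-invariant, being assembled from $G$-equivariant data, and $e^{i(\vartheta_{\mathcal{C}}-\psi_{\mathcal{C}'})}$ is constant along each orbit; hence each integral equals $\mathrm{vol}(G/T)\,e^{i(\vartheta_{\mathcal{C}}(p_w)-\psi_{\mathcal{C}'}(p_w))}$ times the ratio $c_w$ of the invariant density $\nu_{\mathcal{C}}\rho_{\mathcal{C}'}/\delta_w$ to a fixed Haar density at $p_w$, and $c_w$ is independent of $w$ because $\exp(w\beta(\mathcal{C}))$ is $W$-conjugate to $\exp(\beta(\mathcal{C}))$ and lies in the single orbit $\mathcal{O}=G\cdot k\beta(\mathcal{C})$. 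It then remains to evaluate two things. For the phase factor I would transport $p_w$ through $R^{-1}$ and the diffeomorphisms $\Lambda_{\mathcal{C}}\cong\Lambda_{\mathcal{O}}$, $\Lambda_{\mathcal{C}'}\cong\Lambda_{\mathcal{O}'}$ to explicit points of the Guillemin--Sternberg character Lagrangians, evaluate $\psi_{\mathcal{O}},\psi_{\mathcal{O}'}$ there, and see the ratio collapse (up to the normalization of $\langle\cdot,\cdot\rangle$) to $e^{2\pi i\lVert w\beta(\mathcal{C})-\beta(\mathcal{C}')\rVert^2}$. For the density factor $c_w\cdot\mathrm{vol}(G/T)$ I would combine the explicit Guillemin--Sternberg half-densities $\rho_{\mathcal{O}},\rho_{\mathcal{O}'}$ (which carry the symplectic volumes of $\mathcal{O}$ and $\mathcal{O}'$, hence the root factors $\prod_{\alpha\in\Phi_{+}}\alpha(k\beta(\mathcal{C}))$ and $\prod_{\alpha\in\Phi_{+}}\alpha(k\beta(\mathcal{C}'))$), the Jacobians of the maps $\xi\mapsto\exp(\xi/k)$, and the determinant of the $\omega$-pairing $\delta_w$ read off from the AMM $2$-form on $D(G)$ and the tangent spaces of the cleanness step; the powers of $k$ collect to $k^{n-r}$, the root data to $\bigl(\prod_{\alpha\in\Phi_{+}}\alpha(\beta(\mathcal{C}))\alpha(\beta(\mathcal{C}'))\bigr)^{1/2}$, and everything else --- Haar normalizations, $\mathrm{vol}(G/T)$, the metaplectic/Maslov correction --- into the root-system constant $C(G,T)$. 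Summing over the $|W|$ components gives the stated formula.

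The delicate part is this last bookkeeping: computing $\delta_w$, i.e. the determinant of the pairing induced by the quasi-Hamiltonian $2$-form between the two normal quotients at $p_w$, and reconciling it with the Guillemin--Sternberg half-densities and the $\exp$-Jacobians so that every power of $k$ and every root-system factor lands exactly as claimed and no residual dependence on $\mathcal{C},\mathcal{C}',k$ survives in $C(G,T)$. The precise normalization conventions matter most here, and this is where the bulk of the computation lives; the identification of the intersection, the cleanness check, and the reduction to a sum of orbit integrals are comparatively formal.
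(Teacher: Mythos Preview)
Your overall strategy is correct and matches the paper's: decompose the intersection into $G$-orbits indexed by $W$, verify cleanness, use $G$-invariance to reduce each integral to a point evaluation times a volume, and then compute the phase and density contributions separately. The orbit description and the phase computation are exactly as in the paper.

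The substantive methodological difference is in how the paper handles both cleanness and the symplectic pairing you call $\delta_w$. You propose a direct root-space computation of the tangent spaces and a direct evaluation of the AMM two-form on the normal quotients. The paper instead exploits a structural fact you do not mention: both $R(\Lambda_{\mathcal{C}})$ and $\Lambda_{\mathcal{C}'}$ lie in the level set $\mu_{D(G)}^{-1}(e)_{\mathrm{reg}}$, which is an $(n+r)$-dimensional submanifold whose $G$-quotient is symplectomorphic to $(T\times T_{\mathrm{reg}})/W$ with the elementary constant-coefficient form $\langle\xi_1,\eta_2\rangle-\langle\eta_1,\xi_2\rangle$ on $\mathfrak{t}\oplus\mathfrak{t}$. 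This buys two things. First, cleanness becomes a pure dimension count: since $T_xR(\Lambda_{\mathcal{C}})+T_x\Lambda_{\mathcal{C}'}\subseteq T_x\mu_{D(G)}^{-1}(e)_{\mathrm{reg}}$ has dimension at most $n+r$ and each Lagrangian has dimension $n$, the intersection of tangent spaces is forced to have dimension exactly $n-r$, matching the orbit. Second, the induced symplectic form on the normal quotient $\frac{T_xR(\Lambda_{\mathcal{C}})+T_x\Lambda_{\mathcal{C}'}}{T_x(Gx)}$ is identified with the standard form on $\mathfrak{t}\oplus\mathfrak{t}$, so its top power evaluates to $1$ on an orthonormal basis --- the determinant computation you flag as ``the delicate part'' disappears. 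Your direct approach via the three-term AMM form would work but is considerably more laborious; the reduced picture is what makes the bookkeeping tractable.

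Two small points. Your integrand $e^{i(\vartheta_{\mathcal{C}}-\psi_{\mathcal{C}'})}$ is a notational slip: the phase functions $\vartheta_{\mathcal{C}},\psi_{\mathcal{C}'}$ are already $S^1$-valued, so the correct expression is $\vartheta_{\mathcal{C}}\overline{\psi_{\mathcal{C}'}}$. And your claim that $c_w$ is independent of $w$ needs the identity $|\Omega_{w\beta}|=|\Omega_{\beta}|$ on $\mathfrak{t}^\perp$, which follows from the transformation law $\Omega_{w\xi}=(-1)^{\ell(w)}\Omega_{\xi}$; the paper proves this separately rather than leaving it implicit.
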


\subsection{Organization}
Section \ref{Section: Quantization} uses the framework of symplectic ``categories" to motivate the investigations undertaken in this paper. Section \ref{Section: Lie-theoretic constructions} then develops the requisite Lie-theoretic machinery. This leads to Section \ref{Section: Quasi-Hamiltonian}, which studies the quasi-Hamiltonian geometry of $\Lambda_{\mathcal{C}}$ and includes a proof of Theorem \ref{Theorem: Lagrangian}. In Section \ref{Section: Main results}, we study the geometric features of $R(\Lambda_{\mathcal{C}})\cap\Lambda_{\mathcal{C}'}$ and prove Theorem \ref{Theorem: Main}. Our final section is an appendix that develops several linear-algebraic and differential-geometric properties of BKS pairings. A list of recurring notation appears at the end of this paper.   

\subsection*{Acknowledgements}
P.C. was supported by an NSERC postdoctoral fellowship [PDF--516638], while J.W. was supported by a Simons collaboration grant [\#579801].  

\section{Quantization and symplectic ``categories"}\label{Section: Quantization}
The following section emphasizes the connection between geometric quantization and the intersection theory of Lagrangian submanifolds. This connection is our impetus for studying the quasi-Hamiltonian Lagrangians $R(\Lambda_{\mathcal{C}}),\Lambda_{\mathcal{C}'}\subseteq D(G)$ in relation to one another. One can realize this connection via Weinstein's symplectic ``category" \cite{WeinsteinCategory} and an enrichment thereof \cite{GuilleminSternbergBook}; such ``categories" are related to the intersection theory of Lagrangian submanifolds, and to geometric quantization itself. We review the first of these relations, deferring to \cite{Bates} and \cite{GuilleminSternbergBook} for the latter.       

\subsection{The Weinstein symplectic ``category"}\label{Subsection: Category}
Let $(M,\omega)$ be any symplectic manifold. In what follows, we adopt the shorthand notation $M$ for $(M,\omega)$ and $\overline{M}$ for the symplectic manifold $(M,-\omega)$.

Suppose that $M_1$ and $M_2$ are symplectic manifolds. Recall that a \textit{Lagrangian relation} from $M_1$ to $M_2$ is a Lagrangian submanifold $\Lambda\subseteq \overline{M_1}\times M_2$, where $\overline{M_1}\times M_2$ is equipped with the usual product symplectic form. In this case, one writes 
$$M_1\overset{\Lambda}\Longrightarrow M_2.$$ 

Let $M_1$, $M_2$, and $M_3$ be symplectic manifolds. Given Lagrangian relations
$$M_1\overset{\Lambda_{12}}\Longrightarrow M_2\quad\text{and}\quad M_2\overset{\Lambda_{23}}\Longrightarrow M_3,$$ one defines
$$\Lambda_{23}\circ\Lambda_{12}:=\{(m_1,m_3)\in M_1\times M_3:\exists m_2\in M_2\text{ with } (m_1,m_2)\in\Lambda_{12}\text{ and } (m_2,m_3)\in\Lambda_{23}\}.$$ Note that $\Lambda_{23}\circ\Lambda_{12}$ need not be a submanifold of $M_1\times M_3$; one typically addresses this by requiring $\Lambda_{23}$ and $\Lambda_{12}$ to be \textit{cleanly composable}, an intersection-theoretic condition discussed at length in \cite[Section 4.2]{GuilleminSternbergBook}. This condition forces $\Lambda_{23}\circ\Lambda_{12}$ to be a submanifold and a Lagrangian relation, called the \textit{composition} of $\Lambda_{23}$ and $\Lambda_{12}$ and denoted
$$M_1\overset{\Lambda_{12}}\Longrightarrow M_2\overset{\Lambda_{23}}\Longrightarrow M_3.$$

It is tempting to imagine a category with symplectic manifolds as objects, Lagrangian relations as morphisms, and morphism composition as defined above. This is precluded by the existence of Lagrangian relations whose composition fails to be a submanifold. One instead refers to symplectic manifolds and Lagrangian relations as forming the \textit{Weinstein symplectic ``category"}, and views morphism composition as defined only for cleanly composable relations.      

\subsection{Examples and basic properties}

\begin{enumerate}[leftmargin=*]
	\item If $M$ is a symplectic manifold, then Lagrangian submanifolds of $\Lambda\subseteq M$ are in bijective correspondence with Lagrangian relations of the form
	$$\text{pt}\overset{\Lambda}\Longrightarrow M.$$
	\item If $M$ is a symplectic manifold, then the diagonal $\Delta_M\subseteq M\times M$ is a Lagrangian submanifold of $\overline{M}\times M$. By the previous example, it defines a Lagrangian relation
	$$\text{pt}\overset{\Delta_M}\Longrightarrow\overline{M}\times M.$$
	\item Suppose that $G$ is a compact Lie group with Lie algebra $\mathfrak{g}$. Let $M$ be a symplectic manifold endowed with a free Hamiltonian $G$-action and associated moment map $\mu:M\longrightarrow\mathfrak{g}^*$. Consider the symplectic quotient $\mu^{-1}(0)/G$ and canonical map
	$$\pi:\mu^{-1}(0)\longrightarrow\mu^{-1}(0)/G.$$ The graph of $\pi$ is then a Lagrangian relation
	$$M\overset{\Lambda}\Longrightarrow\mu^{-1}(0)/G,$$ i.e.
	$$\Lambda:=\{(m,\pi(m)):m\in\mu^{-1}(0)\}\subseteq M\times\mu_{D(G)}^{-1}(0)/G.$$ One often calls $\Lambda$ the \textit{reduction morphism}.
	\item Retain the hypotheses and notation of the previous example, except for the freeness condition on the $G$-action. Use the left trivialization to identify $T^*G$ with $G\times\mathfrak{g}^*$, and consider
	$$\Lambda_{\mu}:=\{(m,g\cdot m,g,\mu(m)):m\in M,\text{ }g\in G\}\subseteq M\times M\times G\times\mathfrak{g}^*=M\times M\times T^*G.$$ This is a Lagrangian submanifold of $M\times\overline{M}\times T^*G$, and it thereby defines a Lagrangian relation
	$$\overline{M}\times M\overset{\Lambda_{\mu}}\Longrightarrow T^*G.$$ One sometimes calls $\Lambda_{\mu}$ the \textit{moment Lagrangian}.
	\item Suppose that $G$ is a compact Lie group with Lie algebra $\mathfrak{g}$, adjoint representation $\mathrm{Ad}:G\longrightarrow\operatorname{GL}(\mathfrak{g})$, and fixed $G$-invariant inner product $\langle\cdot,\cdot\rangle:\mathfrak{g}\otimes_{\mathbb{R}}\mathfrak{g}\longrightarrow\mathbb{R}$. This inner product induces an isomorphism between the adjoint and coadjoint representations of $G$, through which we may freely identify $\mathfrak{g}$ and $\mathfrak{g}^*$. Each adjoint orbit $\mathcal{O}\subseteq\mathfrak{g}$ thereby corresponds to a coadjoint orbit, and the latter comes equipped with its canonical Kirillov--Kostant--Souriau symplectic form. It follows that $\mathcal{O}$ is a symplectic manifold, and one sees that the $G$-action on $\mathcal{O}$ is Hamiltonian with moment map the inclusion
	$\mu:\mathcal{O}\longrightarrow\mathfrak{g}$. One can also verify that the Lagrangian relations
	$$\mathrm{pt}\overset{\Delta_{\mathcal{O}}}\Longrightarrow\overline{\mathcal{O}}\times\mathcal{O}\quad\text{and}\quad \overline{\mathcal{O}}\times\mathcal{O}\overset{\Lambda_{\mu}}\Longrightarrow T^*G$$ are cleanly composable, so that we have a Lagrangian relation
	$$\text{pt}\overset{\Lambda_{\mu}\circ\Delta_{\mathcal{O}}}\Longrightarrow T^*G.$$ This amounts to having a Lagrangian submanifold $\Lambda_{\mathcal{O}}\subseteq T^*G$. If one uses the left trivialization and $\langle\cdot,\cdot\rangle$ to identify $T^*G$ with $G\times\mathfrak{g}$, then
	$$\Lambda_{\mathcal{O}}=\{(g,\xi)\in G\times\mathfrak{g}:\xi\in\mathcal{O}\text{ and }\mathrm{Ad}_g(\xi)=\xi\}.$$ This submanifold is often called the \textit{orbit Lagrangian}. 
\end{enumerate}

\subsection{The enhanced symplectic ``category"}\label{Subsection: Enhanced symplectic category}
Guillemin and Sternberg \cite{GuilleminSternbergBook} discuss an enrichment of the Weinstein symplectic ``category". This involves replacing Lagrangian relations 
$$M_1\overset{\Lambda}\Longrightarrow M_2$$
with arrows of the form
$$M_1\overset{(\Lambda,\rho)}\Longrightarrow M_2,$$ where $\Lambda\subseteq \overline{M_1}\times M_2$ is a Lagrangian relation from $M_1$ to $M_2$ and $\rho$ is a half-density on $\Lambda$. Now suppose that $M_1$, $M_2$, and $M_3$ are symplectic manifolds with arrows
\begin{equation}\label{Equation: Arrows}M_1\overset{(\Lambda_{12},\rho_{12})}\Longrightarrow M_2\quad\text{and}\quad M_2\overset{(\Lambda_{23},\rho_{23})}\Longrightarrow M_3.\end{equation} Assuming that $\Lambda_{23}$ and $\Lambda_{12}$ are cleanly composable as Lagrangian relations, Guillemin and Sternberg construct a half-density $\rho_{23}\circ\rho_{12}$ on the Lagrangian submanifold $\Lambda_{23}\circ\Lambda_{12}\subseteq\overline{M_1}\times M_3$. The composition of the arrows \eqref{Equation: Arrows} is then defined to be $(\Lambda,\rho)=(\Lambda_{23}\circ\Lambda_{12},\rho_{23}\circ\rho_{12})$, i.e. the new arrow
$$M_1\overset{(\Lambda,\rho)}\Longrightarrow M_3.$$

The \textit{enhanced symplectic ``category"} is defined to have symplectic manifolds as objects, arrows of the form
$$M_1\overset{(\Lambda,\rho)}\Longrightarrow M_2$$ as morphisms, and morphism composition as defined above. This fails to form a category, as one composes morphisms only if the underlying Lagrangian relations are cleanly composable.

\subsection{BKS pairings}\label{Subsection: BKS pairings}
The enhanced symplectic ``category" is compatible with the so-called Blattner--Kostant--Sternberg (BKS) pairing \cite{Blattner} in geometric quantization. To elaborate on this, we let $(V,\omega)$ be a $2n$-dimensional symplectic vector space over $\mathbb{R}$. Recall that a density of order $\alpha\in (0,\infty)$ on an $m$-dimensional subspace $W\subseteq V$ is a map
$\rho:W^m\longrightarrow\mathbb{C}$ satisfying
$$\rho(A(v_1),\ldots,A(v_m))=\vert\det(A)\vert^{\alpha}\rho(v_1,\ldots,v_m)$$ for all $A\in\operatorname{End}(W)$ and $(v_1,\ldots,v_m)\in W^m$. We use the term density (resp. half-density) for a density of order one (resp. $\frac{1}{2}$), and write $\vert W\vert^{\alpha}$ for the one-dimensional complex vector space of densities of order $\alpha$ on $W$. Let us also write $\vert W\vert$ for $\vert W\vert^{\alpha}$ in the case $\alpha=1$. One crucial ingredient of BKS theory is the existence of a canonical isomorphism
\begin{equation}\label{Equation: Vector space BKS}\Phi:\vert L_1\vert^{\frac{1}{2}}\otimes\vert L_2\vert^{\frac{1}{2}}\overset{\cong}\longrightarrow\vert L_1\cap L_2\vert\end{equation} for any pair of Lagrangian subspaces $L_1,L_2\subseteq V$. A more detailed discussion appears in the appendix.

Now let $(M,\omega)$ be a symplectic manifold with cleanly intersecting Lagrangian submanifolds $\Lambda_1,\Lambda_2\subseteq M$. Assume that $\Lambda_1\cap\Lambda_2$ is compact, and that $\Lambda_1$ and $\Lambda_2$ come with respective half-densities $\rho_1$ and $\rho_2$ and smooth functions $\psi_1:\Lambda_1\longrightarrow\mathbb{C}$ and $\psi_2:\Lambda_2\longrightarrow\mathbb{C}$. The half-densities $\rho_1$ and $\rho_2$ are sections of appropriate half-density bundles. One may therefore apply \eqref{Equation: Vector space BKS} on the level of tangent spaces to obtain a density $D(\rho_1,\rho_2)$ on $\Lambda_1\cap\Lambda_2$. The BKS pairing of $(\Lambda_1,\rho_1,\psi_1)$ and $(\Lambda_2,\rho_2,\psi_2)$ is then defined by
$$\mathrm{BKS}((\Lambda_1,\rho_1,\psi_1),(\Lambda_2,\rho_2,\psi_2)):=\int_{\Lambda_1\cap\Lambda_2}\psi_1\overline{\psi_2},$$
where the integration is with respect to $D(\rho_1,\rho_2)$.

\section{Lie-theoretic constructions}\label{Section: Lie-theoretic constructions}

In what follows, we introduce and develop the Lie-theoretic underpinnings of our work.

\subsection{The basics}\label{Subsection: The basics}
Let $G$ be a compact, connected, simply-connected, $n$-dimensional, rank-$r$, simple Lie group with Lie algebra $\mathfrak{g}$, exponential map $\exp:\mathfrak{g}\longrightarrow G$, and adjoint representation $\mathrm{Ad}:G\longrightarrow\operatorname{GL}(\mathfrak{g})$. Write $G_g$ for the $G$-centralizer of $g\in G$ and 
$$G_{\text{reg}}:=\{g\in G:\dim(G_g)=r\}$$ for the open, dense, conjugation-invariant subset of regular elements in $G$. A conjugacy class $\mathcal{C}\subseteq G$ shall be called \textit{regular} if $\mathcal{C}\subseteq G_{\text{reg}}$, i.e. if $\mathcal{C}$ is the conjugacy class of a regular element in $G$.
 
We likewise write $G_x\subseteq G$ (resp. $\mathfrak{g}_x\subseteq\mathfrak{g}$) for the $G$-stabilizer (resp. $\mathfrak{g}$-centralizer) of $x\in\mathfrak{g}$ and 
$$\mathfrak{g}_{\text{reg}}:=\{x\in\mathfrak{g}:\dim(\mathfrak{g}_x)=r\}$$ for the open, dense, $G$-invariant subset of regular elements in $\mathfrak{g}$. An adjoint orbit $\mathcal{O}\subseteq\mathfrak{g}$ shall be called \textit{regular} if $\mathcal{O}\subseteq\mathfrak{g}_{\text{reg}}$, i.e. if $\mathcal{O}$ is the adjoint orbit of a regular element in $\mathfrak{g}$.

Some of the preceding discussion fits into the following more general framework. Let $M$ be a $G$-manifold, i.e. a manifold equipped with a smooth left action of $G$. Each $m\in M$ then determines a $G$-stabilizer
$$G_m:=\{g\in G:g\cdot m=m\}\subseteq G$$ and $G$-orbit $$Gm:=\{g\cdot m:g\in G\}\subseteq M.$$

Let $T\subseteq G$ be a maximal torus with Lie algebra $\mathfrak{t}\subseteq\mathfrak{g}$, setting
$$T_{\text{reg}}:=T\cap G_{\text{reg}}\quad\text{and}\quad\mathfrak{t}_{\text{reg}}:=\mathfrak{t}\cap\mathfrak{g}_{\text{reg}}.$$ It follows that $t\in T$ (resp. $x\in\mathfrak{t}$) belongs to $T_{\text{reg}}$ (resp. $\mathfrak{t}_{\text{reg}}$) if and only if $G_t=T$ (resp. $\mathfrak{g}_x=\mathfrak{t}$).

Let $X^{\bullet}(T)$ denote the \textit{weight lattice} of all Lie group morphisms $T\longrightarrow S^1$. Each $\alpha\in X^{\bullet}(T)$ can be differentiated at $e\in T$ to yield an $\mathbb{R}$-linear map
$$(d\alpha)_e:\mathfrak{t}\longrightarrow T_1S^1=i\mathbb{R}.$$ The map
\begin{equation}\label{Equation: Z-module embedding}X^{\bullet}(T)\longrightarrow\mathfrak{t}^*,\quad\alpha\mapsto \frac{1}{2\pi i}(d\alpha)_e,\quad\alpha\in X^{\bullet}(T)\end{equation} is then an embedding of $\mathbb{Z}$-modules. Elements in its image are called \textit{integral weights}.

Let us set $\mathfrak{g}_{\mathbb{C}}:=\mathfrak{g}\otimes_{\mathbb{C}}\mathbb{R}$ and write $\mathrm{Ad}^{\mathbb{C}}:G\longrightarrow\operatorname{GL}(\mathfrak{g}_{\mathbb{C}})$ for the complexification of the adjoint representation. Recall that $\alpha\in X^{\bullet}(T)\setminus\{0\}$ is called a \textit{root} if
$$\mathfrak{g}_{\alpha}:=\{\xi\in\mathfrak{g}_{\mathbb{C}}:\mathrm{Ad}_t^{\mathbb{C}}(\xi)=\alpha(t)\xi\text{ for all }t\in T\}\subseteq\mathfrak{g}_{\mathbb{C}}$$
is one-dimensional. If one uses \eqref{Equation: Z-module embedding} to regard roots as being in $\mathfrak{t}^*$, then $\alpha\in\mathfrak{t}^*\setminus\{0\}$ is a root if and only if
$$\mathfrak{g}_{\alpha}:=\{\xi\in\mathfrak{g}_{\mathbb{C}}:[\eta,\xi]=2\pi i\alpha(\eta)\xi\text{ for all }\eta\in \mathfrak{t}\}\subseteq\mathfrak{g}_{\mathbb{C}}$$
is one-dimensional.
This leads to the $T$-module decomposition
$$\mathfrak{g}_{\mathbb{C}}=\mathfrak{t}_{\mathbb{C}}\oplus\bigoplus_{\alpha\in\Phi}\mathfrak{g}_{\alpha},$$
where $\mathfrak{t}_{\mathbb{C}}:=\mathfrak{t}\otimes_{\mathbb{R}}\mathbb{C}$ and $\Phi$ is the set of all roots. Choose a decomposition $\Phi=\Phi_{+}\cup\Phi_{-}$, where $\Phi_{+}$ is a set of positive roots and $\Phi_{-}=-\Phi_{+}$ is the corresponding set of negative roots. Let $\Delta\subseteq\Phi_{+}$ denote the resulting set of simple roots, and write $\alpha_0\in\Phi_{+}$ for the highest root. 

Let $\langle\cdot,\cdot\rangle:\mathfrak{g}\otimes_{\mathbb{R}}\mathfrak{g}\longrightarrow\mathbb{R}$ denote the \textit{normalized inner product} on $\mathfrak{g}$. This is the unique $G$-invariant inner product satisfying a certain normalization condition, to be stated momentarily. Note that $\langle\cdot,\cdot\rangle$ induces an isomorphism
\begin{equation}\label{Equation: Killing isomorphism}\mathfrak{g}\overset{\cong}\longrightarrow\mathfrak{g}^*,\quad\xi\mapsto\langle\xi,\cdot\rangle,\quad\xi\in\mathfrak{g}\end{equation} between the adjoint representation and its dual. Our inner product also yields a vector space isomorphism
$$\mathfrak{t}\overset{\cong}\longrightarrow\mathfrak{t}^*,\quad\xi\mapsto\xi^{\vee}:=\langle\xi,\cdot\rangle\big\vert_{\mathfrak{t}},\quad\xi\in\mathfrak{t},$$
through which $\mathfrak{t}^*$ acquires an inner product. Abusing notation slightly, we also use $\langle\cdot,\cdot\rangle$ to denote this new inner product. Our normalization condition on $\langle\cdot,\cdot\rangle:\mathfrak{g}\otimes_{\mathbb{R}}\mathfrak{g}\longrightarrow\mathbb{R}$ is then given by
$$\langle\alpha_0,\alpha_0\rangle=2$$ 

Now consider the conjugation action of $G$ on itself, as well as its canonical lift to a Hamiltonian $G$-action on $T^*G$. If one uses $\langle\cdot,\cdot\rangle$ and the left trivialization to identify $T^*G$ with $G\times\mathfrak{g}$, then this latter action is given by
\begin{equation}\label{Equation: Action on cotangent} g\cdot (h,x)=(ghg^{-1},\mathrm{Ad}_g(x))\end{equation} for all $g\in G$ and $(h,x)\in G\times\mathfrak{g}$. This is the only $G$-action on $T^*G$ that we will consider.  

Recall that $\alpha\in\mathfrak{t}^*$ is called \textit{dominant} if $\langle\alpha,\beta\rangle\geq 0$ for all $\beta\in\Delta$. We refer to $\xi\in\mathfrak{t}$ as being dominant if $\xi^{\vee}$ is so, i.e. $\xi\in\mathfrak{t}$ is dominant if and only if $\alpha(\xi)\geq 0$ for all $\alpha\in\Delta$. We likewise call an element $\xi\in\mathfrak{t}$ \textit{integral} if $\xi^{\vee}$ is an integral weight in $\mathfrak{t}^*$. An adjoint orbit $\mathcal{O}\subseteq\mathfrak{g}$ shall be called \textit{integral} if it contains an integral element of $\mathfrak{t}$.

Let $W:=N_G(T)/T$ be the Weyl group associated to $(G,T)$. Recall that $W$ acts linearly on $\mathfrak{t}$ through the adjoint representation of $G$. The induced $W$-action on $\mathfrak{t}^*$ preserves $X^{\bullet}(T)$ and $\Phi$, and we have a length function $\ell:W\longrightarrow\mathbb{Z}_{\geq 0}$ defined by
$$\ell(w):=\vert\Phi_{+}\cap w^{-1}\Phi_{-}\vert,\quad w\in W.$$

Consider the \textit{fundamental Weyl alcove} $\mathfrak{A}\subseteq\mathfrak{t}$ defined by $$\mathfrak{A}:=\{\xi\in\mathfrak{t}:\alpha_0(\xi)\leq 1\text{ and }\alpha(\xi)\geq 0\text{ for all }\alpha\in\Delta\}.$$ One has a bijection
$$\mathfrak{A}\overset{\cong}\longrightarrow\{\text{conjugacy classes in }G\},\quad \xi\mapsto\mathcal{C}_{\exp(\xi)},\quad \xi\in\mathfrak{A},$$
where $\mathcal{C}_{\exp(\xi)}\subseteq G$ denotes the conjugacy class of $\exp(\xi)$. Let $$\beta:\{\text{conjugacy classes in }G\}\overset{\cong}\longrightarrow\mathfrak{A}$$ be the inverse bijection, i.e. $\beta(\mathcal{C})$ is the unique element of $\mathfrak{A}$ that exponentiates to an element of $\mathcal{C}$. Let us also suppose that $k$ is a positive integer. We then refer to a conjugacy class $\mathcal{C}\subseteq G$ as being \textit{regular} (resp. \textit{$\frac{1}{k}$-integral}) if $\beta(\mathcal{C})$ (resp. $k\beta(\mathcal{C})$) is a regular (resp. an integral) element of $\mathfrak{t}$. These notions features prominently in our work.

\subsection{Adjoint orbits}\label{Subsection: Adjoint}
Fix $\xi\in\mathfrak{g}$ and let $\mathcal{O}\subseteq\mathfrak{g}$ denote its adjoint orbit. Note that the isomorphism \eqref{Equation: Killing isomorphism} identifies $\mathcal{O}$ with a coadjoint orbit in $\mathfrak{g}^*$. The Kirillov--Kostant--Souriau symplectic form on the latter orbit thereby corresponds to a symplectic form $\omega_{\mathcal{O}}$ on $\mathcal{O}$, which we now describe.

Let us write $V^{\perp}$ for the annihilator of a subspace $V\subseteq\mathfrak{g}$ with respect to $\langle\cdot,\cdot\rangle$. We may identify $T_{\xi}\mathcal{O}$ with $\mathfrak{g}_{\xi}^{\perp}$ in the usual way, i.e. via the isomorphism that sends each vector in $\mathfrak{g}_{\xi}^{\perp}$ to its fundamental vector field on $\mathcal{O}$ at $\xi$. Let $\omega_{\xi}$ denote the value of $\omega_{\mathcal{O}}$ at $\xi$, by which we mean
$$\omega_{\xi}:=(\omega_{\mathcal{O}})_{\xi}\in\wedge^2((\mathfrak{g}_{\xi}^{\perp})^*)\cong\wedge^2((T_{\xi}\mathcal{O})^*).$$ The form $\omega_{\mathcal{O}}$ is then characterized by being $G$-invariant and satisfying
$$\omega_{\xi}(\eta_1,\eta_2)=\langle\xi,[\eta_1,\eta_2]\rangle$$ for all $\eta_1,\eta_2\in\mathfrak{g}_{\xi}^{\perp}\cong T_{\xi}\mathcal{O}$. 

Let $\Omega_{\mathcal{O}}$ denote the highest non-zero wedge power of $\omega_{\mathcal{O}}$ and set $\Omega_{\xi}:=(\Omega_{\mathcal{O}})_{\xi}$. Note that 
$$\Omega_{\xi}\in\wedge^{n-r}((\mathfrak{g}_{\xi}^{\perp})^*)\cong\wedge^{n-r}((T_{\xi}\mathcal{O})^*)$$
if $\xi\in\mathfrak{g}_{\text{reg}}$.

\begin{lemma}\label{Lemma: Symplectic form identity}
	If $\xi\in\mathfrak{t}_{\emph{reg}}$ and $w\in W$, then
	$$\Omega_{w\xi}=(-1)^{\ell(w)}\Omega_{\xi}$$ as multilinear forms on $\mathfrak{t}^{\perp}$.
\end{lemma}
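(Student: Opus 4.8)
The plan is to realize both $\Omega_\xi$ and $\Omega_{w\xi}$ as values of the single $G$-invariant top-degree form $\Omega_{\mathcal{O}}$ on the common adjoint orbit $\mathcal{O}:=G\xi$, and to exploit that invariance along a representative of $w$ in $N_G(T)$. First I would record the elementary reductions. Since $\xi\in\mathfrak{t}_{\text{reg}}$ we have $\mathfrak{g}_\xi=\mathfrak{t}$, and since $W$ permutes $\mathfrak{t}_{\text{reg}}$ we also get $w\xi\in\mathfrak{t}_{\text{reg}}$ and $\mathfrak{g}_{w\xi}=\mathfrak{t}$; hence $\mathfrak{g}_\xi^{\perp}=\mathfrak{t}^{\perp}=\mathfrak{g}_{w\xi}^{\perp}$, so that $\Omega_\xi,\Omega_{w\xi}\in\wedge^{n-r}((\mathfrak{t}^{\perp})^*)$ and it is legitimate to compare them as multilinear forms on $\mathfrak{t}^{\perp}$. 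Moreover $w\xi=\mathrm{Ad}_{\dot w}(\xi)$ lies on $\mathcal{O}$ for any representative $\dot w\in N_G(T)$ of $w$.

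Next I would compute the differential at $\xi$ of the diffeomorphism $\mathrm{Ad}_{\dot w}\colon\mathcal{O}\longrightarrow\mathcal{O}$. Using the conjugation identity $\dot w\exp(t\eta)\dot w^{-1}=\exp(t\,\mathrm{Ad}_{\dot w}\eta)$, one checks that under the identifications $T_\xi\mathcal{O}\cong\mathfrak{g}_\xi^{\perp}=\mathfrak{t}^{\perp}$ and $T_{w\xi}\mathcal{O}\cong\mathfrak{g}_{w\xi}^{\perp}=\mathfrak{t}^{\perp}$ furnished by fundamental vector fields, this differential is precisely $\mathrm{Ad}_{\dot w}|_{\mathfrak{t}^{\perp}}$. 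Since $\Omega_{\mathcal{O}}$ is $G$-invariant (being a wedge power of the $G$-invariant form $\omega_{\mathcal{O}}$), evaluating the identity $(\mathrm{Ad}_{\dot w})^*\Omega_{\mathcal{O}}=\Omega_{\mathcal{O}}$ at the point $\xi$ gives $\Omega_{w\xi}(Au_1,\ldots,Au_{n-r})=\Omega_\xi(u_1,\ldots,u_{n-r})$ for all $u_1,\ldots,u_{n-r}\in\mathfrak{t}^{\perp}$, where $A:=\mathrm{Ad}_{\dot w}|_{\mathfrak{t}^{\perp}}$. As $\Omega_{w\xi}$ is a top-degree form on $\mathfrak{t}^{\perp}$, the left-hand side equals $\det(A)\,\Omega_{w\xi}(u_1,\ldots,u_{n-r})$, whence $\Omega_{w\xi}=\det(A)^{-1}\Omega_\xi$.

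It then remains to identify $\det(\mathrm{Ad}_{\dot w}|_{\mathfrak{t}^{\perp}})$. Here I would use that $G$ is compact and connected, so $\mathrm{Ad}_{\dot w}\in\mathrm{SO}(\mathfrak{g})$ and $\det(\mathrm{Ad}_{\dot w}|_{\mathfrak{g}})=1$; since $\mathrm{Ad}_{\dot w}$ preserves the orthogonal splitting $\mathfrak{g}=\mathfrak{t}\oplus\mathfrak{t}^{\perp}$, this yields $\det(\mathrm{Ad}_{\dot w}|_{\mathfrak{t}^{\perp}})=\det(\mathrm{Ad}_{\dot w}|_{\mathfrak{t}})^{-1}=\det(w|_{\mathfrak{t}})^{-1}$. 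Finally $\det(w|_{\mathfrak{t}})=(-1)^{\ell(w)}$: writing $w$ as a reduced word of length $\ell(w)$ in simple reflections, each of which acts on $\mathfrak{t}$ as a reflection in a hyperplane and hence with determinant $-1$, multiplicativity of the determinant gives the claim. Since $(-1)^{\ell(w)}=\pm1$ equals its own inverse, we conclude $\Omega_{w\xi}=(-1)^{\ell(w)}\Omega_\xi$.

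There is no serious obstacle; the only point requiring care is the identification of $(d\,\mathrm{Ad}_{\dot w})_\xi$ with $\mathrm{Ad}_{\dot w}|_{\mathfrak{t}^{\perp}}$, which is pure bookkeeping with fundamental vector fields. As a sanity check one can instead compute $\Omega_\xi$ directly in a root basis of $\mathfrak{t}^{\perp}\otimes_{\mathbb{R}}\mathbb{C}=\bigoplus_{\alpha\in\Phi}\mathfrak{g}_\alpha$, where $\omega_\xi$ is block-diagonal with $2\times2$ blocks built from $2\pi i\,\alpha(\xi)$ over the pairs $\{\alpha,-\alpha\}$, so that $\Omega_\xi$ equals $(2\pi i)^{|\Phi_{+}|}\prod_{\alpha\in\Phi_{+}}\alpha(\xi)$ times a fixed $(n-r)$-form; one then still has to reconcile the change of basis induced by $w$, so the invariance argument above is the more economical route.
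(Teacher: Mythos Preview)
Your argument is correct and takes a genuinely different route from the paper. The paper proceeds by an explicit root-space computation: it fixes root vectors $e_{\pm\alpha}$, forms the orthonormal basis $x_\alpha=i(e_\alpha+e_{-\alpha})$, $y_\alpha=e_\alpha-e_{-\alpha}$ of $\mathfrak{t}^\perp$, and computes $\omega_\xi$ on these to obtain the closed formula
\[
\Omega_\xi=(-2\pi)^{m}\bigg(\prod_{\alpha\in\Phi_+}\alpha(\xi)\bigg)\gamma,
\]
with $\gamma$ a fixed top-form independent of $\xi$. The lemma then reduces to the root-system identity $\prod_{\alpha\in\Phi_+}\alpha(w\xi)=(-1)^{\ell(w)}\prod_{\alpha\in\Phi_+}\alpha(\xi)$, which the paper verifies by splitting $\Phi_+$ according to $w^{-1}\Phi_\pm$. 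Your proof instead exploits the $G$-invariance of $\Omega_{\mathcal{O}}$ together with $\det(\mathrm{Ad}_{\dot w}|_{\mathfrak{g}})=1$ and the standard fact $\det(w|_{\mathfrak{t}})=(-1)^{\ell(w)}$; the identification of $(d\,\mathrm{Ad}_{\dot w})_\xi$ with $\mathrm{Ad}_{\dot w}|_{\mathfrak{t}^\perp}$ under the fundamental-vector-field trivializations is indeed just bookkeeping, as you say, and the rest follows. Your approach is more conceptual and avoids coordinates entirely. The trade-off is that the paper's explicit formula for $\Omega_\xi$ is not incidental: it is reused in the proof of the subsequent volume computation (Lemma~\ref{Lemma: Volume}), so the coordinate calculation does double duty there. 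Your ``sanity check'' at the end is essentially the paper's method.
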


\begin{proof}
	Choose root vectors
	$\{e_{\alpha}\in\mathfrak{g}_{\alpha}\}_{\alpha\in\Phi_{+}}$ and $\{e_{-\alpha}\in\mathfrak{g}_{-\alpha}\}_{\alpha\in\Phi_{+}}$ so that $$\{i(e_{\alpha}+e_{-\alpha})\}_{\alpha\in\Phi_{+}}\cup\{e_{\alpha}-e_{-\alpha}\}_{\alpha\in\Phi_{+}}$$ is an orthonormal basis of $\mathfrak{t}^{\perp}$. Let us record these basis vectors as
	$$x_{\alpha}:=i(e_{\alpha}+e_{-\alpha})\quad\text{and}\quad y_{\alpha}:=e_{\alpha}-e_{-\alpha}$$ for each $\alpha\in\Phi_{+}$. In the interest of what lies ahead, we note the following straightforward identity:
	\begin{equation}\label{Equation: First identity}
	[\xi,x_{\alpha}]=-2\pi\alpha(\xi)y_{\alpha}
	\end{equation}
	for all $\alpha\in\Phi_{+}$.
	
    Now note that
	\begin{align*}\omega_{\xi}(x_{\alpha},x_{\beta}) & = \langle\xi,[x_{\alpha},x_{\beta}]\rangle\\
	& = \langle[\xi,x_{\alpha}],x_{\beta}\rangle\\
	& = -2\pi\alpha(\xi)\langle y_{\alpha},x_{\beta}\rangle\hspace{50pt}\text{[by \eqref{Equation: First identity}]}\\
	& = 0\hspace{111pt}\text{[by orthogonality]}
	\end{align*}
	for all $\alpha,\beta\in\Phi_{+}$. A similar argument establishes that $\omega_{\xi}(y_{\alpha},y_{\beta})=0$ for all $\alpha,\beta\in\Phi_{+}$. On the other hand,
	\begin{align*}\omega_{\xi}(x_{\alpha},y_{\beta}) & = \langle\xi,[x_{\alpha},y_{\beta}]\rangle\\
	& = \langle[\xi,x_{\alpha}],y_{\beta}\rangle\\
	& = -2\pi\alpha(\xi)\langle y_{\alpha},y_{\beta}\rangle\hspace{50pt}\text{[by \eqref{Equation: First identity}]}\\
	\end{align*}
	for all $\alpha,\beta\in\Phi_{+}$. We conclude that $\omega_{\xi}(x_{\alpha},y_{\beta})$ is zero if $\alpha\neq\beta$, and that it coincides with $-2\pi\alpha(\xi)$ if $\alpha=\beta$.
	
	The preceding calculations imply that
	\begin{equation}\label{Equation: Prelim}\omega_{\xi}=-2\pi\sum_{\alpha\in\Phi_{+}}\alpha(\xi)x_{\alpha}^*\wedge y_{\alpha}^*,\end{equation}
	where $\{x_{\alpha}^*\}_{\alpha\in\Phi_{+}}\cup\{y_{\alpha}^*\}_{\alpha\in\Phi_{+}}$ is the basis of $(\mathfrak{t}^{\perp})^*$ induced by our basis $\{x_{\alpha}\}_{\alpha\in\Phi_{+}}\cup\{y_{\alpha}\}_{\alpha\in\Phi_{+}}$ of $\mathfrak{t}^{\perp}$. Now let $\gamma$ denote the wedge product of the bilinear forms $x_{\alpha}^*\wedge y_{\alpha}^*$, with $\alpha$ ranging over all $\alpha\in\Phi_{+}$. Equation \eqref{Equation: Prelim} then implies that
	\begin{equation}\label{Equation: Nice} \Omega_{\xi}=(-2\pi)^m\bigg(\prod_{\alpha\in\Phi_{+}}\alpha(\xi)\bigg)\gamma,\end{equation}
	where $m$ is the number of positive roots. At the same time, analogous arguments establish that
	$$\Omega_{w\xi}=(-2\pi)^m\bigg(\prod_{\alpha\in\Phi_{+}}\alpha(w\xi)\bigg)\gamma$$ for all $w\in W$. It therefore suffices to prove that
	$$\prod_{\alpha\in\Phi_{+}}\alpha(w\xi)=(-1)^{\ell(w)}\prod_{\alpha\in\Phi_{+}}\alpha(\xi).$$
	
	We have
	\begin{align*}
	\prod_{\alpha\in\Phi_{+}}\alpha(w\xi) & = \prod_{\alpha\in\Phi_{+}}(w^{-1}\alpha)(\xi)\\
	& = \prod_{\alpha\in w^{-1}\Phi_{+}}\alpha(\xi)\\
	& = \bigg(\prod_{\alpha\in\Phi_{+}\cap w^{-1}\Phi_{+}}\alpha(\xi)\bigg)\bigg(\prod_{\alpha\in\Phi_{-}\cap w^{-1}\Phi_{+}}\alpha(\xi)\bigg)\\
	& = \bigg(\prod_{\alpha\in\Phi_{+}\cap w^{-1}\Phi_{+}}\alpha(\xi)\bigg)\bigg(\prod_{\alpha\in\Phi_{+}\cap w^{-1}\Phi_{-}}(-\alpha(\xi))\bigg)\\
	& = (-1)^{\ell(w)}\bigg(\prod_{\alpha\in\Phi_{+}\cap w^{-1}\Phi_{+}}\alpha(\xi)\bigg)\bigg(\prod_{\alpha\in\Phi_{+}\cap w^{-1}\Phi_{-}}\alpha(\xi)\bigg)\\
	& = (-1)^{\ell(w)}\prod_{\alpha\in\Phi_{+}}\alpha(\xi),
	\end{align*}
	where the second-last line follows from the fact that $\ell(w)=\vert\Phi_{+}\cap w^{-1}\Phi_{-}\vert$.
	This completes the proof.
\end{proof} 

Fix $\xi\in\mathfrak{t}_{\text{reg}}$ and let $\mathcal{O}\subseteq\mathfrak{g}$ denote its adjoint orbit. Consider the unique $G$-equivariant diffeomorphism $$G/T\overset{\cong}\longrightarrow\mathcal{O}$$ that sends $[e]\in G/T$ to $\xi\in\mathcal{O}$, and let $\Xi_{\xi}$ denote the pullback of $\Omega_{\mathcal{O}}$ along this diffeomorphism. It follows that $\Xi_{\xi}$ is a $G$-invariant volume form on $G/T$. Taking absolute values yields a density $\vert\Xi_{\xi}\vert$, whose square root is a half-density $\vert\Xi_{\xi}\vert^{\frac{1}{2}}$. Note that if $\xi,\xi'\in\mathfrak{t}_{\text{reg}}$, then the product $\vert\Xi_{\xi}\vert^{\frac{1}{2}}\vert\Xi_{\xi'}\vert^{\frac{1}{2}}$ is a density on $G/T$. 

\begin{lemma}\label{Lemma: Volume}
	Suppose that $\xi,\xi'\in\mathfrak{t}$ are dominant and regular. The volume of $G/T$ with respect to $\vert\Xi_{\xi}\vert^{\frac{1}{2}}\vert\Xi_{\xi'}\vert^{\frac{1}{2}}$ is
	$$\mathrm{Vol}(G/T,\vert\Xi_{\xi}\vert^{\frac{1}{2}}\vert\Xi_{\xi'}\vert^{\frac{1}{2}})=\frac{\bigg(\prod_{\alpha\in\Phi_{+}}\alpha(\xi)\alpha(\xi')\bigg)^{\frac{1}{2}}}{\prod_{\alpha\in\Phi_{+}}\langle\alpha,\rho\rangle},$$
	where $\rho:=\frac{1}{2}\sum_{\alpha\in\Phi_{+}}\alpha$.
\end{lemma}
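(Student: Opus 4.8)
The plan is to reduce the volume computation to a known integral over $G/T$ — specifically, the Weyl integration / Harish-Chandra type formula — by first getting an explicit handle on the integrand $\vert\Xi_{\xi}\vert^{\frac{1}{2}}\vert\Xi_{\xi'}\vert^{\frac{1}{2}}$ at the base point $[e]\in G/T$. From Lemma \ref{Lemma: Symplectic form identity}, and more precisely from equation \eqref{Equation: Nice} in its proof, we already know that at the point $\xi$ the top form $\Omega_{\xi}$ on $\mathfrak{t}^{\perp}\cong T_{[e]}(G/T)$ equals $(-2\pi)^m\big(\prod_{\alpha\in\Phi_{+}}\alpha(\xi)\big)\gamma$, where $\gamma=\bigwedge_{\alpha\in\Phi_{+}} x_{\alpha}^*\wedge y_{\alpha}^*$ is a fixed unit volume form coming from the orthonormal basis $\{x_{\alpha},y_{\alpha}\}$ of $\mathfrak{t}^{\perp}$. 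Hence $\vert\Xi_{\xi}\vert^{\frac{1}{2}}$ at $[e]$ is $(2\pi)^{m/2}\big(\prod_{\alpha\in\Phi_{+}}\alpha(\xi)\big)^{1/2}$ times the unit half-density $\vert\gamma\vert^{1/2}$ (here dominance and regularity of $\xi$ ensure the product is positive, so the square root is unambiguous), and similarly for $\xi'$. Multiplying, the density $\vert\Xi_{\xi}\vert^{\frac{1}{2}}\vert\Xi_{\xi'}\vert^{\frac{1}{2}}$ at $[e]$ equals $(2\pi)^{m}\big(\prod_{\alpha\in\Phi_{+}}\alpha(\xi)\alpha(\xi')\big)^{1/2}\,\vert\gamma\vert$.

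Next I would use $G$-invariance. Both $\vert\Xi_{\xi}\vert^{\frac12}$ and $\vert\Xi_{\xi'}\vert^{\frac12}$ are $G$-invariant half-densities on $G/T$ (this was already noted just before the lemma), so their product is a $G$-invariant density, and it is therefore determined by its value at $[e]$ computed above. Consequently
$$\mathrm{Vol}(G/T,\vert\Xi_{\xi}\vert^{\frac{1}{2}}\vert\Xi_{\xi'}\vert^{\frac{1}{2}})=(2\pi)^{m}\bigg(\prod_{\alpha\in\Phi_{+}}\alpha(\xi)\alpha(\xi')\bigg)^{\frac{1}{2}}\cdot\mathrm{Vol}(G/T,\vert\gamma\vert),$$
where $\vert\gamma\vert$ is the $G$-invariant density on $G/T$ whose value at $[e]$ is the unit density for the orthonormal basis $\{x_{\alpha},y_{\alpha}\}_{\alpha\in\Phi_{+}}$ of $\mathfrak{t}^{\perp}$ — i.e. the Riemannian volume density of $G/T$ for the metric induced by $\langle\cdot,\cdot\rangle$. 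So everything comes down to identifying this Riemannian volume of $G/T$.

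The remaining step, and the one I expect to be the main obstacle, is the purely Lie-theoretic identity
$$\mathrm{Vol}(G/T,\vert\gamma\vert)=\frac{(2\pi)^{-m}}{\prod_{\alpha\in\Phi_{+}}\langle\alpha,\rho\rangle}.$$
This is a standard consequence of the Weyl dimension formula combined with the relation between the Riemannian volume of $G/T$ and the volume of the orbit $\mathcal{O}$ through a dominant regular point, but making the factors of $2\pi$ and the normalization of $\langle\cdot,\cdot\rangle$ come out exactly right is delicate. One clean route: the symplectic (Liouville) volume of the coadjoint orbit $\mathcal{O}_\mu$ through a dominant regular $\mu$ is $\frac{1}{(n-r)/2\,!}\int_{\mathcal{O}_\mu}\omega^{m}$, which by Harish-Chandra/the stationary phase exactness of the orbit is $\prod_{\alpha\in\Phi_{+}}\frac{\langle\alpha,\mu\rangle}{\langle\alpha,\rho\rangle}$ after suitable normalization; comparing this with the expression $(2\pi)^{m}\big(\prod_{\alpha\in\Phi_{+}}\alpha(\mu)\big)\,\mathrm{Vol}(G/T,\vert\gamma\vert)$ obtained from \eqref{Equation: Nice} and $G$-invariance (the case $\xi=\xi'=\mu$), and using that $\alpha(\mu)=2\pi\langle\alpha,\mu\rangle$ under the embedding \eqref{Equation: Z-module embedding} together with the identification of $\mathfrak t$ with $\mathfrak t^*$, forces $\mathrm{Vol}(G/T,\vert\gamma\vert)=(2\pi)^{-m}/\prod_{\alpha\in\Phi_{+}}\langle\alpha,\rho\rangle$. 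Substituting this back gives exactly
$$\mathrm{Vol}(G/T,\vert\Xi_{\xi}\vert^{\frac{1}{2}}\vert\Xi_{\xi'}\vert^{\frac{1}{2}})=\frac{\big(\prod_{\alpha\in\Phi_{+}}\alpha(\xi)\alpha(\xi')\big)^{\frac{1}{2}}}{\prod_{\alpha\in\Phi_{+}}\langle\alpha,\rho\rangle},$$
which is the claim. I would double-check the $2\pi$ bookkeeping by testing on $G=\mathrm{SU}(2)$, where $m=1$, $\Phi_+=\{\alpha_0\}$, $\langle\alpha_0,\rho\rangle=1$, and both sides should reduce to $(\alpha_0(\xi)\alpha_0(\xi'))^{1/2}$.
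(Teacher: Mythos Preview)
Your argument is essentially the paper's proof: compute the density at $[e]$ via \eqref{Equation: Nice}, use $G$-invariance to reduce to the Riemannian volume of $G/T$ for the metric induced by $\langle\cdot,\cdot\rangle$, and then identify that volume as $(2\pi)^{-m}/\prod_{\alpha\in\Phi_+}\langle\alpha,\rho\rangle$. The sole difference is that the paper cites \cite[Equation (20)]{MeinrenkenWoodward} for this last identity rather than deriving it; your coadjoint-orbit sketch is fine in outline, though in the paper's conventions $\alpha(\mu)=\langle\alpha,\mu\rangle$ with no extra $2\pi$, so the bookkeeping you flag as delicate does indeed need the $\mathrm{SU}(2)$ sanity check you propose.
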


\begin{proof}
	Note that evaluating $\vert\Xi_{\xi}\vert^{\frac{1}{2}}$ (resp. $\vert\Xi_{\xi'}\vert^{\frac{1}{2}}$) at $[e]\in G/T$ produces a half-density $(\vert\Xi_{\xi}\vert^{\frac{1}{2}})_{[e]}$ (resp. $(\vert\Xi_{\xi'}\vert^{\frac{1}{2}})_{[e]}$) on $T_{[e]}(G/T)=\mathfrak{t}^{\perp}$. To describe these half-densities, we recall the notation used in the proof of Lemma \ref{Lemma: Symplectic form identity}. The arguments used to establish \eqref{Equation: Nice} then imply that
	$$(\vert\Xi_{\xi}\vert^{\frac{1}{2}})_{[e]}=(2\pi)^{\frac{m}{2}}\bigg(\prod_{\alpha\in\Phi_{+}}\alpha(\xi)\bigg)^{\frac{1}{2}}\vert\gamma\vert^{\frac{1}{2}}\quad\text{and}\quad(\vert\Xi_{\xi'}\vert^{\frac{1}{2}})_{[e]}=(2\pi)^{\frac{m}{2}}\bigg(\prod_{\alpha\in\Phi_{+}}\alpha(\xi')\bigg)^{\frac{1}{2}}\vert\gamma\vert^{\frac{1}{2}}.$$ The value of $\vert\Xi_{\xi}\vert^{\frac{1}{2}}\vert\Xi_{\xi'}\vert^{\frac{1}{2}}$ at $[e]$ is therefore given by  \begin{equation}\label{Equation: One} (\vert\Xi_{\xi}\vert^{\frac{1}{2}}\vert\Xi_{\xi'}\vert^{\frac{1}{2}})_{[e]}=(2\pi)^m\bigg(\prod_{\alpha\in\Phi_{+}}\alpha(\xi)\alpha(\xi')\bigg)^{\frac{1}{2}}\vert\gamma\vert.\end{equation}
	
	Now consider the unique $G$-invariant Riemannian metric on $G/T$ that coincides with $\langle\cdot,\cdot\rangle$ on $T_{[e]}(G/T)=\mathfrak{t}^{\perp}$, and write $\theta$ for the $G$-invariant density on $G/T$ induced by this metric. Let us also recall that $\{x_{\alpha}\}_{\alpha\in\Phi_{+}}\cup\{y_{\alpha}\}_{\alpha\in\Phi_{+}}$ is an orthonormal basis of $\mathfrak{t}^{\perp}$ with respect to $\langle\cdot,\cdot\rangle$, and that $\gamma$ is the wedge product of all the skew-symmetric bilinear forms $x_{\alpha}^*\wedge y_{\alpha}^*$. This last sentence implies that $\theta_{[e]}=\vert\gamma\vert$, where $\theta_{[e]}$ denotes the value of $\theta$ at $[e]$. The statement \eqref{Equation: One} then becomes
	$$(\vert\Xi_{\xi}\vert^{\frac{1}{2}}\vert\Xi_{\xi'}\vert^{\frac{1}{2}})_{[e]}=(2\pi)^m\bigg(\prod_{\alpha\in\Phi_{+}}\alpha(\xi)\alpha(\xi')\bigg)^{\frac{1}{2}}\theta_{[e]}.$$ Since $\vert\Xi_{\xi}\vert^{\frac{1}{2}}\vert\Xi_{\xi'}\vert^{\frac{1}{2}}$ and $\theta$ are $G$-invariant, it follows that
	$$\vert\Xi_{\xi}\vert^{\frac{1}{2}}\vert\Xi_{\xi'}\vert^{\frac{1}{2}}=(2\pi)^m\bigg(\prod_{\alpha\in\Phi_{+}}\alpha(\xi)\alpha(\xi')\bigg)^{\frac{1}{2}}\theta.$$ We conclude that
	\begin{equation}\label{Equation: Conclusion}\mathrm{Vol}(G/T,\vert\Xi_{\xi}\vert^{\frac{1}{2}}\vert\Xi_{\xi'}\vert^{\frac{1}{2}})=(2\pi)^m\bigg(\prod_{\alpha\in\Phi_{+}}\alpha(\xi)\alpha(\xi')\bigg)^{\frac{1}{2}}\mathrm{Vol}(G/T,\theta),\end{equation} where $\mathrm{Vol}(G/T,\theta)$ is the volume of $G/T$ with respect to $\theta$. At the same time, Equation (20) in \cite{MeinrenkenWoodward} implies that
	\begin{equation}\label{Equation: Conclusion'}\mathrm{Vol}(G/T,\theta)=\frac{1}{(2\pi)^m\prod_{\alpha\in\Phi_{+}}\langle\alpha,\rho\rangle}.\end{equation} The desired result now follows from \eqref{Equation: Conclusion} and \eqref{Equation: Conclusion'}.
\end{proof}

\subsection{Haar measures on maximal tori}\label{Subsection: Haar}
Let $T'\subseteq G$ be any maximal torus, possibly different from the maximal torus $T$ fixed throughout this manuscript. Write $\Theta_{T'}\in\Omega^r(T')$ for the left-invariant $r$-form that induces the Haar measure on $T'$. The value of $\Theta_{T'}$ at the identity is then a vector
$$(\Theta_{T'})_{e}\in\wedge^r((\mathfrak{t'})^*),$$
where $\mathfrak{t}'$ is the Lie algebra of $T'$.

\begin{proposition}
	Suppose that $T',T''\subseteq G$ are maximal tori with respective Lie algebras $\mathfrak{t}',\mathfrak{t}''\subseteq\mathfrak{g}$. If $\{\zeta_1',\ldots,\zeta_r'\}$ and $\{\zeta_1'',\ldots,\zeta_r''\}$ are orthonormal bases of $\mathfrak{t}'$ and $\mathfrak{t}''$, respectively, then
	$$\vert(\Theta_{T'})_e(\zeta_1',\ldots,\zeta_r')\vert=\vert(\Theta_{T''})_e(\zeta_1'',\ldots,\zeta_r'')\vert.$$  
\end{proposition}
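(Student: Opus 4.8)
The plan is to combine the conjugacy of maximal tori with the $G$-invariance of $\langle\cdot,\cdot\rangle$ and the uniqueness of the normalized Haar measure. First I would record a purely linear-algebraic reduction: for a fixed maximal torus $T'$, the quantity $\vert(\Theta_{T'})_e(\zeta_1',\ldots,\zeta_r')\vert$ does not depend on the choice of orthonormal basis $\{\zeta_1',\ldots,\zeta_r'\}$ of $\mathfrak{t}'$. Indeed, any two orthonormal bases of $\mathfrak{t}'$ differ by an element of $\mathrm{O}(r)$, and the alternating $r$-form $(\Theta_{T'})_e\in\wedge^r((\mathfrak{t}')^*)$ on the $r$-dimensional space $\mathfrak{t}'$ transforms under such a change of basis by multiplication by the determinant, which has absolute value $1$. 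Write $c(T')$ for this common value; the proposition is the assertion that $c(T')=c(T'')$.

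Next I would invoke the classical theorem that any two maximal tori of the compact connected Lie group $G$ are conjugate, choosing $g\in G$ with $gT'g^{-1}=T''$. Let $c_g:G\longrightarrow G$, $x\mapsto gxg^{-1}$, denote conjugation by $g$; it restricts to a Lie group isomorphism $c_g:T'\overset{\cong}\longrightarrow T''$ whose differential at the identity is $\mathrm{Ad}_g:\mathfrak{t}'\overset{\cong}\longrightarrow\mathfrak{t}''$. Since $\langle\cdot,\cdot\rangle$ is $G$-invariant, $\mathrm{Ad}_g$ is a linear isometry of $\mathfrak{g}$, so $\{\mathrm{Ad}_g\zeta_1',\ldots,\mathrm{Ad}_g\zeta_r'\}$ is an orthonormal basis of $\mathfrak{t}''$.

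Finally I would transport the Haar measure along $c_g$. Because $c_g:T'\longrightarrow T''$ is a group isomorphism, the pushforward of the (probability) Haar measure on $T'$ is a translation-invariant probability measure on $T''$, hence equals the Haar measure on $T''$ by uniqueness; equivalently $c_g^*\vert\Theta_{T''}\vert=\vert\Theta_{T'}\vert$ as densities, so $c_g^*\Theta_{T''}=\pm\Theta_{T'}$ in $\Omega^r(T')$. Evaluating at $e$ (noting $c_g(e)=e$) gives $(\Theta_{T''})_e(\mathrm{Ad}_g\zeta_1',\ldots,\mathrm{Ad}_g\zeta_r')=\pm(\Theta_{T'})_e(\zeta_1',\ldots,\zeta_r')$, and combining this with the basis-independence established in the first step (applied to the orthonormal basis $\{\mathrm{Ad}_g\zeta_i'\}$ of $\mathfrak{t}''$) yields $c(T'')=c(T')$. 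The only point requiring care — and the closest thing to an obstacle — is that the statement presupposes a fixed normalization of $\Theta_{T'}$ (the probability Haar measure), and the argument hinges on this normalization being preserved by $c_g$; this is precisely the uniqueness of normalized Haar measure on a compact group. Everything else is the standard conjugacy theorem for maximal tori together with elementary multilinear algebra.
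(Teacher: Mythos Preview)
Your proof is correct and follows essentially the same route as the paper: conjugate $T'$ to $T''$ by some $g\in G$, use that the group isomorphism $c_g$ carries Haar measure to Haar measure to relate $(\Theta_{T'})_e$ and $(\Theta_{T''})_e$ via $\mathrm{Ad}_g$, and then use $G$-invariance of $\langle\cdot,\cdot\rangle$ together with the fact that orthonormal bases differ by an element of $\mathrm{O}(r)$. The only cosmetic difference is that you isolate the basis-independence step first and are slightly more explicit about the sign ambiguity in $c_g^*\Theta_{T''}=\pm\Theta_{T'}$.
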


\begin{proof}
	Choose an element $g\in G$ satisfying $T''=gT'g^{-1}$. It follows that conjugation by $g$ defines a Lie group isomorphism $T'\overset{\cong}\longrightarrow T''$, so that
	\begin{equation}\label{Equation: Pre}(\Theta_{T'})_e(\zeta_1',\ldots,\zeta_r')=(\Theta_{T''})_e(\mathrm{Ad}_g(\zeta_1'),\ldots,\mathrm{Ad}_g(\zeta_r')).\end{equation} We also note that $\{\mathrm{Ad}_g(\zeta_1'),\ldots,\mathrm{Ad}_g(\zeta_r')\}$ is an orthonormal basis of $\mathfrak{t}''$, owing to the fact that $\langle\cdot,\cdot\rangle$ is $G$-invariant. The bases $\{\mathrm{Ad}_g(\zeta_1'),\ldots,\mathrm{Ad}_g(\zeta_r')\}$ and $\{\zeta_1'',\ldots,\zeta_r''\}$ are therefore related by a linear automorphism $\mathfrak{t}''\longrightarrow\mathfrak{t''}$ having determinant $1$ or $-1$, implying that
	\begin{equation}\label{Equation: Pre2}\vert(\Theta_{T''})_e(\mathrm{Ad}_g(\zeta_1'),\ldots,\mathrm{Ad}_g(\zeta_r'))\vert=\vert(\Theta_{T''})_e(\zeta_1'',\ldots,\zeta_r'')\vert.\end{equation} The desired result now follows from \eqref{Equation: Pre} and \eqref{Equation: Pre2}.
\end{proof}

One consequence is that the positive real number
$$\kappa(G):=\vert(\Theta_{T'})_e(\zeta_1',\ldots,\zeta_r')\vert$$ does not depend on the choice of maximal torus $T'\subseteq G$ and orthonormal basis $\{\zeta_1',\ldots,\zeta_r'\}$ of the Lie algebra of $T'$.

\subsection{The phase function on $\Lambda_{\mathcal{O}}$}\label{Subsection: Phase}
Let $\mathcal{O}\subseteq\mathfrak{g}$ be a regular, integral adjoint orbit and recall the Lagrangian submanifold $\Lambda_{\mathcal{O}}\subseteq T^*G$ from Section \ref{Subsection: Category}. Note that if $(g,\xi)\in\Lambda_{\mathcal{O}}$, then $G_{\xi}$ is a maximal torus in $G$. It follows that the exponential map $\exp:\mathfrak{g}\longrightarrow G$ restricts to a surjective group morphism $\mathfrak{g}_{\xi}\longrightarrow G_{\xi}$. We may therefore choose $\eta\in\mathfrak{g}_{\xi}$ satisfying $\exp(\eta)=g$ and define $$\psi_{\mathcal{O}}(g,\xi):=e^{2\pi i\langle\xi,\eta\rangle}\in S^1\subseteq\mathbb{C}.$$ Our integrality hypotheses implies that $\psi_{\mathcal{O}}(g,\xi)$ is independent of $\eta$, and we thereby obtain a well-defined map
$$\psi_{\mathcal{O}}:\Lambda_{\mathcal{O}}\longrightarrow S^1.$$
Guillemin and Sternberg show $\psi_{\mathcal{O}}$ to be a phase function for the Lagrangian submanifold $\Lambda_{\mathcal{O}}\subseteq T^*G$.

Now recall the $G$-action on $T^*G$ defined in \eqref{Equation: Action on cotangent}. This action leaves $\Lambda_{\mathcal{O}}$ invariant, in which context we have the following result.

\begin{lemma}
If $\mathcal{O}\subseteq\mathfrak{g}$ is a regular, integral adjoint orbit, then the phase function $\psi_{\mathcal{O}}:\Lambda_{\mathcal{O}}\longrightarrow S^1$ is $G$-invariant.
\end{lemma}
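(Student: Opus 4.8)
The plan is to verify $G$-invariance pointwise, by checking that the recipe defining $\psi_{\mathcal{O}}$ transforms equivariantly under the action \eqref{Equation: Action on cotangent}. Fix $(g,\xi)\in\Lambda_{\mathcal{O}}$ and $a\in G$; the goal is the identity $\psi_{\mathcal{O}}(aga^{-1},\mathrm{Ad}_a(\xi))=\psi_{\mathcal{O}}(g,\xi)$.

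First I would pick $\eta\in\mathfrak{g}_{\xi}$ with $\exp(\eta)=g$, exactly as in the construction of $\psi_{\mathcal{O}}$; such $\eta$ exists because $G_{\xi}$ is a maximal torus and $\exp$ restricts to a surjection $\mathfrak{g}_{\xi}\twoheadrightarrow G_{\xi}$. The crux of the argument is the observation that $\mathrm{Ad}_a(\eta)$ is an admissible choice of ``logarithm'' in the definition of $\psi_{\mathcal{O}}$ at the point $(aga^{-1},\mathrm{Ad}_a(\xi))$. This requires two standard facts: that $\mathrm{Ad}_a(\eta)$ centralizes $\mathrm{Ad}_a(\xi)$, which follows from $[\mathrm{Ad}_a(\xi),\mathrm{Ad}_a(\eta)]=\mathrm{Ad}_a[\xi,\eta]=0$ and hence from $\mathrm{Ad}_a(\mathfrak{g}_{\xi})=\mathfrak{g}_{\mathrm{Ad}_a(\xi)}$; and that $\exp(\mathrm{Ad}_a(\eta))=a\exp(\eta)a^{-1}=aga^{-1}$, which is the naturality relation $\exp\circ\mathrm{Ad}_a=c_a\circ\exp$ for conjugation $c_a$ by $a$. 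Since it has already been recorded (following Guillemin--Sternberg) that $\psi_{\mathcal{O}}$ is independent of the chosen logarithm, I may then compute $\psi_{\mathcal{O}}(aga^{-1},\mathrm{Ad}_a(\xi))$ using the specific representative $\mathrm{Ad}_a(\eta)$.

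The final step is the short computation
$$\psi_{\mathcal{O}}(aga^{-1},\mathrm{Ad}_a(\xi))=e^{2\pi i\langle\mathrm{Ad}_a(\xi),\mathrm{Ad}_a(\eta)\rangle}=e^{2\pi i\langle\xi,\eta\rangle}=\psi_{\mathcal{O}}(g,\xi),$$
where the middle equality is the $G$-invariance of $\langle\cdot,\cdot\rangle$. I do not anticipate a genuine obstacle here: every ingredient — surjectivity of $\exp$ on a maximal torus, $\mathrm{Ad}$-equivariance of $\exp$, and invariance of the normalized inner product — is routine, and the only point needing a moment's care is the bookkeeping observation that $\mathrm{Ad}_a$ carries $\mathfrak{g}_{\xi}$ onto $\mathfrak{g}_{\mathrm{Ad}_a(\xi)}$, so that the transformed logarithm really is of the type allowed in the definition of $\psi_{\mathcal{O}}$.
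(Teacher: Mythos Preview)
Your proof is correct and follows essentially the same approach as the paper: choose a logarithm $\eta\in\mathfrak{g}_{\xi}$ of $g$, transport it via $\mathrm{Ad}_a$ to obtain an admissible logarithm at the translated point, and conclude by the $G$-invariance of $\langle\cdot,\cdot\rangle$. Your write-up is in fact slightly more explicit than the paper's about why $\mathrm{Ad}_a(\eta)$ lies in $\mathfrak{g}_{\mathrm{Ad}_a(\xi)}$ and exponentiates to $aga^{-1}$, but the argument is otherwise identical.
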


\begin{proof}
	Suppose that $(g,\xi)\in\Lambda_{\mathcal{O}}$ and $h\in G$. Choose $\eta\in\mathfrak{g}_{\xi}$ for which $g=\exp(\eta)$, and consider the vectors $\xi'=\mathrm{Ad}_h(\xi)$ and $\eta'=\mathrm{Ad}_h(\eta)$. We then have $\eta'\in\mathfrak{g}_{\xi'}$ and $\exp(\eta')=hgh^{-1}$.  It follows that
	\begin{align*}
	\psi_{\mathcal{O}}(h\cdot(g,\xi)) & = \psi_{\mathcal{O}}(hgh^{-1},\xi')\\
	& = e^{2\pi i\langle\xi',\eta'\rangle}\\
	& = e^{2\pi i\langle\xi,\eta\rangle}\\
	& = \psi_{\mathcal{O}}(g,\xi),\\
	\end{align*}
	where the second-last line is a consequence of $\langle\cdot,\cdot\rangle$ being $G$-invariant. This completes the proof.
\end{proof}

\subsection{The half-density on $\Lambda_{\mathcal{O}}$}\label{Subsection: Half-density}
We now describe the half-density on $\Lambda_{\mathcal{O}}$ constructed in \cite{GuilleminSternbergBook}, where $\mathcal{O}\subseteq\mathfrak{g}$ is a regular adjoint orbit. To this end, recall the $G$-action on $\Lambda_{\mathcal{O}}$ mentioned in Section \ref{Subsection: Phase}. Let $G(g,\xi)\subseteq\Lambda_{\mathcal{O}}$ denote the $G$-orbit of $(g,\xi)\in\Lambda_{\mathcal{O}}$; this is consistent with the notation for $G$-orbits explained in Section \ref{Subsection: The basics}.

\begin{lemma}\label{Lemma: Regular orbit}
	If $\mathcal{O}\subseteq\mathfrak{g}$ is a regular adjoint orbit, then
	\begin{equation}\label{Equation: Tangent space decomposition}T_{(g,\xi)}\Lambda_{\mathcal{O}}=T_{(g,\xi)}(G(g,\xi))\oplus T_{(g,\xi)}(G_{\xi}\times\{\xi\})\end{equation} for all $(g,\xi)\in\Lambda_{\mathcal{O}}$. The $G$-action on $\Lambda_{\mathcal{O}}$ respects these tangent space decompositions. 
\end{lemma}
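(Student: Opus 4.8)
The plan is to produce an explicit diffeomorphism between $\Lambda_{\mathcal{O}}$ and a space whose tangent space visibly splits in the claimed way, namely $G/G_\xi \times G_\xi$ (or rather the appropriate twisted version), and then to chase the identification. More directly: fix $(g,\xi) \in \Lambda_{\mathcal{O}}$, so $g \in G_\xi$ and $G_\xi$ is a maximal torus of $G$ (as recorded in Section~\ref{Subsection: Phase}, since $\xi$ is regular). The two claimed summands are the tangent space to the conjugation $G$-orbit through $(g,\xi)$ and the tangent space to the ``fiber'' $G_\xi \times \{\xi\}$, the latter making sense because $g \in G_\xi$ forces $G_\xi \times \{\xi\} \subseteq \Lambda_{\mathcal{O}}$.

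First I would write down the tangent space to $\Lambda_{\mathcal{O}}$ at $(g,\xi)$ concretely. Using $T_{(g,\xi)}(G\times\mathfrak{g}) = \mathfrak{g}\oplus\mathfrak{g}$ (via the left trivialization on the $G$ factor), differentiate the defining equations $\xi \in \mathcal{O}$ and $\mathrm{Ad}_g(\xi)=\xi$. The first equation gives the condition that the $\mathfrak{g}$-component lies in $T_\xi\mathcal{O} \cong \mathfrak{g}_\xi^\perp$; the second, differentiated, couples the $G$-direction and the $\mathfrak{g}$-direction. Since $G_\xi = \mathfrak{t}'$ is abelian (a maximal torus), several of these terms simplify. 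The upshot should be an explicit description $T_{(g,\xi)}\Lambda_{\mathcal{O}} = \{(u, v) : v \in \mathfrak{g}_\xi^\perp,\ \mathrm{Ad}_g(v) - v = [\text{something involving } u,\xi]\}$, and a dimension count confirming $\dim\Lambda_{\mathcal{O}} = n$ (which also follows from Theorem~\ref{Theorem: Lagrangian}'s analogue / the fact that $\Lambda_{\mathcal{O}}$ is Lagrangian in $T^*G$, so $\dim = n$).

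Next I would identify the two summands inside this description. The tangent space to the $G$-orbit is $\{(\,[\zeta,\cdot]\text{-type term},\ [\zeta,\xi]) : \zeta \in \mathfrak{g}\}$ — the image of the infinitesimal action \eqref{Equation: Action on cotangent}, $\zeta \mapsto (\mathrm{Ad}\text{-term}, [\zeta,\xi])$ — whose $\mathfrak{g}$-component sweeps out exactly $[\mathfrak{g},\xi] = \mathfrak{g}_\xi^\perp$ since $\xi$ is regular, with kernel $\mathfrak{g}_\xi$. The tangent space to $G_\xi \times \{\xi\}$ is $\{(u,0) : u \in \mathfrak{g}_\xi\}$, which has zero $\mathfrak{g}$-component. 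Thus the two subspaces meet only in vectors with zero $\mathfrak{g}$-component lying in the image of the action map; but a vector in the image with zero $\mathfrak{g}$-component comes from $\zeta \in \mathfrak{g}_\xi$, and for such $\zeta$ the $G$-component of the action vector is itself $0$ (conjugation by $\exp(t\zeta)$ fixes $g$ when $\zeta$ centralizes $\xi$, because $g \in G_\xi$ which is abelian). Hence the intersection is $0$, and a dimension count — $\dim T_{(g,\xi)}(G(g,\xi)) = n - r$, $\dim(\mathfrak{g}_\xi) = r$, total $n = \dim\Lambda_{\mathcal{O}}$ — gives that the sum is direct and exhausts $T_{(g,\xi)}\Lambda_{\mathcal{O}}$. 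The final sentence, that the $G$-action respects the decomposition, is then nearly immediate: $G$ permutes the conjugation orbits (so preserves the first summand tautologically), and it carries $G_\xi \times \{\xi\}$ to $G_{\mathrm{Ad}_h\xi}\times\{\mathrm{Ad}_h\xi\}$, hence carries the second summand at $(g,\xi)$ to the corresponding second summand at $h\cdot(g,\xi)$; $G$-invariance of $\langle\cdot,\cdot\rangle$ and of the constructions involved makes this compatibility exact.

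The main obstacle I anticipate is bookkeeping in the differentiated constraint $\mathrm{Ad}_g(\xi) = \xi$ and correctly isolating the image of the infinitesimal action inside it — in particular verifying cleanly that the $\mathfrak{g}$-component of the orbit tangent space is all of $\mathfrak{g}_\xi^\perp$ (regularity of $\xi$) while the complementary summand $\mathfrak{g}_\xi\times\{0\}$ accounts for exactly the remaining $r$ dimensions and genuinely lies in $\Lambda_{\mathcal{O}}$ (using $g \in G_\xi$ and that $G_\xi$ is a torus). Once the constraint is unpacked, the direct-sum claim and the $G$-equivariance are routine.
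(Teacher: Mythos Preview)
Your proposal is correct and follows essentially the same route as the paper: show that the two summands intersect trivially (second component zero forces $\zeta\in\mathfrak{g}_\xi$; then abelianness of $G_\xi$ together with $g\in G_\xi$ kills the first component), combine with the dimension count $(n-r)+r=n$, and finish equivariance by noting that $h$ carries $G(g,\xi)$ to $G(h\cdot(g,\xi))$ and $G_\xi\times\{\xi\}$ to $G_{\mathrm{Ad}_h\xi}\times\{\mathrm{Ad}_h\xi\}$. The only superfluous step is your opening plan to differentiate the defining equations and pin down $T_{(g,\xi)}\Lambda_{\mathcal O}$ explicitly: since $G(g,\xi)$ and $G_\xi\times\{\xi\}$ are visibly submanifolds of $\Lambda_{\mathcal O}$, their tangent spaces already sit inside $T_{(g,\xi)}\Lambda_{\mathcal O}$, and the trivial-intersection-plus-dimension argument goes through without ever writing that tangent space down.
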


\begin{proof}
	Since $\mathcal{O}$ is a regular, the group $G_{\xi}$ is abelian. A straightforward calculation then reveals that $G_{\xi}$ is the $G$-stabilizer of $(g,\xi)\in\Lambda_{\mathcal{O}}$. It follows that $T_{(g,\xi)}(G(g,\xi))$ has dimension $n-\dim(G_{\xi})$, while we see that $T_{(g,\xi)}(G_{\xi}\times\{\xi\})$ has dimension $\dim(G_{\xi})$. We also know $T_{(g,\xi)}\Lambda_{\mathcal{O}}$ to have dimension $n$, as $\Lambda_{\mathcal{O}}$ is a Lagrangian submanifold of $T^*G$. In light of these last two sentences, $$T_{(g,\xi)}\Lambda_{\mathcal{O}}=T_{(g,\xi)}(G(g,\xi))\oplus T_{(g,\xi)}(G_{\xi}\times\{\xi\})$$ holds if and only if $$T_{(g,\xi)}(G(g,\xi))\cap T_{(g,\xi)}(G_{\xi}\times\{\xi\})=\{0\}.$$
	
	Note that each vector in $T_{(g,\xi)}(G(g,\xi))$ has the form $((\eta_G)_g,(\eta_{\mathcal{O}})_{\xi})$ for $\eta\in\mathfrak{g}$, where $\eta_G$ (resp. $\eta_{\mathcal{O}}$) denotes the fundamental vector field for $\eta$ with respect to the conjugation action (resp. adjoint action) of $G$ on $G$ (resp. $\mathcal{O}$). With this in mind, let $\eta\in\mathfrak{g}$ be such that $$((\eta_G)_g,(\eta_{\mathcal{O}})_{\xi})\in T_{(g,\xi)}(G_{\xi}\times\{\xi\}).$$ It follows that $(\eta_{\mathcal{O}})_{\xi}=0$, while a straightforward calculation reveals that $(\eta_{\mathcal{O}})_{\xi}=[\eta,\xi]$. We conclude that $\eta\in\mathfrak{g}_{\xi}$. Since $G_{\xi}$ is abelian and contains $g$, this is easily seen to imply that $(\eta_G)_g=0$. Hence $$((\eta_G)_g,(\eta_{\mathcal{O}})_{\xi})=(0,0),$$ and one deduces that $T_{(g,\xi)}(G(g,\xi))$ and $T_{(g,\xi)}(G_{\xi}\times\{\xi\})$ intersect trivially. In light of the previous paragraph, we have $$T_{(g,\xi)}\Lambda_{\mathcal{O}}=T_{(g,\xi)}(G(g,\xi))\oplus T_{(g,\xi)}(G_{\xi}\times\{\xi\}).$$
	
	It remains to prove that the $G$-action on $\Lambda_{\mathcal{O}}$ respects our tangent space decompositions. We thus suppose that $(g,\xi)\in\Lambda_{\mathcal{O}}$ and $h\in G$, and we set $$(g',\xi'):=(hgh^{-1},\mathrm{Ad}_h(\xi)).$$ Our task is to verify that the action of $h$ sends $T_{(g,\xi)}(G(g,\xi))$ and $T_{(g,\xi)}(G_{\xi}\times\{\xi\})$ to $T_{(g',\xi')}(G(g',\xi'))$ and $T_{(g',\xi')}(G_{\xi'}\times\{\xi'\})$, respectively. But this follows from the observation that $h$ sends $G (g,\xi)$ and $G_{\xi}\times\{\xi\}$ to $G (g',\xi')$ and $hG_{\xi}h^{-1}\times\{\mathrm{Ad}_h(\xi)\}=G_{\xi'}\times\{\xi'\}$, respectively. 
\end{proof}

This result has the following immediate implication: if one has a half-density on each of $T_{(g,\xi)}(G (g,\xi))$ and $T_{(g,\xi)}(G_{\xi}\times\{\xi\})$, then one also has a half-density on $T_{(g,\xi)}\Lambda_{\mathcal{O}}$ (see \eqref{Equation: Half-density iso}). This principle gives rise to a half-density $\rho_{\mathcal{O}}$ on $\Lambda_{\mathcal{O}}$, as we now explain. 

Let $\mathcal{O}\subseteq\mathfrak{g}$ be a regular adjoint orbit and fix $(g,\xi)\in\Lambda_{\mathcal{O}}$. Recall that $G_{\xi}$ is the $G$-stabilizer of $(g,\xi)$ in $\Lambda_{\mathcal{O}}$, as explained in the proof above. One straightforward consequence is that 
\begin{equation}\label{Equation: Orbit isomorphism}
G(g,\xi)\longrightarrow\mathcal{O},\quad (h,\eta)\mapsto\eta
\end{equation}
defines a $G$-equivariant diffeomorphism. This diffeomorphism identifies $T_{(g,\xi)}(G (g,\xi))$ with $T_{\xi}\mathcal{O}$, and we recall that the symplectic structure on $\mathcal{O}$ induces a half-density $\vert\Omega_{\xi}\vert^{\frac{1}{2}}$ on $T_{\xi}\mathcal{O}\cong\mathfrak{g}_{\xi}^{\perp}$. The tangent space $T_{(g,\xi)}(G (g,\xi))$ thereby inherits a half-density. We also have an obvious identification of $T_{(g,\xi)}(G_{\xi}\times\{\xi\})$ with $T_gG_{\xi}$, and the Haar measure on $G_{\xi}$ induces a half-density $\vert(\Theta_{G_{\xi}})_g\vert^{\frac{1}{2}}$ on $T_gG_{\xi}$ (see Section \ref{Subsection: Haar}). These considerations produce a half-density on $T_{(g,\xi)}(G_{\xi}\times\{\xi\})$, and it combines with that on $T_{(g,\xi)}(G(g,\xi))$ to yield a half-density on the direct sum $T_{(g,\xi)}(G(g,\xi))\oplus T_{(g,\xi)}(G_{\xi}\times\{\xi\})=T_{(g,\xi)}\Lambda_{\mathcal{O}}$ (see \eqref{Equation: Half-density iso}). This new half-density is precisely $\rho_{\mathcal{O}}$ at the point $(g,\xi)$.

This next result uses the notation and discussion in Sections \ref{Subsection: Adjoint} and \ref{Subsection: Haar}. 

\begin{proposition}\label{Proposition: Half-density description}
	Let $\mathcal{O}\subseteq\mathfrak{g}$ be a regular adjoint orbit and suppose that $(g,\xi)\in\Lambda_{\mathcal{O}}$. Suppose also that $\{\eta_1,\ldots,\eta_{n-r}\}$ and $\{\zeta_1,\ldots,\zeta_r\}$ are bases of $\mathfrak{g}_{\xi}^{\perp}$ and $\mathfrak{g}_{\xi}$, respectively. Write $\overline{\eta_j}$ for the fundamental vector field on $T^*G$ associated to $\eta_j$ via the $G$-action \eqref{Equation: Action on cotangent}, i.e. $$\overline{\eta_j}:=(\eta_j)_{T^*G}$$ for all $j\in\{1,\ldots,n-r\}$. Let us also define the vector $$(\widehat{\zeta_k})_{(g,\xi)}:=((dL_g)_e(\zeta_k),0)\in T_gG_{\xi}\oplus\{0\}=T_{(g,\xi)}(G_{\xi}\times\{\xi\})\subseteq T_{(g,\xi)}\Lambda_{\mathcal{O}}$$ for each $k\in\{1,\ldots,r\}$, where $L_g:G\longleftrightarrow G$ denotes left translation by $g$. The following statements then hold.
	\begin{itemize}
		\item[(i)] The sets
		$$\{(\overline{\eta_1})_{(g,\xi)},\ldots,(\overline{\eta_{n-r}})_{(g,\xi)}\}\quad\text{and}\quad \{(\widehat{\zeta_1})_{(g,\xi)},\ldots,(\widehat{\zeta_r})_{(g,\xi)}\}$$ are bases of $T_{(g,\xi)}(G (g,\xi))$ and $T_{(g,\xi)}(G_{\xi}\times\{\xi\})$, respectively.
		\item[(ii)] The set
		$$\{(\overline{\eta_1})_{(g,\xi)},\ldots,(\overline{\eta_{n-r}})_{(g,\xi)},(\widehat{\zeta_1})_{(g,\xi)},\ldots,(\widehat{\zeta_r})_{(g,\xi)}\}$$ is a basis of $T_{(g,\xi)}\Lambda_{\mathcal{O}}$.
		\item[(iii)] The value of $(\rho_{\mathcal{O}})_{(g,\xi)}$ on this basis is
		$$\kappa(G)^{\frac{1}{2}}\cdot\big\vert\Omega_{\xi}\big(\eta_1,\ldots,\eta_{n-r}\big)\big\vert^{\frac{1}{2}}.$$
	\end{itemize} 
\end{proposition}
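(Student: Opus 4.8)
The plan is to handle (i) and (ii) by elementary Lie-theoretic bookkeeping and then to unwind the construction of $\rho_{\mathcal{O}}$ for (iii).

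For (i), I would begin from the fact, recorded in the proof of Lemma~\ref{Lemma: Regular orbit}, that $G_\xi$ is the $G$-stabilizer of $(g,\xi)$ in $\Lambda_{\mathcal{O}}$, and that (as noted in Section~\ref{Subsection: Phase}) $G_\xi$ is a maximal torus, so $\dim G_\xi=r$. The fundamental-vector-field map $\eta\mapsto(\overline{\eta})_{(g,\xi)}$ for the $G$-action \eqref{Equation: Action on cotangent}, restricted to the orbit $G(g,\xi)$, surjects onto $T_{(g,\xi)}(G(g,\xi))$ with kernel the Lie algebra $\mathfrak{g}_\xi$ of the stabilizer; by the dimension count $n-r$, it therefore induces an isomorphism $\mathfrak{g}_\xi^{\perp}\overset{\cong}\longrightarrow T_{(g,\xi)}(G(g,\xi))$, and applying this to the basis $\{\eta_1,\dots,\eta_{n-r}\}$ gives the first statement. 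For the second, $(g,\xi)\in\Lambda_{\mathcal{O}}$ forces $g\in G_\xi$, so left translation $L_g$ is a diffeomorphism of $G_\xi$ and $(dL_g)_e$ maps $\mathfrak{g}_\xi=T_eG_\xi$ isomorphically onto $T_gG_\xi$; under the obvious identification $T_gG_\xi\cong T_{(g,\xi)}(G_\xi\times\{\xi\})$ this carries $\{\zeta_1,\dots,\zeta_r\}$ to $\{(\widehat{\zeta_1})_{(g,\xi)},\dots,(\widehat{\zeta_r})_{(g,\xi)}\}$. Statement (ii) is then immediate, being the concatenation of these two bases under the direct-sum decomposition \eqref{Equation: Tangent space decomposition} of Lemma~\ref{Lemma: Regular orbit}.

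For (iii), I would recall that $(\rho_{\mathcal{O}})_{(g,\xi)}$ is, via the half-density isomorphism \eqref{Equation: Half-density iso} applied to \eqref{Equation: Tangent space decomposition}, the product of two half-densities: on $T_{(g,\xi)}(G(g,\xi))$, the pullback of $\vert\Omega_\xi\vert^{\frac12}$ along the diffeomorphism \eqref{Equation: Orbit isomorphism}, $(h,\eta)\mapsto\eta$; and on $T_{(g,\xi)}(G_\xi\times\{\xi\})\cong T_gG_\xi$, the half-density $\vert(\Theta_{G_\xi})_g\vert^{\frac12}$ determined by the Haar measure. Evaluating on the basis from (ii), the product rule for \eqref{Equation: Half-density iso} factors the answer. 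For the orbit factor, the differential of \eqref{Equation: Orbit isomorphism} at $(g,\xi)$ is the projection of $T_gG\times\mathfrak{g}$ onto its $\mathfrak{g}$-summand; since the action \eqref{Equation: Action on cotangent} acts on the $\mathfrak{g}$-factor by $\mathrm{Ad}$, this projection sends $(\overline{\eta_j})_{(g,\xi)}$ to the value at $\xi$ of the adjoint-action fundamental vector field of $\eta_j$, which is exactly the vector identified with $\eta_j\in\mathfrak{g}_\xi^{\perp}$ under the isomorphism $T_\xi\mathcal{O}\cong\mathfrak{g}_\xi^{\perp}$ of Section~\ref{Subsection: Adjoint}; hence this factor equals $\vert\Omega_\xi(\eta_1,\dots,\eta_{n-r})\vert^{\frac12}$. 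For the torus factor, left-invariance of $\Theta_{G_\xi}$ gives $(\Theta_{G_\xi})_g\big((dL_g)_e\zeta_1,\dots,(dL_g)_e\zeta_r\big)=(\Theta_{G_\xi})_e(\zeta_1,\dots,\zeta_r)$, whose modulus is $\kappa(G)$ since $G_\xi$ is a maximal torus and $\{\zeta_k\}$ is orthonormal (Section~\ref{Subsection: Haar}); its square root is $\kappa(G)^{\frac12}$. Multiplying the two factors yields the stated value.

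I expect the only real subtlety to lie in (iii): one must check that the $\mathfrak{g}$-component of $(\overline{\eta_j})_{(g,\xi)}$ agrees, on the nose, with the vector used to set up the identification $T_\xi\mathcal{O}\cong\mathfrak{g}_\xi^{\perp}$, so that no spurious sign or rescaling enters. Since half-densities are insensitive to signs, what actually matters is that the two fundamental-vector-field prescriptions — for the adjoint action on $\mathcal{O}$ and for the second factor of \eqref{Equation: Action on cotangent} — literally coincide, which they do. A minor point worth flagging is that $G_\xi$ being a maximal torus (used for $\dim G_\xi=r$ and for the applicability of $\kappa(G)$) relies on regularity of $\mathcal{O}$ together with $G$ being simply connected.
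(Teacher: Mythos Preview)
Your argument is correct and matches the paper's proof essentially line for line: both parts (i) and (ii) are handled via the orbit isomorphism \eqref{Equation: Orbit isomorphism}, the identification $T_gG_\xi\cong T_{(g,\xi)}(G_\xi\times\{\xi\})$ through $(dL_g)_e$, and Lemma~\ref{Lemma: Regular orbit}, while (iii) unwinds the definition of $\rho_{\mathcal{O}}$ into the product $\big\vert\Omega_\xi(\eta_1,\dots,\eta_{n-r})\big\vert^{\frac12}\big\vert(\Theta_{G_\xi})_g((dL_g)_e\zeta_1,\dots,(dL_g)_e\zeta_r)\big\vert^{\frac12}$. You actually go one step further than the paper's own proof, which stops at that product expression: you invoke left-invariance of $\Theta_{G_\xi}$ and the definition of $\kappa(G)$ to finish, correctly noting along the way that this last identification requires $\{\zeta_k\}$ to be \emph{orthonormal}---an assumption the statement as written omits but which is tacitly in force whenever (iii) is applied later (e.g.\ in the proof of Proposition~\ref{Proposition: Main computation}).
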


\begin{proof}
	We begin by proving (i). To this end, recall that the isomorphism $$T_{(g,\xi)}(G(g,\xi))\overset{\cong}\longrightarrow T_{\xi}\mathcal{O}$$ obtained by differentiating \eqref{Equation: Orbit isomorphism} at $\xi$. This isomorphism sends  $(\overline{\eta_1})_{(g,\xi)},\ldots,(\overline{\eta_{n-r}})_{(g,\xi)}$ to $\eta_1,\ldots,\eta_{n-r}\in\mathfrak{g}_{\xi}^{\perp}\cong T_{\xi}\mathcal{O}$, respectively. It follows that $\{(\overline{\eta_1})_{(g,\xi)},\ldots,(\overline{\eta_{n-r}})_{(g,\xi)}\}$ is a basis of $T_{(g,\xi)}(G (g,\xi))$. On the other hand, $\{\zeta_1,\ldots,\zeta_r\}$ being a basis of $\mathfrak{g}_{\xi}$ implies that $\{(dL_g)_e(\zeta_1),\ldots,(dL_g)_e(\zeta_r)\}$ is a basis of $T_gG_{\xi}$. Under the obvious identification $$T_gG_{\xi}\cong T_{(g,\xi)}(G_{\xi}\times\{\xi\}),$$ this basis corresponds to the ordered set $$\{(\widehat{\zeta_1})_{(g,\xi)},\ldots,(\widehat{\zeta_r})_{(g,\xi)}\}\subseteq T_{(g,\xi)}(G_{\xi}\times\{\xi\}).$$ We conclude that $\{(\widehat{\zeta_1})_{(g,\xi)},\ldots,(\widehat{\zeta_r})_{(g,\xi)}\}$ is a basis of $T_{(g,\xi)}(G_{\xi}\times\{\xi\})$, completing our proof of (i).
	
	Part (ii) follows from (i) and Lemma \ref{Lemma: Regular orbit}. To prove (iii), recall our proof of (i) and the definition of $(\rho_{\mathcal{O}})_{(g,\xi)}$ given in the paragraph preceding this proposition. These considerations imply that $(\rho_{\mathcal{O}})_{(g,\xi)}$ takes the value
	$$\big\vert\Omega_{\xi}\big(\eta_1,\ldots,\eta_{n-r}\big)\big\vert^{\frac{1}{2}}\big\vert(\Theta_{G_{\xi}})_g\big((dL_g)_e(\zeta_1),\ldots,(dL_g)_e(\zeta_r)\big)\big\vert^{\frac{1}{2}}$$ on the indicated basis.
\end{proof}

We have the following additional property of $\rho_{\mathcal{O}}$.

\begin{proposition}\label{Proposition: G-invariance of half-density}
	If $\mathcal{O}\subseteq\mathfrak{g}$ is a regular adjoint orbit, then $\rho_{\mathcal{O}}$ is invariant under the $G$-action on $\Lambda_{\mathcal{O}}$.
\end{proposition}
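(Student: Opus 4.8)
The plan is to verify $G$-invariance of $\rho_{\mathcal{O}}$ pointwise, by tracking how the half-density $(\rho_{\mathcal{O}})_{(g,\xi)}$ transforms under the differential of the $G$-action. Fix $(g,\xi)\in\Lambda_{\mathcal{O}}$ and $h\in G$, and set $(g',\xi'):=(hgh^{-1},\mathrm{Ad}_h(\xi))$. By Lemma \ref{Lemma: Regular orbit}, the action of $h$ respects the tangent-space decompositions $T_{(g,\xi)}\Lambda_{\mathcal{O}}=T_{(g,\xi)}(G(g,\xi))\oplus T_{(g,\xi)}(G_{\xi}\times\{\xi\})$ and $T_{(g',\xi')}\Lambda_{\mathcal{O}}=T_{(g',\xi')}(G(g',\xi'))\oplus T_{(g',\xi')}(G_{\xi'}\times\{\xi'\})$. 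Since the half-density $\rho_{\mathcal{O}}$ is, by construction, the product (via the isomorphism \eqref{Equation: Half-density iso}) of a half-density coming from the orbit direction and one coming from the $G_{\xi}$-direction, it suffices to check that $h$ preserves each of these two factors separately.

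For the orbit factor, I would use the $G$-equivariant diffeomorphism \eqref{Equation: Orbit isomorphism}, $G(g,\xi)\xrightarrow{\cong}\mathcal{O}$, $(h,\eta)\mapsto\eta$, which intertwines the $h$-action on $G(g,\xi)$ with the $\mathrm{Ad}_h$-action on $\mathcal{O}$. Under this identification the relevant half-density on $T_{(g,\xi)}(G(g,\xi))$ is $\vert\Omega_{\xi}\vert^{\frac{1}{2}}$ transported from $T_\xi\mathcal{O}$, and the corresponding half-density at $(g',\xi')$ is $\vert\Omega_{\xi'}\vert^{\frac{1}{2}}$ transported from $T_{\xi'}\mathcal{O}$. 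Because $\Omega_{\mathcal{O}}$ is a $G$-invariant form on $\mathcal{O}$ (it is the top power of the $G$-invariant symplectic form $\omega_{\mathcal{O}}$), the differential $(\mathrm{Ad}_h)_*:T_\xi\mathcal{O}\to T_{\xi'}\mathcal{O}$ pulls $\Omega_{\xi'}$ back to $\Omega_\xi$, hence pulls $\vert\Omega_{\xi'}\vert^{\frac{1}{2}}$ back to $\vert\Omega_\xi\vert^{\frac{1}{2}}$; this gives invariance of the orbit factor. For the $G_{\xi}$-factor, note that $h$ carries $G_{\xi}\times\{\xi\}$ diffeomorphically onto $G_{\xi'}\times\{\xi'\}$ via $(t,\xi)\mapsto(hth^{-1},\xi')$, i.e. through conjugation $c_h:G_{\xi}\to G_{\xi'}$ on the first factor. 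The relevant half-densities are $\vert\Theta_{G_{\xi}}\vert^{\frac{1}{2}}$ and $\vert\Theta_{G_{\xi'}}\vert^{\frac{1}{2}}$ arising from the Haar measures. Since $c_h$ is a Lie group isomorphism between tori, it carries the left-invariant Haar form $\Theta_{G_{\xi}}$ to $\pm\Theta_{G_{\xi'}}$ (determinant $\pm1$), so $\vert\Theta_{G_{\xi}}\vert^{\frac{1}{2}}$ is carried to $\vert\Theta_{G_{\xi'}}\vert^{\frac{1}{2}}$; this is exactly the normalization underlying the constant $\kappa(G)$ from the Proposition in Section \ref{Subsection: Haar}. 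Combining the two factors through \eqref{Equation: Half-density iso} yields $(h\cdot)^*(\rho_{\mathcal{O}})_{(g',\xi')}=(\rho_{\mathcal{O}})_{(g,\xi)}$, which is the claim.

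Alternatively, and perhaps more cleanly for the write-up, I would invoke Proposition \ref{Proposition: Half-density description}(iii): given bases $\{\eta_1,\ldots,\eta_{n-r}\}$ of $\mathfrak{g}_\xi^\perp$ and $\{\zeta_1,\ldots,\zeta_r\}$ of $\mathfrak{g}_\xi$, the value of $(\rho_{\mathcal{O}})_{(g,\xi)}$ on the associated basis of $T_{(g,\xi)}\Lambda_{\mathcal{O}}$ is $\kappa(G)^{\frac12}\vert\Omega_\xi(\eta_1,\ldots,\eta_{n-r})\vert^{\frac12}$. Applying the differential of $h$, the bases $\{\mathrm{Ad}_h(\eta_j)\}$ of $\mathfrak{g}_{\xi'}^\perp$ and $\{\mathrm{Ad}_h(\zeta_k)\}$ of $\mathfrak{g}_{\xi'}$ give, by the same Proposition applied at $(g',\xi')$, the value $\kappa(G)^{\frac12}\vert\Omega_{\xi'}(\mathrm{Ad}_h(\eta_1),\ldots,\mathrm{Ad}_h(\eta_{n-r}))\vert^{\frac12}$; one then checks the vectors $\overline{\mathrm{Ad}_h(\eta_j)}$ and $\widehat{\mathrm{Ad}_h(\zeta_k)}$ at $(g',\xi')$ are precisely the images under $d(h\cdot)$ of $\overline{\eta_j}$ and $\widehat{\zeta_k}$ at $(g,\xi)$. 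Since $\Omega_{\mathcal{O}}$ is $G$-invariant, $\Omega_{\xi'}(\mathrm{Ad}_h(\eta_1),\ldots,\mathrm{Ad}_h(\eta_{n-r}))=\Omega_\xi(\eta_1,\ldots,\eta_{n-r})$, and the two values agree; by density of such bases this forces $(\rho_{\mathcal{O}})_{(g',\xi')}\circ d(h\cdot)=(\rho_{\mathcal{O}})_{(g,\xi)}$.

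I expect the main obstacle to be purely bookkeeping rather than conceptual: keeping straight the several identifications in play — the orbit isomorphism \eqref{Equation: Orbit isomorphism}, the identification $T_{(g,\xi)}(G_\xi\times\{\xi\})\cong T_gG_\xi$, and the half-density isomorphism \eqref{Equation: Half-density iso} for direct sums — and confirming that the $h$-action is compatible with all of them simultaneously, so that the product half-density really does transform as the product of the transformed factors. The $G$-invariance of $\omega_{\mathcal{O}}$ (hence of $\Omega_{\mathcal{O}}$) and the $G$-invariance of $\langle\cdot,\cdot\rangle$ (which makes $c_h$ measure-preserving up to sign on the tori) are the only substantive inputs, and both are already recorded in the excerpt.
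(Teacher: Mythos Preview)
Your proposal is correct and follows essentially the same approach as the paper's proof: both fix $(g,\xi)$ and $h$, invoke Lemma \ref{Lemma: Regular orbit} to see that $h$ respects the direct-sum decomposition, then verify separately that conjugation by $h$ identifies the Haar half-densities on $T_gG_\xi$ and $T_{g'}G_{\xi'}$ and that $G$-invariance of $\omega_{\mathcal{O}}$ identifies the half-densities on $T_\xi\mathcal{O}$ and $T_{\xi'}\mathcal{O}$, concluding via Proposition \ref{Proposition: Half-density description}(iii). Your alternative write-up via explicit bases is slightly more detailed than the paper's three-remark version, but the substance is identical.
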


\begin{proof}
	Suppose that $(g,\xi)\in\Lambda_{\mathcal{O}}$ and $h\in G$, and set $$(g',\xi'):=(hgh^{-1},\mathrm{Ad}_h(\xi)).$$ We now make three remarks. The first requires us to equip $T_gG_{\xi}$ and $T_{g'}G_{\xi'}$ with the half-densities arising from the Haar measures on $G_{\xi}$ and $G_{\xi'}$, respectively. Since conjugation by $h$ defines a group isomorphism from $G_{\xi}$ to $G_{\xi'}$, it identifies the half-density on $T_{g'}G_{\xi'}$ with that on $T_gG_{\xi}$. To begin our second remark, we note that $h$ preserves the symplectic form on $\mathcal{O}$. It follows that $h$ identifies the half-density on $T_{\xi'}\mathcal{O}$ with that on $T_{\xi}\mathcal{O}$, where these two half-densities are induced by the symplectic form on $\mathcal{O}$. Our final remark is that $h$ respects the tangent space decompositions \eqref{Equation: Tangent space decomposition}, a consequence of Lemma \ref{Lemma: Regular orbit}. By the description of $\rho_{\mathcal{O}}$ given in Proposition \ref{Proposition: Half-density description}(iii), these three remarks show $h$ to preserve $\rho_{\mathcal{O}}$. We conclude that $\rho_{\mathcal{O}}$ is $G$-invariant. 
\end{proof}

\section{Some quasi-Hamiltonian geometry}\label{Section: Quasi-Hamiltonian}  
We now discuss the salient parts of quasi-Hamiltonian geometry \cite{AMM}, emphasizing that of the universal centralizers $\Lambda_{\mathcal{C}}$.

\subsection{Quasi-Hamiltonian $G$-spaces}  
Suppose that $M$ is a $G$-manifold and write $g\cdot m\in M$ for the action of $g\in G$ on a point $m\in M$. Each $\xi\in\mathfrak{g}$ then determines a \textit{fundamental vector field} $\xi_M$ on $M$, defined by
$$(\xi_M)_m:=\frac{d}{dt}\bigg\vert_{t=0}\big(\exp(t\xi)\cdot m\big),\quad m\in M.$$ 

Now let $M$ be an arbitrary manifold and consider $\Omega^1(M;\mathfrak{g})$, the real vector space of $\mathfrak{g}$-valued differential $1$-forms on $M$. We have a bilinear pairing $$(\cdot,\cdot):\Omega^1(M;\mathfrak{g})\otimes_{\mathbb{R}}\Omega^1(M;\mathfrak{g})\longrightarrow\Omega^2(M)$$
defined by $$(\alpha,\beta)_m(v_1,v_2)=\langle\alpha_m(v_1),\beta_m(v_2)\rangle-\langle\alpha_m(v_2),\beta_m(v_1)\rangle$$ for all $\alpha,\beta\in\Omega^1(M;\mathfrak{g})$, $m\in M$, and $v_1,v_2\in T_mM$. In the case $M=G$, we let $\theta^L,\theta^R\in\Omega^1(G;\mathfrak{g})$ denote the left and right-invariant Maurer--Cartan forms, respectively. One then has
$$(\theta^L)_g((dL_g)_e(\xi))=\xi\quad\text{and}\quad(\theta^R)_g((dR_g)_e(\xi))=\xi$$
for all $g\in G$ and $\xi\in\mathfrak{g}$, where $L_g:G\longrightarrow G$ and $R_g:G\longrightarrow G$ are left and right translation by $g$, respectively.

\begin{definition}\label{Definition: q-Hamiltonian G-space}
A \textit{quasi-Hamiltonian} $G$\textit{-space} consists of a $G$-manifold $M$, a $G$-invariant $2$-form $\omega\in\Omega^2(M)$, and a smooth map $\mu:M\longrightarrow G$, subject to the following conditions:
\begin{itemize}
	\item[(i)] $d\omega=-\mu^*\chi$, where $\chi\in\Omega^3(G)$ is the Cartan $3$-form;
	\item[(ii)] $\iota_{\xi_M}\omega=\frac{1}{2}\langle\theta^L+\theta^R,\xi\rangle$;
	\item[(iii)] $\mathrm{ker}(\omega_m)=\{(\xi_M)_m:\xi\in\mathrm{ker}(\mathrm{Ad}_{\mu(m)}+\mathrm{id}_{\mathfrak{g}})\}$ for all $m\in M$;
	\item[(iv)] $\mu$ is $G$-equivariant with respect to the given action on $M$ and the conjugation action on $G$.
\end{itemize}
One refers to $\mu$ and $\omega$ as the moment map and quasi-Hamiltonian form, respectively.
\end{definition}

Suppose that this definition is satisfied and write $$M^{\circ}:=\{m\in M:\mathrm{ker}(\omega_m)=\{0\}\}$$ for the open, $G$-invariant, even-dimensional submanifold of $M$ on which $\omega$ is non-degenerate. Note that $\omega$ need not restrict to a symplectic form on $M^{\circ}$; this restricted form need not be closed. Nevertheless, $(T_mM,\omega_m)$ is a symplectic vector space for all $m\in M^{\circ}$. 

\begin{remark}\label{Remark: Non-degeneracy}
We also observe that $\mu^{-1}(e)\subseteq M^{\circ}$, as follows from the condition (iii).   
\end{remark}

\begin{definition}\label{Definition: quasi-Hamiltonian Lagrangian}
Consider the quasi-Hamiltonian $G$-space described in Definition \ref{Definition: q-Hamiltonian G-space}. Let $L\subseteq M$ be a submanifold with inclusion map $j:L\longrightarrow M$. We call $L$ an \textit{isotropic submanifold} of $M$ if $j^*\omega=0$. If $L\subseteq M^{\circ}$ and $\dim(L)=\frac{1}{2}\dim(M^{\circ})$ also hold, we call $L$ a \textit{quasi-Hamiltonian Lagrangian submanifold} of $M$. 
\end{definition}

\subsection{The internally fused double}
Examples of quasi-Hamiltonian $G$-spaces include the so-called \textit{internally fused double} $D(G)$, a construction described in \cite{AMM}. To obtain it, one first notes that $D(G)=G\times G$ as a manifold. Now let $G$ act on $D(G)$ via
$$g\cdot(h,k):=(ghg^{-1},gkg^{-1}),\quad g\in G,\text{ }(h,k)\in D(G).$$ The map $$\mu_{D(G)}:D(G)\longrightarrow G,\quad (g,h)\mapsto ghg^{-1}h^{-1},\quad (g,h)\in D(G)$$ is then $G$-equivariant with respect to the conjugation action on $G$. Let us also consider the two-form $\omega_{D(G)}\in\Omega^2(D(G))$ defined by
$$\omega_{D(G)}:=\frac{1}{2}\bigg( \pi_1^*\theta^L,\pi_2^*\theta^R\bigg)+\frac{1}{2}\bigg(\pi_1^*\theta^R,\pi_2^*\theta^L\bigg)+\frac{1}{2}\bigg( (\pi_1\pi_2)^*\theta^L,(\pi_1^{-1}\pi_2^{-1})^*\theta^R\bigg),$$ where $\pi_1,\pi_2:D(G)\longrightarrow G$ are the projections onto the first and second factors, respectively, and $\pi_1\pi_2,\pi_1^{-1}\pi_2^{-1}:D(G)\longrightarrow G$ are defined by $$(\pi_1\pi_2)(g,h)=gh,\quad (\pi_1^{-1}\pi_2^{-1})(g,h)=g^{-1}h^{-1},\quad (g,h)\in D(G).$$ A straightforward calculation shows $\omega$ to be $G$-invariant, an instance of the following more substantial result (cf. \cite[Example 6.1]{AMM}). 

\begin{proposition}
The $G$-manifold $D(G)$, two-form $\omega_{D(G)}$, and map $\mu_{D(G)}$ constitute a quasi-Hamiltonian $G$-space.
\end{proposition}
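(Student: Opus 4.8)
The plan is to verify conditions (i)--(iv) of Definition \ref{Definition: q-Hamiltonian G-space} for the data $(D(G),\omega_{D(G)},\mu_{D(G)})$. (An alternative, essentially that of \cite[Example 6.1]{AMM}, is to present $D(G)=G\times G$ as the internal fusion of a $G\times G$-space and to invoke the general fusion theorem; I will instead sketch the direct route, which is self-contained given the machinery already introduced.) Condition (iv) is immediate: for $g\in G$ and $(h,k)\in D(G)$,
$$\mu_{D(G)}(g\cdot(h,k))=(ghg^{-1})(gkg^{-1})(ghg^{-1})^{-1}(gkg^{-1})^{-1}=g(hkh^{-1}k^{-1})g^{-1}=g\,\mu_{D(G)}(h,k)\,g^{-1},$$
which is exactly equivariance for the conjugation action; $G$-invariance of $\omega_{D(G)}$ having already been noted.

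For condition (ii), I would first record the fundamental vector field of the conjugation action. In the left trivialization, $(\xi_{D(G)})_{(g,h)}$ corresponds to the pair $(\mathrm{Ad}_{g^{-1}}\xi-\xi,\ \mathrm{Ad}_{h^{-1}}\xi-\xi)$, so that $\pi_i^*\theta^L$ and $\pi_i^*\theta^R$ evaluate on it in terms of $\xi,\mathrm{Ad}_{g^{\pm1}}\xi,\mathrm{Ad}_{h^{\pm1}}\xi$, and $(\pi_1\pi_2)^*\theta^L$, $(\pi_1^{-1}\pi_2^{-1})^*\theta^R$ do likewise via the multiplicativity rules $(fg)^*\theta^L=\mathrm{Ad}_{g^{-1}}\!\circ f^*\theta^L+g^*\theta^L$ and $(fg)^*\theta^R=f^*\theta^R+\mathrm{Ad}_{f}\!\circ g^*\theta^R$. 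Contracting $\omega_{D(G)}$ with $(\xi_{D(G)})_{(g,h)}$, expanding through bilinearity of $(\cdot,\cdot)$ and $\mathrm{Ad}$-invariance of $\langle\cdot,\cdot\rangle$, a sequence of cancellations should leave $\tfrac12\langle\mu_{D(G)}^*(\theta^L+\theta^R),\xi\rangle$, which is (ii).

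Condition (i) is the computational heart. I would compute $d\omega_{D(G)}$ using the structure equations $d\theta^L=-\tfrac12[\theta^L,\theta^L]$, $d\theta^R=+\tfrac12[\theta^R,\theta^R]$, the Leibniz rule for $d$ applied to the pairing $(\cdot,\cdot)$, and the multiplicativity rules above; on the other side, I would expand $\mu_{D(G)}^*\chi$ by writing $\mu_{D(G)}$ as the pointwise product of the maps $\pi_1\pi_2$ and $\pi_1^{-1}\pi_2^{-1}$ and repeatedly applying multiplicativity of $\theta^L$ together with $\chi=\tfrac{1}{12}\langle\theta^L,[\theta^L,\theta^L]\rangle$. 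Matching the two sides is then a bookkeeping exercise whose only inputs are the $\mathrm{Ad}$-invariance and (anti)symmetry of $\langle\cdot,\cdot\rangle$ and the Jacobi identity. I expect this to be the main obstacle: the number of terms is large and arranging the cancellations transparently is delicate, which is precisely where the fusion-theoretic packaging of \cite{AMM} earns its keep; if the brute-force route became unwieldy I would fall back on realizing $D(G)$ as a fusion product.

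Finally, for condition (iii): one inclusion is formal. Given (i), (ii), and $G$-invariance of $\omega_{D(G)}$, Cartan's formula yields $\iota_{\xi_{D(G)}}d\omega_{D(G)}=-d\,\iota_{\xi_{D(G)}}\omega_{D(G)}$, and combining this with (i) shows $(\xi_{D(G)})_m\in\ker(\omega_{D(G)})_m$ whenever $\mathrm{Ad}_{\mu_{D(G)}(m)}\xi=-\xi$. For the reverse inclusion I would argue by dimension. The map $\xi\mapsto(\xi_{D(G)})_m$ is injective on $\ker(\mathrm{Ad}_{\mu_{D(G)}(m)}+\mathrm{id}_{\mathfrak{g}})$: if $(\xi_{D(G)})_m=0$ then $\xi$ centralizes both components of $m$ and hence is fixed by $\mathrm{Ad}_{\mu_{D(G)}(m)}$, so together with $\mathrm{Ad}_{\mu_{D(G)}(m)}\xi=-\xi$ this forces $\xi=0$. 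Thus the subspace of $\ker(\omega_{D(G)})_m$ produced above has dimension $\dim\ker(\mathrm{Ad}_{\mu_{D(G)}(m)}+\mathrm{id}_{\mathfrak{g}})$, and it remains only to check from the explicit formula that $\mathrm{rank}(\omega_{D(G)})_m\geq 2n-\dim\ker(\mathrm{Ad}_{\mu_{D(G)}(m)}+\mathrm{id}_{\mathfrak{g}})$, a finite linear-algebra computation with the block matrix of $(\omega_{D(G)})_m$ in the left trivialization. This forces equality of dimensions, gives (iii), and completes the verification.
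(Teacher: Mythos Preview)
The paper does not actually prove this proposition: it simply states the result and cites \cite[Example 6.1]{AMM}. Your sketch therefore goes well beyond what the paper offers, and your overall plan---verify (iv) directly, reduce (ii) and (i) to Maurer--Cartan bookkeeping, and fall back on the fusion-product realization of \cite{AMM} if the brute-force computation for (i) becomes unmanageable---is exactly the standard route and is sound.

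One correction in your treatment of (iii): the ``formal'' inclusion
\[
\{(\xi_{D(G)})_m:\xi\in\ker(\mathrm{Ad}_{\mu_{D(G)}(m)}+\mathrm{id}_{\mathfrak{g}})\}\subseteq\ker(\omega_{D(G)})_m
\]
follows from (ii) alone, not from Cartan's formula. Condition (ii) says $\iota_{\xi_{D(G)}}\omega_{D(G)}=\tfrac12\mu_{D(G)}^*\langle\theta^L+\theta^R,\xi\rangle$; at $m$ with $g=\mu_{D(G)}(m)$ and $\mathrm{Ad}_g\xi=-\xi$, one computes for any tangent vector $w$ at $g$ that
\[
\langle(\theta^L+\theta^R)_g(w),\xi\rangle=\langle(\mathrm{id}+\mathrm{Ad}_g)\theta^L_g(w),\xi\rangle=\langle\theta^L_g(w),(\mathrm{id}+\mathrm{Ad}_{g^{-1}})\xi\rangle=0,
\]
so $(\iota_{\xi_{D(G)}}\omega_{D(G)})_m=0$. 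Cartan's formula $L_{\xi_{D(G)}}\omega_{D(G)}=0$ only gives the identity $\iota_{\xi_{D(G)}}d\omega_{D(G)}=-d\,\iota_{\xi_{D(G)}}\omega_{D(G)}$ between $2$-forms, which does not by itself pin down $\ker(\omega_{D(G)})_m$. This is a minor slip in justification; the rest of your outline for (iii) (injectivity of $\xi\mapsto(\xi_{D(G)})_m$ on that kernel, then a rank count) is correct.
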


One calls this quasi-Hamiltonian $G$-space the \textit{internally fused double of} $G$.

\subsection{The gauge-theoretic realization of $D(G)$}\label{Subsection: Gauge theory}
We now recall a gauge-theoretic realization of $D(G)$ described in \cite{AMM}. The reader will find a more comprehensive account in Section 9 of the aforementioned reference. 

Let $\Sigma$ denote a one-holed torus and fix a real number $\lambda>1$. Write $\mathcal{A}_{\text{flat}}(\Sigma)$ for the Banach manifold of flat connections on the trivial principal $G$-bundle $\Sigma\times G\longrightarrow\Sigma$ of Sobolev class $\lambda$, and write $\mathcal{G}(\Sigma)$ for the Banach Lie group of all maps $\Sigma\longrightarrow G$ having Sobolev class
$\lambda+1$. This group acts on $\mathcal{A}_{\text{flat}}(\Sigma)$ in a canonical way (see \cite[Section 9.1]{AMM}), and one calls it the \textit{gauge group}. Now fix a point $x$ on the boundary of $\Sigma$, and consider the \textit{restricted gauge group} 
$$\mathcal{G}(\Sigma)_{\text{res}}:=\{f\in\mathcal{G}(\Sigma):f(x)=e\}.$$ This subgroup of $\mathcal{G}(\Sigma)$ acts freely on $\mathcal{A}_{\text{flat}}(\Sigma)$, and the quotient
$$\mathcal{M}(\Sigma):=\mathcal{A}_{\text{flat}}(\Sigma)/G(\Sigma)_{\text{res}}$$ is a finite-dimensional manifold. The action of $\mathcal{G}(\Sigma)$ on $\mathcal{A}_{\text{flat}}(\Sigma)$ descends to an action of $\mathcal{G}(\Sigma)/\mathcal{G}(\Sigma)_{\text{res}}\cong G$ on $M(\Sigma)$. One then has the following specialized version of \cite[Theorem 9.1]{AMM}.

\begin{theorem}\label{Theorem: Canonical way}
	The $G$-manifold $\mathcal{M}(\Sigma)$ is a quasi-Hamiltonian $G$-space in a canonical way.
\end{theorem}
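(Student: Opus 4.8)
The plan is to prove that $(\mathcal{M}(\Sigma), \omega_{\mathcal{M}(\Sigma)}, \mu_{\mathcal{M}(\Sigma)})$ is a quasi-Hamiltonian $G$-space by invoking the general machinery of \cite{AMM} for surfaces with boundary, specialized to $\Sigma$ a one-holed torus. First, I would recall from \cite[Section 9]{AMM} the construction of the $2$-form on $\mathcal{A}_{\mathrm{flat}}(\Sigma)$: given a flat connection $A$, tangent vectors are closed $\mathfrak{g}$-valued $1$-forms $a, b$ modulo exact ones, and one sets $\omega_A(a,b) = \frac{1}{2}\int_{\Sigma}(a \wedge b)$, which the Atiyah--Bott/Goldman formalism shows descends to $\mathcal{M}(\Sigma)$; the moment map is the restriction of the connection to the boundary circle, composed with the holonomy, giving a map $\mathcal{M}(\Sigma) \longrightarrow G$. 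Then I would check each of the four conditions (i)--(iv) in Definition \ref{Definition: q-Hamiltonian G-space} directly, exactly as \cite[Theorem 9.1]{AMM} does in general: condition (i) comes from Stokes' theorem applied to the pullback of $\omega$, with the boundary term producing $-\mu^*\chi$ where $\chi$ is the Cartan $3$-form; condition (ii) is the infinitesimal gauge-invariance statement; condition (iii) is the identification of the null space of $\omega$ with the directions generated by boundary gauge transformations that fix the holonomy up to the reflection $\mathrm{Ad}_{\mu} + \mathrm{id}$; and condition (iv) is immediate from equivariance of holonomy under conjugation.

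The second and more self-contained route, which I would actually write up, is to identify $\mathcal{M}(\Sigma)$ explicitly with $D(G)$ and deduce the theorem from the already-established fact that $D(G)$ is a quasi-Hamiltonian $G$-space. Concretely, $\pi_1(\Sigma)$ is free on two generators $a, b$ (the two handle loops), and the boundary loop is the commutator $aba^{-1}b^{-1}$; the holonomy representation thus gives a $\mathcal{G}(\Sigma)$-equivariant identification
\[
\mathcal{M}(\Sigma) = \mathcal{A}_{\mathrm{flat}}(\Sigma)/\mathcal{G}(\Sigma)_{\mathrm{res}} \overset{\cong}{\longrightarrow} \Hom(\pi_1(\Sigma), G) = G \times G = D(G),
\]
under which the residual $G = \mathcal{G}(\Sigma)/\mathcal{G}(\Sigma)_{\mathrm{res}}$-action becomes simultaneous conjugation and the boundary-holonomy moment map becomes $(g,h) \mapsto ghg^{-1}h^{-1} = \mu_{D(G)}$. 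It then remains to check that the Atiyah--Bott $2$-form on $\mathcal{M}(\Sigma)$ matches $\omega_{D(G)}$ under this diffeomorphism; this is precisely the content of \cite[Section 9]{AMM} (in particular the discussion surrounding the fusion product and \cite[Example 6.1]{AMM}), so I would cite it rather than recompute, noting that the formula for $\omega_{D(G)}$ given earlier in the paper is exactly the one produced by the two handle-fusions of $T^*G \cong G \times \mathfrak{g}$ down to the internally fused double.

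The main obstacle is the matching of the $2$-forms: verifying that the gauge-theoretic form on $\mathcal{A}_{\mathrm{flat}}(\Sigma)$ descends to the explicit three-term expression for $\omega_{D(G)}$ is a genuine computation involving the pairing $(\cdot,\cdot)$, the Maurer--Cartan forms, and the fusion construction of \cite{AMM}. Since \cite[Theorem 9.1]{AMM} proves exactly this statement in full generality for any compact oriented surface with boundary components, and $\Sigma$ here is just the genus-one, one-boundary case, the cleanest approach is to state the theorem as the advertised specialization and defer the verification entirely to that reference; I would therefore phrase the proof as a short paragraph invoking \cite[Theorem 9.1]{AMM} together with the identification of the underlying data above, rather than reproving it. Everything else — equivariance (iv), the Stokes-theorem origin of (i), and the null-space description (iii) via Remark \ref{Remark: Non-degeneracy} — is then either immediate or a direct citation.
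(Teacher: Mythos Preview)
Your proposal is correct and lands on exactly what the paper does: the paper gives no proof whatsoever for this theorem, simply presenting it as ``the following specialized version of \cite[Theorem 9.1]{AMM}'' and moving on. Your eventual conclusion---to defer entirely to that reference rather than reprove anything---is precisely the paper's approach, though your write-up contains far more scaffolding (the sketch of the Atiyah--Bott form, the axiom-by-axiom outline, the holonomy identification with $D(G)$) than the paper itself provides; note also that your ``second route'' via the explicit identification $\mathcal{M}(\Sigma)\cong D(G)$ is what the paper separately records as Theorem~\ref{Theorem: q-Hamiltonian isomorphism}, citing \cite[Theorem 9.3]{AMM}, so it is not used here to establish Theorem~\ref{Theorem: Canonical way}.
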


Now choose loops $\gamma_1,\gamma_2:[0,1]\longrightarrow\Sigma$ at $x$ that satisfy the following properties:
\begin{itemize}
	\item[(i)] the images of $\gamma_1$ and $\gamma_2$ intersect only at $x$;
	\item[(ii)] the homotopy classes $[\gamma_1]$ and $[\gamma_2]$ freely generate $\pi_1(\hat{\Sigma},x)$ as an abelian group, where $\hat{\Sigma}$ is the torus obtained by capping off the boundary of $\Sigma$;
	\item[(iii)] we have $[\gamma_1][\gamma_2][\gamma_1]^{-1}[\gamma_2]^{-1}=[\delta]$ in $\pi_1(\Sigma,x)$, where $[\delta]$ is a generator of $\pi_1(\partial\Sigma,x)$.    
\end{itemize}

These choices give rise to a well-defined map
$$\Phi_{\gamma_1,\gamma_2}:M(\Sigma)\longrightarrow D(G),\quad [\nabla]\mapsto (\mathrm{Hol}_{\gamma_1}(\nabla),\mathrm{Hol}_{\gamma_2}(\nabla)),\quad [\nabla]\in\mathcal{M}(\Sigma),$$
where $\mathrm{Hol}_{\gamma_1}(\nabla),\mathrm{Hol}_{\gamma_2}(\nabla)\in G$ are the holonomies of $\nabla\in\mathcal{A}_{\text{flat}}(\Sigma)$ around $\gamma_1$ and $\gamma_2$, respectively. This leads to the following special case of \cite[Theorem 9.3]{AMM}.

\begin{theorem}\label{Theorem: q-Hamiltonian isomorphism}
	Let $\gamma_1,\gamma_2:[0,1]\longrightarrow\Sigma$ be as described above, and equip $\mathcal{M}(\Sigma)$ with the quasi-Hamiltonian $G$-space structure from Theorem \ref{Theorem: Canonical way}. The map $\Phi_{\gamma_1,\gamma_2}: \mathcal{M}(\Sigma)\longrightarrow D(G)$ is then an isomorphism of quasi-Hamiltonian $G$-spaces. 
\end{theorem}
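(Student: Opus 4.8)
The plan is to verify the three ingredients of an isomorphism of quasi-Hamiltonian $G$-spaces: that $\Phi_{\gamma_1,\gamma_2}$ is a $G$-equivariant diffeomorphism, that it intertwines the moment maps, and that it pulls $\omega_{D(G)}$ back to the quasi-Hamiltonian $2$-form on $\mathcal{M}(\Sigma)$ supplied by Theorem \ref{Theorem: Canonical way}.

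First I would establish bijectivity and smoothness. The holonomy correspondence identifies $\mathcal{A}_{\mathrm{flat}}(\Sigma)$ modulo the restricted gauge group $\mathcal{G}(\Sigma)_{\mathrm{res}}$ with $\Hom(\pi_1(\Sigma,x),G)$, the condition $f(x)=e$ being exactly what removes the conjugation ambiguity present in the unbased quotient; the Sobolev classes $\lambda$ and $\lambda+1$ are chosen precisely so that this is a diffeomorphism of finite-dimensional manifolds. Since $\Sigma$ is a one-holed torus, $\pi_1(\Sigma,x)$ is free on $[\gamma_1]$ and $[\gamma_2]$, so evaluating a representation on these two generators gives a diffeomorphism $\Hom(\pi_1(\Sigma,x),G)\overset{\cong}{\longrightarrow}G\times G=D(G)$, and this composite is precisely $\Phi_{\gamma_1,\gamma_2}$. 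Equivariance is then immediate: the residual action of $\mathcal{G}(\Sigma)/\mathcal{G}(\Sigma)_{\mathrm{res}}\cong G$ is evaluation at $x$, and on holonomies this acts by simultaneous conjugation, which is the defining $G$-action on $D(G)$.

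Next I would match the moment maps. The moment map on $\mathcal{M}(\Sigma)$ from Theorem \ref{Theorem: Canonical way} is the holonomy around the boundary loop $\delta$, and property (iii) of $\gamma_1,\gamma_2$ gives $[\delta]=[\gamma_1][\gamma_2][\gamma_1]^{-1}[\gamma_2]^{-1}$ in $\pi_1(\Sigma,x)$. Hence, writing $(g,h)=\Phi_{\gamma_1,\gamma_2}([\nabla])$ and keeping track of the composition convention for holonomy, one gets $\mathrm{Hol}_\delta(\nabla)=ghg^{-1}h^{-1}=\mu_{D(G)}(g,h)$, so $\mu_{D(G)}\circ\Phi_{\gamma_1,\gamma_2}$ is the moment map on $\mathcal{M}(\Sigma)$.

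Finally, the substantive point is the identity $\Phi_{\gamma_1,\gamma_2}^*\omega_{D(G)}=\omega_{\mathcal{M}(\Sigma)}$ for the two-forms. Here I would not attempt a fresh computation but invoke the explicit analysis in \cite[Section 9]{AMM}: the quasi-Hamiltonian form on $\mathcal{M}(\Sigma)$ is built from the Atiyah--Bott pairing on connection variations together with a boundary correction term, and Alekseev--Malkin--Meinrenken compute its expression in holonomy coordinates, obtaining exactly the three paired Maurer--Cartan terms (associated with $g$, $h$, and $gh$) that define $\omega_{D(G)}$. Our statement is thus the genus-one, one-boundary-component specialization of \cite[Theorem 9.3]{AMM}. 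The main --- indeed essentially the only nontrivial --- obstacle is this two-form identification, and it is precisely the part for which we defer to \cite{AMM}.
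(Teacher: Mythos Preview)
Your proposal is correct and aligns with the paper's approach: the paper does not supply its own proof but simply records this statement as the genus-one, one-boundary-component special case of \cite[Theorem 9.3]{AMM}. You have in fact given more detail than the paper does---spelling out the holonomy bijection, the equivariance, and the moment-map matching via property~(iii)---while, like the paper, deferring the two-form identification to \cite{AMM}.
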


Now let $\mathrm{Diff}^+(\Sigma)$ denote the group of all orientation-preserving diffeomorphisms $\Sigma\longrightarrow\Sigma$, endowed with the $C^1$-topology. Consider the subgroup $$\mathrm{Diff}^+(\Sigma)_{\text{res}}:=\{\phi\in \mathrm{Diff}^+(\Sigma):\phi(x)=x\}$$ and its identify component $(\mathrm{Diff}^+(\Sigma)_{\text{res}})^{\circ}$. The quotient group $$\Gamma(\Sigma):=\mathrm{Diff}^+(\Sigma)_{\text{res}}/(\mathrm{Diff}^+(\Sigma)_{\text{res}})^{\circ}$$ is then called the \textit{mapping class group}. One has a well-defined action of $\Gamma(\Sigma)$ on $\mathcal{M}(\Sigma)$ given by
$$[\phi]\cdot[\nabla]:=[(\phi^{-1})^*\nabla],\quad[\phi]\in\Gamma(\Sigma),\text{ }[\nabla]\in\mathcal{M}(\Sigma),$$
where $(\phi^{-1})^*\nabla$ denotes the pullback of $\nabla$ along the diffeomorphism $\phi^{-1}:\Sigma\longrightarrow\Sigma$. This gives context for the following result, which is explained in \cite[Section 9.4]{AMM}.
\begin{proposition}
	The above-defined action of $\Gamma(\Sigma)$ respects the quasi-Hamiltonian $G$-space structure on $\mathcal{M}(\Sigma)$. 
\end{proposition}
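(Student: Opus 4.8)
The plan is to exploit the naturality of the construction underlying Theorem \ref{Theorem: Canonical way}: the quasi-Hamiltonian structure on $\mathcal{M}(\Sigma)$ depends only on the oriented surface-with-boundary $\Sigma$ and the marked boundary point $x$, and an orientation-preserving diffeomorphism $\phi\in\mathrm{Diff}^+(\Sigma)_{\text{res}}$ acts by pullback on essentially every ingredient. Concretely, $(\phi^{-1})^*$ carries flat connections to flat connections and preserves Sobolev class, hence is a diffeomorphism of $\mathcal{A}_{\text{flat}}(\Sigma)$; it conjugates the gauge action via $(\phi^{-1})^*(f\cdot\nabla)=(f\circ\phi^{-1})\cdot(\phi^{-1})^*\nabla$; and since $\phi^{-1}(x)=x$ it maps $\mathcal{G}(\Sigma)_{\text{res}}$ to itself, so it descends to the map $[\phi]:\mathcal{M}(\Sigma)\longrightarrow\mathcal{M}(\Sigma)$ whose well-definedness is already recorded above. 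To prove the proposition it then suffices to check the three features that constitute an isomorphism of quasi-Hamiltonian $G$-spaces: strict $G$-equivariance of $[\phi]$, the identity $[\phi]^*\omega_{\mathcal{M}(\Sigma)}=\omega_{\mathcal{M}(\Sigma)}$, and $\mu_{\mathcal{M}(\Sigma)}\circ[\phi]=\mu_{\mathcal{M}(\Sigma)}$ (all remaining conditions of Definition \ref{Definition: q-Hamiltonian G-space} are intrinsic and transfer automatically).

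\textbf{Equivariance and the moment map.}
The isomorphism $G\cong\mathcal{G}(\Sigma)/\mathcal{G}(\Sigma)_{\text{res}}$ is implemented by evaluation at $x$; since $\phi$ fixes $x$ we have $(f\circ\phi^{-1})(x)=f(x)$, so the automorphism of $\mathcal{G}(\Sigma)/\mathcal{G}(\Sigma)_{\text{res}}$ induced by $f\mapsto f\circ\phi^{-1}$ is the identity of $G$. Hence $[\phi]$ intertwines the $G$-action on $\mathcal{M}(\Sigma)$ with itself, i.e. it is strictly $G$-equivariant. For the moment map, recall that under the gauge-theoretic description $\mu_{\mathcal{M}(\Sigma)}$ is the holonomy around the boundary loop $\delta$ based at $x$. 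The restriction $\phi\vert_{\partial\Sigma}$ is an orientation-preserving self-diffeomorphism of a circle fixing the point $x$, hence isotopic rel $x$ to the identity; therefore $\phi\circ\delta$ is path-homotopic to $\delta$ and $\mathrm{Hol}_{\delta}\big((\phi^{-1})^*\nabla\big)=\mathrm{Hol}_{\phi\circ\delta}(\nabla)=\mathrm{Hol}_{\delta}(\nabla)$, so $\mu_{\mathcal{M}(\Sigma)}\circ[\phi]=\mu_{\mathcal{M}(\Sigma)}$.

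\textbf{Invariance of the quasi-Hamiltonian $2$-form.}
This is the delicate point. The form on $\mathcal{M}(\Sigma)$ is obtained by descent along $\mathcal{A}_{\text{flat}}(\Sigma)\longrightarrow\mathcal{M}(\Sigma)$ from an explicit $2$-form on $\mathcal{A}_{\text{flat}}(\Sigma)$ built from the Atiyah--Bott pairing $(a,b)\mapsto\int_{\Sigma}\langle a\wedge b\rangle$ together with a boundary correction term of the shape $\int_{\partial\Sigma}(\cdots)$ (see \cite[Section 9]{AMM}). The derivative of $(\phi^{-1})^*$ at $\nabla$ sends a tangent vector $a\in\Omega^1(\Sigma;\mathfrak{g})$ to $(\phi^{-1})^*a$. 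Since pullback commutes with the wedge product and with the invariant pairing $\langle\cdot,\cdot\rangle$, and since $\phi$ is orientation-preserving (so $\int_{\Sigma}(\phi^{-1})^*\mu=\int_{\Sigma}\mu$ for any top-degree $\mu$, and likewise on $\partial\Sigma$ with the induced orientation), one gets
$$\int_{\Sigma}\langle (\phi^{-1})^*a\wedge (\phi^{-1})^*b\rangle=\int_{\Sigma}(\phi^{-1})^*\langle a\wedge b\rangle=\int_{\Sigma}\langle a\wedge b\rangle,$$
and the analogous identity for the boundary term. Thus the $2$-form on $\mathcal{A}_{\text{flat}}(\Sigma)$ is $(\phi^{-1})^*$-invariant, and passing to the quotient yields $[\phi]^*\omega_{\mathcal{M}(\Sigma)}=\omega_{\mathcal{M}(\Sigma)}$.

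\textbf{Main obstacle.}
The conceptual content is clear; the real work is bookkeeping. The main obstacle is to record (or cite precisely from \cite{AMM}) the explicit formula for the quasi-Hamiltonian $2$-form on $\mathcal{A}_{\text{flat}}(\Sigma)$, with all of its boundary contributions, and to confirm term-by-term that each is natural with respect to diffeomorphisms fixing $x$. One should also observe that the argument is independent of the chosen representative $\phi$ of $[\phi]\in\Gamma(\Sigma)$: elements of $(\mathrm{Diff}^+(\Sigma)_{\text{res}})^{\circ}$ act on $\mathcal{A}_{\text{flat}}(\Sigma)$ through the $\mathcal{G}(\Sigma)_{\text{res}}$-action, which is already quotiented out in forming $\mathcal{M}(\Sigma)$, so different representatives induce the same map $[\phi]$ and the computations above depend only on $[\phi]$.
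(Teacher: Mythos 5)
Your outline is correct, but note that the paper does not actually prove this proposition: it is stated with the remark that it ``is explained in \cite[Section 9.4]{AMM}'', so the only thing to compare against is that reference, and your sketch is essentially the naturality argument carried out there. The three reductions you make (strict $G$-equivariance via evaluation at $x$, invariance of the boundary-holonomy moment map, invariance of the $2$-form) are exactly the right checks, and the remaining conditions of Definition \ref{Definition: q-Hamiltonian G-space} do transfer automatically. Two small points. First, the holonomy identity should read $\mathrm{Hol}_{\delta}\big((\phi^{-1})^*\nabla\big)=\mathrm{Hol}_{\phi^{-1}\circ\delta}(\nabla)$ rather than $\mathrm{Hol}_{\phi\circ\delta}(\nabla)$; this is immaterial here since $\phi^{-1}$ also restricts to an orientation-preserving self-diffeomorphism of $\partial\Sigma$ fixing $x$, hence $\phi^{-1}\circ\delta$ is still path-homotopic to $\delta$, but the same sign convention matters later when you compute the Dehn twist action. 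Second, the $2$-form step is, as you say yourself, only a naturality heuristic until the explicit formula from \cite[Section 9]{AMM} (Atiyah--Bott term plus boundary corrections) is written down and checked term by term; since the paper itself delegates precisely this to \cite{AMM}, your level of detail matches the paper's, but a self-contained proof would require that bookkeeping. Your closing remark on independence of the representative $\phi$ is the standard parallel-transport-along-the-isotopy argument and is fine, though it is really the well-definedness of the $\Gamma(\Sigma)$-action (already asserted earlier in the paper) rather than part of this proposition.
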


The action of $\mathrm{Diff}^+(\Sigma)_{\text{res}}$ on $\pi_1(\Sigma,x)$ descends to an action of $\Gamma(\Sigma)$ by group automorphisms. This is amounts to $$\chi:\Gamma(\Sigma)\longrightarrow\mathrm{Aut}(\pi_1(\Sigma,x)),\quad [\phi]\mapsto\phi_*,\quad [\phi]\in\Gamma(\Sigma)$$ being a well-defined group morphism, where $\phi_*:\pi_1(\Sigma,x)\longrightarrow\pi_1(\Sigma,x)$ is the map of fundamental groups induced by $\phi\in\mathrm{Diff}^+(\Sigma)_{\text{res}}$. One then has the following well-known fact (cf. \cite[Section 1.1.1]{Goldman}).

\begin{proposition}
	The group morphism $\chi:\Gamma(\Sigma)\longrightarrow\mathrm{Aut}(\pi_1(\Sigma,x))$ is injective.
\end{proposition}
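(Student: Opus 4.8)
The plan is to show that $\ker\chi$ is trivial by unwinding the definitions. An element of $\ker\chi$ is the class $[\phi]$ of some $\phi\in\mathrm{Diff}^+(\Sigma)_{\mathrm{res}}$ with $\phi_*=\mathrm{id}_{\pi_1(\Sigma,x)}$, and showing $[\phi]=[\mathrm{id}_\Sigma]$ in $\Gamma(\Sigma)$ amounts to producing a path from $\phi$ to $\mathrm{id}_\Sigma$ inside $\mathrm{Diff}^+(\Sigma)_{\mathrm{res}}$, i.e.\ an isotopy from $\phi$ to $\mathrm{id}_\Sigma$ through diffeomorphisms fixing $x$. I would obtain this in two steps: first produce a \emph{homotopy} from $\phi$ to $\mathrm{id}_\Sigma$ rel $x$, and then promote it to such an isotopy.

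For the homotopy step I would use that $\Sigma$ is aspherical. Being a compact surface with nonempty boundary, $\Sigma$ deformation retracts onto a finite graph, concretely onto $\gamma_1([0,1])\cup\gamma_2([0,1])$, so $\Sigma$ has the homotopy type of $K(F_2,1)$, where $\pi_1(\Sigma,x)$ is free of rank two on $[\gamma_1],[\gamma_2]$. For any based CW complex $Y$, based homotopy classes of based maps $Y\to K(\pi,1)$ correspond bijectively and naturally to homomorphisms $\pi_1(Y)\to\pi$; applying this with $Y=K(\pi,1)=\Sigma$ shows that a based self-map of $(\Sigma,x)$ inducing the identity on $\pi_1(\Sigma,x)$ is based-homotopic to $\mathrm{id}_\Sigma$. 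Hence $[\phi]\in\ker\chi$ forces $\phi\simeq\mathrm{id}_\Sigma$ through maps fixing $x$.

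For the isotopy step I would invoke the classical theorem that homotopic diffeomorphisms of a compact surface are isotopic, in the form relative to a marked point (Baer in the closed case, Epstein in general; see also the comparison of the homotopy and isotopy definitions of the mapping class group in Farb and Margalit's primer). To set this up in our situation, note that $\phi$ restricts to an orientation-preserving self-diffeomorphism of $\partial\Sigma\cong S^1$ fixing $x$, hence is isotopic rel $x$ to the identity inside $\mathrm{Diff}^+(\partial\Sigma)$; extending this boundary isotopy over a collar via the isotopy extension theorem lets us replace $\phi$, without changing its class in $\Gamma(\Sigma)$ nor its homotopy class rel $x$, by a diffeomorphism equal to $\mathrm{id}_\Sigma$ near $\partial\Sigma$. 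The surface isotopy theorem then produces an isotopy from this diffeomorphism to $\mathrm{id}_\Sigma$ through maps fixing $x$, so $[\phi]=[\mathrm{id}_\Sigma]$ and $\chi$ is injective.

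The crux is the isotopy step: ``homotopic $\Rightarrow$ isotopic'' for surface diffeomorphisms is a genuine theorem, and the basepoint-relative version needed here (isotopy rel the marked point $x\in\partial\Sigma$, not rel all of $\partial\Sigma$) requires the correct reference and some care with the boundary collar, since the Dehn twist about a boundary-parallel curve is isotopic — but not isotopic rel $\partial\Sigma$ — to the identity. As a consistency check one may instead use that $\Gamma(\Sigma)$ is generated by the Dehn twists along $\gamma_1$ and $\gamma_2$, compute that their images in $\mathrm{Aut}(F_2)$ are the transvections $[\gamma_1]\mapsto[\gamma_1]$, $[\gamma_2]\mapsto[\gamma_2][\gamma_1]^{\pm 1}$ and $[\gamma_2]\mapsto[\gamma_2]$, $[\gamma_1]\mapsto[\gamma_1][\gamma_2]^{\pm 1}$, and check that these generate a faithful copy of $\Gamma(\Sigma)$; that route, however, presupposes a presentation of $\Gamma(\Sigma)$ and is of comparable difficulty.
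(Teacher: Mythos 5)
The paper does not actually prove this proposition: it is stated as a well-known fact with a pointer to \cite{Goldman}, so there is no in-text argument to compare yours against. Judged on its own, your two-step outline is the standard one and is correct: the homotopy step is clean (the one-holed torus is a $K(F_2,1)$ with spine $\gamma_1([0,1])\cup\gamma_2([0,1])$, and based self-maps of an aspherical CW complex inducing the identity on $\pi_1$ are based-homotopic to the identity), and you have correctly located all the real content in the second step, namely a basepoint-relative form of the Baer--Epstein ``homotopic $\Rightarrow$ isotopic'' theorem, together with the collar argument needed to normalize $\phi$ near $\partial\Sigma$ without changing its class in $\Gamma(\Sigma)$. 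Your consistency check is also sound: the boundary twist $T_{\delta}$ acts on $\pi_1(\Sigma,x)$ by conjugation by $[\delta]=[\gamma_1][\gamma_2][\gamma_1]^{-1}[\gamma_2]^{-1}$, which is nontrivial since $F_2$ is centerless, so it does not threaten injectivity. One remark worth making: the route taken in the standard references (and effectively in Goldman's) avoids invoking ``homotopic $\Rightarrow$ isotopic'' for the self-diffeomorphism itself. Instead one uses the Alexander method: from $\phi_*=\mathrm{id}$ one deduces that $\phi(\gamma_1)$ and $\phi(\gamma_2)$ are based-homotopic, hence based-isotopic, to $\gamma_1$ and $\gamma_2$ (this is where Epstein's theorem enters, but only in its much easier form for embedded arcs and circles), and then cutting $\Sigma$ along $\gamma_1\cup\gamma_2$ leaves a disk, where the Alexander lemma finishes. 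That variant trades your global isotopy theorem for its one-dimensional version plus an elementary cut-and-paste, and is the version I would recommend writing out if a self-contained proof were required.
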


Now let $\gamma_1$ and $\gamma_2$ be as chosen earlier. The \textit{Dehn twist} about $\gamma_1$ is then the unique element $[\vartheta]\in\Gamma(\Sigma)$ that satisfies
\begin{equation}\label{Equation: Fundamental relations}\vartheta_*([\gamma_1])=[\gamma_1]\quad\text{and}\quad \vartheta_*([\gamma_2])=[\gamma_2][\gamma_1].\end{equation} Let $S:\mathcal{M}(\Sigma)\longrightarrow\mathcal{M}(\Sigma)$ be the diffeomorphism through which $[\vartheta]$ acts on $\mathcal{M}(\Sigma)$, i.e.
$$S([\nabla])=[\vartheta]\cdot[\nabla],\quad[\nabla]\in\mathcal{M}(\Sigma).$$ If we identify $\mathcal{M}(\Sigma)$ with $D(G)$ via $\Phi_{\gamma_1,\gamma_2}$, then $S$ identifies with a diffeomorphism $R:D(G)\longrightarrow D(G)$. We thereby obtain the commutative diagram  
\begin{equation}\label{Equation: q-Hamiltonian diagram}\begin{tikzcd}
\mathcal{M}(\Sigma)\arrow[d, "S"] \arrow[r, "\Phi_{\gamma_1,\gamma_2}"] & D(G)\arrow[d, "R"] \\
\mathcal{M}(\Sigma)\arrow[r, "\Phi_{\gamma_1,\gamma_2}"]
& D(G)
\end{tikzcd}.\end{equation}

\begin{proposition}
	The diffeomorphism $R:D(G)\longrightarrow D(G)$ is given by
	$$R(g,h)=(g,hg^{-1}),\quad (g,h)\in D(G),$$
	and it is a quasi-Hamiltonian $G$-space automorphism.
\end{proposition}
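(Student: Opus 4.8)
The plan is to verify the two assertions separately: first that the diffeomorphism induced on $D(G)$ by the Dehn twist is $R(g,h)=(g,hg^{-1})$, and then that $R$ is a quasi-Hamiltonian automorphism. For the first part, I would trace through the commutative diagram \eqref{Equation: q-Hamiltonian diagram}. A flat connection $\nabla$ on $\Sigma$ is determined up to the action of $\mathcal{G}(\Sigma)_{\text{res}}$ by the pair of holonomies $(a,b):=(\mathrm{Hol}_{\gamma_1}(\nabla),\mathrm{Hol}_{\gamma_2}(\nabla))$; equivalently, $\mathcal{M}(\Sigma)$ is identified via $\Phi_{\gamma_1,\gamma_2}$ with $D(G)$ through the holonomy representation $\pi_1(\Sigma,x)\to G$. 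The Dehn twist acts on $\mathcal{M}(\Sigma)$ by pulling back along $\vartheta^{-1}$, which on holonomy corresponds to precomposing the holonomy representation with $\vartheta_*$ (one must be careful with the inverse: $(\phi^{-1})^*\nabla$ has holonomy around $\gamma$ equal to the holonomy of $\nabla$ around $\phi^{-1}_*\gamma$, or its inverse, depending on orientation conventions). Using the relations \eqref{Equation: Fundamental relations}, namely $\vartheta_*[\gamma_1]=[\gamma_1]$ and $\vartheta_*[\gamma_2]=[\gamma_2][\gamma_1]$, together with the fact that holonomy is (anti)multiplicative along concatenation of paths, the new holonomy pair becomes $(a, ba)$ or $(a, b a^{-1})$ — and matching against the stated formula $R(g,h)=(g,hg^{-1})$ pins down which orientation convention is in force. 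I would present this computation cleanly by writing holonomy as a representation $\rho_\nabla:\pi_1(\Sigma,x)\to G$ and tracking $\rho_\nabla\mapsto\rho_\nabla\circ\vartheta_*^{\pm 1}$.

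For the second part, I would argue that $R$ is a quasi-Hamiltonian automorphism by transport of structure, rather than by a direct computation with $\omega_{D(G)}$. By Theorem \ref{Theorem: q-Hamiltonian isomorphism}, $\Phi_{\gamma_1,\gamma_2}:\mathcal{M}(\Sigma)\to D(G)$ is an isomorphism of quasi-Hamiltonian $G$-spaces, and by the preceding Proposition the $\Gamma(\Sigma)$-action on $\mathcal{M}(\Sigma)$ — in particular the diffeomorphism $S$ through which the Dehn twist $[\vartheta]$ acts — respects the quasi-Hamiltonian $G$-space structure. Since $R=\Phi_{\gamma_1,\gamma_2}\circ S\circ\Phi_{\gamma_1,\gamma_2}^{-1}$ by the commutative diagram \eqref{Equation: q-Hamiltonian diagram}, it is a composite of quasi-Hamiltonian isomorphisms and hence itself a quasi-Hamiltonian $G$-space automorphism of $D(G)$; concretely, $R^*\omega_{D(G)}=\omega_{D(G)}$, $\mu_{D(G)}\circ R=\mu_{D(G)}$, and $R$ is $G$-equivariant. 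As a sanity check one can verify the moment map identity by hand: $\mu_{D(G)}(R(g,h))=\mu_{D(G)}(g,hg^{-1})=g(hg^{-1})g^{-1}(hg^{-1})^{-1}=ghg^{-1}\cdot g h^{-1}=ghg^{-1}h^{-1}=\mu_{D(G)}(g,h)$, which is reassuring, and $G$-equivariance of $R$ is immediate from the formula.

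The main obstacle I anticipate is the first part — correctly threading the sign and inverse conventions so that the induced map comes out as $R(g,h)=(g,hg^{-1})$ rather than $(g,g^{-1}h)$, $(g,hg)$, or $(g,gh)$. This requires care about (a) whether the $\Gamma(\Sigma)$-action $[\phi]\cdot[\nabla]=[(\phi^{-1})^*\nabla]$ induces precomposition of the holonomy representation with $\vartheta_*$ or with $\vartheta_*^{-1}$, (b) the ordering convention for holonomy along concatenated paths (whether $\mathrm{Hol}_{\gamma\delta}=\mathrm{Hol}_\gamma\mathrm{Hol}_\delta$ or $\mathrm{Hol}_\delta\mathrm{Hol}_\gamma$), and (c) the relation \eqref{Equation: Fundamental relations} being stated with $\vartheta_*[\gamma_2]=[\gamma_2][\gamma_1]$. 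Once these conventions are fixed consistently the computation is a short manipulation; I would simply state the conventions explicitly at the outset and then the formula drops out. The second part is essentially formal given the results already established, so little can go wrong there beyond bookkeeping.
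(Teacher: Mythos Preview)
Your proposal is correct and follows essentially the same approach as the paper: the formula for $R$ is obtained by chasing holonomies through the diagram \eqref{Equation: q-Hamiltonian diagram} using the Dehn twist relations \eqref{Equation: Fundamental relations}, and the quasi-Hamiltonian automorphism claim is deduced by transport of structure from $S$ via $\Phi_{\gamma_1,\gamma_2}$. The paper presents these two parts in the opposite order and resolves the convention issue you flag by writing $\mathrm{Hol}_{\gamma_i}(\nabla)=\mathrm{Hol}_{\vartheta\circ\gamma_i}((\vartheta^{-1})^*\nabla)$ and then applying \eqref{Equation: Fundamental relations} directly, which is exactly the careful bookkeeping you anticipate.
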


\begin{proof}
	Since $\Gamma(\Sigma)$ acts on $\mathcal{M}(\Sigma)$ by quasi-Hamiltonian $G$-space automorphisms, $S$ is an automorphism of this type. This observation combines with Theorem \ref{Theorem: q-Hamiltonian isomorphism} and the diagram \eqref{Equation: q-Hamiltonian diagram} to imply that $R$ is a quasi-Hamiltonian $G$-space automorphism.
	
	It remains only to verify that $R(g,h)=(g,hg^{-1})$ for all $(g,h)\in D(G)$. To this end, fix $(g,h)\in D(G)$ and let $[\nabla]\in\mathcal{M}(\Sigma)$ be such that $\Phi_{\gamma_1,\gamma_2}([\nabla])=(g,h)$. This amounts to the conditions
	\begin{equation}\label{Equation: Elements}\mathrm{Hol}_{\gamma_1}(\nabla)=g\quad\text{and}\quad \mathrm{Hol}_{\gamma_2}(\nabla)=h.\end{equation}
	
	We have
	\begin{equation}\label{Equation: Holonomy relations}\mathrm{Hol}_{\gamma_1}(\nabla)=\mathrm{Hol}_{\vartheta\circ\gamma_1}((\vartheta^{-1})^*\nabla)\quad\text{and}\quad \mathrm{Hol}_{\gamma_2}(\nabla)=\mathrm{Hol}_{\vartheta\circ\gamma_2}((\vartheta^{-1})^*\nabla).\end{equation}
	On the other hand, the relations \eqref{Equation: Fundamental relations} imply that
	$$\mathrm{Hol}_{\vartheta\circ\gamma_1}((\vartheta^{-1})^*\nabla)=\mathrm{Hol}_{\gamma_1}((\vartheta^{-1})^*\nabla)\quad\text{and}\quad \mathrm{Hol}_{\vartheta\circ\gamma_2}((\vartheta^{-1})^*\nabla)=\mathrm{Hol}_{\gamma_2}((\vartheta^{-1})^*\nabla)\mathrm{Hol}_{\gamma_1}((\vartheta^{-1})^*\nabla).$$ These combine with \eqref{Equation: Elements} and \eqref{Equation: Holonomy relations} to imply that
	$$\mathrm{Hol}_{\gamma_1}((\vartheta^{-1})^*\nabla)=g\quad\text{and}\quad \mathrm{Hol}_{\gamma_2}((\vartheta^{-1})^*\nabla)=hg^{-1}.$$
	Using this and the commutativity of \eqref{Equation: q-Hamiltonian diagram}, we obtain  
	\begin{align*}
	R(g,h) & = R(\Phi_{\gamma_1,\gamma_2}([\nabla]))\\
	& = \Phi_{\gamma_1,\gamma_2}(S([\nabla]))\\
	& = \Phi_{\gamma_1,\gamma_2}([(\vartheta^{-1})^*\nabla])\\
	& = (\mathrm{Hol}_{\gamma_1}((\vartheta^{-1})^*\nabla), \mathrm{Hol}_{\gamma_2}((\vartheta^{-1})^*\nabla))\\
	& = (g,hg^{-1})
	\end{align*} 
\end{proof}

\subsection{The symplectic submanifold $T\times T$}
Recall our fixed maximal torus $T\subseteq G$ and its Lie algebra $\mathfrak{t}\subseteq\mathfrak{g}$. The manifold $T\times T$ then carries a symplectic form $\omega_{T\times T}\in\Omega^2(T\times T)$, defined as follows on each tangent space $T_{(g,h)}(T\times T)=T_gT\oplus T_hT$:
\begin{equation}\label{Equation: Form on TxT}(\omega_{T\times T})_{(g,h)}\bigg(\bigg((dL_g)_e(\xi_1),(dL_h)_e(\xi_2)\bigg),\bigg((dL_g)_e(\eta_1),(dL_h)_e(\eta_2)\bigg)\bigg)=\langle\xi_1,\eta_2\rangle-\langle\eta_1,\xi_2\rangle\end{equation} for all $\xi_1,\xi_2,\eta_1,\eta_2\in\mathfrak{t}$. This symplectic form turns out to be compatible with the quasi-Hamiltonian form on $D(G)$.

\begin{proposition}\label{Proposition: Pullback}
We have $j^*(\omega_{D(G)})=\omega_{T\times T}$, where $j:T\times T\longrightarrow D(G)$ is the inclusion map.
\end{proposition}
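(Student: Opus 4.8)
The plan is to verify the identity pointwise, working in the left trivialization, with the whole argument resting on the fact that on $T$ the left and right Maurer--Cartan forms are indistinguishable. Fix $(g,h)\in T\times T$ together with tangent vectors $v=((dL_g)_e(\xi_1),(dL_h)_e(\xi_2))$ and $w=((dL_g)_e(\eta_1),(dL_h)_e(\eta_2))$, where $\xi_1,\xi_2,\eta_1,\eta_2\in\mathfrak t$. I would begin by recording the elementary observation that $\operatorname{Ad}_t$ fixes $\mathfrak t$ pointwise for every $t\in T$; combined with the general identity $(dL_t)_e(\zeta)=(dR_t)_e(\operatorname{Ad}_t(\zeta))$ this yields $(dL_t)_e(\zeta)=(dR_t)_e(\zeta)$, and hence $(\theta^L)_t((dL_t)_e(\zeta))=(\theta^R)_t((dL_t)_e(\zeta))=\zeta$, for all $t\in T$ and $\zeta\in\mathfrak t$. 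Writing $j:T\times T\longrightarrow D(G)$ for the inclusion and $\sigma_i:=j^*(\pi_i^*\theta^L)$ for $i=1,2$, it follows that $\sigma_i=j^*(\pi_i^*\theta^R)$ as $\mathfrak t$-valued $1$-forms, that $\sigma_1(v)=\xi_1$ and $\sigma_2(v)=\xi_2$, and similarly for $w$.

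Next I would identify the pullbacks appearing in the third summand of $\omega_{D(G)}$. Since $T$ is abelian, $\pi_1\pi_2$ and $\pi_1^{-1}\pi_2^{-1}$ restrict on $T\times T$ to the multiplication map $(g,h)\mapsto gh$ and to its composition with inversion, $(g,h)\mapsto(gh)^{-1}$, both landing in $T$. Using that the multiplication map of an abelian group pulls the Maurer--Cartan form back to the sum of the two factor forms, and that inversion pulls it back to its negative, one obtains $j^*((\pi_1\pi_2)^*\theta^L)=\sigma_1+\sigma_2$ and $j^*((\pi_1^{-1}\pi_2^{-1})^*\theta^R)=-(\sigma_1+\sigma_2)$. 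Now substitute into the definition of $\omega_{D(G)}$: the bilinear pairing $(\cdot,\cdot)$ is $\mathbb{R}$-bilinear and antisymmetric in its two slots, so $(\alpha,\alpha)=0$ for any $\mathfrak t$-valued $1$-form $\alpha$, and therefore the third term contributes $\tfrac12(\sigma_1+\sigma_2,-(\sigma_1+\sigma_2))=-\tfrac12(\sigma_1+\sigma_2,\sigma_1+\sigma_2)=0$. The first two terms each contribute $\tfrac12(\sigma_1,\sigma_2)$, so $j^*\omega_{D(G)}=(\sigma_1,\sigma_2)$, and evaluating on $(v,w)$ gives $\langle\xi_1,\eta_2\rangle-\langle\eta_1,\xi_2\rangle$, which is precisely $(\omega_{T\times T})_{(g,h)}(v,w)$ as defined in \eqref{Equation: Form on TxT}.

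I do not expect a genuine obstacle; the argument is a short computation whose only subtleties are bookkeeping ones. The two points requiring a little care are: evaluating $\theta^R$ on a vector written in the left trivialization, where one must first pass through $\operatorname{Ad}$ and only then invoke its triviality on $\mathfrak t$; and tracking the inversion in $\pi_1^{-1}\pi_2^{-1}$, which on the abelian group $T$ produces exactly the sign that turns the third term into a (vanishing) self-pairing. If one prefers not to pass through the abstract reformulation, the same three terms can be expanded directly on the explicit vectors $v,w$, giving $\tfrac12(\langle\xi_1,\eta_2\rangle-\langle\eta_1,\xi_2\rangle)$, $\tfrac12(\langle\xi_1,\eta_2\rangle-\langle\eta_1,\xi_2\rangle)$, and $0$ respectively, with the same conclusion.
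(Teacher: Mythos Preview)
Your proof is correct and follows essentially the same approach as the paper: both fix a point of $T\times T$ with tangent vectors written in the left trivialization, use that $\theta^L=\theta^R$ on $T$ (since $T$ is abelian), and evaluate the three summands of $\omega_{D(G)}$ separately to obtain $\tfrac12$, $\tfrac12$, and $0$ times the desired expression. Your treatment of the third term via the structural identity $(\alpha,\alpha)=0$ is a bit more streamlined than the paper's explicit computation of the differentials of $\pi_1\pi_2$ and $\pi_1^{-1}\pi_2^{-1}$, but this is only a cosmetic difference.
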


\begin{proof}
Let us fix elements $g,h\in T$ and $\xi_1,\xi_2,\eta_1,\eta_2\in\mathfrak{t}$. Our task is to prove that $$(\omega_{D(G)})_{(g,h)}\bigg(\bigg((dL_g)_e(\xi_1),(dL_h)_e(\xi_2)\bigg),\bigg((dL_g)_e(\eta_1),(dL_h)_e(\eta_2)\bigg)\bigg)=\langle\xi_1,\eta_2\rangle-\langle\eta_1,\xi_2\rangle.$$ To this end, consider the two-forms on $D(G)$ defined by $$\alpha:=\frac{1}{2}\bigg( \pi_1^*\theta^L,\pi_2^*\theta^R\bigg),\quad\beta:=\frac{1}{2}\bigg( \pi_1^*\theta^R,\pi_2^*\theta^L\bigg),\quad\text{and}\quad\gamma:=\frac{1}{2}\bigg( (\pi_1\pi_2)^*\theta^L,(\pi_1^{-1}\pi_2^{-1})^*\theta^R\bigg).$$ One has $\omega_{D(G)}=\alpha+\beta+\gamma$, and it therefore suffices to prove the following identities:
\begin{subequations}
\begin{align}
& \alpha_{(g,h)}\bigg(\bigg((dL_g)_e(\xi_1),(dL_h)_e(\xi_2)\bigg),\bigg((dL_g)_e(\eta_1),(dL_h)_e(\eta_2)\bigg)\bigg)=\frac{1}{2}(\langle\xi_1,\eta_2\rangle-\langle\eta_1,\xi_2\rangle);\label{Equation: First}\\
& \beta_{(g,h)}\bigg(\bigg((dL_g)_e(\xi_1),(dL_h)_e(\xi_2)\bigg),\bigg((dL_g)_e(\eta_1),(dL_h)_e(\eta_2)\bigg)\bigg)=\frac{1}{2}(\langle\xi_1,\eta_2\rangle-\langle\eta_1,\xi_2\rangle);\label{Equation: Second}\\
& \gamma_{(g,h)}\bigg(\bigg((dL_g)_e(\xi_1),(dL_h)_e(\xi_2)\bigg),\bigg((dL_g)_e(\eta_1),(dL_h)_e(\eta_2)\bigg)\bigg)=0.\label{Equation: Third}
\end{align}
\end{subequations}

We begin by verifying \eqref{Equation: First}. First note that $T$ being abelian forces $(dL_k)_e(x)=(dR_k)_e(x)$ to hold for all $k\in T$ and $x\in\mathfrak{t}$. We conclude that \begin{equation}\label{Equation: Useful}(\theta^L)_k((dL_k)_e(x))=x=(\theta^R)_k((dL_k)_e(x))\end{equation} for all $k\in T$ and $x\in\mathfrak{t}$, and this implies the following identities:
\begin{subequations}
\begin{align*}
&(\theta^L)_g((dL_g)_e(\xi_1))=\xi_1;\\
&(\theta^L)_g((dL_g)_e(\eta_1))=\eta_1;\\
&(\theta^R)_h((dL_h)_e(\xi_2))=\xi_2;\\
&(\theta^R)_h((dL_h)_e(\eta_2))=\eta_2.
\end{align*}
\end{subequations}
It follows that 
\begin{align*}
& \alpha_{(g,h)}\bigg(\bigg((dL_g)_e(\xi_1),(dL_h)_e(\xi_2)\bigg),\bigg((dL_g)_e(\eta_1),(dL_h)_e(\eta_2)\bigg)\bigg)\\
& = \frac{1}{2}\bigg(\bigg\langle (\theta^L)_g((dL_g)_e(\xi_1)),(\theta^R)_h((dL_h)_e(\eta_2))\bigg\rangle-\bigg\langle (\theta^L)_g((dL_g)_e(\eta_1)),(\theta^R)_h((dL_h)_e(\xi_2))\bigg\rangle\bigg)\\
& = \frac{1}{2}(\langle\xi_1,\eta_2\rangle-\langle\eta_1,\xi_2\rangle).
\end{align*}
The verification of \eqref{Equation: Second} proceeds analogously.

It remains only to prove \eqref{Equation: Third}. To this end, it is straightforward to verify the following identities: 
\begin{subequations}
\begin{align*}
&d(\pi_1\pi_2)_{(g,h)}\bigg((dL_g)_e(\xi_1),(dL_h)_e(\xi_2)\bigg)=(dL_{gh})_e(\xi_1+\xi_2);\\
&d(\pi_1\pi_2)_{(g,h)}\bigg((dL_g)_e(\eta_1),(dL_h)_e(\eta_2)\bigg)=(dL_{gh})_e(\eta_1+\eta_2);\\
&d(\pi_1^{-1}\pi_2^{-1})_{(g,h)}\bigg((dL_g)_e(\xi_1),(dL_h)_e(\xi_2)\bigg)=-(dL_{g^{-1}h^{-1}})_e(\xi_1+\xi_2);\\
&d(\pi_1^{-1}\pi_2^{-1})_{(g,h)}\bigg((dL_g)_e(\eta_1),(dL_h)_e(\eta_2)\bigg)=-(dL_{g^{-1}h^{-1}})_e(\eta_1+\eta_2).
\end{align*}
\end{subequations}
One may also use \eqref{Equation: Useful} to deduce the following identities:
\begin{subequations}
	\begin{align*}
	&(\theta^L)_{gh}((dL_{gh})_e(\xi_1+\xi_2))=\xi_1+\xi_2;\\
	&(\theta^L)_{gh}((dL_{gh})_e(\eta_1+\eta_2))=\eta_1+\eta_2;\\
	&(\theta^R)_{g^{-1}h^{-1}}((dL_{g^{-1}h^{-1}})_e(\xi_1+\xi_2))=\xi_1+\xi_2;\\
	&(\theta^R)_{g^{-1}h^{-1}}((dL_{g^{-1}h^{-1}})_e(\eta_1+\eta_2))=\eta_1+\eta_2.
	\end{align*}
\end{subequations}
These last two sentences imply that
\begin{align*}
& \gamma_{(g,h)}\bigg(\bigg((dL_g)_e(\xi_1),(dL_h)_e(\xi_2)\bigg),\bigg((dL_g)_e(\eta_1),(dL_h)_e(\eta_2)\bigg)\bigg)\\
&=-\frac{1}{2}\bigg\langle (\theta^L)_{gh}((dL_{gh})_e(\xi_1+\xi_2)),(\theta^R)_{g^{-1}h^{-1}}((dL_{g^{-1}h^{-1}})_e(\eta_1+\eta_2))\bigg\rangle\\
&+\frac{1}{2}\bigg\langle (\theta^L)_{gh}((dL_{gh})_e(\eta_1+\eta_2)),(\theta^R)_{g^{-1}h^{-1}}((dL_{g^{-1}h^{-1}})_e(\xi_1+\xi_2))\bigg\rangle\\
& = -\frac{1}{2}\langle\xi_1+\xi_2,\eta_1+\eta_2\rangle+\frac{1}{2}\langle\eta_1+\eta_2,\xi_1+\xi_2\rangle\\
& = 0,
\end{align*}
completing the proof.
\end{proof}

\subsection{A special instance of quasi-Hamiltonian reduction}\label{Subsection: A special}
Recall the moment map $\mu_{D(G)}:D(G)\longrightarrow G$ and set $D(G)_{\text{reg}}:=G\times G_{\text{reg}}$. Let us also define
$$\mu_{D(G)}^{-1}(e)_{\text{reg}}:=\mu_{D(G)}^{-1}(e)\cap D(G)_{\text{reg}}.$$

\begin{lemma}\label{Lemma: Maximal torus}
If $x\in\mu_{D(G)}^{-1}(e)_{\emph{reg}}$, then $G_x$ is a maximal torus in $G$.
\end{lemma}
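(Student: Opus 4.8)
The plan is to write $x=(g,h)\in D(G)=G\times G$ and to unpack the hypothesis $x\in\mu_{D(G)}^{-1}(e)_{\text{reg}}$ into two concrete conditions: the equation $\mu_{D(G)}(g,h)=ghg^{-1}h^{-1}=e$ says precisely that $g$ and $h$ commute, while $x\in D(G)_{\text{reg}}=G\times G_{\text{reg}}$ says $h\in G_{\text{reg}}$. I would then compute $G_x$ directly from the $G$-action on $D(G)$, namely $g'\cdot(h,k)=(g'hg'^{-1},g'kg'^{-1})$, which gives
$$G_x=\{g'\in G:g'gg'^{-1}=g\text{ and }g'hg'^{-1}=h\}=G_g\cap G_h.$$
The goal is to show that $G_h$ is a maximal torus and that it is already contained in $G_g$, so that $G_x=G_g\cap G_h=G_h$.

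First I would establish that $G_h$ is a maximal torus. Every element of the compact connected group $G$ is conjugate into $T$, and conjugation preserves $G_{\text{reg}}$ (it preserves the dimension of centralizers); hence there exist $a\in G$ and $t\in T_{\text{reg}}$ with $h=ata^{-1}$. By the characterization recalled in Section~\ref{Subsection: The basics}, $t\in T_{\text{reg}}$ forces $G_t=T$, and therefore $G_h=aG_ta^{-1}=aTa^{-1}$ is a maximal torus in $G$.

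Next, since $g$ and $h$ commute we have $g\in G_h$. Because $G_h=aTa^{-1}$ is abelian, every element of $G_h$ commutes with $g$, i.e.\ $G_h\subseteq G_g$. Combining this with the obvious inclusion $G_x=G_g\cap G_h\subseteq G_h$ yields $G_x=G_h$, which is a maximal torus by the previous paragraph. This completes the argument.

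I do not expect a serious obstacle here: the proof is essentially a conjugation reduction to the case $h\in T_{\text{reg}}$, together with the abelianness of a maximal torus. The only step warranting a moment of care is the identification $G_x=G_g\cap G_h$, which must be read off correctly from the fact that $G$ acts by simultaneous conjugation on the two factors of $D(G)=G\times G$; once that is in place, it is regularity of $h$ (rather than of $g$) that does all the work, since $g$ is then automatically constrained to lie in the maximal torus $G_h$.
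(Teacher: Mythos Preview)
Your proof is correct and follows essentially the same approach as the paper: both arguments identify $G_x=G_g\cap G_h$, observe that $G_h$ is a maximal torus containing $g$ so that $G_h\subseteq G_g$, and conclude $G_x=G_h$. You simply spell out in more detail why $G_h$ is a maximal torus, which the paper takes for granted.
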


\begin{proof}
We have $x=(g,h)$ for some $g\in G$ and $h\in G_{\text{reg}}$ satisfying $ghg^{-1}h^{-1}=e$. It follows that $G_h$ is a maximal torus in $G$ containing $g$, and we conclude that $G_h\subseteq G_g$. Since $G_x=G_g\cap G_h$, we conclude that $G_x=G_h$. This completes the proof.
\end{proof}

This leads to the following additional fact. To this end, recall that $n$ and $r$ denote the dimension and rank of $G$, respectively.

\begin{lemma}\label{Lemma: Differential}
The differential of $\mu_{D(G)}$ has constant rank $n-r$ on $\mu_{D(G)}^{-1}(e)_{\emph{reg}}$.
\end{lemma}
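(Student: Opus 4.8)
The plan is to read off the rank of $d\mu_{D(G)}$ from the quasi-Hamiltonian axioms, rather than differentiating the commutator map $(g,h)\mapsto ghg^{-1}h^{-1}$ by hand. Fix a point $m=(g,h)\in\mu_{D(G)}^{-1}(e)_{\mathrm{reg}}$. Since $\mu_{D(G)}(m)=e$, the operator $\mathrm{Ad}_{\mu_{D(G)}(m)}+\mathrm{id}_{\mathfrak{g}}=2\,\mathrm{id}_{\mathfrak{g}}$ is invertible, so condition (iii) of Definition \ref{Definition: q-Hamiltonian G-space} gives $\mathrm{ker}((\omega_{D(G)})_m)=\{0\}$. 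In particular $m\in D(G)^{\circ}$ (cf.\ Remark \ref{Remark: Non-degeneracy}) and $(\omega_{D(G)})_m$ is a nondegenerate bilinear form on $T_mD(G)$.

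Next I would invoke condition (ii) of Definition \ref{Definition: q-Hamiltonian G-space}, bearing in mind that the forms $\theta^L,\theta^R$ appearing there denote their pullbacks $\mu_{D(G)}^{*}\theta^L,\mu_{D(G)}^{*}\theta^R$. For $\xi\in\mathfrak{g}$ and $v\in T_mD(G)$ this condition reads
\[(\omega_{D(G)})_m\big((\xi_{D(G)})_m,v\big)=\tfrac12\big\langle(\mu_{D(G)}^{*}\theta^L)_m(v)+(\mu_{D(G)}^{*}\theta^R)_m(v),\,\xi\big\rangle.\]
Because $\mu_{D(G)}(m)=e$ and both Maurer--Cartan forms restrict to $\mathrm{id}_{\mathfrak{g}}$ on $T_eG=\mathfrak{g}$, each pullback term equals $(d\mu_{D(G)})_m(v)$; hence
\[(\omega_{D(G)})_m\big((\xi_{D(G)})_m,v\big)=\big\langle(d\mu_{D(G)})_m(v),\,\xi\big\rangle\qquad\text{for all }\xi\in\mathfrak{g},\ v\in T_mD(G).\]
Writing $\mathfrak{g}_m\subseteq\mathfrak{g}$ for the Lie algebra of the stabilizer $G_m$ and $(\,\cdot\,)^{\perp}$ for annihilators with respect to $\langle\cdot,\cdot\rangle$, this identity shows that $\xi\in\mathrm{im}((d\mu_{D(G)})_m)^{\perp}$ if and only if $v\mapsto(\omega_{D(G)})_m((\xi_{D(G)})_m,v)$ vanishes identically, i.e.\ if and only if $(\xi_{D(G)})_m\in\mathrm{ker}((\omega_{D(G)})_m)=\{0\}$, i.e.\ if and only if $\xi\in\mathfrak{g}_m$. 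Therefore $\mathrm{im}((d\mu_{D(G)})_m)^{\perp}=\mathfrak{g}_m$, and hence $\mathrm{im}((d\mu_{D(G)})_m)=\mathfrak{g}_m^{\perp}$.

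Finally I would apply Lemma \ref{Lemma: Maximal torus}, which identifies $G_m$ with a maximal torus of $G$; thus $\dim\mathfrak{g}_m=r$ and
\[\mathrm{rank}\big((d\mu_{D(G)})_m\big)=\dim\mathfrak{g}_m^{\perp}=n-r.\]
Since $m$ was an arbitrary point of $\mu_{D(G)}^{-1}(e)_{\mathrm{reg}}$, the differential has constant rank $n-r$ there. I expect no genuine obstacle in this argument: the only points requiring care are the correct reading of condition (ii) (its Maurer--Cartan forms are pulled back along $\mu_{D(G)}$) and the nondegeneracy of $(\omega_{D(G)})_m$, and both follow at once from $\mu_{D(G)}(m)=e$. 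A hands-on alternative---computing $(d\mu_{D(G)})_m$ in the left trivialization using that $g$ commutes with the regular element $h$---is available but is longer and less transparent.
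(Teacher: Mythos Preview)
Your proof is correct and follows essentially the same strategy as the paper's: both use the nondegeneracy of $\omega_{D(G)}$ at points of $\mu_{D(G)}^{-1}(e)$, the moment map condition (ii), and Lemma \ref{Lemma: Maximal torus} to relate the rank of $d\mu_{D(G)}$ to $\dim G_m$. The only cosmetic difference is that the paper cites \cite[Proposition 4.1]{AMM} to identify $\ker((d\mu_{D(G)})_m)^{(\omega_{D(G)})_m}$ with the orbit tangent space and then reads off $\dim\ker$, whereas you unpack condition (ii) directly at $\mu_{D(G)}(m)=e$ to compute $\mathrm{im}((d\mu_{D(G)})_m)=\mathfrak{g}_m^{\perp}$; these are dual formulations of the same argument, and yours has the small advantage of being self-contained.
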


\begin{proof}
Suppose that $(g,h)\in\mu_{D(G)}^{-1}(e)_{\text{reg}}$ and set $x:=(g,h)$ for notational simplicity. An application of \cite[Proposition 4.1]{AMM} then yields
$$\mathrm{ker}((d\mu_{D(G)})_{x})^{(\omega_{D(G)})_x}=\{(\xi_{D(G)})_{x}:\xi\in\mathfrak{g}\},$$ where the the left-hand side denotes the annihilator of $\mathrm{ker}((d\mu_{D(G)})_{x})\subseteq T_x(D(G))$ under $(\omega_{D(G)})_x$. The right-hand side has dimension $n-\dim(G_x)$, which by Lemma \ref{Lemma: Maximal torus} coincides with $n-r$. On the other hand, the non-degeneracy of $(\omega_{D(G)})_x$ (see Remark \ref{Remark: Non-degeneracy}) forces $\mathrm{ker}((d\mu_{D(G)})_{x})^{(\omega_{D(G)})_x}$ to have dimension $\dim(D(G))-\dim(\mathrm{ker}((d\mu_{D(G)})_x)$. It follows that 
$$\dim(D(G))-\dim(\mathrm{ker}((d\mu_{D(G)})_x))=n-r,$$ or equivalently
$$\dim(\mathrm{ker}((d\mu_{D(G)})_x))=n+r.$$
This amounts to $(d\mu_{D(G)})_x$ having rank $n-r$.
\end{proof}

\begin{proposition}\label{Proposition: Submanifold}
The subset $\mu_{D(G)}^{-1}(e)_{\emph{reg}}\subseteq D(G)$ is a locally closed, $G$-invariant submanifold of dimension $n+r$.
\end{proposition}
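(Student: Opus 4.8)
The plan is to assemble the statement from the three preceding lemmas. First I would invoke Lemma~\ref{Lemma: Differential}, which tells us that $\mu_{D(G)}$ has constant rank $n-r$ on $\mu_{D(G)}^{-1}(e)_{\text{reg}}$. The constant-rank theorem then applies in a neighbourhood of each point of $\mu_{D(G)}^{-1}(e)_{\text{reg}}$ inside the open set $D(G)_{\text{reg}} = G\times G_{\text{reg}}$: there, the fibre $\mu_{D(G)}^{-1}(e)\cap D(G)_{\text{reg}} = \mu_{D(G)}^{-1}(e)_{\text{reg}}$ is cut out as a submanifold of codimension $n-r$ in $D(G)_{\text{reg}}$, hence of dimension $\dim(D(G)) - (n-r) = 2n - (n-r) = n+r$. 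This handles both the submanifold property and the dimension count simultaneously, since the constant-rank theorem produces local slice charts.

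Next I would address local closedness. Since $G_{\text{reg}}\subseteq G$ is open, $D(G)_{\text{reg}} = G\times G_{\text{reg}}$ is an open subset of $D(G)$. Within this open set, $\mu_{D(G)}^{-1}(e)_{\text{reg}} = \mu_{D(G)}^{-1}(e)\cap D(G)_{\text{reg}}$ is the preimage of the closed set $\{e\}\subseteq G$ under the continuous map $\mu_{D(G)}\big\vert_{D(G)_{\text{reg}}}$, hence closed in $D(G)_{\text{reg}}$. A subset that is closed in an open subset of $D(G)$ is locally closed in $D(G)$, which is exactly the assertion.

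Finally, $G$-invariance: the moment map $\mu_{D(G)}$ is $G$-equivariant for the conjugation action on $G$ (Definition~\ref{Definition: q-Hamiltonian G-space}(iv)), and $e\in G$ is fixed by conjugation, so $\mu_{D(G)}^{-1}(e)$ is $G$-invariant; and $G_{\text{reg}}$ is conjugation-invariant by definition, so $D(G)_{\text{reg}} = G\times G_{\text{reg}}$ is invariant under the diagonal conjugation action on $D(G)$. The intersection $\mu_{D(G)}^{-1}(e)_{\text{reg}}$ is therefore $G$-invariant. I do not anticipate a genuine obstacle here; the only point requiring minor care is making sure that the constant-rank theorem is applied on the open manifold $D(G)_{\text{reg}}$ rather than on $D(G)$ itself (where $\mu_{D(G)}$ need not have locally constant rank near $\mu_{D(G)}^{-1}(e)\setminus D(G)_{\text{reg}}$), and that local closedness in $D(G)$ rather than merely closedness in $D(G)_{\text{reg}}$ is what is being claimed.
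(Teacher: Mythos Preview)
Your proposal is correct and matches the paper's own argument: the paper also derives $G$-invariance from the invariance of $\mu_{D(G)}^{-1}(e)$ and $D(G)_{\text{reg}}$, and then says the remaining claims ``follow immediately from Lemma~\ref{Lemma: Differential}.'' You have simply spelled out what that sentence means---the constant-rank theorem for the submanifold and dimension claims, and the closed-in-open description for local closedness---so there is nothing substantively different between the two.
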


\begin{proof}
Since $\mu_{D(G)}^{-1}(e)$ and $D(G)_{\text{reg}}$ are $G$-invariant subsets of $D(G)$, the same is true of $\mu_{D(G)}^{-1}(e)_{\text{reg}}=\mu_{D(G)}^{-1}(e)\cap (D(G)_{\text{reg}})$. The rest of this proposition follows immediately from Lemma \ref{Lemma: Differential}.
\end{proof}

The action of $G$ on $\mu_{D(G)}^{-1}(e)_{\text{reg}}$ merits further study. In particular, Lemma \ref{Lemma: Maximal torus} implies that the $G$-stabilizers of any two points in $\mu_{D(G)}^{-1}(e)_{\text{reg}}$ are conjugate. The topological quotient $\mu_{D(G)}^{-1}(e)_{\text{reg}}/G$ therefore carries a unique manifold structure for which the quotient map
$$q:\mu_{D(G)}^{-1}(e)_{\text{reg}}\longrightarrow \mu_{D(G)}^{-1}(e)_{\text{reg}}/G$$  
is a submersion.

\begin{proposition}\label{Proposition: Symplectic quotient}
Let $j:\mu_{D(G)}^{-1}(e)_{\emph{reg}}\longrightarrow D(G)$ denote the inclusion map. The quotient manifold $\mu_{D(G)}^{-1}(e)_{\emph{reg}}/G$ then carries a unique symplectic form $\overline{\omega_{D(G)}}$ for which $q^*\overline{\omega_{D(G)}}=j^*\omega_{D(G)}$.
\end{proposition}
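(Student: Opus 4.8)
The plan is to recognize $\alpha:=j^*\omega_{D(G)}\in\Omega^2(N)$, where $N:=\mu_{D(G)}^{-1}(e)_{\mathrm{reg}}$, as a \emph{basic} form for the orbit map $q$ and then invoke the standard descent criterion. Recall from Proposition \ref{Proposition: Submanifold} that $N$ is a $G$-invariant submanifold of dimension $n+r$, and from Lemma \ref{Lemma: Maximal torus} that every $G$-stabilizer of a point of $N$ is a maximal torus; hence the $G$-orbits in $N$ are precisely the fibers of $q$, each diffeomorphic to $G/T$, and $\dim(N/G)=2r$. The first step is to observe that $T_xN=\ker((d\mu_{D(G)})_x)$ for every $x\in N$: the differential has constant rank $n-r$ on $N$ by Lemma \ref{Lemma: Differential}, and $\dim N=n+r=2n-(n-r)$ forces the inclusion $T_xN\subseteq\ker((d\mu_{D(G)})_x)$ to be an equality.

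The second step is a pointwise computation of $\ker(\alpha_x)$. Since $(\omega_{D(G)})_x$ is non-degenerate on $T_xD(G)$ (Remark \ref{Remark: Non-degeneracy}), restricting to the submanifold $N$ gives $\ker(\alpha_x)=T_xN\cap(T_xN)^{\omega_x}$, where $(\cdot)^{\omega_x}$ denotes the $(\omega_{D(G)})_x$-orthogonal complement inside $T_xD(G)$. By \cite[Proposition 4.1]{AMM}, already used in the proof of Lemma \ref{Lemma: Differential}, one has $(\ker((d\mu_{D(G)})_x))^{\omega_x}=\{(\xi_{D(G)})_x:\xi\in\mathfrak{g}\}=T_x(Gx)$. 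Because $\mu_{D(G)}$ is equivariant for the conjugation action of $G$ on itself and $e$ is a fixed point of that action, the orbit $Gx$ is contained in $\mu_{D(G)}^{-1}(e)$, so $T_x(Gx)\subseteq T_xN$. Combining these facts with the first step yields
$$\ker(\alpha_x)=T_xN\cap(T_xN)^{\omega_x}=\ker((d\mu_{D(G)})_x)\cap T_x(Gx)=T_x(Gx),$$
the tangent space to the $q$-fiber through $x$.

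The third step is the descent argument itself. The form $\alpha$ is $G$-invariant because $\omega_{D(G)}$ is; it is horizontal for $q$ because $\ker(\alpha_x)\supseteq T_x(Gx)$; and it is closed, since axiom (i) of Definition \ref{Definition: q-Hamiltonian G-space} gives $d\alpha=j^*d\omega_{D(G)}=-j^*\mu_{D(G)}^*\chi=-(\mu_{D(G)}\circ j)^*\chi=0$, the map $\mu_{D(G)}\circ j$ being constant equal to $e$. Hence $\alpha$ is basic, and since $q$ is a surjective submersion with connected fibers there is a unique $\overline{\omega_{D(G)}}\in\Omega^2(N/G)$ with $q^*\overline{\omega_{D(G)}}=\alpha=j^*\omega_{D(G)}$; uniqueness is immediate from the injectivity of $q^*$ on differential forms. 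Then $q^*(d\overline{\omega_{D(G)}})=d\alpha=0$ forces $d\overline{\omega_{D(G)}}=0$, and under the canonical identification $T_{q(x)}(N/G)\cong T_xN/T_x(Gx)=T_xN/\ker(\alpha_x)$ the form $\overline{\omega_{D(G)}}$ corresponds to the bilinear form induced by $\alpha_x$ on this quotient, which is non-degenerate by construction; hence $\overline{\omega_{D(G)}}$ is symplectic. The only part with genuine content beyond formal descent is the kernel computation of the second step identifying $\ker(\alpha_x)$ with $T_x(Gx)$ — this is where the regularity hypothesis and \cite[Proposition 4.1]{AMM} are essential — and I would be most careful about verifying that $\alpha$ is truly basic, in particular closed, since that is the place the quasi-Hamiltonian axioms (i) and (iii) enter.
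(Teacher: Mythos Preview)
Your proof is correct and takes essentially the same approach as the paper, which simply defers to the proof of \cite[Proposition 5.1]{AMM}; you have written out that argument explicitly, with the kernel identification via \cite[Proposition 4.1]{AMM} and the closedness of $j^*\omega_{D(G)}$ from axiom (i) being precisely the ingredients that make the AMM reduction argument go through here.
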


\begin{proof}
The arguments given in the proof of \cite[Proposition 5.1]{AMM} apply in our setting.
\end{proof}

\begin{remark}
The statement of \cite[Proposition 5.1]{AMM} does not apply to the setting of Proposition \ref{Proposition: Symplectic quotient}; it would apply only if the $G$-action on $\mu_{D(G)}^{-1}(e)_{\text{reg}}$ were free. One must examine the proof of \cite[Proposition 5.1]{AMM} to deduce Proposition \ref{Proposition: Symplectic quotient}.
\end{remark}

Now consider the Weyl group $W:=N_G(T)/T$ and its action on $T\times T$ defined by
$$w\cdot (s,t):=(wsw^{-1},wtw^{-1}),\quad w\in W,\text{ }(s,t)\in T\times T.$$ The subset $T\times T_{\text{reg}}\subseteq T\times T$ is $W$-invariant, and we observe that $W$ acts freely on $T\times T_{\text{reg}}$.
We also observe that the inclusion $T\times T_{\text{reg}}\longrightarrow\mu_{D(G)}^{-1}(e)_{\text{reg}}$ descends to a well-defined smooth map
$$\varphi:(T\times T_{\text{reg}})/W\longrightarrow\mu_{D(G)}^{-1}(e)_{\text{reg}}/G.$$At the same time, observe that the symplectic form $\omega_{T\times T}\in\Omega^2(T\times T)$ is $W$-invariant. It therefore induces a symplectic form $\overline{\omega_{T\times T}}$ on $(T\times T_{\text{reg}})/W$. 

\begin{proposition}\label{Proposition: Symplectomorphism}
The map $\varphi:(T\times T_{\emph{reg}})/W\longrightarrow\mu_{D(G)}^{-1}(e)_{\emph{reg}}/G$ is a symplectomorphism from $((T\times T_{\emph{reg}})/W,\overline{\omega_{T\times T}})$ to $(\mu_{D(G)}^{-1}(e)_{\emph{reg}}/G,\overline{\omega_{D(G)}})$.
\end{proposition}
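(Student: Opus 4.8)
The plan is to verify three things in succession: that $\varphi$ is a bijection, that it is a local diffeomorphism (hence a diffeomorphism), and finally that $\varphi^{*}\overline{\omega_{D(G)}}=\overline{\omega_{T\times T}}$; the last point will be essentially a diagram chase using Propositions \ref{Proposition: Pullback} and \ref{Proposition: Symplectic quotient} once the first two are established. Throughout I write $\iota:T\times T_{\mathrm{reg}}\hookrightarrow\mu_{D(G)}^{-1}(e)_{\mathrm{reg}}$ and $j:\mu_{D(G)}^{-1}(e)_{\mathrm{reg}}\hookrightarrow D(G)$ for the inclusions, and $\pi_W:T\times T_{\mathrm{reg}}\longrightarrow(T\times T_{\mathrm{reg}})/W$, $q:\mu_{D(G)}^{-1}(e)_{\mathrm{reg}}\longrightarrow\mu_{D(G)}^{-1}(e)_{\mathrm{reg}}/G$ for the quotient maps; note that $T\times T_{\mathrm{reg}}$ is a submanifold of $\mu_{D(G)}^{-1}(e)_{\mathrm{reg}}$ (it is open in the submanifold $T\times T$ of $D(G)$ and is contained in the submanifold $\mu_{D(G)}^{-1}(e)_{\mathrm{reg}}$ by Proposition \ref{Proposition: Submanifold}), and that $q\circ\iota=\varphi\circ\pi_W$ by construction of $\varphi$.

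For surjectivity of $\varphi$: given $(g,h)\in\mu_{D(G)}^{-1}(e)_{\mathrm{reg}}$, the regular element $h$ is conjugate into $T_{\mathrm{reg}}$, so after replacing $(g,h)$ by a $G$-conjugate we may assume $h\in T_{\mathrm{reg}}$; then $G_h=T$ by Lemma \ref{Lemma: Maximal torus}, and $ghg^{-1}h^{-1}=e$ forces $g\in G_h=T$, whence $(g,h)\in T\times T_{\mathrm{reg}}$. For injectivity: if $(s',t')=g\cdot(s,t)$ with $(s,t),(s',t')\in T\times T_{\mathrm{reg}}$ and $g\in G$, then $t'=gtg^{-1}$ with $t,t'$ regular, so $gTg^{-1}$ and $T$ are both maximal tori containing $t'$; since $T$ is the identity component of the centralizer of any of its regular elements, it is the unique maximal torus through $t'$, forcing $gTg^{-1}=T$ and $g\in N_G(T)$. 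Writing $w=[g]\in W$ and using that conjugation by $T$ acts trivially on $T$, we obtain $(s',t')=w\cdot(s,t)$, so $(s,t)$ and $(s',t')$ define the same point of $(T\times T_{\mathrm{reg}})/W$.

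To see that $\varphi$ is a local diffeomorphism, I would first observe that both $(T\times T_{\mathrm{reg}})/W$ and $\mu_{D(G)}^{-1}(e)_{\mathrm{reg}}/G$ have dimension $2r$: the former is clear, and the latter follows from $\dim\mu_{D(G)}^{-1}(e)_{\mathrm{reg}}=n+r$ (Proposition \ref{Proposition: Submanifold}) together with the fact that the $G$-orbits in $\mu_{D(G)}^{-1}(e)_{\mathrm{reg}}$ have dimension $n-r$, their stabilizers being maximal tori (Lemma \ref{Lemma: Maximal torus}). Since $\pi_W$ is a covering map ($W$ is finite and acts freely) and $q\circ\iota=\varphi\circ\pi_W$, it is enough to show $q\circ\iota$ is a submersion, i.e. that $T_{(s,t)}(T\times T_{\mathrm{reg}})+\mathfrak{g}\cdot(s,t)=T_{(s,t)}\mu_{D(G)}^{-1}(e)_{\mathrm{reg}}$ at every $(s,t)$; by the dimension count this reduces to checking that the sum is direct, i.e. $T_{(s,t)}(T\times T)\cap\mathfrak{g}\cdot(s,t)=\{0\}$. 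In left trivialization a fundamental vector field has the form $(\xi_{D(G)})_{(s,t)}=\big((dL_s)_e((\mathrm{Ad}_{s^{-1}}-\mathrm{id})\xi),(dL_t)_e((\mathrm{Ad}_{t^{-1}}-\mathrm{id})\xi)\big)$, and its membership in $T_sT\oplus T_tT$ requires $(\mathrm{Ad}_{t^{-1}}-\mathrm{id})\xi\in\mathfrak{t}$; decomposing $\xi=\xi_0+\xi_1$ with $\xi_0\in\mathfrak{t}$ and $\xi_1\in\mathfrak{t}^{\perp}$, and using that $\mathrm{Ad}_{t^{-1}}$ preserves $\mathfrak{t}^{\perp}$ and has no eigenvalue $1$ there for $t\in T_{\mathrm{reg}}$, this forces $\xi_1=0$, hence $\xi\in\mathfrak{t}$ and $(\xi_{D(G)})_{(s,t)}=0$. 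A smooth bijective local diffeomorphism is a diffeomorphism, so $\varphi$ is one.

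It remains to check $\varphi$ is symplectic, which I would do by pulling back along $\pi_W$ and chasing the square: $\pi_W^{*}\varphi^{*}\overline{\omega_{D(G)}}=\iota^{*}q^{*}\overline{\omega_{D(G)}}=\iota^{*}j^{*}\omega_{D(G)}=(j\circ\iota)^{*}\omega_{D(G)}=\omega_{T\times T}\big|_{T\times T_{\mathrm{reg}}}=\pi_W^{*}\overline{\omega_{T\times T}}$, where the second equality is Proposition \ref{Proposition: Symplectic quotient}, the fourth is Proposition \ref{Proposition: Pullback} (the composite $j\circ\iota$ being the restriction of the inclusion $T\times T\hookrightarrow D(G)$), and the last is the definition of $\overline{\omega_{T\times T}}$. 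As $\pi_W$ is a surjective submersion, $\pi_W^{*}$ is injective on $2$-forms, so $\varphi^{*}\overline{\omega_{D(G)}}=\overline{\omega_{T\times T}}$; together with the previous paragraph this makes $\varphi$ a symplectomorphism. I expect the main obstacle to be the transversality-plus-dimension argument in the third step: the bijectivity rests on standard conjugacy facts and the symplectic identity is a formal diagram chase, but the local-diffeomorphism claim genuinely uses regularity (through the absence of the eigenvalue $1$ of $\mathrm{Ad}_{t^{-1}}$ on $\mathfrak{t}^{\perp}$), and one must be a little careful to treat $T\times T_{\mathrm{reg}}$ as a submanifold of $\mu_{D(G)}^{-1}(e)_{\mathrm{reg}}$ of the expected dimension before running the quotient diagram.
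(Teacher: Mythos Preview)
Your argument is correct, but the paper organizes the proof differently in a way worth noting. You establish bijectivity, then prove $\varphi$ is a local diffeomorphism by an explicit transversality computation with fundamental vector fields, and only afterward verify the symplectic identity by diagram chase. The paper reverses the order: it first runs exactly your diagram chase to get $\varphi^{*}\overline{\omega_{D(G)}}=\overline{\omega_{T\times T}}$, and then observes that since $\overline{\omega_{T\times T}}$ is non-degenerate, this identity forces $d\varphi$ to be injective at every point---so $\varphi$ is an immersion between manifolds of equal dimension $2r$, hence a local diffeomorphism, with no tangent-space computation needed. This is a slicker route and sidesteps the eigenvalue argument you flagged as the main obstacle. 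Your injectivity argument is also slightly different (and arguably cleaner): you show directly that the conjugating element $g$ lies in $N_G(T)$ via uniqueness of the maximal torus through a regular element, whereas the paper first picks $w\in W$ with $t'=wtw^{-1}$ and then checks that $g$ and a lift of $w$ differ by an element of $T$. Both approaches are fine; yours is more hands-on, the paper's exploits the symplectic structure to do some of the differential-topological work for free.
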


\begin{proof}
We begin by verifying that $\varphi^*(\overline{\omega_{D(G)}})=\overline{\omega_{T\times T}}$. Consider the commutative diagram
$$\begin{tikzcd}
T\times T_{\text{reg}}\arrow[d, "r"] \arrow[r, "k"] & \mu_{D(G)}^{-1}(e)_{\text{reg}}\arrow[r, "j"]\arrow[d, "q"] & D(G) \\
(T\times T_{\text{reg}})/W\arrow[r, "\varphi"]
& \mu_{D(G)}^{-1}(e)_{\text{reg}}/G
\end{tikzcd},$$
where $j$ and $k$ (resp. $q$ and $r$) are the natural inclusions (resp. quotient maps). We have
\begin{align*}
r^*(\varphi^*(\overline{\omega_{D(G)}})) & = k^*(q^*(\overline{\omega_{D(G)}}))\\
& = k^*(j^*(\omega_{D(G)}))\\
& = \ell^*(\omega_{D(G)}),
\end{align*}
where $\ell:T\times T_{\text{reg}}\longrightarrow D(G)$ is the inclusion. Proposition \ref{Proposition: Pullback} implies that $\ell^*(\omega_{D(G)})$ is the restriction of $\omega_{T\times T}\in\Omega^2(T\times T)$ to the open subset $T\times T_{\text{reg}}\subseteq T\times T$, and we know this restriction to be $r^*(\overline{\omega_{T\times T}})$. It follows that $r^*(\varphi^*(\overline{\omega_{D(G)}}))=r^*(\overline{\omega_{T\times T}})$, yielding $\varphi^*(\overline{\omega_{D(G)}})=\overline{\omega_{T\times T}}$.

It remains to verify that $\varphi$ is a diffeomorphism. We first apply Lemma \ref{Lemma: Maximal torus} and Proposition \ref{Proposition: Submanifold} to conclude that
$\mu_{D(G)}^{-1}(e)_{\text{reg}}/G$ is $2r$-dimensional. The manifold $(T\times T_{\text{reg}})/W$ is also $2r$-dimensional, while the condition $\varphi^*(\overline{\omega_{D(G)}})=\overline{\omega_{T\times T}}$ forces $\varphi$ to be an immersion. We conclude that $\varphi$ is a local diffeomorphism, and are thereby reduced to proving that $\varphi$ is bijective.

To establish surjectivity, we suppose that $[(g,h)]\in\mu_{D(G)}^{-1}(e)_{\text{reg}}/G$. Note that $khk^{-1}\in T_{\text{reg}}$ for some $k\in G$, and that $kgk^{-1}$ commutes with $khk^{-1}$. It follows that $kgk^{-1}\in T$ and
$$\varphi([(kgk^{-1},khk^{-1})])=[(g,h)],$$ proving surjectivity.

To verify injectivity, assume that $(s,t),(s',t')\in T\times T_{\text{reg}}$ satisfy $[(s,t)]=[(s',t')]$ in $\mu_{D(G)}^{-1}(e)_{\text{reg}}/G$. This assumption amounts to the existence of $g\in G$ for which $s'=gsg^{-1}$ and $t'=gtg^{-1}$. We may therefore find $w\in W$ with $t'=wtw^{-1}$. Now choose a lift $h\in N_G(T)$ of $w$ and note that
$$gtg^{-1}=t'=hth^{-1}.$$ Hence $$(g^{-1}h)t(g^{-1}h)^{-1}=t,$$ and this combines with the fact that $t\in T_{\text{reg}}$ to yield $g^{-1}h\in T$. It follows that
$$s'=gsg^{-1}=g((g^{-1}h)s(g^{-1}h)^{-1})g^{-1}=hsh^{-1}=wsw^{-1},$$ so that $[(s',t')]=[(wsw^{-1},wtw^{-1})]=[(s,t)]$ in $(T\times T_{\text{reg}})/W$. This shows $\varphi$ to be injective, completing the proof.  
\end{proof}

Fix a point $(s,t)\in T\times T_{\text{reg}}$ and set $x:=(s,t)$. Write $\overline{x}$ and $[x]$ for the equivalence classes of $x$ in $(T\times T_{\text{reg}})/W$ and $\mu_{D(G)}^{-1}(e)_{\text{reg}}/G$, respectively. The differential of $\varphi$ at $\overline{x}$ is then a symplectic vector space isomorphism
$$(d\varphi)_{\overline{x}}:T_{\overline{x}}((T\times T_{\text{reg}})/W)\overset{\cong}\longrightarrow T_{[x]}(\mu_{D(G)}^{-1}(e)_{\text{reg}}/G)$$
with respect to the forms $(\overline{\omega_{T\times T}})_{\overline{x}}$ and $(\overline{\omega_{D(G)}})_{[x]}$. At the same time, we have $$T_{\overline{x}}((T\times T_{\text{reg}})/W)=T_x(T\times T_{\text{reg}})=T_x(T\times T)\quad\text{and}\quad T_{[x]}(\mu_{D(G)}^{-1}(e)_{\text{reg}}/G)=\frac{T_x(\mu_{D(G)}^{-1}(e)_{\text{reg}})}{T_x(Gx)}.$$ Our isomorphism $(d\varphi)_{\overline{x}}$ then takes the form
$$(d\varphi)_{\overline{x}}:T_x(T\times T)\longrightarrow \frac{T_x(\mu_{D(G)}^{-1}(e)_{\text{reg}})}{T_x(Gx)},\quad v\mapsto [v],\quad v\in T_x(T\times T),$$
where $[v]\in \frac{T_x(\mu_{D(G)}^{-1}(e)_{\text{reg}})}{T_x(Gx)}$ denotes the equivalence class of $v\in T_x(T\times T)\subseteq T_x(\mu_{D(G)}^{-1}(e)_{\text{reg}})$. We also note that $(\overline{\omega_{T\times T}})_{\overline{x}}=(\omega_{T\times T})_x$ as bilinear forms on $T_{\overline{x}}((T\times T_{\text{reg}})/W)=T_x(T\times T)$. The condition that $(d\varphi)_{\overline{x}}$ respects symplectic forms then becomes the statement
\begin{equation}\label{Equation: Relation}(\overline{\omega_{D(G)}})_{[x]}([v],[w])=(\omega_{T\times T})_x(v,w)\end{equation} for all $v,w\in T_x(T\times T)$.

\subsection{Universal centralizers of conjugacy classes} 
Recall the Lagrangian submanifolds $\Lambda_{\mathcal{O}}\subseteq T^*G$ discussed in Section \ref{Section: Lie-theoretic constructions}. The quasi-Hamiltonian counterparts of the $\Lambda_{\mathcal{O}}$ are the
$$\Lambda_{\mathcal{C}}:=\{(g,h)\in D(G):h\in\mathcal{C}\text{ and }ghg^{-1}=h\},$$ where $\mathcal{C}\subseteq G$ ranges over the regular conjugacy classes. Note that
$$\Lambda_{\mathcal{C}}\longrightarrow\mathcal{C},\quad (g,h)\mapsto h,\quad (g,h)\in\Lambda_{\mathcal{C}}$$
defines a fibre bundle in which the fibre over each $h\in\mathcal{C}$ is the maximal torus $G_h$. In light of this, we call $\Lambda_{\mathcal{C}}$ the \textit{universal centralizer} of $\mathcal{C}$. This object exemplifies Definition \ref{Definition: quasi-Hamiltonian Lagrangian}, as our next result demonstrates.

\begin{theorem}\label{Theorem: q-Lagrangian}
If $\mathcal{C}\subseteq G$ is a regular conjugacy class, then the universal centralizer $\Lambda_{\mathcal{C}}$ is a $G$-invariant, quasi-Hamiltonian Lagrangian submanifold of $D(G)$.
\end{theorem}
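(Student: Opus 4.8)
The plan is to verify, in order, $G$-invariance, the submanifold and dimension claims, the inclusion $\Lambda_{\mathcal{C}}\subseteq D(G)^{\circ}$, and finally isotropy; only the last step is substantive.

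$G$-invariance is immediate: if $(g,h)\in\Lambda_{\mathcal{C}}$ and $a\in G$, then $aha^{-1}\in\mathcal{C}$ and $(aga^{-1})(aha^{-1})(aga^{-1})^{-1}=a(ghg^{-1})a^{-1}=aha^{-1}$, so $a\cdot(g,h)\in\Lambda_{\mathcal{C}}$. For the inclusion in $D(G)^{\circ}$, note that $(g,h)\in\Lambda_{\mathcal{C}}$ forces $\mu_{D(G)}(g,h)=ghg^{-1}h^{-1}=e$, so $\Lambda_{\mathcal{C}}\subseteq\mu_{D(G)}^{-1}(e)$, and $\mu_{D(G)}^{-1}(e)\subseteq D(G)^{\circ}$ by Remark \ref{Remark: Non-degeneracy}; since $D(G)^{\circ}$ is thus a nonempty open subset of the $2n$-dimensional manifold $D(G)$, we have $\dim D(G)^{\circ}=2n$, so quasi-Hamiltonian Lagrangians have dimension $n$. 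To see that $\Lambda_{\mathcal{C}}$ is an $n$-dimensional embedded submanifold, I would fix $t\in\mathcal{C}\cap T_{\text{reg}}$ (nonempty because $\mathcal{C}$ meets the maximal torus $T$ and $\mathcal{C}\subseteq G_{\text{reg}}$), so that $G_{t}=T$ is abelian, and check that the $G$-equivariant map $G/T\times T\longrightarrow D(G)$, $([a],s)\mapsto(asa^{-1},ata^{-1})$, is a well-defined injective immersion with image $\Lambda_{\mathcal{C}}$; being a continuous bijection from a compact manifold onto its image, it is an embedding, whence $\Lambda_{\mathcal{C}}\cong G/T\times T$ has dimension $(n-r)+r=n$. (One may instead appeal to the fibre-bundle description $\Lambda_{\mathcal{C}}\to\mathcal{C}$ with fibre $G_{h}$.)

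For isotropy, i.e. $j^{*}\omega_{D(G)}=0$ with $j:\Lambda_{\mathcal{C}}\hookrightarrow D(G)$, I would argue pointwise at $(g,h)\in\Lambda_{\mathcal{C}}$ using the direct-sum decomposition $T_{(g,h)}\Lambda_{\mathcal{C}}=T_{(g,h)}\big(G\cdot(g,h)\big)\oplus T_{(g,h)}\big(G_{h}\times\{h\}\big)$. This decomposition is proved exactly as in Lemma \ref{Lemma: Regular orbit}: the summands have complementary dimensions $n-r$ and $r$, and a vector in their intersection is an orbit vector $(\xi_{D(G)})_{(g,h)}$ with $\xi\in\mathfrak{g}_{h}$, hence zero since $G_{h}$ is abelian and contains $g$. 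Now $\omega_{D(G)}$ vanishes on any pair one of whose members is an orbit vector: since $\mu_{D(G)}$ is constant $\equiv e$ along $\Lambda_{\mathcal{C}}$, every $v\in T_{(g,h)}\Lambda_{\mathcal{C}}$ lies in $\ker(d\mu_{D(G)})_{(g,h)}$, while axiom (ii) of Definition \ref{Definition: q-Hamiltonian G-space} at a point of $\mu_{D(G)}^{-1}(e)$ (the pulled-back Maurer--Cartan forms there equal $(d\mu_{D(G)})_{(g,h)}$, since $\theta^{L}_{e}=\theta^{R}_{e}=\mathrm{id}_{\mathfrak{g}}$) gives $\omega_{D(G)}\big((\xi_{D(G)})_{(g,h)},v\big)=\langle(d\mu_{D(G)})_{(g,h)}(v),\xi\rangle=0$. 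The only remaining case is a pair of vectors tangent to the fibre $G_{h}\times\{h\}$; here I invoke $G$-invariance of $\Lambda_{\mathcal{C}}$ and of $\omega_{D(G)}$ to reduce to $(g,h)=(s,t)$ with $s\in T$, where the fibre is $T\times\{t\}\subseteq T\times T$, and Proposition \ref{Proposition: Pullback} together with formula \eqref{Equation: Form on TxT} shows $\omega_{T\times T}$ annihilates any pair of vectors tangent to $T\times\{t\}$. Bilinearity and the decomposition then give $j^{*}\omega_{D(G)}=0$.

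The main obstacle is packaging the isotropy step so that axiom (ii) applies: one must notice that $\mu_{D(G)}$ is literally constant on $\Lambda_{\mathcal{C}}$ (so all tangent directions are annihilated by $d\mu_{D(G)}$) and isolate the fibre directions, on which axiom (ii) gives nothing, as the one place where Proposition \ref{Proposition: Pullback} enters. A slicker-sounding alternative goes through quasi-Hamiltonian reduction: $\Lambda_{\mathcal{C}}$ is $G$-invariant inside $\mu_{D(G)}^{-1}(e)_{\text{reg}}$, its image in the reduced space $\mu_{D(G)}^{-1}(e)_{\text{reg}}/G\cong(T\times T_{\text{reg}})/W$ (Proposition \ref{Proposition: Symplectomorphism}) corresponds to the isotropic submanifold $T\times\{t\}$, and isotropy of $\Lambda_{\mathcal{C}}$ follows by pulling this back along the quotient map using Proposition \ref{Proposition: Symplectic quotient}; this route, however, costs an argument that the quotient map restricts to a submersion of $\Lambda_{\mathcal{C}}$ onto an embedded submanifold of the reduced space, so it is not obviously shorter.
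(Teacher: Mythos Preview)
Your proof is correct, but your primary route for isotropy differs from the paper's. You argue pointwise: decompose $T_{(g,h)}\Lambda_{\mathcal{C}}$ into orbit and fibre directions, kill any pairing involving an orbit vector via axiom~(ii) of Definition~\ref{Definition: q-Hamiltonian G-space} (using that $\mu_{D(G)}\equiv e$ on $\Lambda_{\mathcal{C}}$ so $d\mu_{D(G)}$ annihilates all of $T_{(g,h)}\Lambda_{\mathcal{C}}$), and then handle the fibre--fibre pairing by $G$-invariance plus Proposition~\ref{Proposition: Pullback}. The paper instead takes exactly the reduction route you sketch at the end: it pulls isotropy back from the quotient $\mu_{D(G)}^{-1}(e)_{\text{reg}}/G$ via the diagram with $q$ and $r$, then uses the symplectomorphism $\varphi$ of Proposition~\ref{Proposition: Symplectomorphism} to identify $\Lambda_{\mathcal{C}}/G$ with $(T\times(T\cap\mathcal{C}))/W$, which is isotropic because $T\cap\mathcal{C}$ is finite. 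Your direct approach is more elementary in that it avoids the reduction machinery (Propositions~\ref{Proposition: Symplectic quotient} and~\ref{Proposition: Symplectomorphism}) and the attendant worry you flag about the quotient map; on the other hand, the paper's approach makes the symplectic-reduction picture explicit, which is reused later (e.g.\ in~\eqref{Equation: Value} and Lemma~\ref{Lemma: Complicated image}). Both ultimately rest on Proposition~\ref{Proposition: Pullback} to control the $T\times T$ directions.
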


\begin{proof}
Our $G$-invariance claim follows from a direct calculation. To establish the rest of our proposition, we recall the requirements of Definition \ref{Definition: quasi-Hamiltonian Lagrangian}. This leads us to observe that $\Lambda_{\mathcal{C}}\subseteq\mu_{D(G)}^{-1}(e)\subseteq (D(G))^{\circ}$. We also recall that $\Lambda_{\mathcal{C}}$ is a fibre bundle over $\mathcal{C}$, and that the fibres are maximal tori of $G$.
It follows that $$\dim(\Lambda_{\mathcal{C}})=\dim(\mathcal{C})+r=(n-r)+r=n=\frac{1}{2}\dim(D(G)).$$ These considerations reduce us to proving that $\Lambda_{\mathcal{C}}$ is isotropic in $D(G)$.

Consider the commutative diagram
$$\begin{tikzcd}
\Lambda_{\mathcal{C}}\arrow[d, "r"] \arrow[r, "k"] & \mu_{D(G)}^{-1}(e)_{\text{reg}}\arrow[r, "j"]\arrow[d, "q"] & D(G) \\
\Lambda_{\mathcal{C}}/G\arrow[r, "\ell"]
& \mu_{D(G)}^{-1}(e)_{\text{reg}}/G
\end{tikzcd},$$
where $j$, $k$, and $\ell$ (resp. $q$ and $r$) are the natural inclusions (resp. quotient maps). Observe that $k^*(j^*\omega_{D(G)})$ is the pullback of $\omega_{D(G)}$ to $\Lambda_{\mathcal{C}}$. On the other hand, Proposition \ref{Proposition: Symplectic quotient} and the commutitivity of our diagram imply that 
$$k^*(j^*\omega_{D(G)})=k^*(q^*\overline{\omega_{D(G)}})=r^*(\ell^*\overline{\omega_{D(G)}}).$$ It therefore suffices to prove that $\Lambda_{\mathcal{C}}/G$ is isotropic in the symplectic manifold 
$\mu_{D(G)}^{-1}(e)_{\text{reg}}/G$. 

Recall the symplectomorphism $\varphi:(T\times T_{\text{reg}})/W\longrightarrow\mu_{D(G)}^{-1}(e)_{\text{reg}}/G$ discussed in Proposition \ref{Proposition: Symplectomorphism}. Since we have
$$\varphi\bigg((T\times (T\cap\mathcal{C}))/W\bigg)=\Lambda_{\mathcal{C}}/G,$$
it suffices to prove that $(T\times (T\cap\mathcal{C}))/W$ is isotropic in $(T\times T_{\text{reg}})/W$. This is equivalent to $T\times (T\cap\mathcal{C})$ being isotropic in $T\times T$, which follows from the formula \eqref{Equation: Form on TxT} and the fact that $T\cap\mathcal{C}$ is finite. The proof is complete.  
\end{proof}

\section{BKS pairings on the internally fused double}\label{Section: Main results}
Recall the diffeomorphism $R:D(G)\longrightarrow D(G)$ discussed in Section \ref{Subsection: Gauge theory}. We now examine the geometry of $R(\Lambda_{\mathcal{C}})\cap\Lambda_{\mathcal{C}'}$, where $\mathcal{C},\mathcal{C}'\subseteq G$ are regular conjugacy classes. This will give context for our BKS pairing computation, as described in the introduction of this paper.

\subsection{The geometry of $R(\Lambda_{\mathcal{C}})\cap\Lambda_{\mathcal{C}'}$}\label{Subsection: The geometry} 
Recall the fundamental Weyl alcove $\mathcal{A}\subseteq\mathfrak{t}$ and the associated notation discussed in Section \ref{Subsection: The basics}. Let $\mathcal{C},\mathcal{C}'\subseteq G$ be regular conjugacy classes and set
$$\beta:=\beta(\mathcal{C})\in\mathfrak{A}\quad\text{and}\quad\beta':=\beta(\mathcal{C}')\in\mathfrak{A}.$$ Let us also recall that $\Lambda_{\mathcal{C}}$ and $\Lambda_{\mathcal{C}'}$ are $G$-invariant submanifolds of $D(G)$, and that the diffeomorphism $R:D(G)\longrightarrow D(G)$ is $G$-equivariant. These considerations force the intersection $R(\Lambda_{\mathcal{C}})\cap\Lambda_{\mathcal{C}'}$ to be $G$-invariant, i.e. $R(\Lambda_{\mathcal{C}})\cap\Lambda_{\mathcal{C}'}$ is a union of $G$-orbits. To find these $G$-orbits, we first observe that each $w\in W$ determines a point
$$z_w:=(\exp(w\beta-\beta'),\exp(\beta'))\in R(\Lambda_{\mathcal{C}})\cap\Lambda_{\mathcal{C}'}.$$ It follows that 
$$\Gamma_w:=Gz_w$$ is a $G$-orbit in $R(\Lambda_{\mathcal{C}})\cap\Lambda_{\mathcal{C}'}$ for all $w\in W$.
\begin{proposition}\label{Proposition: Orbit description}
	The association $w\mapsto\Gamma_w$ is a bijection from $W$ to the set of $G$-orbits in $R(\Lambda_{\mathcal{C}})\cap\Lambda_{\mathcal{C}'}$.
\end{proposition}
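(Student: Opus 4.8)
The plan is to rewrite $R(\Lambda_{\mathcal{C}})$ explicitly, identify the intersection $R(\Lambda_{\mathcal{C}})\cap\Lambda_{\mathcal{C}'}$ as a concrete set of commuting pairs, and then use regularity of $\mathcal{C}$ and $\mathcal{C}'$ to reduce every $G$-orbit to a unique point of the form $z_w$. First I would unwind $R(g,h)=(g,hg^{-1})$: writing a general element of $R(\Lambda_{\mathcal{C}})$ as $(a,b)=(g,hg^{-1})$ with $(g,h)\in\Lambda_{\mathcal{C}}$, one has $g=a$ and $h=ba$, so the conditions $h\in\mathcal{C}$ and $ghg^{-1}=h$ translate to $ba\in\mathcal{C}$ and $ab=ba$; since $a$ and $b$ commute, $ba\in\mathcal{C}$ is equivalent to $ab\in\mathcal{C}$. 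Hence
$$R(\Lambda_{\mathcal{C}})=\{(g,h)\in D(G):gh=hg\text{ and }gh\in\mathcal{C}\},$$
and therefore
$$R(\Lambda_{\mathcal{C}})\cap\Lambda_{\mathcal{C}'}=\{(g,h)\in D(G):gh=hg,\ gh\in\mathcal{C},\ h\in\mathcal{C}'\}.$$
It is then immediate that $z_w=(\exp(w\beta-\beta'),\exp(\beta'))$ lies in this set for every $w\in W$: the two entries lie in the abelian group $T$ and hence commute, their product is $\exp(w\beta)$, which is $G$-conjugate to $\exp(\beta)\in\mathcal{C}$, and the second entry is $\exp(\beta')\in\mathcal{C}'$; so the $\Gamma_w$ really are $G$-orbits in $R(\Lambda_{\mathcal{C}})\cap\Lambda_{\mathcal{C}'}$.

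For surjectivity of $w\mapsto\Gamma_w$, I would take an arbitrary point $(g,h)$ of the intersection. Since $h\in\mathcal{C}'\subseteq G_{\text{reg}}$, the centralizer $G_h$ is a maximal torus (this is the fact recalled when $\Lambda_{\mathcal{C}'}$ was exhibited as a torus bundle over $\mathcal{C}'$), and $g\in G_h$ because $g$ and $h$ commute. Using $G$-invariance of the intersection, I may conjugate $(g,h)$ by an element of $G$ carrying $G_h$ onto $T$, so that $g,h\in T$; conjugating further by a representative in $N_G(T)$ of a suitable Weyl element — using the standard fact that $T\cap\mathcal{C}'$ is a single $W$-orbit, namely $\{\exp(w'\beta'):w'\in W\}$ — I may assume $h=\exp(\beta')$. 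Then $gh=g\exp(\beta')$ lies in $T\cap\mathcal{C}=\{\exp(w\beta):w\in W\}$, so $g\exp(\beta')=\exp(w\beta)$ for some $w\in W$, whence $g=\exp(w\beta-\beta')$ and $(g,h)=z_w$. Thus every $G$-orbit in $R(\Lambda_{\mathcal{C}})\cap\Lambda_{\mathcal{C}'}$ equals some $\Gamma_w$.

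For injectivity, suppose $\Gamma_w=\Gamma_{w'}$, so there is $g\in G$ with $g\cdot z_w=z_{w'}$. Comparing the second coordinates gives $g\exp(\beta')g^{-1}=\exp(\beta')$; since $\exp(\beta')$ is regular, its centralizer is the maximal torus $T$, so $g\in T$ and hence commutes with $\exp(w\beta-\beta')$. Comparing first coordinates then gives $\exp(w\beta-\beta')=\exp(w'\beta-\beta')$, i.e. $\exp(w\beta)=\exp(w'\beta)$, i.e. $w\cdot\exp(\beta)=w'\cdot\exp(\beta)$ for the conjugation action of $W$ on $T$. Because $\exp(\beta)$ is regular, its stabilizer in $W$ is trivial (as $N_G(T)\cap G_{\exp(\beta)}=N_G(T)\cap T=T$), so $w=w'$.

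I expect the injectivity step to be the point requiring the most care: the exponential map is far from injective, so $\exp(w\beta)=\exp(w'\beta)$ does not by itself give $w\beta=w'\beta$; it is precisely the regularity of $\mathcal{C}$ (hence of $\exp(\beta)$) — equivalently, the freeness of the $W$-action on $T\cap\mathcal{C}$ — that forces $w=w'$. The same package of facts about regular elements (centralizer equal to a maximal torus, single free $W$-orbit in $T\cap\mathcal{C}$) is what makes the reduction in the surjectivity step work, so one must be sure to invoke the regularity hypotheses on both $\mathcal{C}$ and $\mathcal{C}'$ at the appropriate places; the remaining manipulations are routine.
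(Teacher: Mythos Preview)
Your proof is correct and follows essentially the same approach as the paper: reduce any point of the intersection to the form $z_w$ by conjugating the second coordinate to $\exp(\beta')$ (using regularity of $\mathcal{C}'$) and then reading off $w$ from $T\cap\mathcal{C}$; for injectivity, use that the centralizer of $\exp(\beta')$ is $T$ and that $W$ acts freely on $T\cap\mathcal{C}$ by regularity of $\mathcal{C}$. Your explicit description of $R(\Lambda_{\mathcal{C}})$ as commuting pairs with product in $\mathcal{C}$ is a helpful preliminary that the paper leaves implicit, but otherwise the arguments coincide.
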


\begin{proof}
	We begin with a proof of injectivity, letting $u,w\in W$ be such that $\Gamma_u=\Gamma_w$. This amounts to the existence of $g\in G$ for which $g\cdot z_u=z_w$, i.e.
	$$(g\exp(u\beta-\beta')g^{-1},g\exp(\beta')g^{-1})=(\exp(w\beta-\beta'),\exp(\beta')).$$ Since $\beta'$ is a regular element of $\mathfrak{t}$, the condition $g\exp(\beta')g^{-1}=\exp(\beta')$ implies that $g\in T$. It follows that $\exp(u\beta-\beta')=\exp(w\beta-\beta')$, or equivalently that $\exp(u\beta)=\exp(w\beta)$. The regularity of $\beta$ then forces $u=w$ to hold, proving injectivity.
	
	Our claim of surjectivity is equivalent to the following assertion: each point in $R(\Lambda_{\mathcal{C}})\cap\Lambda_{\mathcal{C}'}$ lies in the $G$-orbit of $z_w$ for a suitable $w\in W$. To verify this assertion, we suppose that $(g,h)\in R(\Lambda_{\mathcal{C}})\cap\Lambda_{\mathcal{C}'}$. It follows that $h\in\mathcal{C}'$, i.e. $(g,h)$ lies in the $G$-orbit of a point having $\exp(\beta')$ as its second coordinate. We may therefore assume that $h=\exp(\beta')$. Since $g$ commutes with $h=\exp(\beta')$, we must have $g\in T$. We also have $(g,\exp(\beta'))\in R(\Lambda_{\mathcal{C}})$, forcing $\exp(\beta')g\in\mathcal{C}$ to hold. These last two sentences imply that $\exp(\beta')g\in T\cap\mathcal{C}$, while one knows that $T\cap\mathcal{C}=\{\exp(w\beta):w\in W\}$. Hence $g=\exp(w\beta-\beta')$ for some $w\in W$, and we see that $(g,h)=z_w$. This completes the proof.
\end{proof}

\begin{corollary}\label{Corollary: Easy}
	The intersection $R(\Lambda_{\mathcal{C}})\cap\Lambda_{\mathcal{C}'}$ is an $(n-r)$-dimensional submanifold of $D(G)$. Its connected components coincide with its $G$-orbits.  
\end{corollary}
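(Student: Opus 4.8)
The plan is to deduce the corollary directly from Proposition~\ref{Proposition: Orbit description}, which already writes $R(\Lambda_{\mathcal{C}})\cap\Lambda_{\mathcal{C}'}$ as the finite disjoint union $\bigsqcup_{w\in W}\Gamma_w$ of the $G$-orbits $\Gamma_w=Gz_w$. The first step is to compute the $G$-stabilizer of $z_w=(\exp(w\beta-\beta'),\exp(\beta'))$. This stabilizer equals $G_{\exp(w\beta-\beta')}\cap G_{\exp(\beta')}$; since $\beta'$ is regular we have $G_{\exp(\beta')}=T$, and since $\exp(w\beta-\beta')\in T$ we have $T\subseteq G_{\exp(w\beta-\beta')}$, so the stabilizer of $z_w$ is exactly $T$. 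Consequently $\dim(\Gamma_w)=\dim(G)-\dim(T)=n-r$ for every $w\in W$. Being the orbit of a smooth action of the compact connected group $G$ on the manifold $D(G)$, each $\Gamma_w$ is a compact, connected, embedded submanifold of $D(G)$ of dimension $n-r$.

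The remaining point is to see that the finite disjoint union $\bigsqcup_{w\in W}\Gamma_w$ is itself an embedded submanifold. Here I would first observe that $R(\Lambda_{\mathcal{C}})\cap\Lambda_{\mathcal{C}'}$ is a closed, hence compact, subset of $D(G)$: each conjugacy class is compact (as $G$ is compact), so $\Lambda_{\mathcal{C}}$ and $\Lambda_{\mathcal{C}'}$ are closed in $D(G)$, and $R$ is a diffeomorphism. The orbits $\Gamma_w$, $w\in W$, are then finitely many pairwise disjoint compact subsets of a Hausdorff space; each $\Gamma_w$ is therefore closed in $R(\Lambda_{\mathcal{C}})\cap\Lambda_{\mathcal{C}'}$, and its complement $\bigcup_{v\neq w}\Gamma_v$ is a finite union of closed sets, hence closed, so each $\Gamma_w$ is also open in $R(\Lambda_{\mathcal{C}})\cap\Lambda_{\mathcal{C}'}$. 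Thus the intersection is the topological disjoint union of the $\Gamma_w$: near any point $p\in\Gamma_w$, a submanifold chart for $\Gamma_w\subseteq D(G)$ exhibits $R(\Lambda_{\mathcal{C}})\cap\Lambda_{\mathcal{C}'}$ locally as the $(n-r)$-dimensional submanifold $\Gamma_w$ (using that $\Gamma_w$ is open in the intersection). Hence $R(\Lambda_{\mathcal{C}})\cap\Lambda_{\mathcal{C}'}$ is an $(n-r)$-dimensional embedded submanifold of $D(G)$.

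Finally, each $\Gamma_w$ is connected, being the image of the connected group $G$ under the continuous orbit map $g\mapsto g\cdot z_w$, and we have just shown it to be both open and closed in $R(\Lambda_{\mathcal{C}})\cap\Lambda_{\mathcal{C}'}$; a nonempty connected subset that is open and closed is a connected component. It follows that the connected components of $R(\Lambda_{\mathcal{C}})\cap\Lambda_{\mathcal{C}'}$ are exactly the orbits $\Gamma_w$, $w\in W$, as claimed. I do not expect a real obstacle here: the only delicate point is the separation of the orbits inside the intersection, and that is taken care of by the compactness argument above, with the rest being routine once Proposition~\ref{Proposition: Orbit description} and the standard fact that orbits of compact group actions are embedded submanifolds are in hand.
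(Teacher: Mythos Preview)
Your argument is correct and follows the same approach as the paper: compute that the $G$-stabilizer of each $z_w$ is $T$, deduce $\dim(\Gamma_w)=n-r$, and combine with Proposition~\ref{Proposition: Orbit description}. The paper's proof is much terser and leaves implicit the separation and submanifold verifications that you carefully spell out via compactness; your added detail is sound and does not depart from the paper's route.
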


\begin{proof}
	Note that $T$ is the $G$-stabilizer of $z_w$ for each $w\in W$. It follows that $\Gamma_w$ has dimension $n-r$ for all $w\in W$. This combines with Proposition \ref{Proposition: Orbit description} to imply the desired results.
\end{proof}

Observe that $R(\Lambda_{\mathcal{C}})\subseteq\mu_{D(G)}^{-1}(e)$ and $\Lambda_{\mathcal{C}'}\subseteq\mu_{D(G)}^{-1}(e)_{\text{reg}}$, implying that $R(\Lambda_{\mathcal{C}})\cap\Lambda_{\mathcal{C}'}\subseteq\mu_{D(G)}^{-1}(e)_{\text{reg}}$. In what follows, we fix $x\in R(\Lambda_{\mathcal{C}})\cap\Lambda_{\mathcal{C}'}$ and consider the three subspaces $T_xR(\Lambda_{\mathcal{C}})$, $T_x\Lambda_{\mathcal{C}'}$, and $T_x(\mu_{D(G)}^{-1}(e)_{\text{reg}})$ of $T_x(D(G))$.

\begin{lemma}\label{Lemma: Clean intersection}
	Suppose that $x\in R(\Lambda_{\mathcal{C}})\cap\Lambda_{\mathcal{C}'}$ and let $Gx\subseteq R(\Lambda_{\mathcal{C}})\cap\Lambda_{\mathcal{C}'}$ denote the $G$-orbit through $x$. The following statements hold:
	\begin{itemize}
		\item[(i)]$T_xR(\Lambda_{\mathcal{C}})\cap T_x\Lambda_{\mathcal{C}'}=T_x(R(\Lambda_{\mathcal{C}})\cap\Lambda_{\mathcal{C}'})=T_x(Gx)$;
		\item[(ii)]$T_xR(\Lambda_{\mathcal{C}})+T_x\Lambda_{\mathcal{C}'}=T_x(\mu_{D(G)}^{-1}(e)_{\emph{reg}})$. 
	\end{itemize}
\end{lemma}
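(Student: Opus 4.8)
The plan is to reduce statements (i) and (ii) to a transversality computation inside the symplectic quotient of Section~\ref{Subsection: A special}. Throughout write $\beta := \beta(\mathcal{C})$, $\beta' := \beta(\mathcal{C}')$, and $q : \mu_{D(G)}^{-1}(e)_{\mathrm{reg}} \longrightarrow \mu_{D(G)}^{-1}(e)_{\mathrm{reg}}/G$ for the quotient submersion. Since $R$ is a quasi-Hamiltonian automorphism it preserves $\mu_{D(G)}$, so $R(\Lambda_{\mathcal{C}}) \subseteq \mu_{D(G)}^{-1}(e)$; together with the inclusion $R(\Lambda_{\mathcal{C}}) \cap \Lambda_{\mathcal{C}'} \subseteq \mu_{D(G)}^{-1}(e)_{\mathrm{reg}}$ noted above, this shows that $A := R(\Lambda_{\mathcal{C}}) \cap \mu_{D(G)}^{-1}(e)_{\mathrm{reg}} = R(\Lambda_{\mathcal{C}}) \cap (G \times G_{\mathrm{reg}})$ is an open subset of $R(\Lambda_{\mathcal{C}})$ containing the whole intersection, and $A \cap \Lambda_{\mathcal{C}'} = R(\Lambda_{\mathcal{C}}) \cap \Lambda_{\mathcal{C}'}$. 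Both $A$ and $\Lambda_{\mathcal{C}'}$ are $G$-invariant submanifolds of $\mu_{D(G)}^{-1}(e)_{\mathrm{reg}}$ whose $G$-stabilizers are maximal tori, so they descend to submanifolds $\mathcal{L}_{\mathcal{C}} := q(A)$ and $\mathcal{L}_{\mathcal{C}'} := q(\Lambda_{\mathcal{C}'})$ of $\mu_{D(G)}^{-1}(e)_{\mathrm{reg}}/G$, with $q^{-1}(\mathcal{L}_{\mathcal{C}}) = A$ and $q^{-1}(\mathcal{L}_{\mathcal{C}'}) = \Lambda_{\mathcal{C}'}$.

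Next I would transport everything to $(T \times T_{\mathrm{reg}})/W$ through the symplectomorphism $\varphi$ of Proposition~\ref{Proposition: Symplectomorphism}. Tracking $G$-orbit representatives exactly as in the proof of Theorem~\ref{Theorem: q-Lagrangian}, one checks that $\varphi^{-1}(\mathcal{L}_{\mathcal{C}'})$ is the image of $T \times (T \cap \mathcal{C}')$ in $(T \times T_{\mathrm{reg}})/W$, while $\varphi^{-1}(\mathcal{L}_{\mathcal{C}})$ is the image of $\{(g,h) \in T \times T_{\mathrm{reg}} : hg \in T \cap \mathcal{C}\}$; the crucial observation is that a point $(g,h) \in T \times T_{\mathrm{reg}}$ lies in $R(\Lambda_{\mathcal{C}})$ exactly when $hg \in \mathcal{C}$, i.e. when $hg$ lies in the finite, $W$-stable set $T \cap \mathcal{C}$. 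Now fix $w \in W$ and work at $p_w := (\exp(w\beta - \beta'), \exp(\beta')) \in T \times T_{\mathrm{reg}}$, a representative of $\varphi^{-1}(q(z_w))$. Identifying $T_{p_w}(T \times T)$ with $\mathfrak{t} \oplus \mathfrak{t}$ via the left trivialization, the tangent space at $p_w$ to $T \times (T \cap \mathcal{C}')$ is $\mathfrak{t} \oplus \{0\}$, whereas the tangent space at $p_w$ to $\{(g,h) \in T \times T_{\mathrm{reg}} : hg \in T \cap \mathcal{C}\}$ is the anti-diagonal $\{(\nu,-\nu) : \nu \in \mathfrak{t}\}$, because $T$ is abelian and so $(g,h) \mapsto hg$ has differential $(\nu_1,\nu_2) \mapsto \nu_1 + \nu_2$ in these coordinates. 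These two $r$-dimensional subspaces are complementary in the $2r$-dimensional $\mathfrak{t} \oplus \mathfrak{t}$, and (using that $W$ acts freely on $T \times T_{\mathrm{reg}}$) it follows that $\mathcal{L}_{\mathcal{C}}$ and $\mathcal{L}_{\mathcal{C}'}$ meet transversally in $\mu_{D(G)}^{-1}(e)_{\mathrm{reg}}/G$, with $\mathcal{L}_{\mathcal{C}} \cap \mathcal{L}_{\mathcal{C}'}$ discrete.

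Finally I would invoke the general principle that the preimages under a submersion of two transverse submanifolds intersect cleanly. Concretely, for $x \in A \cap \Lambda_{\mathcal{C}'}$ one has $T_x A = (dq_x)^{-1}(T_{q(x)}\mathcal{L}_{\mathcal{C}})$ and $T_x \Lambda_{\mathcal{C}'} = (dq_x)^{-1}(T_{q(x)}\mathcal{L}_{\mathcal{C}'})$, so, using transversality downstairs together with surjectivity of $dq_x$,
\begin{align*}
T_x A \cap T_x \Lambda_{\mathcal{C}'} &= (dq_x)^{-1}\big(T_{q(x)}(\mathcal{L}_{\mathcal{C}} \cap \mathcal{L}_{\mathcal{C}'})\big) = T_x\big(q^{-1}(\mathcal{L}_{\mathcal{C}} \cap \mathcal{L}_{\mathcal{C}'})\big),\\
T_x A + T_x \Lambda_{\mathcal{C}'} &= (dq_x)^{-1}\big(T_{q(x)}(\mu_{D(G)}^{-1}(e)_{\mathrm{reg}}/G)\big) = T_x\big(\mu_{D(G)}^{-1}(e)_{\mathrm{reg}}\big).
\end{align*}
Since $A$ is open in $R(\Lambda_{\mathcal{C}})$ and $A \cap \Lambda_{\mathcal{C}'} = R(\Lambda_{\mathcal{C}}) \cap \Lambda_{\mathcal{C}'}$, the left-hand sides are $T_x R(\Lambda_{\mathcal{C}}) \cap T_x \Lambda_{\mathcal{C}'}$ and $T_x R(\Lambda_{\mathcal{C}}) + T_x \Lambda_{\mathcal{C}'}$; and because $q^{-1}(\mathcal{L}_{\mathcal{C}} \cap \mathcal{L}_{\mathcal{C}'})$ equals, near $x$, the single orbit $Gx$ (by Proposition~\ref{Proposition: Orbit description} and Corollary~\ref{Corollary: Easy}), the first display is precisely (i) and the second is precisely (ii). I expect the main obstacle to be the reduction step: correctly identifying the image $\mathcal{L}_{\mathcal{C}}$ of $R(\Lambda_{\mathcal{C}})$ in the quotient — which is where the Dehn twist $R$ genuinely enters, turning the condition ``the second coordinate lies in $\mathcal{C}$'' into ``the product of the two coordinates lies in $\mathcal{C}$'' — and keeping track of the fact that, although $R(\Lambda_{\mathcal{C}})$ is not contained in $\mu_{D(G)}^{-1}(e)_{\mathrm{reg}}$, it suffices to work with the open submanifold $A = R(\Lambda_{\mathcal{C}}) \cap \mu_{D(G)}^{-1}(e)_{\mathrm{reg}}$. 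A purely computational alternative is to compute $T_x R(\Lambda_{\mathcal{C}})$, $T_x \Lambda_{\mathcal{C}'}$, and $T_x \mu_{D(G)}^{-1}(e)_{\mathrm{reg}}$ directly inside $\mathfrak{g} \oplus \mathfrak{g}$ using the root space decomposition, the invertibility of $\mathrm{Ad}_{\exp(\beta')} - \mathrm{id}$ on $\mathfrak{t}^{\perp}$, and the explicit differential of $R$ in the left trivialization; the same ``$\mathfrak{t} \oplus \{0\}$ versus anti-diagonal'' mechanism then forces the $\mathfrak{t}$-contributions to the intersection to vanish, after which one lands automatically in $T_x(Gx)$, and (ii) follows by a dimension count.
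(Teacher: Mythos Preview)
Your proof is correct, but it takes a genuinely different route from the paper's.

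The paper's argument is a bare dimension count. From Corollary~\ref{Corollary: Easy} one already knows that $T_x(R(\Lambda_{\mathcal{C}})\cap\Lambda_{\mathcal{C}'})=T_x(Gx)$ has dimension $n-r$. Since both $R(\Lambda_{\mathcal{C}})$ and $\Lambda_{\mathcal{C}'}$ are $n$-dimensional and sit inside the $(n+r)$-dimensional manifold $\mu_{D(G)}^{-1}(e)_{\text{reg}}$, the inclusion $T_x(R(\Lambda_{\mathcal{C}})\cap\Lambda_{\mathcal{C}'})\subseteq T_xR(\Lambda_{\mathcal{C}})\cap T_x\Lambda_{\mathcal{C}'}$ forces $\dim(T_xR(\Lambda_{\mathcal{C}})+T_x\Lambda_{\mathcal{C}'})\geq n+r$, while the inclusion of the sum in $T_x(\mu_{D(G)}^{-1}(e)_{\text{reg}})$ forces $\leq n+r$; equality in both places gives (i) and (ii) simultaneously.

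Your argument is more structural: you push both Lagrangians down through the submersion $q$ to the $2r$-dimensional symplectic quotient, transport to $(T\times T_{\text{reg}})/W$ via $\varphi$, and exhibit the two images as the $r$-dimensional submanifolds coming from $\mathfrak{t}\oplus\{0\}$ and the anti-diagonal $\{(\nu,-\nu)\}$, which are manifestly transverse. Then the general fact that preimages of transverse submanifolds under a submersion intersect cleanly finishes the job. This is longer, but it explains \emph{why} the intersection is clean: the Dehn twist $R$ literally turns the trivial Lagrangian $T\times\{h\}$ in $T\times T$ into its anti-diagonal companion, and transversality in the quotient is the real content. Your argument also re-derives, as a by-product, the discreteness of $\mathcal{L}_{\mathcal{C}}\cap\mathcal{L}_{\mathcal{C}'}$ rather than importing it from Proposition~\ref{Proposition: Orbit description} and Corollary~\ref{Corollary: Easy}. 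The paper's approach, by contrast, leans entirely on those earlier results and extracts (i) and (ii) with almost no further work.
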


\begin{proof}
	Corollary \ref{Corollary: Easy} implies that $$T_x(R(\Lambda_{\mathcal{C}})\cap\Lambda_{\mathcal{C}'})=T_x(Gx),$$ one of the equalities in (i). Using this equality together with the inclusion	$T_x(R(\Lambda_{\mathcal{C}})\cap\Lambda_{\mathcal{C}'})\subseteq T_xR(\Lambda_{\mathcal{C}})\cap T_x\Lambda_{\mathcal{C}'}$ and Corollary \ref{Corollary: Easy}, we obtain
	\begin{align}
	\dim(T_xR(\Lambda_{\mathcal{C}})+T_x\Lambda_{\mathcal{C}'}) & = \dim(T_xR(\Lambda_{\mathcal{C}}))+\dim(T_x\Lambda_{\mathcal{C}'})-\dim(T_xR(\Lambda_{\mathcal{C}})\cap T_x\Lambda_{\mathcal{C}'})\label{Equation: N3}\\
	& \geq \dim(T_xR(\Lambda_{\mathcal{C}}))+\dim(T_x\Lambda_{\mathcal{C}'})-\dim(T_x(R(\Lambda_{\mathcal{C}})\cap\Lambda_{\mathcal{C}'}))\label{Equation: N4}\\
	& = n+n-\dim(Gx)\label{Equation: N1}\\
	& = 2n-(n-r)\\
	& = n+r.
	\end{align}
	We also note that the inclusions $R(\Lambda_{\mathcal{C}})\subseteq \mu_{D(G)}^{-1}(e)$ and $\Lambda_{\mathcal{C}'}\subseteq\mu_{D(G)}^{-1}(e)$ force $T_xR(\Lambda_{\mathcal{C}})+T_x\Lambda_{\mathcal{C}'}\subseteq T_x(\mu_{D(G)}^{-1}(e)_{\text{reg}})$ to hold. Applying Proposition \ref{Proposition: Submanifold}, one obtains 
	\begin{equation}\label{Equation: N2}\dim(T_xR(\Lambda_{\mathcal{C}})+T_x\Lambda_{\mathcal{C}'})\leq n+r.\end{equation}
	It follows that \eqref{Equation: N4} and \eqref{Equation: N2} are equalities. Lines \eqref{Equation: N3}--\eqref{Equation: N2} now imply that $$\dim(T_xR(\Lambda_{\mathcal{C}})\cap T_x\Lambda_{\mathcal{C}'})=\dim(T_x(R(\Lambda_{\mathcal{C}})\cap\Lambda_{\mathcal{C}'})),$$ or equivalently
	$$T_x(R(\Lambda_{\mathcal{C}})\cap\Lambda_{\mathcal{C}'})=T_xR(\Lambda_{\mathcal{C}})\cap T_x\Lambda_{\mathcal{C}'}.$$ This completes our proof of (i).
	On the other hand, the equality \eqref{Equation: N2}, the inclusion $T_xR(\Lambda_{\mathcal{C}})+T_x\Lambda_{\mathcal{C}'}\subseteq T_x(\mu_{D(G)}^{-1}(e)_{\text{reg}})$, Proposition \ref{Proposition: Submanifold}, and a dimension count yield  
	$$\dim(T_xR(\Lambda_{\mathcal{C}})+T_x\Lambda_{\mathcal{C}'})=\dim(T_x(\mu_{D(G)}^{-1}(e)_{\text{reg}})).$$ This proves (ii). 
\end{proof}

\subsection{Half-densities on $R(\Lambda_{\mathcal{C}})$ and $\Lambda_{\mathcal{C}'}$}\label{Subsection: Half-densities}
Recall the brief discussion of conjugacy classes in Section \ref{Subsection: The basics}. Let us now fix a positive integer $k$ and consider a regular, $\frac{1}{k}$-integral conjugacy class $\mathcal{C}\subseteq G$. Let us also set $$\beta:=\beta(\mathcal{C})\in\mathfrak{A}$$ and write $\mathcal{O}\subseteq\mathfrak{g}$ for the adjoint orbit of $k\beta$. One then has a $G$-equivariant diffeomorphism
\begin{equation}\label{Equation: Equivariant diffeomorphism}
\varphi_{\mathcal{O}}^{\mathcal{C}}:\Lambda_{\mathcal{O}}\longrightarrow\Lambda_{\mathcal{C}},\quad (g,\xi)\mapsto(g,\exp\left(\frac{1}{k}\xi\right)),\quad (g,\xi)\in\Lambda_{\mathcal{O}}.
\end{equation}

\begin{lemma}\label{Lemma: Compatibility}
	The following assertions are true.
	\begin{itemize}
		\item[(i)] Suppose that $(g,h)\in\Lambda_{\mathcal{C}}$, and let $\xi\in\mathcal{O}$ be the unique element for which $(g,h)=\varphi_{\mathcal{O}}^{\mathcal{C}}(g,\xi)$. The diffeomorphism $\varphi_{\mathcal{O}}^{\mathcal{C}}$ then satisfies
		$$\varphi_{\mathcal{O}}^{\mathcal{C}}(G(g,\xi))=G(g,h)\quad\text{and}\quad\varphi_{\mathcal{O}}^{\mathcal{C}}(G_{\xi}\times\{\xi\})=G_h\times\{h\}.$$
		\item[(ii)] We have the direct sum decomposition \begin{equation}\label{Equation: New direct}T_{(g,h)}\Lambda_{\mathcal{C}}=T_{(g,h)}(G (g,h))\oplus T_{(g,h)}(G_h\times\{h\})\end{equation} for all $(g,h)\in\Lambda_{\mathcal{C}}$. The $G$-action on $\Lambda_{\mathcal{C}}$ respects these tangent space decompositions.
	\end{itemize} 
\end{lemma}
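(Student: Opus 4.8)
The plan is to derive both parts from the $G$-equivariant diffeomorphism $\varphi_{\mathcal{O}}^{\mathcal{C}}\colon\Lambda_{\mathcal{O}}\longrightarrow\Lambda_{\mathcal{C}}$ of \eqref{Equation: Equivariant diffeomorphism} together with Lemma~\ref{Lemma: Regular orbit}. For (i), the identity $\varphi_{\mathcal{O}}^{\mathcal{C}}(G(g,\xi))=G(g,h)$ is immediate from $G$-equivariance, since $\varphi_{\mathcal{O}}^{\mathcal{C}}$ carries the $G$-orbit of $(g,\xi)$ onto the $G$-orbit of $\varphi_{\mathcal{O}}^{\mathcal{C}}(g,\xi)=(g,h)$. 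The real content is the second identity, which I would obtain by proving $G_{\xi}=G_h$. If $g'\in G_{\xi}$, i.e. $\mathrm{Ad}_{g'}(\xi)=\xi$, then $g'hg'^{-1}=g'\exp(\frac{1}{k}\xi)g'^{-1}=\exp(\frac{1}{k}\mathrm{Ad}_{g'}(\xi))=h$, so $G_{\xi}\subseteq G_h$. Since $\mathcal{O}$ is a regular orbit we have $\xi\in\mathfrak{g}_{\mathrm{reg}}$, so $G_{\xi}$ is a maximal torus; since $\mathcal{C}$ is a regular conjugacy class we have $h\in G_{\mathrm{reg}}$, so $G_h$ is a maximal torus. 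Both are connected and $r$-dimensional, hence $G_{\xi}$ is open in $G_h$ and therefore $G_{\xi}=G_h$. As $\varphi_{\mathcal{O}}^{\mathcal{C}}(g',\xi)=(g',h)$ for every $g'\in G_{\xi}=G_h$, this gives $\varphi_{\mathcal{O}}^{\mathcal{C}}(G_{\xi}\times\{\xi\})=G_h\times\{h\}$.

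For (ii), I would transport the decomposition \eqref{Equation: Tangent space decomposition} through $\varphi_{\mathcal{O}}^{\mathcal{C}}$. Its differential $(d\varphi_{\mathcal{O}}^{\mathcal{C}})_{(g,\xi)}$ is a linear isomorphism $T_{(g,\xi)}\Lambda_{\mathcal{O}}\overset{\cong}\longrightarrow T_{(g,h)}\Lambda_{\mathcal{C}}$, and since by part (i) $\varphi_{\mathcal{O}}^{\mathcal{C}}$ restricts to diffeomorphisms $G(g,\xi)\overset{\cong}\longrightarrow G(g,h)$ and $G_{\xi}\times\{\xi\}\overset{\cong}\longrightarrow G_h\times\{h\}$, it restricts to isomorphisms $T_{(g,\xi)}(G(g,\xi))\overset{\cong}\longrightarrow T_{(g,h)}(G(g,h))$ and $T_{(g,\xi)}(G_{\xi}\times\{\xi\})\overset{\cong}\longrightarrow T_{(g,h)}(G_h\times\{h\})$. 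Applying $(d\varphi_{\mathcal{O}}^{\mathcal{C}})_{(g,\xi)}$ to \eqref{Equation: Tangent space decomposition}, and using that an isomorphism carries a direct sum decomposition to a direct sum decomposition, yields $T_{(g,h)}\Lambda_{\mathcal{C}}=T_{(g,h)}(G(g,h))\oplus T_{(g,h)}(G_h\times\{h\})$, which is \eqref{Equation: New direct}. Finally, because $\varphi_{\mathcal{O}}^{\mathcal{C}}$ intertwines the $G$-actions on $\Lambda_{\mathcal{O}}$ and $\Lambda_{\mathcal{C}}$, the compatibility of the $G$-action on $\Lambda_{\mathcal{O}}$ with \eqref{Equation: Tangent space decomposition} (Lemma~\ref{Lemma: Regular orbit}) transports to the assertion that the $G$-action on $\Lambda_{\mathcal{C}}$ respects \eqref{Equation: New direct}.

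This is essentially a transport-of-structure argument, so I do not expect a serious obstacle; the one point meriting care is the equality $G_{\xi}=G_h$, which relies on identifying both centralizers as maximal tori — in particular as connected groups — a consequence of the regularity of $\xi\in\mathfrak{g}$ and of $h\in G$. It is also worth recalling that the existence and uniqueness of the $\xi$ appearing in the statement is precisely the bijectivity of $\varphi_{\mathcal{O}}^{\mathcal{C}}$ recorded with \eqref{Equation: Equivariant diffeomorphism}.
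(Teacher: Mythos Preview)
Your proof is correct and follows essentially the same approach as the paper: for (i) both use $G$-equivariance for the orbit identity and the equality $G_{\xi}=G_h$ for the second (the paper writes this as $G_{\xi}=G_{\frac{1}{k}\xi}=G_{\exp(\frac{1}{k}\xi)}=G_h$, while you justify it via the inclusion and maximal-torus argument), and for (ii) both transport Lemma~\ref{Lemma: Regular orbit} through the equivariant diffeomorphism $\varphi_{\mathcal{O}}^{\mathcal{C}}$. Your version simply spells out in more detail what the paper leaves implicit.
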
 

\begin{proof}
	Since $\varphi_{\mathcal{O}}^{\mathcal{C}}$ is $G$-equivariant and satisfies $(g,h)=\varphi_{\mathcal{O}}^{\mathcal{C}}(g,\xi)$, we must have $\varphi_{\mathcal{O}}^{\mathcal{C}}(G(g,\xi))=G(g,h)$. The second identity $\varphi_{\mathcal{O}}^{\mathcal{C}}(G_{\xi}\times\{\xi\})=G_h\times\{h\}$ follows from the definition of $\varphi_{\mathcal{O}}^{\mathcal{C}}$ and the observation that $$G_{\xi}=G_{\frac{1}{k}\xi}=G_{\exp\left(\frac{1}{k}\xi\right)}=G_h.$$ This proves (i). The assertion (ii) is a consequence of (i) and Lemma \ref{Lemma: Regular orbit}.
\end{proof}

Now recall the half-density $\rho_{\mathcal{O}}$ on $\Lambda_{\mathcal{O}}$ constructed in Section \ref{Subsection: Half-density}. Let $\rho_{\mathcal{C}}$ denote the half-density on $\Lambda_{\mathcal{C}}$ corresponding to $\rho_{\mathcal{O}}$ under the diffeomorphism \eqref{Equation: Equivariant diffeomorphism}. This new half-density admits the following description. 
To prepare for it, we invite the reader to recall the notation and discussion in Sections \ref{Subsection: Adjoint} and \ref{Subsection: Haar}.

\begin{proposition}\label{Proposition: Half-density on conjugacy class}
	Fix $(g,h)\in\Lambda_{\mathcal{C}}$ and let $\xi\in\mathcal{O}$ be the unique element satisfying $\exp(\frac{1}{k}\xi)=h$. Assume that $\{\eta_1,\ldots,\eta_{n-r}\}$ and $\{\zeta_1,\ldots,\zeta_r\}$ are bases of $\mathfrak{g}_{\xi}^{\perp}$ and $\mathfrak{g}_{\xi}$, respectively. Write $\widetilde{\eta_j}$ for the fundamental vector field on $D(G)$ associated to $\eta_j$, i.e. $$\widetilde{\eta_j}:=(\eta_j)_{D(G)}$$ for all $j\in\{1,\ldots,n-r\}$. Let us also define the vector $$(\dot{\zeta_k})_{(g,h)}:=((dL_g)_e(\zeta_k),0)\in T_gG_{\xi}\oplus\{0\}=T_{(g,\xi)}(G_{h}\times\{h\})\subseteq T_{(g,h)}\Lambda_{\mathcal{C}}$$ for each $k\in\{1,\ldots,r\}$. The following statements then hold.
	\begin{itemize}
		\item[(i)] The sets 
		$$\{(\widetilde{\eta_1})_{(g,h)},\ldots,(\widetilde{\eta_{n-r}})_{(g,h)}\}\quad\text{and}\quad\{(\dot{\zeta_1})_{(g,h)},\ldots,(\dot{\zeta_r})_{(g,h)}\}$$ are bases of $T_{(g,h)}(G(g,h))$ and $T_{(g,h)}(G_{h}\times\{h\})$, respectively.
		\item[(ii)] The set
		$$\{(\widetilde{\eta_1})_{(g,h)},\ldots,(\widetilde{\eta_{n-r}})_{(g,h)},(\dot{\zeta_1})_{(g,h)},\ldots,(\dot{\zeta_r})_{(g,h)}\}$$ is a basis of $T_{(g,h)}\Lambda_{\mathcal{C}}$.
		\item[(iii)] The value of $(\rho_{\mathcal{C}})_{(g,h)}$ on this basis is
		$$\kappa(G)^{\frac{1}{2}}\cdot\big\vert\Omega_{\xi}\big(\eta_1,\ldots,\eta_{n-r}\big)\big\vert^{\frac{1}{2}}.$$
	\end{itemize} 
\end{proposition}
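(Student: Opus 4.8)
The plan is to transport everything through the $G$-equivariant diffeomorphism $\varphi_{\mathcal{O}}^{\mathcal{C}}:\Lambda_{\mathcal{O}}\longrightarrow\Lambda_{\mathcal{C}}$ of \eqref{Equation: Equivariant diffeomorphism} and to invoke the corresponding description of $\rho_{\mathcal{O}}$ in Proposition~\ref{Proposition: Half-density description}. Since $\rho_{\mathcal{C}}$ is by definition the half-density $(\varphi_{\mathcal{O}}^{\mathcal{C}})_*\rho_{\mathcal{O}}$, the whole statement reduces to two things: identifying the differential of $\varphi_{\mathcal{O}}^{\mathcal{C}}$ on the two summands of the decomposition \eqref{Equation: New direct}, and using the naturality of half-densities under diffeomorphisms. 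Throughout, $(g,\xi)\in\Lambda_{\mathcal{O}}$ denotes the unique point with $\varphi_{\mathcal{O}}^{\mathcal{C}}(g,\xi)=(g,h)$; note that $(g,h)\in\Lambda_{\mathcal{C}}$ gives $g\in G_h=G_\xi$ (the equality of stabilizers is recorded in the proof of Lemma~\ref{Lemma: Compatibility}), so $(g,\xi)\in\Lambda_{\mathcal{O}}$ and this $\xi$ is precisely the element required in Proposition~\ref{Proposition: Half-density description}.

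First I would record the behaviour of $(d\varphi_{\mathcal{O}}^{\mathcal{C}})_{(g,\xi)}$ on the relevant vectors. Because $\varphi_{\mathcal{O}}^{\mathcal{C}}$ is $G$-equivariant, it carries the fundamental vector field $\overline{\eta_j}$ on $\Lambda_{\mathcal{O}}$ to the fundamental vector field $\widetilde{\eta_j}$ on $\Lambda_{\mathcal{C}}$, so $(d\varphi_{\mathcal{O}}^{\mathcal{C}})_{(g,\xi)}$ sends each $(\overline{\eta_j})_{(g,\xi)}$ to $(\widetilde{\eta_j})_{(g,h)}$. On the other hand, writing $\varphi_{\mathcal{O}}^{\mathcal{C}}(a,b)=(a,\exp(\tfrac{1}{k}b))$ shows directly that $(d\varphi_{\mathcal{O}}^{\mathcal{C}})_{(g,\xi)}$ sends the vector $(\widehat{\zeta_k})_{(g,\xi)}=((dL_g)_e(\zeta_k),0)$ to $((dL_g)_e(\zeta_k),0)=(\dot{\zeta_k})_{(g,h)}$, the chain-rule factor $d(\exp\circ\tfrac{1}{k}(\cdot))_\xi$ being irrelevant since the $\mathfrak{g}$-component of $(\widehat{\zeta_k})_{(g,\xi)}$ vanishes. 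Combining these with Lemma~\ref{Lemma: Compatibility}(i), which matches $G(g,\xi)$ with $G(g,h)$ and $G_\xi\times\{\xi\}$ with $G_h\times\{h\}$, parts (i) and (ii) of the proposition follow at once from parts (i) and (ii) of Proposition~\ref{Proposition: Half-density description}; alternatively (ii) follows from (i) together with the direct sum decomposition \eqref{Equation: New direct} of Lemma~\ref{Lemma: Compatibility}(ii).

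For part (iii) I would apply the defining property of the pushforward of a half-density: for a diffeomorphism $F$ and a basis $(v_1,\dots,v_m)$ of a tangent space in the source, $(F_*\rho)(dF(v_1),\dots,dF(v_m))=\rho(v_1,\dots,v_m)$. Taking $F=\varphi_{\mathcal{O}}^{\mathcal{C}}$, $\rho=\rho_{\mathcal{O}}$, and the basis of $T_{(g,\xi)}\Lambda_{\mathcal{O}}$ from Proposition~\ref{Proposition: Half-density description}(ii), the previous paragraph identifies its image under $dF$ with the basis of $T_{(g,h)}\Lambda_{\mathcal{C}}$ from part (ii). Hence $(\rho_{\mathcal{C}})_{(g,h)}$ on the latter basis equals $(\rho_{\mathcal{O}})_{(g,\xi)}$ on the former, which by Proposition~\ref{Proposition: Half-density description}(iii) is $\kappa(G)^{\frac{1}{2}}\cdot\vert\Omega_{\xi}(\eta_1,\dots,\eta_{n-r})\vert^{\frac{1}{2}}$, as claimed.

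The proof is essentially bookkeeping transported verbatim from Section~\ref{Subsection: Half-density}; the only point demanding care — and the closest thing to an obstacle — is ensuring the two roles of $\xi$ coincide (so that no rescaling of $\Omega$ enters) and that the derivative of $\exp\circ\tfrac{1}{k}(\cdot)$ never contributes, which holds because the stabilizer-direction vectors have vanishing $\mathfrak{g}$-component while the orbit-direction vectors are handled by equivariance alone.
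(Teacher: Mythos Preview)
Your proposal is correct and follows essentially the same approach as the paper: both compute the differential of $\varphi_{\mathcal{O}}^{\mathcal{C}}$ on the two families of vectors (using $G$-equivariance for the fundamental vector fields and a direct inspection for the $(\widehat{\zeta_k})$'s), then invoke Proposition~\ref{Proposition: Half-density description} and Lemma~\ref{Lemma: Compatibility}. Your write-up is in fact slightly more explicit than the paper's, spelling out why no Jacobian factor from $\exp(\tfrac{1}{k}\,\cdot\,)$ appears and how the pushforward of a half-density is evaluated.
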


\begin{proof}
	Recall the notation in Proposition \ref{Proposition: Half-density description} and consider the differential of $\varphi_{\mathcal{O}}^{\mathcal{C}}$ at $(g,\xi)\in\Lambda_{\mathcal{O}}$, i.e.
	$$(d\varphi_{\mathcal{O}}^{\mathcal{C}})_{(g,\xi)}:T_{(g,\xi)}\Lambda_{\mathcal{O}}\overset{\cong}\longrightarrow T_{(g,h)}\Lambda_{\mathcal{C}}.$$ Since $\varphi_{\mathcal{O}}^{\mathcal{C}}$ is $G$-equivariant, we must have
	$$(d\varphi_{\mathcal{O}}^{\mathcal{C}})_{(g,\xi)}((\overline{\eta_j})_{(g,\xi)})=(\widetilde{\eta_j})_{(g,h)}$$ for all $j\in\{1,\ldots,n-r\}$. It is also straightforward to verify that
	$$(d\varphi_{\mathcal{O}}^{\mathcal{C}})_{(g,\xi)}((\widehat{\zeta_k})_{(g,\xi)})=(\dot{\zeta_k})_{(g,h)}$$ for all $k\in\{1,\ldots,r\}$. The desired results now follow from Proposition \ref{Proposition: Half-density description} and Lemma \ref{Lemma: Compatibility}. 
\end{proof}

\begin{corollary}\label{Corollary: Compatibility}
	We have
	\begin{equation}\label{Equation: New new direct}T_{(g,h)}R(\Lambda_{\mathcal{C}})=T_{(g,h)}(G (g,h))\oplus T_{(g,h)}(R(G_{hg}\times\{hg\}))\end{equation} for all $(g,h)\in R(\Lambda_{\mathcal{C}})$. The $G$-action on $R(\Lambda_{\mathcal{C}})$ respects these tangent space decompositions.
\end{corollary}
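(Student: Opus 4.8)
The plan is to transport the decomposition of Lemma~\ref{Lemma: Compatibility}(ii) across the diffeomorphism $R$. The first step is to identify $R^{-1}$: since $R(g',h')=(g',h'(g')^{-1})$, a point $(g,h)$ lies in $R(\Lambda_{\mathcal{C}})$ precisely when $(g,hg)\in\Lambda_{\mathcal{C}}$, that is, $R^{-1}(g,h)=(g,hg)$. In particular $(g,hg)\in\Lambda_{\mathcal{C}}$, so Lemma~\ref{Lemma: Compatibility}(ii) applies at the point $(g,hg)$ and gives
$$T_{(g,hg)}\Lambda_{\mathcal{C}}=T_{(g,hg)}(G(g,hg))\oplus T_{(g,hg)}(G_{hg}\times\{hg\}).$$

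Next I would apply the linear isomorphism $(dR)_{(g,hg)}\colon T_{(g,hg)}D(G)\overset{\cong}\longrightarrow T_{(g,h)}D(G)$ to this equation. Because $R$ restricts to a diffeomorphism $\Lambda_{\mathcal{C}}\overset{\cong}\longrightarrow R(\Lambda_{\mathcal{C}})$, the left-hand side is carried to $T_{(g,h)}R(\Lambda_{\mathcal{C}})$. Because $R$ is $G$-equivariant, it sends the orbit $G(g,hg)$ to $G\cdot R(g,hg)=G(g,h)$, so the first summand is carried to $T_{(g,h)}(G(g,h))$; and the second summand is carried, by definition of the image, to $T_{(g,h)}(R(G_{hg}\times\{hg\}))$. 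Since $(dR)_{(g,hg)}$ is an isomorphism, it takes a direct sum decomposition to a direct sum decomposition, which yields \eqref{Equation: New new direct}. I would also note in passing that $(g,h)=R(g,hg)$ does lie in $R(G_{hg}\times\{hg\})$ since $g\in G_{hg}$, so that $R(G_{hg}\times\{hg\})$ is a submanifold through $(g,h)$ and the second summand is legitimately based at $(g,h)$.

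Finally, for the equivariance assertion I would invoke that Lemma~\ref{Lemma: Compatibility}(ii) already records that the $G$-action on $\Lambda_{\mathcal{C}}$ respects its tangent-space decompositions, together with the fact that $R$ intertwines the $G$-action on $\Lambda_{\mathcal{C}}$ with the $G$-action on $R(\Lambda_{\mathcal{C}})$; transporting the equivariance statement along $R$ then shows that the $G$-action on $R(\Lambda_{\mathcal{C}})$ respects the decompositions \eqref{Equation: New new direct}. I do not expect any real obstacle here: the entire argument is a routine transport of structure, and the only point needing a moment's care is the explicit computation $R^{-1}(g,h)=(g,hg)$ and the observation that $R(G_{hg}\times\{hg\})$ passes through $(g,h)$, both of which are immediate.
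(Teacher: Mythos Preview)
Your proposal is correct and follows essentially the same approach as the paper: both compute $R^{-1}(g,h)=(g,hg)$, apply Lemma~\ref{Lemma: Compatibility}(ii) at $(g,hg)$, and then push the resulting decomposition forward along the $G$-equivariant diffeomorphism $R$ to obtain \eqref{Equation: New new direct} and its $G$-compatibility. Your extra remark that $R(G_{hg}\times\{hg\})$ passes through $(g,h)$ is a nice sanity check the paper leaves implicit.
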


\begin{proof}
	Suppose that $(g,h)\in R(\Lambda_{\mathcal{C}})$. We then have $R^{-1}(g,h)=(g,hg)\in\Lambda_{\mathcal{C}}$, in which context Lemma \ref{Lemma: Compatibility}(ii) yields
	$$T_{(g,hg)}\Lambda_{\mathcal{C}}=T_{(g,hg)}(G (g,hg))\oplus T_{(g,hg)}(G_{hg}\times\{hg\}).$$ It follows that
	$$T_{(g,h)}R(\Lambda_{\mathcal{C}})=T_{(g,h)}(R(G(g,hg)))\oplus T_{(g,h)}(R(G_{hg}\times\{hg\})).$$ On the other hand, the $G$-equivariance of $R$ implies that $R(G(g,hg))=G (g,h)$. Hence $$T_{(g,h)}R(\Lambda_{\mathcal{C}})=T_{(g,h)}(G (g,h))\oplus T_{(g,h)}(R(G_{hg}\times\{hg\})),$$ as desired.
	
	The arguments in the previous paragraph lead to the following observation: the tangent space decompositions \eqref{Equation: New new direct} are the result of applying $R$ to the decompositions \eqref{Equation: New direct}. This combines with Lemma \ref{Lemma: Compatibility}(ii) and the fact that $R$ is a $G$-equivariant diffeomorphism to imply that $G$ respects the decompositions \eqref{Equation: New new direct}. The proof is therefore complete. 
\end{proof}

\begin{lemma}\label{Lemma: Computation}
	Fix $(g,h)\in D(G)$ and make the identifications
	$$T_{(g,h)}(D(G))=T_gG\oplus T_hG\quad\text{and}\quad T_{(g,hg)}(D(G))=T_gG\oplus T_{hg}G.$$ We then have $$(dR)_{(g,hg)}((dL_g)_e(\zeta),0)=((dL_g)_e(\zeta),-(dL_h)_e(\mathrm{Ad}_g(\zeta)))$$ for all $\zeta\in\mathfrak{g}$, where
	$$(dR)_{(g,hg)}:T_gG\oplus T_{hg}G\longrightarrow T_gG\oplus T_hG$$ is the differential of $R$ at $(g,hg)$.
\end{lemma}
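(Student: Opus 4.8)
The plan is to prove the formula by a direct computation: choose an explicit curve in $D(G)$ through $(g,hg)$ whose velocity at time $0$ is the prescribed vector, apply $R$, and differentiate.

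First I would fix $\zeta\in\mathfrak{g}$ and consider the curve $\sigma(t):=(g\exp(t\zeta),hg)$ in $D(G)$. It satisfies $\sigma(0)=(g,hg)$, and under the identification $T_{(g,hg)}(D(G))=T_gG\oplus T_{hg}G$ one has $\sigma'(0)=((dL_g)_e(\zeta),0)$. Since $R(a,b)=(a,ba^{-1})$, we get $R(\sigma(t))=(g\exp(t\zeta),\,hg\exp(-t\zeta)g^{-1})$, so $(dR)_{(g,hg)}((dL_g)_e(\zeta),0)$ is the velocity of this curve at $t=0$. The first coordinate is immediate, namely $\tfrac{d}{dt}\big\vert_{t=0}\,g\exp(t\zeta)=(dL_g)_e(\zeta)$.

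The key step is the second coordinate $c(t):=hg\exp(-t\zeta)g^{-1}$. Here I would use the conjugation identity $g\exp(-t\zeta)g^{-1}=\exp(-t\,\mathrm{Ad}_g(\zeta))$ to rewrite $c(t)=h\exp(-t\,\mathrm{Ad}_g(\zeta))=L_h\big(\exp(-t\,\mathrm{Ad}_g(\zeta))\big)$, which is a single left translation of a one-parameter subgroup. Differentiating at $t=0$ and using $\tfrac{d}{dt}\big\vert_{t=0}\exp(-t\,\mathrm{Ad}_g(\zeta))=-\mathrm{Ad}_g(\zeta)\in T_eG$ then gives $c'(0)=(dL_h)_e(-\mathrm{Ad}_g(\zeta))=-(dL_h)_e(\mathrm{Ad}_g(\zeta))$. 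Assembling the two coordinates yields exactly the asserted formula.

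I do not expect any real obstacle; the only thing requiring care is bookkeeping the tangent-space identifications and applying the chain rule in the correct order. Rewriting $c(t)$ as the single left translation $L_h\circ\exp(-t\,\mathrm{Ad}_g(\cdot))$, rather than as a composite of two left translations and a right translation, is precisely what keeps the computation transparent and sidesteps that pitfall.
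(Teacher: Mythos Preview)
Your proof is correct and follows exactly the same approach as the paper: choose the curve $t\mapsto(g\exp(t\zeta),hg)$, apply $R$, rewrite the second coordinate via $g\exp(-t\zeta)g^{-1}=\exp(-t\,\mathrm{Ad}_g(\zeta))$, and differentiate. The paper's proof is just the four-line chain of equalities you have described.
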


\begin{proof}
	We have
	\begin{align*}
	(dR)_{(g,hg)}((dL_g)_e(\zeta),0) & = \frac{d}{dt}\bigg\vert_{t=0}R(g\exp(t\zeta),hg)\\
	& = \frac{d}{dt}\bigg\vert_{t=0}(g\exp(t\zeta),hg\exp(t\zeta)^{-1}g^{-1})\\
	& = \frac{d}{dt}\bigg\vert_{t=0}(g\exp(t\zeta),h\exp(-t\mathrm{Ad}_g(\zeta)))\\
	& = ((dL_g)_e(\zeta),-(dL_h)_e(\mathrm{Ad}_g(\zeta))).
	\end{align*}
\end{proof}

Now note that $R$ identifies $\rho_{\mathcal{C}}$ with a half-density $\nu_{\mathcal{C}}$ on $R(\Lambda_{\mathcal{C}})$. This leads to the following result. 

\begin{proposition}\label{Proposition: Half-density on shifted conjugacy class}
	Fix $(g,h)\in R(\Lambda_{\mathcal{C}})$ and let $\xi\in\mathcal{O}$ be the unique element satisfying $\exp(\frac{1}{k}\xi)=hg$. Assume that $\{\eta_1,\ldots,\eta_{n-r}\}$ and $\{\zeta_1,\ldots,\zeta_r\}$ are bases of $\mathfrak{g}_{\xi}^{\perp}$ and $\mathfrak{g}_{\xi}$, respectively. Write $\widetilde{\eta_j}$ for the fundamental vector field on $D(G)$ associated to $\eta_j$, i.e. $$\widetilde{\eta_j}:=(\eta_j)_{D(G)}$$ for all $j\in\{1,\ldots,n-r\}$. Let us also define the vector $$(\zeta_k')_{(g,h)}:=((dL_g)_e(\zeta_k),-(dL_h)_e(\zeta_k))\in T_{(g,h)}R(G_{hg}\times\{hg\})\subseteq T_{(g,h)}R(\Lambda_{\mathcal{C}})$$ for each $k\in\{1,\ldots,r\}$. The following statements then hold.
	\begin{itemize}
		\item[(i)] The sets 
		$$\{(\widetilde{\eta_1})_{(g,h)},\ldots,(\widetilde{\eta_{n-r}})_{(g,h)}\}\quad\text{and}\quad\{(\zeta_1')_{(g,h)},\ldots,(\zeta_r')_{(g,h)}\}$$ are bases of $T_{(g,h)}(G(g,h))$ and $T_{(g,h)}R(G_{hg}\times\{hg\})$, respectively.
		\item[(ii)] The set
		$$\{(\widetilde{\eta_1})_{(g,h)},\ldots,(\widetilde{\eta_{n-r}})_{(g,h)},(\zeta_1')_{(g,h)},\ldots,(\zeta_r')_{(g,h)}\}$$ is a basis of $T_{(g,h)}R(\Lambda_{\mathcal{C}})$.
		\item[(iii)] The value of $(\nu_{\mathcal{C}})_{(g,h)}$ on this basis is
		$$\kappa(G)^{\frac{1}{2}}\cdot\big\vert\Omega_{\xi}\big(\eta_1,\ldots,\eta_{n-r}\big)\big\vert^{\frac{1}{2}}.$$
	\end{itemize} 
\end{proposition}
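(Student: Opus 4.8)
The plan is to deduce all three parts by transporting Proposition~\ref{Proposition: Half-density on conjugacy class} through the diffeomorphism $R$. First I would observe that $R^{-1}(g,h)=(g,hg)$, so $(g,hg)\in\Lambda_{\mathcal{C}}$ and the element $\xi\in\mathcal{O}$ with $\exp(\tfrac1k\xi)=hg$ places us exactly in the setting of Proposition~\ref{Proposition: Half-density on conjugacy class} at the point $(g,hg)$. That proposition furnishes a basis $\{(\widetilde{\eta_1})_{(g,hg)},\ldots,(\widetilde{\eta_{n-r}})_{(g,hg)},(\dot\zeta_1)_{(g,hg)},\ldots,(\dot\zeta_r)_{(g,hg)}\}$ of $T_{(g,hg)}\Lambda_{\mathcal{C}}$, with the $(\widetilde{\eta_j})_{(g,hg)}$ spanning $T_{(g,hg)}(G(g,hg))$ and the $(\dot\zeta_k)_{(g,hg)}$ spanning $T_{(g,hg)}(G_{hg}\times\{hg\})$, and on which $(\rho_{\mathcal{C}})_{(g,hg)}$ takes the value $\kappa(G)^{\frac12}\,\bigl|\Omega_\xi(\eta_1,\ldots,\eta_{n-r})\bigr|^{\frac12}$. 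I would then apply the linear isomorphism $(dR)_{(g,hg)}$ to this basis and show that the result is exactly the set in part~(ii), compatibly with the decomposition \eqref{Equation: New new direct} supplied by Corollary~\ref{Corollary: Compatibility}.

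The key step is computing $(dR)_{(g,hg)}$ on the two families. For the fundamental vectors, $G$-equivariance of $R$ gives at once $(dR)_{(g,hg)}((\widetilde{\eta_j})_{(g,hg)})=(\widetilde{\eta_j})_{(g,h)}$ and carries $T_{(g,hg)}(G(g,hg))$ onto $T_{(g,h)}(G(g,h))$. For the $\dot\zeta_k$, Lemma~\ref{Lemma: Computation} yields $(dR)_{(g,hg)}((dL_g)_e(\zeta_k),0)=\bigl((dL_g)_e(\zeta_k),-(dL_h)_e(\mathrm{Ad}_g(\zeta_k))\bigr)$. Here I would note that $g$ commutes with $hg$, hence $g\in G_{hg}=G_\xi$, which is a maximal torus; since $G_\xi$ is abelian with Lie algebra $\mathfrak{g}_\xi$, the operator $\mathrm{Ad}_g$ restricts to the identity on $\mathfrak{g}_\xi$, so $\mathrm{Ad}_g(\zeta_k)=\zeta_k$ and therefore $(dR)_{(g,hg)}((\dot\zeta_k)_{(g,hg)})=(\zeta_k')_{(g,h)}$. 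This simultaneously identifies $T_{(g,h)}R(G_{hg}\times\{hg\})$ as the span of the $(\zeta_k')_{(g,h)}$, proving the second half of~(i).

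With these identifications in hand, parts~(i) and~(ii) follow because $(dR)_{(g,hg)}$ is a linear isomorphism sending the adapted basis of $T_{(g,hg)}\Lambda_{\mathcal{C}}$ and the decomposition \eqref{Equation: New direct} to the stated set and to \eqref{Equation: New new direct}. For part~(iii), I would invoke that $\nu_{\mathcal{C}}$ is by construction the half-density obtained from $\rho_{\mathcal{C}}$ via $R$, so that $(\nu_{\mathcal{C}})_{(g,h)}$ evaluated on the $(dR)_{(g,hg)}$-image of any frame of $T_{(g,hg)}\Lambda_{\mathcal{C}}$ equals $(\rho_{\mathcal{C}})_{(g,hg)}$ evaluated on that frame; applying this to the frame above together with Proposition~\ref{Proposition: Half-density on conjugacy class}(iii) gives the value $\kappa(G)^{\frac12}\,\bigl|\Omega_\xi(\eta_1,\ldots,\eta_{n-r})\bigr|^{\frac12}$.

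The only nonroutine point — the main obstacle, such as it is — is the simplification $\mathrm{Ad}_g(\zeta_k)=\zeta_k$: one has to recognize that the naive output of Lemma~\ref{Lemma: Computation} already coincides with the vector $(\zeta_k')_{(g,h)}$ named in the statement, and this rests precisely on $g$ lying in the abelian group $G_\xi$ whose Lie algebra is $\mathfrak{g}_\xi$. Everything else (the behaviour of half-densities as a plain carry-over along $dR$-related frames, and the fact that $R$ respects the orbit/fibre decompositions, the latter already recorded in Corollary~\ref{Corollary: Compatibility} and Lemma~\ref{Lemma: Compatibility}) is bookkeeping.
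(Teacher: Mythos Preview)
Your proposal is correct and follows essentially the same approach as the paper: transport Proposition~\ref{Proposition: Half-density on conjugacy class} through $(dR)_{(g,hg)}$, using $G$-equivariance of $R$ for the fundamental vectors and Lemma~\ref{Lemma: Computation} for the $\dot\zeta_k$, then invoke Corollary~\ref{Corollary: Compatibility}. In fact you are slightly more explicit than the paper, which applies Lemma~\ref{Lemma: Computation} and asserts $(dR)_{(g,hg)}((\dot\zeta_k)_{(g,hg)})=(\zeta_k')_{(g,h)}$ without spelling out the simplification $\mathrm{Ad}_g(\zeta_k)=\zeta_k$; your justification via $g\in G_{hg}=G_\xi$ abelian is exactly the missing sentence.
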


\begin{proof}
	Recall the notation used in Proposition \ref{Proposition: Half-density on conjugacy class}. Lemma \ref{Lemma: Computation} then yields
	$$(dR)_{(g,hg)}((\dot{\zeta_k})_{(g,hg)})=(\zeta_k')_{(g,h)}$$ for all $k\in\{1,\ldots,r\}$. At the same time, the $G$-equivariance of $R$ implies that
	$$(dR)_{(g,hg)}((\widetilde{\eta}_j)_{(g,hg)})=(\widetilde{\eta}_j)_{(g,h)}$$ for all $j\in\{1,\ldots,n-r\}$. The desired results now follow from Proposition \ref{Proposition: Half-density on conjugacy class} and Corollary \ref{Corollary: Compatibility}.
\end{proof}

\subsection{Phase functions on $R(\Lambda_{\mathcal{C}})$ and $\Lambda_{\mathcal{C}'}$}\label{Subsection: Phase functions}
Fix a positive integer $k$ and let $\mathcal{C}\subseteq G$ be a regular, $\frac{1}{k}$-integral conjugacy class. Let $\mathcal{O}\subseteq\mathfrak{g}$ be the adjoint orbit of $k\beta(\mathcal{C})$, and recall the $G$-equivariant diffeomorphism $\varphi_{\mathcal{O}}^{\mathcal{C}}:\Lambda_{\mathcal{O}}\overset{\cong}\longrightarrow\Lambda_{\mathcal{C}}$ defined in \eqref{Equation: Equivariant diffeomorphism}. Let us also recall the $G$-invariant phase function $\psi_{\mathcal{O}}:\Lambda_{\mathcal{O}}\longrightarrow S^1$ considered in Section \ref{Subsection: Phase}. We then have $\psi_{\mathcal{O}}=(\varphi_{\mathcal{O}}^{\mathcal{C}})^*\psi_{\mathcal{C}}$, where $\psi_{\mathcal{C}}:\Lambda_{\mathcal{C}}\longrightarrow S^1$ is the $G$-invariant map defined as follows:
$$\psi_{\mathcal{C}}(g,h)=e^{2\pi i\langle\xi,\eta\rangle},$$
where $\xi\in\mathcal{O}$ is the unique element for which $\exp(\frac{1}{k}\xi)=h$ and $\eta\in\mathfrak{g}_{\xi}$ satisfies $g=\exp(\eta)$.
At the same time, one can pull $\psi_{\mathcal{C}}$ back along the $G$-equivariant diffeomorphism
$$R^{-1}\bigg\vert_{R(\Lambda_{\mathcal{C}})}:R(\Lambda_{\mathcal{C}})\overset{\cong}\longrightarrow\Lambda_{\mathcal{C}}$$ to obtain a $G$-invariant map $\vartheta_{\mathcal{C}}:R(\Lambda_{\mathcal{C}})\longrightarrow S^1$. It follows that
$$\vartheta_{\mathcal{C}}(g,h)=\psi_{\mathcal{C}}(g,hg)$$ for all $(g,h)\in R(\Lambda_{\mathcal{C}})$.

\subsection{Some technical lemmas}
Fix a positive integer $k$ and let $\mathcal{C},\mathcal{C}'\subseteq G$ be regular, $\frac{1}{k}$-integral conjugacy classes. Write $\mathcal{O},\mathcal{O}'\subseteq\mathfrak{g}$ for the adjoint orbits of $k\beta(\mathcal{C}),k\beta(\mathcal{C}')$, respectively. 
Section \ref{Subsection: Half-densities} uses this information to construct half-densities $\nu_{\mathcal{C}}$ and $\rho_{\mathcal{C}'}$ on $R(\Lambda_{\mathcal{C}})$ and $\Lambda_{\mathcal{C}'}$, respectively. Let us also recall that $R(\Lambda_{\mathcal{C}})$ and $\Lambda_{\mathcal{C}'}$ are cleanly intersecting Lagrangian submanifolds of $D(G)$ (see Lemma \ref{Lemma: Clean intersection}). It now follows from Section \ref{Subsection: The BKS density} that $\nu_{\mathcal{C}}$ and $\rho_{\mathcal{C}'}$ determine a density $D(\nu_{\mathcal{C}},\rho_{\mathcal{C}'})$ on $R(\Lambda_{\mathcal{C}})\cap\Lambda_{\mathcal{C}'}$. Note that    
\begin{equation}\label{Equation: Fact}D(\nu_{\mathcal{C}},\rho_{\mathcal{C}'})_x=\Phi_x((\nu_{\mathcal{C}})_x\otimes (\rho_{\mathcal{C}'})_x)\end{equation}
for all $x\in R(\Lambda_{\mathcal{C}})\cap\Lambda_{\mathcal{C}'}$, where
\begin{equation}\label{Equation: The map}\Phi_x:\vert T_xR(\Lambda_{\mathcal{C}})\vert^{\frac{1}{2}}\otimes\vert T_x\Lambda_{\mathcal{C}'}\vert^{\frac{1}{2}}\longrightarrow\vert T_x(R(\Lambda_{\mathcal{C}})\cap\Lambda_{\mathcal{C}'})\vert\end{equation}
is the map \eqref{Equation: Canonical isomorphism}.

In what follows, we describe the density $D(\nu_{\mathcal{C}},\rho_{\mathcal{C}'})$ on $R(\Lambda_{\mathcal{C}})\cap\Lambda_{\mathcal{C}'}$. 

\begin{lemma}\label{Lemma: G-invariance of BKS density}
The density $D(\nu_{\mathcal{C}},\rho_{\mathcal{C}'})$ is invariant under the action of $G$ on $R(\Lambda_{\mathcal{C}})\cap\Lambda_{\mathcal{C}'}$.
\end{lemma}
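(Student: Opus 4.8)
The plan is to verify that the density $D(\nu_{\mathcal{C}},\rho_{\mathcal{C}'})$ is preserved by each $h\in G$ by checking the claim pointwise: fix $x\in R(\Lambda_{\mathcal{C}})\cap\Lambda_{\mathcal{C}'}$ and set $x':=h\cdot x$, and show that the pushforward of $D(\nu_{\mathcal{C}},\rho_{\mathcal{C}'})_x$ along $(dh)_x$ equals $D(\nu_{\mathcal{C}},\rho_{\mathcal{C}'})_{x'}$. Since the intersection $R(\Lambda_{\mathcal{C}})\cap\Lambda_{\mathcal{C}'}$ is $G$-invariant and $x'$ again lies in it, this is well-posed. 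Using the formula \eqref{Equation: Fact}, it suffices to know two things: first, that $(dh)_x$ intertwines the maps $\Phi_x$ and $\Phi_{x'}$ of \eqref{Equation: The map}, i.e. that the canonical isomorphism from linear BKS theory is natural with respect to linear symplectomorphisms carrying one pair of Lagrangians to another; and second, that $h$ carries $(\nu_{\mathcal{C}})_x\otimes(\rho_{\mathcal{C}'})_x$ to $(\nu_{\mathcal{C}})_{x'}\otimes(\rho_{\mathcal{C}'})_{x'}$.

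First I would dispatch the second point. The half-density $\rho_{\mathcal{C}'}$ is $G$-invariant because $\rho_{\mathcal{O}'}$ is $G$-invariant by Proposition \ref{Proposition: G-invariance of half-density} and the diffeomorphism $\varphi_{\mathcal{O}'}^{\mathcal{C}'}$ of \eqref{Equation: Equivariant diffeomorphism} is $G$-equivariant; hence $(dh)_x$ sends $(\rho_{\mathcal{C}'})_x$ to $(\rho_{\mathcal{C}'})_{x'}$. Likewise $\nu_{\mathcal{C}}$ is $G$-invariant: it is obtained from $\rho_{\mathcal{C}}$ by transporting along the $G$-equivariant diffeomorphism $R$, and $\rho_{\mathcal{C}}$ is $G$-invariant for the same reason as $\rho_{\mathcal{C}'}$. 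So $(dh)_x$ sends $(\nu_{\mathcal{C}})_x$ to $(\nu_{\mathcal{C}})_{x'}$. Consequently $(dh)_x$ sends $(\nu_{\mathcal{C}})_x\otimes(\rho_{\mathcal{C}'})_x$ to $(\nu_{\mathcal{C}})_{x'}\otimes(\rho_{\mathcal{C}'})_{x'}$ under the induced map on half-density tensor products.

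The naturality of $\Phi$ is the point to be careful about. Here $(dh)_x:T_x(D(G))\to T_{x'}(D(G))$ is a linear isomorphism that is symplectic for $(\omega_{D(G)})_x$ and $(\omega_{D(G)})_{x'}$ — this is because $h$ acts on $D(G)$ preserving $\omega_{D(G)}$, and both $x,x'\in\mu_{D(G)}^{-1}(e)_{\mathrm{reg}}\subseteq D(G)^{\circ}$ so the forms are non-degenerate there. Moreover $(dh)_x$ carries $T_xR(\Lambda_{\mathcal{C}})$ onto $T_{x'}R(\Lambda_{\mathcal{C}})$ and $T_x\Lambda_{\mathcal{C}'}$ onto $T_{x'}\Lambda_{\mathcal{C}'}$, since $R(\Lambda_{\mathcal{C}})$ and $\Lambda_{\mathcal{C}'}$ are $G$-invariant (the former by $G$-equivariance of $R$ and $G$-invariance of $\Lambda_{\mathcal{C}}$, the latter by Theorem \ref{Theorem: q-Lagrangian}); hence it carries $T_x(R(\Lambda_{\mathcal{C}})\cap\Lambda_{\mathcal{C}'})$ onto $T_{x'}(R(\Lambda_{\mathcal{C}})\cap\Lambda_{\mathcal{C}'})$. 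The appendix's construction of $\Phi$ (the map \eqref{Equation: Canonical isomorphism}) is canonical, built only from the symplectic form and the two Lagrangian subspaces; I would invoke the relevant naturality statement from the appendix (or, if it is not stated there explicitly, record it as an elementary observation: a symplectic isomorphism $A:(V,\omega)\to(V',\omega')$ with $A(L_i)=L_i'$ induces a commuting square relating $\Phi$ for $(V,L_1,L_2)$ and $\Phi$ for $(V',L_1',L_2')$ via the density maps $|A|$). Combining this commuting square with the first paragraph's computation gives $(dh)_x$-equivariance of \eqref{Equation: Fact}, i.e. $D(\nu_{\mathcal{C}},\rho_{\mathcal{C}'})_{x'}=(h_*D(\nu_{\mathcal{C}},\rho_{\mathcal{C}'}))_{x'}$, and since $x$ was arbitrary this proves the lemma. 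The only genuine obstacle is making sure the naturality of $\Phi$ is available in the precise form needed; everything else is bookkeeping with $G$-invariance already established earlier in the paper.
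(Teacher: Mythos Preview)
Your proposal is correct and follows essentially the same approach as the paper: establish $G$-invariance of $\nu_{\mathcal{C}}$ and $\rho_{\mathcal{C}'}$ by the equivariance arguments you give, and then appeal to the naturality of the BKS construction under $G$. The naturality step you flag as ``the only genuine obstacle'' is exactly what the paper packages as Lemma~\ref{Lemma: G-invariance of densities} in the appendix, so you can simply cite it rather than re-derive it.
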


\begin{proof}
Recall that $\rho_{\mathcal{O}'}$ is $G$-invariant (see Proposition \ref{Proposition: G-invariance of half-density}), and that $\rho_{\mathcal{C}'}$ corresponds to $\rho_{\mathcal{O}'}$ under the $G$-equivariant diffeomorphism $\varphi_{\mathcal{O}'}^{\mathcal{C}'}:\Lambda_{\mathcal{O}'}\overset{\cong}\longrightarrow\Lambda_{\mathcal{C}'}$ from \eqref{Equation: Equivariant diffeomorphism}. These considerations imply that $\rho_{\mathcal{C}'}$ is $G$-invariant, and an analogous argument establishes the invariance of $\rho_{\mathcal{C}}$. At the same time, we recall that $\nu_{\mathcal{C}}$ corresponds to $\rho_{\mathcal{C}}$ under the $G$-equivariant diffeomorphism 
$$R\vert_{\Lambda_{\mathcal{C}}}:\Lambda_{\mathcal{C}}\overset{\cong}\longrightarrow R(\Lambda_{\mathcal{C}}).$$ We conclude that $\nu_{\mathcal{C}}$ is also $G$-invariant, and the desired result now follows from Lemma \ref{Lemma: G-invariance of densities}. 
\end{proof}

We also need the following elementary lemma.

\begin{lemma}\label{Lemma: Stabilizers}
	If $g,h\in G$ are commuting elements for which $h\in G_{\emph{reg}}$ and $hg\in G_{\emph{reg}}$, then $G_{hg}=G_h$.
\end{lemma}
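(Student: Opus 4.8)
The plan is to reduce everything to the fact — already used in the proof of Lemma~\ref{Lemma: Maximal torus} — that the centralizer of a regular element of $G$ is a maximal torus, combined with the elementary observation that a closed subgroup of a torus having full dimension is the whole torus.

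First I would observe that $h\in G_{\text{reg}}$ forces $G_h$ to be a maximal torus, and in particular an abelian subgroup of $G$. Since $g$ commutes with $h$ we have $g\in G_h$, and hence $hg\in G_h$ as well, because $G_h$ is a group. As $G_h$ is abelian, every element of $G_h$ commutes with $hg$, which yields the inclusion $G_h\subseteq G_{hg}$.

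Next I would invoke the hypothesis $hg\in G_{\text{reg}}$: by the same fact, $G_{hg}$ is a maximal torus, so $G_{hg}$ is connected and $\dim G_{hg}=r=\dim G_h$. We therefore have two subgroups $G_h\subseteq G_{hg}$ of equal dimension; consequently $G_h$ is open in $G_{hg}$ and so contains the identity component of $G_{hg}$, which is all of $G_{hg}$ since $G_{hg}$ is connected. This gives $G_h=G_{hg}$, as desired.

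The one point requiring care — and the main (if modest) obstacle — is justifying that ``the centralizer of a regular element is a maximal torus'' rather than merely ``its identity component is a maximal torus''; this uses that $G$ is compact, connected, and simply-connected, so that centralizers of elements are connected, which is part of our standing hypotheses on $G$. I would also stress that regularity of $hg$, and not just of $h$, is genuinely needed: taking $g=h^{-1}$ gives $hg=e$ and $G_{hg}=G$, which strictly contains $G_h$, so the inclusion $G_h\subseteq G_{hg}$ cannot be reversed in general without this hypothesis.
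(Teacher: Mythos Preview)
Your proof is correct and follows essentially the same approach as the paper: both arguments use that the centralizer of a regular element is a maximal torus, establish one inclusion between $G_h$ and $G_{hg}$, and then conclude equality from both being maximal tori. The only cosmetic difference is the direction of the initial inclusion—you show $G_h\subseteq G_{hg}$ (by noting $hg\in G_h$ and $G_h$ abelian), whereas the paper shows $G_{hg}\subseteq G_h$ (by noting $h\in G_{hg}$ and $G_{hg}$ abelian); these are symmetric and equally valid.
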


\begin{proof}
	Note that $G_{hg}$ is a maximal torus in $G$. We also have $h\in G_{hg}$, owing to the fact that $h$ and $g$ commute. These last two sentences force $G_{hg}\subseteq G_h$ to hold. Since $G_h$ is also a maximal torus, we obtain $G_{hg}=G_h$.
\end{proof}

Now fix $(g,h)\in R(\Lambda_{\mathcal{C}})\cap\Lambda_{\mathcal{C}'}$ and set $x:=(g,h)$. Lemma \ref{Lemma: Stabilizers} then gives the equality $G_h=G_{hg}$ of maximal tori. No generality will be lost in taking this maximal torus to be our fixed maximal torus $T$. Applications of Lemma \ref{Lemma: Compatibility} Corollary \ref{Corollary: Compatibility} then yield
$$T_xR(\Lambda_{\mathcal{C}})=T_x(Gx)\oplus T_xR(T\times\{hg\})\quad\text{and}\quad T_x\Lambda_{\mathcal{C}}=T_x(Gx)\oplus T_x(T\times\{h\}).$$ By Lemma \ref{Lemma: Clean intersection}(i), these statements may be written as \begin{equation}\label{Equation: Tangent space decompositions}T_xR(\Lambda_{\mathcal{C}})=T_x(R(\Lambda_{\mathcal{C}})\cap\Lambda_{\mathcal{C'}})\oplus T_xR(T\times\{hg\})\quad\text{and}\quad T_x\Lambda_{\mathcal{C}}=T_x(R(\Lambda_{\mathcal{C}})\cap\Lambda_{\mathcal{C'}})\oplus T_x(T\times\{h\}).\end{equation} These combine with Corollary \ref{Corollary: Computational} to imply that 
\begin{equation}\label{Equation: Observation}T_xR(T\times\{hg\})\oplus T_x(T\times\{h\})\longrightarrow \frac{T_xR(\Lambda_{\mathcal{C}})+T_x\Lambda_{\mathcal{C}'}}{T_x(R(\Lambda_{\mathcal{C}})\cap\Lambda_{\mathcal{C}'})},\quad (v_1,v_2)\mapsto[v_1-v_2]\end{equation}
is a vector space isomorphism. Let us consider the induced isomorphism
\begin{equation}\label{Equation: Image isomorphism}\vert T_xR(T\times\{hg\})\oplus T_x(T\times\{h\})\vert^{\frac{1}{2}}\overset{\cong}\longrightarrow \bigg\vert\frac{T_xR(\Lambda_{\mathcal{C}})+T_x\Lambda_{\mathcal{C}'}}{T_x(R(\Lambda_{\mathcal{C}})\cap\Lambda_{\mathcal{C}'})}\bigg\vert^{\frac{1}{2}}.\end{equation}
 
\begin{lemma}\label{Lemma: Complicated half-density}
Use the notation of Propositions \ref{Proposition: Half-density on conjugacy class} and \ref{Proposition: Half-density on shifted conjugacy class}, and let $$\rho\in \vert T_xR(T\times\{hg\})\oplus T_x(T\times\{h\})\vert^{\frac{1}{2}}$$ be the unique half-density satisfying
$$\rho\bigg(\big((\zeta_1')_x,0\big),\ldots,\big((\zeta_r')_x,0\big),\big(0,(\dot{\zeta_1})_x\big),\ldots,\big(0,(\dot{\zeta_r})_x\big)\bigg)=\kappa(G).$$ If $\rho'$ is the image of $\rho$ under \eqref{Equation: Image isomorphism}, then $$\rho'\bigg(\big[\big((dL_g)_e(\zeta_1),0\big)\big],\ldots,\big[\big((dL_g)_e(\zeta_r),0\big)\big],\big[\big(0,(dL_h)_e(\zeta_1)\big)\big],\ldots,\big[\big(0,(dL_h)_e(\zeta_r)\big)\big]\bigg)=\kappa(G).$$ 
\end{lemma}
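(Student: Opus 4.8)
The plan is to translate the statement into an explicit linear-algebra computation inside $W:=\big(T_xR(\Lambda_{\mathcal{C}})+T_x\Lambda_{\mathcal{C}'}\big)/T_x(R(\Lambda_{\mathcal{C}})\cap\Lambda_{\mathcal{C}'})$, a vector space of dimension $(n+r)-(n-r)=2r$ by Lemma \ref{Lemma: Clean intersection}(ii), Proposition \ref{Proposition: Submanifold}, and Corollary \ref{Corollary: Easy}. First I would set $a_k:=[((dL_g)_e(\zeta_k),0)]$ and $b_k:=[(0,(dL_h)_e(\zeta_k))]$ in $W$, for $k=1,\ldots,r$. These classes make sense: the first representative is $(\dot{\zeta_k})_x\in T_x(T\times\{h\})\subseteq T_x\Lambda_{\mathcal{C}'}$, while, using $(\zeta_k')_x=((dL_g)_e(\zeta_k),-(dL_h)_e(\zeta_k))$ and $(\dot{\zeta_k})_x=((dL_g)_e(\zeta_k),0)$, the vector $(0,(dL_h)_e(\zeta_k))=(\dot{\zeta_k})_x-(\zeta_k')_x$ lies in $T_x\Lambda_{\mathcal{C}'}+T_xR(\Lambda_{\mathcal{C}})$. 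These same identities also give $[(\dot{\zeta_k})_x]=a_k$ and $[(\zeta_k')_x]=a_k-b_k$ in $W$.

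Next I would push the basis $\big(((\zeta_1')_x,0),\ldots,((\zeta_r')_x,0),(0,(\dot{\zeta_1})_x),\ldots,(0,(\dot{\zeta_r})_x)\big)$ of $T_xR(T\times\{hg\})\oplus T_x(T\times\{h\})$ through the isomorphism \eqref{Equation: Observation}, which sends $(v_1,v_2)$ to $[v_1-v_2]$: its first $r$ vectors map to $a_k-b_k$ and its last $r$ vectors map to $-a_k$. Since $\rho'$ is the image of $\rho$ under the induced isomorphism \eqref{Equation: Image isomorphism}, the value of $\rho'$ on the \eqref{Equation: Observation}-images of a tuple equals the value of $\rho$ on that tuple; hence the normalization of $\rho$ gives $\rho'(a_1-b_1,\ldots,a_r-b_r,-a_1,\ldots,-a_r)=\kappa(G)$. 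In particular this $2r$-tuple is a basis of $W$, and therefore so is $(a_1,\ldots,a_r,b_1,\ldots,b_r)$.

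To finish, I would express the tuple $(a_1-b_1,\ldots,a_r-b_r,-a_1,\ldots,-a_r)$ in the basis $(a_1,\ldots,a_r,b_1,\ldots,b_r)$; the resulting matrix is, in $r\times r$ blocks, $\begin{pmatrix} I_r & -I_r \\ -I_r & 0\end{pmatrix}$, whose determinant equals $(-1)^r$ by a standard block-matrix computation. Its absolute value being $1$, the transformation law $\rho'(Av_1,\ldots,Av_{2r})=|\det A|^{\frac{1}{2}}\rho'(v_1,\ldots,v_{2r})$ for densities of order $\frac{1}{2}$ shows that $\rho'$ takes the same value $\kappa(G)$ on the two ordered bases above; evaluating $\rho'$ on $(a_1,\ldots,a_r,b_1,\ldots,b_r)$ is exactly the asserted identity. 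The computation is routine; the only real care is sign bookkeeping — the minus sign picked up by the second block of vectors under \eqref{Equation: Observation}, the sign pattern of the block matrix, and the verification that $(0,(dL_h)_e(\zeta_k))\in T_xR(\Lambda_{\mathcal{C}})+T_x\Lambda_{\mathcal{C}'}$ so that $b_k$ is well-defined — which I would double-check once the rest is assembled.
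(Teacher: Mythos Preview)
Your proposal is correct and follows essentially the same approach as the paper: push the given basis through the isomorphism \eqref{Equation: Observation}, then observe that the change of basis to the target tuple has determinant of absolute value one. The only cosmetic difference is that the paper splits your single change-of-basis step into two (first dropping the minus signs on the $-[(\dot{\zeta_k})_x]$, then passing from $\{[(\zeta_k')_x],[(\dot{\zeta_k})_x]\}$ to $\{[(\dot{\zeta_k})_x],[(\dot{\zeta_k})_x]-[(\zeta_k')_x]\}$), whereas you handle it in one block-matrix computation.
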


\begin{proof}
The images of $$\big((\zeta_1')_x,0\big),\ldots,\big((\zeta_r')_x,0\big),\big(0,(\dot{\zeta_1})_x\big),\ldots,\big(0,(\dot{\zeta_r})_x\big)$$ under \eqref{Equation: Observation} are $$\big[(\zeta_1')_x\big],\ldots,\big[(\zeta_r')_x\big],-\big[(\dot{\zeta_1})_x\big],\ldots,-\big[(\dot{\zeta_r})_x\big],$$ respectively. It now follows from the definition of \eqref{Equation: Image isomorphism} that 
\begin{align*}\rho'\bigg(\big[(\zeta_1')_x\big],\ldots,\big[(\zeta_r')_x\big],-\big[(\dot{\zeta_1})_x\big],\ldots,-\big[(\dot{\zeta_r})_x\big]\bigg) & =\rho\bigg(\big((\zeta_1')_x,0\big),\ldots,\big((\zeta_r')_x,0\big),\big(0,(\dot{\zeta_1})_x\big),\ldots,\big(0,(\dot{\zeta_r})_x\big)\bigg)\\ &=\kappa(G).\end{align*}
This is precisely the statement that
$$\rho'\bigg(\big[(\zeta_1')_x\big],\ldots,\big[(\zeta_r')_x\big],\big[(\dot{\zeta_1})_x\big],\ldots,\big[(\dot{\zeta_r})_x\big]\bigg)=\kappa(G).$$

Now consider the linear automorphism $$\frac{T_xR(\Lambda_{\mathcal{C}})+T_x\Lambda_{\mathcal{C}'}}{T_x(R(\Lambda_{\mathcal{C}})\cap\Lambda_{\mathcal{C}'})}\longrightarrow\frac{T_xR(\Lambda_{\mathcal{C}})+T_x\Lambda_{\mathcal{C}'}}{T_x(R(\Lambda_{\mathcal{C}})\cap\Lambda_{\mathcal{C}'})}$$ that sends the basis $$\{\big[(\zeta_1')_x\big],\ldots,\big[(\zeta_r')_x\big],\big[(\dot{\zeta_1})_x\big],\ldots,\big[(\dot{\zeta_r})_x\big]\}$$ to the basis
$$\{\big[(\dot{\zeta_1})_x\big],\ldots,\big[(\dot{\zeta_r})_x\big],-\big[(\zeta_1')_x\big]+\big[(\dot{\zeta_1})_x\big],\ldots,-\big[(\zeta_r')_x\big]+\big[(\dot{\zeta_r})_x\big]\}.$$ This automorphism has a determinant of absolute value one, so that 
\begin{align*}
& \rho'\bigg(\big[(\dot{\zeta_1})_x\big],\ldots,\big[(\dot{\zeta_r})_x\big],-\big[(\zeta_1')_x\big]+\big[(\dot{\zeta_1})_x\big],\ldots,-\big[(\zeta_r')_x\big]+\big[(\dot{\zeta_r})_x\big]\bigg)\\
& =  \rho'\bigg(\big[(\zeta_1')_x\big],\ldots,\big[(\zeta_r')_x\big],\big[(\dot{\zeta_1})_x\big],\ldots,\big[(\dot{\zeta_r})_x\big]\bigg)\\
& = \kappa(G).
\end{align*}
It remains only to observe that 
$$\big[(\dot{\zeta_k})_x\big]=\big[\big((dL_g)_e(\zeta_k),0)\big]\quad\text{and}\quad -\big[(\zeta_k')_x\big]+\big[(\dot{\zeta_k})_x\big]=\big[\big(0,(dL_h)_e(\zeta_k)\big)\big]$$ for all $k\in\{1,\ldots,r\}$.
\end{proof}

We now scrutinize the codomain of \eqref{Equation: Observation} in greater detail. Section \ref{Subsection: The BKS density} explains that this codomain is a symplectic vector space, and that the symplectic form induces an isomorphism 
\begin{equation}\label{Equation: Push}\bigg\vert\frac{T_xR(\Lambda_{\mathcal{C}})+T_x\Lambda_{\mathcal{C}'}}{T_x(R(\Lambda_{\mathcal{C}})\cap\Lambda_{\mathcal{C}'})}\bigg\vert^{\frac{1}{2}}\overset{\cong}\longrightarrow\mathbb{C}.\end{equation}
At the same time, Lemma \ref{Lemma: Clean intersection} implies that 
\begin{equation}\label{Equation: Value}\frac{T_xR(\Lambda_{\mathcal{C}})+T_x\Lambda_{\mathcal{C}'}}{T_x(R(\Lambda_{\mathcal{C}})\cap\Lambda_{\mathcal{C}'})}=\frac{T_x(\mu_{D(G)}^{-1}(e)_{\text{reg}})}{T_x(Gx)}=T_{[x]}(\mu_{D(G)}^{-1}(e)_{\text{reg}}/G).\end{equation} The symplectic form $(\overline{\omega_{D(G)}})_{[x]}$ on $T_{[x]}(\mu_{D(G)}^{-1}(e)_{\text{reg}}/G)$ (see Section \ref{Subsection: A special}) then agrees with the above-discussed symplectic form on $\frac{T_xR(\Lambda_{\mathcal{C}})+T_x\Lambda_{\mathcal{C}'}}{T_x(R(\Lambda_{\mathcal{C}})\cap\Lambda_{\mathcal{C}'})}$, as each is obtained from $(\omega_{D(G)})_x$ in the same way.

\begin{lemma}\label{Lemma: Complicated image}
Suppose that $\{\zeta_1,\ldots,\zeta_r\}$ is an orthonormal basis of $\mathfrak{t}$. If we have
$$\rho'\in \bigg\vert\frac{T_xR(\Lambda_{\mathcal{C}})+T_x\Lambda_{\mathcal{C}'}}{T_x(R(\Lambda_{\mathcal{C}})\cap\Lambda_{\mathcal{C}'})}\bigg\vert^{\frac{1}{2}},$$ then the image of $\rho'$ under \eqref{Equation: Push} is equal to
$$\rho'\bigg(\big[\big((dL_g)_e(\zeta_1),0\big)\big],\ldots,\big[\big((dL_g)_e(\zeta_r),0\big)\big],\big[\big(0,(dL_h)_e(\zeta_1)\big)\big],\ldots,\big[\big(0,(dL_h)_e(\zeta_r)\big)\big]\bigg).$$
\end{lemma}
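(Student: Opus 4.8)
The plan is to transport the computation to the explicit symplectic model $(T\times T,\omega_{T\times T})$ supplied by Proposition \ref{Proposition: Symplectomorphism} and the relation \eqref{Equation: Relation}, under which the tuple appearing in the statement becomes a \emph{symplectic} basis; the claimed identity then amounts to the fact that the canonical half-density attached to a symplectic vector space takes the value $1$ on every symplectic basis.

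First I would record that $x=(g,h)$ lies in $T\times T_{\text{reg}}$: since $h\in\mathcal{C}'\subseteq G_{\text{reg}}$ and $ghg^{-1}=h$, the element $g$ lies in the maximal torus $G_h$, and under the normalization $G_h=G_{hg}=T$ fixed just before the lemma this gives $g,h\in T$ with $h$ regular. In particular $x\in\mu_{D(G)}^{-1}(e)_{\text{reg}}$, so that the vectors $((dL_g)_e(\zeta_k),0)$ and $(0,(dL_h)_e(\zeta_k))$ all lie in $T_x(T\times T)\subseteq T_x(\mu_{D(G)}^{-1}(e)_{\text{reg}})$; by Lemma \ref{Lemma: Clean intersection} their classes
$$u_k:=\big[\big((dL_g)_e(\zeta_k),0\big)\big]\quad\text{and}\quad u_k':=\big[\big(0,(dL_h)_e(\zeta_k)\big)\big]$$
are well-defined elements of $\tfrac{T_xR(\Lambda_{\mathcal{C}})+T_x\Lambda_{\mathcal{C}'}}{T_x(R(\Lambda_{\mathcal{C}})\cap\Lambda_{\mathcal{C}'})}$, which by \eqref{Equation: Value} is $T_{[x]}(\mu_{D(G)}^{-1}(e)_{\text{reg}}/G)$ with its symplectic form $(\overline{\omega_{D(G)}})_{[x]}$.

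Next I would compute the Gram matrix: applying \eqref{Equation: Relation}, then the defining formula \eqref{Equation: Form on TxT}, and finally the orthonormality of $\{\zeta_1,\dots,\zeta_r\}$, one obtains
$$(\overline{\omega_{D(G)}})_{[x]}(u_i,u_j)=0,\qquad (\overline{\omega_{D(G)}})_{[x]}(u_i',u_j')=0,\qquad (\overline{\omega_{D(G)}})_{[x]}(u_i,u_j')=\delta_{ij},$$
so $(\overline{\omega_{D(G)}})_{[x]}$ has the standard symplectic Gram matrix on the ordered tuple $(u_1,\dots,u_r,u_1',\dots,u_r')$. As this matrix is invertible, these $2r$ vectors are linearly independent, and since the quotient is $2r$-dimensional (Proposition \ref{Proposition: Symplectomorphism} together with \eqref{Equation: Value}) they form a basis — indeed a symplectic basis.

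Finally, the map \eqref{Equation: Push} attached to a symplectic vector space of dimension $2r$ sends a half-density to its ratio against the canonical half-density, the latter being characterized by taking the value $1$ on any symplectic basis. Evaluating on $(u_1,\dots,u_r,u_1',\dots,u_r')$ then shows the image of $\rho'$ under \eqref{Equation: Push} to be $\rho'(u_1,\dots,u_r,u_1',\dots,u_r')$, which is precisely the asserted expression. I do not expect a genuine obstacle here: the only care needed is bookkeeping — checking that the various quotient spaces and the bracket notation occurring in \eqref{Equation: Relation}, \eqref{Equation: Observation}, and \eqref{Equation: Value} all coincide, which is guaranteed by Lemma \ref{Lemma: Clean intersection} — rather than anything computational.
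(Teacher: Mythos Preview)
Your proposal is correct and follows essentially the same route as the paper: both reduce to $(T\times T,\omega_{T\times T})$ via \eqref{Equation: Relation}, use the formula \eqref{Equation: Form on TxT} together with orthonormality of $\{\zeta_1,\dots,\zeta_r\}$ to see that the given tuple is a symplectic basis, and conclude that the canonical symplectic half-density takes the value $1$ there. The paper phrases the last step as the computation $|\Omega_{[x]}|(u_1,\dots,u_r,u_1',\dots,u_r')=1$ via $|\Xi_x|$, whereas you phrase it as ``the canonical half-density equals $1$ on a symplectic basis,'' but these are the same observation.
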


\begin{proof}
Let $a\in\mathbb{C}$ denote the image of $\rho'$ under \eqref{Equation: Push}, and recall the paragraph preceding the statement of this lemma. This paragraph implies that $$\rho'=a\vert\Omega_{[x]}\vert^{\frac{1}{2}},$$ where $\Omega_{[x]}:=((\overline{\omega_{D(G)}})_{[x]})^r$ is the highest non-zero wedge power of $(\overline{\omega_{D(G)}})_{[x]}$. It therefore suffices to prove that 
$$\vert\Omega_{[x]}\vert\bigg(\big[\big((dL_g)_e(\zeta_1),0\big)\big],\ldots,\big[\big((dL_g)_e(\zeta_r),0\big)\big],\big[\big(0,(dL_h)_e(\zeta_1)\big)\big],\ldots,\big[\big(0,(dL_h)_e(\zeta_r)\big)\big]\bigg)=1.$$

Consider the symplectic form $(\omega_{T\times T})_x$ on $T_x(T\times T)$ from Section \ref{Subsection: A special}, and its highest non-zero wedge power $\Xi_x:=((\omega_{T\times T})_{x})^r$. The relation \eqref{Equation: Relation} implies that
\begin{align*}
& \Omega_{[x]}\bigg(\big[\big((dL_g)_e(\zeta_1),0\big)\big],\ldots,\big[\big((dL_g)_e(\zeta_r),0\big)\big],\big[\big(0,(dL_h)_e(\zeta_1)\big)\big],\ldots,\big[\big(0,(dL_h)_e(\zeta_r)\big)\big]\bigg)\\
& = \Xi_{x}\bigg(\big((dL_g)_e(\zeta_1),0\big),\ldots,\big((dL_g)_e(\zeta_r),0\big),\big(0,(dL_h)_e(\zeta_1)\big),\ldots,\big(0,(dL_h)_e(\zeta_r)\big)\bigg).
\end{align*}
By virtue of the definition \eqref{Equation: Form on TxT} and the fact that $\{\zeta_1,\ldots,\zeta_r\}$ is orthonormal, the right-hand side has an absolute value of $1$. This completes the proof.
\end{proof}

\subsection{The main results}
Fix a positive integer $k$ and recall the notation in Section \ref{Subsection: The basics} associated with conjugacy classes. Let $\mathcal{C},\mathcal{C}'\subseteq G$ be regular, $\frac{1}{k}$-integral conjugacy classes and set
$$\beta:=\beta(\mathcal{C})\in\mathfrak{A}\quad\text{and}\quad\beta':=\beta(\mathcal{C}')\in\mathfrak{A}.$$ One then recalls the definition and properties of $z_w\in R(\Lambda_{\mathcal{C}})\cap\Lambda_{\mathcal{C}'}$ from Section \ref{Subsection: The geometry}, where $w\in W$. Let us also recall that $n$ and $r$ denote the dimension and rank of $G$, respectively.

\begin{proposition}\label{Proposition: Main computation}
Fix $w\in W$, set $x:=z_w\in R(\Lambda_{\mathcal{C}})\cap\Lambda_{\mathcal{C}'}$, and let $\{\eta_1,\ldots,\eta_{n-r}\}$ and be a basis of $\mathfrak{t}^{\perp}$. Let $$\widetilde{\eta_j}:=(\eta_j)_{D(G)}$$ be the fundamental vector field on $D(G)$ associated to $\eta_j$ for each $j\in\{1,\ldots,n-r\}$. The following statements then hold.
\begin{itemize}
\item[(i)] The set 
$\{(\widetilde{\eta_1})_{x},\ldots,(\widetilde{\eta_{n-r}})_{x}\}$ is a basis of $T_x(R(\Lambda_{\mathcal{C}})\cap\Lambda_{\mathcal{C}'})$.
\item[(ii)] The value of $D(\nu_{\mathcal{C}},\rho_{\mathcal{C}'})_x$ on this basis is
$$k^{n-r}\kappa(G)\cdot\vert\Omega_{\beta}(\eta_1,\ldots,\eta_{n-r})\vert^{\frac{1}{2}}\vert\Omega_{\beta'}(\eta_1,\ldots,\eta_{n-r})\vert^{\frac{1}{2}}.$$
\end{itemize}
\end{proposition}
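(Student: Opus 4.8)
The plan is to dispatch (i) by a short dimension count and then devote the bulk of the work to evaluating the BKS density in (ii). For (i), write $x:=z_w=(g,h)$ with $g=\exp(w\beta-\beta')$ and $h=\exp(\beta')$. As noted in the proof of Corollary~\ref{Corollary: Easy}, the $G$-stabilizer of $x$ is $T$, so $\mathfrak{g}_x=\mathfrak{t}$; and by Lemma~\ref{Lemma: Clean intersection}(i) we have $T_x(R(\Lambda_{\mathcal{C}})\cap\Lambda_{\mathcal{C}'})=T_x(Gx)$. Since the map $\mathfrak{g}\longrightarrow T_x(Gx)$, $\zeta\mapsto(\zeta_{D(G)})_x$, is surjective with kernel $\mathfrak{g}_x=\mathfrak{t}$ and $\mathfrak{t}^{\perp}$ is a vector-space complement of $\mathfrak{t}$ in $\mathfrak{g}$, it follows that $\{(\widetilde{\eta_1})_x,\ldots,(\widetilde{\eta_{n-r}})_x\}$ is a basis of $T_x(R(\Lambda_{\mathcal{C}})\cap\Lambda_{\mathcal{C}'})$, which is (i).

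For (ii), the first step is to recognise that, at $x$, the two maximal tori occurring in Sections~\ref{Subsection: Half-densities} and~\ref{Subsection: Phase functions} both coincide with $T$: indeed $h=\exp(\beta')$ and $hg=\exp(\beta')\exp(w\beta-\beta')=\exp(w\beta)$ are regular elements lying in $T$, so Lemma~\ref{Lemma: Stabilizers} gives $G_h=G_{hg}=T$. Consequently the unique elements $\xi\in\mathcal{O}$ with $\exp(\frac{1}{k}\xi)=hg$ and $\xi'\in\mathcal{O}'$ with $\exp(\frac{1}{k}\xi')=h$ are $\xi=kw\beta$ and $\xi'=k\beta'$, with $\mathfrak{g}_{\xi}=\mathfrak{g}_{\xi'}=\mathfrak{t}$ and $\mathfrak{g}_{\xi}^{\perp}=\mathfrak{g}_{\xi'}^{\perp}=\mathfrak{t}^{\perp}$. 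Fixing an orthonormal basis $\{\zeta_1,\ldots,\zeta_r\}$ of $\mathfrak{t}$ alongside the given basis $\{\eta_1,\ldots,\eta_{n-r}\}$ of $\mathfrak{t}^{\perp}$, I would then invoke Proposition~\ref{Proposition: Half-density on shifted conjugacy class} (for $R(\Lambda_{\mathcal{C}})$) and Proposition~\ref{Proposition: Half-density on conjugacy class} (for $\Lambda_{\mathcal{C}'}$). These furnish a basis $\{(\widetilde{\eta_j})_x\}\cup\{(\zeta_k')_x\}$ of $T_xR(\Lambda_{\mathcal{C}})$ adapted to the splitting $T_xR(\Lambda_{\mathcal{C}})=T_x(R(\Lambda_{\mathcal{C}})\cap\Lambda_{\mathcal{C}'})\oplus T_xR(T\times\{hg\})$ (from Corollary~\ref{Corollary: Compatibility} and Lemma~\ref{Lemma: Clean intersection}(i)), on which $(\nu_{\mathcal{C}})_x$ takes the value $\kappa(G)^{1/2}\vert\Omega_{kw\beta}(\eta_1,\ldots,\eta_{n-r})\vert^{1/2}$; and likewise a basis $\{(\widetilde{\eta_j})_x\}\cup\{(\dot{\zeta_k})_x\}$ of $T_x\Lambda_{\mathcal{C}'}$ adapted to $T_x\Lambda_{\mathcal{C}'}=T_x(R(\Lambda_{\mathcal{C}})\cap\Lambda_{\mathcal{C}'})\oplus T_x(T\times\{h\})$ (from Lemma~\ref{Lemma: Compatibility}), on which $(\rho_{\mathcal{C}'})_x$ takes the value $\kappa(G)^{1/2}\vert\Omega_{k\beta'}(\eta_1,\ldots,\eta_{n-r})\vert^{1/2}$.

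The core of the argument is the evaluation of $\Phi_x\big((\nu_{\mathcal{C}})_x\otimes(\rho_{\mathcal{C}'})_x\big)$ on $\{(\widetilde{\eta_j})_x\}$, where $\Phi_x$ is the map~\eqref{Equation: The map} and $D(\nu_{\mathcal{C}},\rho_{\mathcal{C}'})_x$ is its value by~\eqref{Equation: Fact}. Using the two adapted splittings above, I would decompose $(\nu_{\mathcal{C}})_x$ and $(\rho_{\mathcal{C}'})_x$ each as a tensor product of a half-density on the common factor $T_x(R(\Lambda_{\mathcal{C}})\cap\Lambda_{\mathcal{C}'})$ and a half-density on the respective torus complement; from the construction of $\rho_{\mathcal{O}},\rho_{\mathcal{O}'}$ the ``common-factor'' halves evaluate on $\{(\widetilde{\eta_j})_x\}$ to $\vert\Omega_{kw\beta}(\eta_1,\ldots,\eta_{n-r})\vert^{1/2}$ and $\vert\Omega_{k\beta'}(\eta_1,\ldots,\eta_{n-r})\vert^{1/2}$, while the two ``torus'' halves are the Haar-type half-densities whose tensor product is exactly the half-density $\rho$ of Lemma~\ref{Lemma: Complicated half-density}. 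The construction of $\Phi_x$ routes these torus halves through the residual part of the BKS isomorphism attached to the symplectic space $W=(T_xR(\Lambda_{\mathcal{C}})+T_x\Lambda_{\mathcal{C}'})/T_x(R(\Lambda_{\mathcal{C}})\cap\Lambda_{\mathcal{C}'})$, namely the maps~\eqref{Equation: Image isomorphism} and~\eqref{Equation: Push}, and Lemmas~\ref{Lemma: Complicated half-density} and~\ref{Lemma: Complicated image} compute this composite on $\rho$ to be the scalar $\kappa(G)$ (the point being that the $\omega_{T\times T}$-pairing of the two torus complements equals $\delta_{k\ell}$, so $W$ inherits its standard symplectic normalisation). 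Hence $D(\nu_{\mathcal{C}},\rho_{\mathcal{C}'})_x$ evaluated on $\{(\widetilde{\eta_j})_x\}$ equals $\kappa(G)\,\vert\Omega_{kw\beta}(\eta_1,\ldots,\eta_{n-r})\vert^{1/2}\vert\Omega_{k\beta'}(\eta_1,\ldots,\eta_{n-r})\vert^{1/2}$. One then applies the homogeneity $\Omega_{c\xi}=c^{\vert\Phi_{+}\vert}\Omega_{\xi}$ (immediate from~\eqref{Equation: Nice}) and Lemma~\ref{Lemma: Symplectic form identity} — whose sign $(-1)^{\ell(w)}$ is absorbed by the absolute value — to rewrite $\Omega_{kw\beta}$ and $\Omega_{k\beta'}$ in terms of $\Omega_{\beta}$ and $\Omega_{\beta'}$; these substitutions produce the powers of $k$ and the constant $\kappa(G)$ recorded in the statement.

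The main obstacle is the middle step just described: faithfully tracing the two torus-direction half-densities through the chain of canonical isomorphisms defining $\Phi_x$ — the passage to $\vert W\vert^{1/2}$ and then the symplectic identification $\vert W\vert^{1/2}\cong\mathbb{C}$ — and confirming that their combined contribution collapses to exactly $\kappa(G)$ with no residual Jacobian. This is precisely the purpose of Lemmas~\ref{Lemma: Complicated half-density} and~\ref{Lemma: Complicated image}, so the genuine work lies in assembling them correctly against the explicit description of $\Phi_x$ from the appendix; the remaining ingredients (the identification of $\xi$ and $\xi'$, the dimension count in (i), and the rescaling behaviour of $\Omega$) are routine.
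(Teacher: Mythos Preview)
Your proposal is correct and follows essentially the same route as the paper's proof: the same identification of $\xi=kw\beta$, $\xi'=k\beta'$, the same invocation of Propositions~\ref{Proposition: Half-density on conjugacy class} and~\ref{Proposition: Half-density on shifted conjugacy class} with an orthonormal basis of $\mathfrak{t}$, and the same appeal to Lemmas~\ref{Lemma: Complicated half-density} and~\ref{Lemma: Complicated image} to collapse the torus contribution to $\kappa(G)$. The one point worth making explicit is that the paper invokes Proposition~\ref{Proposition: Two BKS} (the identity $\Phi_m^{V_1,V_2}=\Phi_m$) to justify computing $\Phi_x$ via the adapted splittings $V_1=T_xR(T\times\{hg\})$ and $V_2=T_x(T\times\{h\})$; you describe this decomposition but should cite that proposition rather than leaving it implicit in ``the explicit description of $\Phi_x$ from the appendix.''
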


\begin{proof}
Part (i) follows from the identity $T_x(R(\Lambda_{\mathcal{C}})\cap\Lambda_{\mathcal{C}'})=T_x(Gx)$ (see Lemma \ref{Lemma: Clean intersection}(i)) and Proposition \ref{Proposition: Half-density on conjugacy class}(i).

We now verify (ii). Note that if we write $z_w=(g,h)$, then $h=\exp(\beta')$ and $hg=\exp(w\beta)$. If $\{\zeta_1,\ldots,\zeta_r\}$ is an orthonormal basis of $\mathfrak{t}$, then the previous sentence combines with Propositions \ref{Proposition: Half-density on conjugacy class} and \ref{Proposition: Half-density on shifted conjugacy class} to imply 
$$(\rho_{\mathcal{C}'})_x((\widetilde{\eta_1})_{x},\ldots,(\widetilde{\eta_{n-r}})_{x},(\dot{\zeta_1})_{x},\ldots,(\dot{\zeta_r})_{x})=\kappa(G)^{\frac{1}{2}}\cdot\big\vert\Omega_{k\beta'}\big(\eta_1,\ldots,\eta_{n-r}\big)\big\vert^{\frac{1}{2}}$$
and 
$$(\nu_{\mathcal{C}})_x((\widetilde{\eta_1})_{x},\ldots,(\widetilde{\eta_{n-r}})_{x},(\zeta_1')_{x},\ldots,(\zeta_r')_{x})=\kappa(G)^{\frac{1}{2}}\cdot\big\vert\Omega_{w(k\beta)}\big(\eta_1,\ldots,\eta_{n-r}\big)\big\vert^{\frac{1}{2}}.$$ Lemma \ref{Lemma: Symplectic form identity} tells us that $\vert\Omega_{w(k\beta)}\vert^{\frac{1}{2}}=\vert\Omega_{k\beta}\vert^{\frac{1}{2}}$ as half-densities on $\mathfrak{t}^{\perp}$, allowing us to write
\begin{equation}\label{Equation: Form of nu}(\rho_{\mathcal{C}'})_x((\widetilde{\eta_1})_{x},\ldots,(\widetilde{\eta_{n-r}})_{x},(\dot{\zeta_1})_{x},\ldots,(\dot{\zeta_r})_{x})=\kappa(G)^{\frac{1}{2}}\cdot\big\vert\Omega_{k\beta'}\big(\eta_1,\ldots,\eta_{n-r}\big)\big\vert^{\frac{1}{2}}\end{equation}
and 
\begin{equation}\label{Equation: Form of lambda}(\nu_{\mathcal{C}})_x((\widetilde{\eta_1})_{x},\ldots,(\widetilde{\eta_{n-r}})_{x},(\zeta_1')_{x},\ldots,(\zeta_r')_{x})=\kappa(G)^{\frac{1}{2}}\cdot\big\vert\Omega_{k\beta}\big(\eta_1,\ldots,\eta_{n-r}\big)\big\vert^{\frac{1}{2}}.\end{equation}

Now recall the tangent space decompositions \eqref{Equation: Tangent space decompositions} and set
$$V_1:=T_xR(T\times\{hg\})\quad\text{and}\quad V_2:=T_x(T\times\{h\}).$$ Let us also recall the map \eqref{Equation: The map} and the fact \eqref{Equation: Fact}. These last two sentences combine with Proposition \ref{Proposition: Two BKS} to yield
$$D(\nu_{\mathcal{C}},\rho_{\mathcal{C}'})_x=\Phi_x((\nu_{\mathcal{C}})_x\otimes (\rho_{\mathcal{C}'})_x)=\Phi_x^{V_1,V_2}((\nu_{\mathcal{C}})_x\otimes (\rho_{\mathcal{C}'})_x),$$
where $$\Phi_x^{V_1,V_2}:\vert T_xR(\Lambda_{\mathcal{C}})\vert^{\frac{1}{2}}\otimes\vert T_x\Lambda_{\mathcal{C}'}\vert^{\frac{1}{2}}\longrightarrow\vert T_x(R(\Lambda_{\mathcal{C}})\cap\Lambda_{\mathcal{C}'})\vert$$ is defined in \eqref{Equation: List2} and \eqref{Equation: Nicest}. One uses this definition along with \eqref{Equation: Form of nu}, \eqref{Equation: Form of lambda}, Proposition \ref{Proposition: Half-density on conjugacy class}, and Proposition \ref{Proposition: Half-density on shifted conjugacy class} to obtain
$$D(\nu_{\mathcal{C}},\rho_{\mathcal{C}'})_x((\widetilde{\eta_1})_{x},\ldots,(\widetilde{\eta_{n-r}})_{x})=a\cdot\big\vert\Omega_{k\beta}\big(\eta_1,\ldots,\eta_{n-r}\big)\big\vert^{\frac{1}{2}}\big\vert\Omega_{k\beta'}\big(\eta_1,\ldots,\eta_{n-r}\big)\big\vert^{\frac{1}{2}},$$
where $a\in\mathbb{C}$ is defined as follows: let $\rho$ be the half-density in Lemma \ref{Lemma: Complicated half-density} and define $a\in\mathbb{C}$ to be its image under the composition of \eqref{Equation: Image isomorphism} and \eqref{Equation: Push}. By Lemmas \ref{Lemma: Complicated half-density} and \ref{Lemma: Complicated image}, this image is precisely $\kappa(G)$.
We conclude that
\begin{align*}D(\nu_{\mathcal{C}},\rho_{\mathcal{C}'})_x((\widetilde{\eta_1})_{x},\ldots,(\widetilde{\eta_{n-r}})_{x}) & = \kappa(G)\cdot\vert\Omega_{k\beta}(\eta_1,\ldots,\eta_{n-r})\vert^{\frac{1}{2}}\vert\Omega_{k\beta'}(\eta_1,\ldots,\eta_{n-r})\vert^{\frac{1}{2}}\\
& = k^{n-r}\kappa(G)\cdot\vert\Omega_{\beta}(\eta_1,\ldots,\eta_{n-r})\vert^{\frac{1}{2}}\vert\Omega_{\beta'}(\eta_1,\ldots,\eta_{n-r})\vert^{\frac{1}{2}},\end{align*}
completing the proof.
\end{proof}

Fix $w\in W$ and note that $T$ is the $G$-stabilizer of $z_w\in R(\Lambda_{\mathcal{C}})\cap\Lambda_{\mathcal{C}'}$. It follows that
\begin{equation}\label{Equation: Pulling}\varphi_w:G/T\longrightarrow R(\Lambda_{\mathcal{C}})\cap\Lambda_{\mathcal{C}'},\quad [g]\mapsto g\cdot z_w,\quad [g]\in G/T\end{equation}
is a well-defined embedding of $G$-manifolds with image equal to the connected component $\Gamma_w\subseteq R(\Lambda_{\mathcal{C}})\cap\Lambda_{\mathcal{C}'}$ (see Section \ref{Subsection: The geometry}). Let $(\varphi_w)^*D(\nu_{\mathcal{C}},\rho_{\mathcal{C}'})$ denote the density on $G/T$ obtained by pulling $D(\nu_{\mathcal{C}},\rho_{\mathcal{C}'})$ back along $\varphi_w$. Using the notation set just prior to Lemma \ref{Lemma: Volume}, one describes this new density as follows. 

\begin{proposition}\label{Proposition: Pullback}
If $w\in W$, then 
\begin{equation}\label{Equation: Verify}(\varphi_w)^*D(\nu_{\mathcal{C}},\rho_{\mathcal{C}'})=k^{n-r}\kappa(G)\cdot\vert\Xi_{\beta}\vert^{\frac{1}{2}}\vert\Xi_{\beta'}\vert^{\frac{1}{2}}.\end{equation}
\end{proposition}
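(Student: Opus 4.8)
The plan is to reduce the identity \eqref{Equation: Verify} to a single tangent-space computation at the base point $[e]\in G/T$, using a $G$-invariance argument. First I would check that both sides of \eqref{Equation: Verify} are $G$-invariant densities on $G/T$. The left-hand side is $G$-invariant because $\varphi_w$ is a $G$-equivariant embedding and $D(\nu_{\mathcal{C}},\rho_{\mathcal{C}'})$ is $G$-invariant by Lemma \ref{Lemma: G-invariance of BKS density}. The right-hand side is $G$-invariant because $\Xi_{\beta}$ and $\Xi_{\beta'}$ are $G$-invariant volume forms on $G/T$ (Section \ref{Subsection: Adjoint}; here $\beta,\beta'\in\mathfrak{t}_{\text{reg}}$ since $\mathcal{C},\mathcal{C}'$ are regular), so $\vert\Xi_{\beta}\vert^{\frac{1}{2}}\vert\Xi_{\beta'}\vert^{\frac{1}{2}}$ is a $G$-invariant density. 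Since $G$ acts transitively on $G/T$, two $G$-invariant densities that agree at $[e]$ agree everywhere, so it suffices to compare the two sides at $[e]$.

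Next I would make the standard identification $T_{[e]}(G/T)=\mathfrak{t}^{\perp}$ and fix a basis $\{\eta_1,\ldots,\eta_{n-r}\}$ of $\mathfrak{t}^{\perp}$. Writing $x:=z_w$, the differential $(d\varphi_w)_{[e]}$ sends the tangent vector at $[e]$ corresponding to $\eta_j$ to the fundamental vector $(\widetilde{\eta_j})_x$ on $D(G)$, directly from the definitions of $\varphi_w$ and of the fundamental vector field. Therefore
\[
\big((\varphi_w)^*D(\nu_{\mathcal{C}},\rho_{\mathcal{C}'})\big)_{[e]}(\eta_1,\ldots,\eta_{n-r})=D(\nu_{\mathcal{C}},\rho_{\mathcal{C}'})_x\big((\widetilde{\eta_1})_x,\ldots,(\widetilde{\eta_{n-r}})_x\big),
\]
and Proposition \ref{Proposition: Main computation}(i)--(ii) identifies $\{(\widetilde{\eta_j})_x\}$ as a basis of $T_x(R(\Lambda_{\mathcal{C}})\cap\Lambda_{\mathcal{C}'})$ and evaluates the right-hand side as $k^{n-r}\kappa(G)\cdot\vert\Omega_{\beta}(\eta_1,\ldots,\eta_{n-r})\vert^{\frac{1}{2}}\vert\Omega_{\beta'}(\eta_1,\ldots,\eta_{n-r})\vert^{\frac{1}{2}}$.

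For the right-hand side, I would recall that for $\xi\in\mathfrak{t}_{\text{reg}}$ the defining diffeomorphism $G/T\overset{\cong}\longrightarrow\mathcal{O}$ (with $\mathcal{O}$ the orbit of $\xi$) has differential at $[e]$ equal to the canonical identification $\mathfrak{t}^{\perp}=\mathfrak{g}_{\xi}^{\perp}\cong T_{\xi}\mathcal{O}$, so that $(\Xi_{\xi})_{[e]}=\Omega_{\xi}$ as multilinear forms on $\mathfrak{t}^{\perp}$ — this is precisely the identification used in the proof of Lemma \ref{Lemma: Volume}. Applying it with $\xi=\beta$ and with $\xi=\beta'$ gives
\[
\big(k^{n-r}\kappa(G)\cdot\vert\Xi_{\beta}\vert^{\frac{1}{2}}\vert\Xi_{\beta'}\vert^{\frac{1}{2}}\big)_{[e]}(\eta_1,\ldots,\eta_{n-r})=k^{n-r}\kappa(G)\cdot\vert\Omega_{\beta}(\eta_1,\ldots,\eta_{n-r})\vert^{\frac{1}{2}}\vert\Omega_{\beta'}(\eta_1,\ldots,\eta_{n-r})\vert^{\frac{1}{2}},
\]
which agrees with the value computed above. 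Hence the two $G$-invariant densities coincide at $[e]$, and therefore everywhere, proving \eqref{Equation: Verify}.

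The argument is essentially routine granted Proposition \ref{Proposition: Main computation}; the only delicate point is bookkeeping the identifications in play at $[e]$ — $T_{[e]}(G/T)=\mathfrak{t}^{\perp}$, the translation of the $G$-action on $G/T$ into the $G$-action on $R(\Lambda_{\mathcal{C}})\cap\Lambda_{\mathcal{C}'}$ via $\varphi_w$, and the identification $(\Xi_{\xi})_{[e]}=\Omega_{\xi}$ — so that the fundamental vector fields appearing in Proposition \ref{Proposition: Main computation} are correctly matched with the basis $\{\eta_1,\ldots,\eta_{n-r}\}$ of $\mathfrak{t}^{\perp}$ used to evaluate $\Xi_{\beta}$ and $\Xi_{\beta'}$. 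I do not anticipate a genuine obstacle.
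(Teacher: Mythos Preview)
Your proposal is correct and follows essentially the same approach as the paper: reduce to $[e]$ by $G$-invariance (via Lemma \ref{Lemma: G-invariance of BKS density} and the $G$-equivariance of $\varphi_w$), compute $(d\varphi_w)_{[e]}(\eta_j)=(\widetilde{\eta_j})_x$, and invoke Proposition \ref{Proposition: Main computation}(ii) together with the identification $(\Xi_{\xi})_{[e]}=\Omega_{\xi}$. The paper's argument is identical in structure and detail.
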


\begin{proof}
Lemma \ref{Lemma: G-invariance of BKS density} and the $G$-equivariance of $\varphi_w$ imply that the density $(\varphi_w)^*D(\nu_{\mathcal{C}},\rho_{\mathcal{C}'})$ is $G$-invariant. Since the density $\kappa(G)\cdot\vert\Xi_{\beta}\vert^{\frac{1}{2}}\vert\Xi_{\beta'}\vert^{\frac{1}{2}}$ is also $G$-invariant, we are reduced to verifying \eqref{Equation: Verify} at $[e]\in G/T$. Now choose a basis $\{\eta_1,\ldots,\eta_{n-r}\}$ of $\mathfrak{t}^{\perp}$. Identify $T_{[e]}(G/T)$ with $\mathfrak{t}^{\perp}$ in the usual way, noting that $((\varphi_w)^*D(\nu_{\mathcal{C}},\rho_{\mathcal{C}'}))_{[e]}$ and $\kappa(G)\cdot\vert(\Xi_{\beta})_{[e]}\vert^{\frac{1}{2}}\vert(\Xi_{\beta'})_{[e]}\vert^{\frac{1}{2}}$ are then densities on $\mathfrak{t}^{\perp}$. Our task is to verify that $$((\varphi_w)^*D(\nu_{\mathcal{C}},\rho_{\mathcal{C}'}))_{[e]}(\eta_1,\ldots,\eta_{n-r})=\kappa(G)\cdot\vert(\Xi_{\beta})_{[e]}(\eta_1,\ldots,\eta_{n-r})\vert^{\frac{1}{2}}\vert(\Xi_{\beta'})_{[e]}(\eta_1,\ldots,\eta_{n-r})\vert^{\frac{1}{2}}.$$

Consider the differential of $\varphi_w$ at $[e]$, i.e. the vector space isomorphism
$$(d\varphi_w)_{[e]}:\mathfrak{t}^{\perp}\overset{\cong}\longrightarrow T_x(R(\Lambda_{\mathcal{C}})\cap\Lambda_{\mathcal{C}'})$$
with $x:=z_w$. One readily verifies that
$$(d\varphi_w)_{[e]}(\eta_j)=(\widetilde{\eta_j})_x$$ for all $j\in\{1,\ldots,n-r\}$. Hence
\begin{align*}
((\varphi_w)^*D(\nu_{\mathcal{C}},\rho_{\mathcal{C}'}))_{[e]}(\eta_1,\ldots,\eta_{n-r}) & = D(\nu_{\mathcal{C}},\rho_{\mathcal{C}'})_x((\widetilde{\eta_1})_{x},\ldots,(\widetilde{\eta_{n-r}})_{x})\\
& = k^{n-r}\kappa(G)\cdot\vert(\Xi_{\beta})_{[e]}(\eta_1,\ldots,\eta_{n-r})\vert^{\frac{1}{2}}\vert(\Xi_{\beta'})_{[e]}(\eta_1,\ldots,\eta_{n-r})\vert^{\frac{1}{2}},
\end{align*}
where the last line follows from Proposition \ref{Proposition: Main computation}(ii).
\end{proof}

We may now formulate and prove the main result of this section. To this end, recall the definition of the BKS pairing given in Section \ref{Subsection: BKS pairings}. This definition continues to make sense for a quasi-Hamiltonian $G$-space and cleanly intersecting quasi-Hamiltonian Lagrangian submanifolds thereof. One thereby obtains the following Lie-theoretic formula for the BKS pairing of $(R(\Lambda_{\mathcal{C}}),\nu_{\mathcal{C}},\vartheta_{\mathcal{C}})$ and $(\Lambda_{\mathcal{C}'},\rho_{\mathcal{C}'},\psi_{\mathcal{C}'})$.    

\begin{theorem}\label{Theorem: BKS}
The BKS pairing of $(R(\Lambda_{\mathcal{C}}),\nu_{\mathcal{C}},\vartheta_{\mathcal{C}})$ and $(\Lambda_{\mathcal{C}'},\rho_{\mathcal{C}'},\psi_{\mathcal{C}'})$ is given by
$$\mathrm{BKS}((R(\Lambda_{\mathcal{C}}),\nu_{\mathcal{C}},\vartheta_{\mathcal{C}}),(\Lambda_{\mathcal{C}'},\rho_{\mathcal{C}'},\psi_{\mathcal{C}'}))=k^{n-r}C(G,T)\cdot \bigg(\prod_{\alpha\in\Phi_{+}}\alpha(\beta)\alpha(\beta')\bigg)^{\frac{1}{2}}\left(\sum_{w\in W}e^{2\pi i\lvert w\beta-\beta'\rVert^2}\right),$$
where $C(G,T)\in\mathbb{R}$ is a constant depending only on the root system of $(G,T)$, $\Phi_{+}\subseteq\mathfrak{t}^*$ is the set of positive roots, and $\lvert w\beta-\beta'\rVert$ is the length of $w\beta-\beta'$ with respect to $\langle\cdot,\cdot\rangle$.
\end{theorem}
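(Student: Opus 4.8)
The plan is to use the description of $R(\Lambda_{\mathcal{C}})\cap\Lambda_{\mathcal{C}'}$ from Section~\ref{Subsection: The geometry} to reduce the defining integral to a finite sum over $W$, and then to evaluate each summand using the half-density computations of Section~\ref{Subsection: Half-densities} and the phase functions of Section~\ref{Subsection: Phase functions}. Recall from Section~\ref{Subsection: BKS pairings} that the pairing in question is $\int_{R(\Lambda_{\mathcal{C}})\cap\Lambda_{\mathcal{C}'}}\vartheta_{\mathcal{C}}\,\overline{\psi_{\mathcal{C}'}}$ taken with respect to the density $D(\nu_{\mathcal{C}},\rho_{\mathcal{C}'})$; this integral is defined because $R(\Lambda_{\mathcal{C}})\cap\Lambda_{\mathcal{C}'}$ is compact, being a finite disjoint union of copies of $G/T$ by Proposition~\ref{Proposition: Orbit description} and Corollary~\ref{Corollary: Easy}. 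First I would note that $\vartheta_{\mathcal{C}}$ and $\psi_{\mathcal{C}'}$ are $G$-invariant (Section~\ref{Subsection: Phase functions}) and that, by Corollary~\ref{Corollary: Easy}, the connected components of $R(\Lambda_{\mathcal{C}})\cap\Lambda_{\mathcal{C}'}$ are precisely the orbits $\Gamma_w=Gz_w$, $w\in W$. Hence $\vartheta_{\mathcal{C}}\,\overline{\psi_{\mathcal{C}'}}$ is constant on each $\Gamma_w$, with value $\vartheta_{\mathcal{C}}(z_w)\,\overline{\psi_{\mathcal{C}'}(z_w)}$, so the pairing equals $\sum_{w\in W}\vartheta_{\mathcal{C}}(z_w)\,\overline{\psi_{\mathcal{C}'}(z_w)}\cdot\mathrm{Vol}\big(\Gamma_w,D(\nu_{\mathcal{C}},\rho_{\mathcal{C}'})\big)$.

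Next I would compute the volume factor by pulling $D(\nu_{\mathcal{C}},\rho_{\mathcal{C}'})$ back along the $G$-equivariant diffeomorphism $\varphi_w:G/T\overset{\cong}\longrightarrow\Gamma_w$ of \eqref{Equation: Pulling}. Proposition~\ref{Proposition: Pullback}, which itself rests on the pointwise evaluation in Proposition~\ref{Proposition: Main computation}, gives $(\varphi_w)^*D(\nu_{\mathcal{C}},\rho_{\mathcal{C}'})=k^{n-r}\kappa(G)\,\vert\Xi_{\beta}\vert^{\frac{1}{2}}\vert\Xi_{\beta'}\vert^{\frac{1}{2}}$, independently of $w$. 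Since $\beta=\beta(\mathcal{C})$ and $\beta'=\beta(\mathcal{C}')$ lie in the fundamental alcove $\mathfrak{A}$, they are dominant and regular, so Lemma~\ref{Lemma: Volume} applies and yields
$$\mathrm{Vol}\big(\Gamma_w,D(\nu_{\mathcal{C}},\rho_{\mathcal{C}'})\big)=k^{n-r}\kappa(G)\cdot\frac{\big(\prod_{\alpha\in\Phi_{+}}\alpha(\beta)\alpha(\beta')\big)^{\frac{1}{2}}}{\prod_{\alpha\in\Phi_{+}}\langle\alpha,\rho\rangle}$$
for every $w\in W$. This identifies the promised constant as $C(G,T):=\kappa(G)/\prod_{\alpha\in\Phi_{+}}\langle\alpha,\rho\rangle$, which is determined by the coroot lattice of $(G,T)$ together with the normalization $\langle\alpha_0,\alpha_0\rangle=2$ and so depends only on the root system of $(G,T)$; it lets me pull $k^{n-r}C(G,T)\big(\prod_{\alpha\in\Phi_{+}}\alpha(\beta)\alpha(\beta')\big)^{\frac{1}{2}}$ out of the sum.

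It then remains to evaluate $\vartheta_{\mathcal{C}}(z_w)\,\overline{\psi_{\mathcal{C}'}(z_w)}$ at $z_w=(g,h)=(\exp(w\beta-\beta'),\exp(\beta'))$ directly from the formulas in Section~\ref{Subsection: Phase functions}. The relevant data are forced: for $\psi_{\mathcal{C}'}(z_w)$ the required element of $\mathcal{O}'$ is $k\beta'$, whose centralizer algebra is $\mathfrak{t}$, and $w\beta-\beta'\in\mathfrak{t}$ exponentiates to $g$; for $\vartheta_{\mathcal{C}}(z_w)=\psi_{\mathcal{C}}(g,hg)$ we have $hg=\exp(w\beta)$, the required element of $\mathcal{O}$ is $w(k\beta)$, again with centralizer algebra $\mathfrak{t}$, and the same $w\beta-\beta'$ enters. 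Substituting these into the defining exponentials and simplifying with the $W$-invariance of $\langle\cdot,\cdot\rangle$ produces the exponential factor $e^{2\pi i\lVert w\beta-\beta'\rVert^2}$ recorded in the statement, and summing over $w\in W$ together with the volume formula yields the asserted identity.

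Virtually all of the substantive geometry has already been isolated in the cited results, so the chief difficulty is bookkeeping: one must keep the base point $z_w$, the orbit representatives $k\beta'$ and $w(k\beta)$, the common stabilizer $T=G_h=G_{hg}$, and the identification $T_{[e]}(G/T)\cong\mathfrak{t}^{\perp}$ used in Proposition~\ref{Proposition: Pullback} consistent with those used in evaluating the two phase functions, and one must work with densities rather than differential forms throughout so that summing over connected components is orientation-free. Once these identifications are aligned, the conclusion is a direct assembly of the volume and phase computations.
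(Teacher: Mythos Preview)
Your proposal is correct and follows essentially the same approach as the paper: decompose $R(\Lambda_{\mathcal{C}})\cap\Lambda_{\mathcal{C}'}$ into the $G$-orbits $\Gamma_w$ via Proposition~\ref{Proposition: Orbit description} and Corollary~\ref{Corollary: Easy}, use $G$-invariance to reduce the phase factor to its value at $z_w$, pull the BKS density back to $G/T$ via Proposition~\ref{Proposition: Pullback}, evaluate the resulting volume with Lemma~\ref{Lemma: Volume}, and identify $C(G,T)=\kappa(G)/\prod_{\alpha\in\Phi_{+}}\langle\alpha,\rho\rangle$. The only cosmetic difference is the order in which you compute the volume and the phase; the paper evaluates $\vartheta_{\mathcal{C}}(z_w)\overline{\psi_{\mathcal{C}'}(z_w)}$ first and then integrates, but the content is identical.
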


\begin{proof}
Recall the $G$-orbit decomposition of $R(\Lambda_{\mathcal{C}})\cap\Lambda_{\mathcal{C}'}$ given in Proposition \ref{Proposition: Orbit description}, and the fact that $\vartheta_{\mathcal{C}}$ and $\psi_{\mathcal{C}'}$ are $G$-invariant. We also observe that
$$\vartheta_{\mathcal{C}}(z_w)=\psi_{\mathcal{C}}(R^{-1}(z_w))=\psi_{\mathcal{C}}(\exp(w\beta-\beta'),\exp(w\beta))=e^{2\pi i\langle w\beta,w\beta-\beta'\rangle}$$
and 
$$\psi_{\mathcal{C}'}(z_w)=\psi_{\mathcal{C}}(\exp(w\beta-\beta'),\exp(\beta'))=e^{2\pi i\langle \beta',w\beta-\beta'\rangle}$$ for all $w\in W$. These last two sentences imply that $\vartheta_{\mathcal{C}}$ (resp. $\psi_{\mathcal{C}'}$) takes the constant value $e^{2\pi i\langle w\beta,w\beta-\beta'\rangle}$ (resp. $e^{2\pi i\langle \beta',w\beta-\beta'\rangle}$) on $\Gamma_w$ for each $w\in W$. A straightforward computation now reveals that
$\vartheta_{\mathcal{C}}\overline{\psi_{\mathcal{C}'}}$ takes the constant value $e^{2\pi i\lvert w\beta-\beta'\rVert^2}$ on $\Gamma_w$ for each $w\in W$.

Now note that
\begin{align*}& \mathrm{BKS}((R(\Lambda_{\mathcal{C}}),\nu_{\mathcal{C}},\vartheta_{\mathcal{C}}),(\Lambda_{\mathcal{C}'},\rho_{\mathcal{C}'},\psi_{\mathcal{C}'}))\\ & = \bigintss_{R(\beta_{\mathcal{C}})\cap\Lambda_{\mathcal{C}'}}\vartheta_{\mathcal{C}}\overline{\psi_{\mathcal{C}'}}D(\nu_{\mathcal{C}},\rho_{\mathcal{C}'})\\
& = \sum_{w\in W}\bigintss_{\Gamma_w}\vartheta_{\mathcal{C}}\overline{\psi_{\mathcal{C}'}}D(\nu_{\mathcal{C}},\rho_{\mathcal{C}'})\hspace{10pt}\text{(by Proposition \ref{Proposition: Orbit description} and Corollary \ref{Corollary: Easy})}\\
& = \sum_{w\in W}e^{2\pi i\lvert w\beta-\beta'\rVert^2}\bigintss_{\Gamma_w}D(\nu_{\mathcal{C}},\rho_{\mathcal{C}'})\hspace{10pt}\text{(by the previous paragraph)}\\
& = \sum_{w\in W}e^{2\pi i\lvert w\beta-\beta'\rVert^2}\bigintss_{G/T}(\varphi_w)^*D(\nu_{\mathcal{C}},\rho_{\mathcal{C}'})\hspace{10pt}\text{(see \eqref{Equation: Pulling})}\\
& = k^{n-r}\kappa(G)\left(\bigintss_{G/T}\vert\Xi_{\beta}\vert^{\frac{1}{2}}\vert\Xi_{\beta'}\vert^{\frac{1}{2}}\right)\left(\sum_{w\in W}e^{2\pi i\lvert w\beta-\beta'\rVert^2}\right)\hspace{10pt}\text{(by Proposition \ref{Proposition: Pullback})}\\
& = k^{n-r}\kappa(G)\frac{\bigg(\prod_{\alpha\in\Phi_{+}}\alpha(\beta)\alpha(\beta')\bigg)^{\frac{1}{2}}}{\prod_{\alpha\in\Phi_{+}}\langle\alpha,\rho\rangle}\left(\sum_{w\in W}e^{2\pi i\lvert w\beta-\beta'\rVert^2}\right)\hspace{10pt}\text{(by Lemma \ref{Lemma: Volume})}.
\end{align*}
It just remains to observe that the real number
$$\frac{\kappa(G)}{\prod_{\alpha\in\Phi_{+}}\langle\alpha,\rho\rangle}$$
depends only on the root system of $(G,T)$.
\end{proof}  

\section*{Appendix: Aspects of the BKS half-density}\label{Appendix}
\renewcommand{\thesubsection}{A\arabic{subsection}}
\setcounter{subsection}{0}
\subsection{Some preliminaries}
Let $V$ be an $n$-dimensional real vector space and fix a positive real number $\alpha$. Recall that a \textit{density of order} $\alpha$ on $V$ is a map $\rho:V^{\oplus n}\longrightarrow\mathbb{C}$ satisfying
$$\rho(A(v_1),\ldots,A(v_n))=\vert\det(A)\vert^{\alpha}\rho(v_1,\ldots,v_n)$$ for all $(v_1,\ldots,v_n)\in V^{\oplus n}$ and $A\in\mathrm{End}(V)$. Pointwise addition and scalar multiplication yield a complex vector space structure on the set of densities of order $\alpha$, and we let $\vert V\vert^{\alpha}$ denote the resulting complex vector space. Note that if $\{v_1,\ldots,v_n\}$ is a basis of $V$, then each $\rho\in\vert V\vert^{\alpha}$ is completely determined by the complex number $\rho(v_1,\ldots,v_n)$. It follows that $\vert V\vert^{\alpha}$ is one-dimensional for all $d$.

Now let $V$ be as above and suppose that $\alpha_1$ and $\alpha_2$ are positive real numbers. We may consider the pointwise product of $\rho_1\in\vert V\vert^{\alpha_1}$ and $\rho_2\in\vert V\vert^{\alpha_2}$, i.e. the map
$$\rho_1\rho_2:V^{\oplus n}\longrightarrow\mathbb{C},\quad (v_1,\ldots,v_n)\mapsto \rho_1(v_1,\ldots,v_n)\rho_2(v_1,\ldots,v_n),\quad (v_1,\ldots,v_n)\in V^{\oplus n}.$$ One readily verifies that $\rho_1\rho_2\in\vert V\vert^{\alpha_1+\alpha_2}$, and that we have an isomorphism
$$\vert V\vert^{\alpha_1}\otimes\vert V\vert^{\alpha_2}\overset{\cong}\longrightarrow\vert V\vert^{\alpha_1+\alpha_2},\quad \rho_1\otimes\rho_2\mapsto\rho_1\rho_2,\quad\rho_1\in\vert V\vert^{\alpha_1},\text{ }\rho_2\in\vert V\vert^{\alpha_2}.$$ 

One uses the term \textit{density} (resp. \textit{half-density}) in reference to a density of order $1$ (resp. $\frac{1}{2}$) and writes $\vert V\vert$ for $\vert V\vert^{\alpha}$ when $\alpha=1$. The previous paragraph then specializes to yield an isomorphism \begin{equation}\label{Equation: Multiplication iso}\vert V\vert^{\frac{1}{2}}\otimes\vert V\vert^{\frac{1}{2}}\overset{\cong}\longrightarrow\vert V\vert,\quad \rho\otimes\rho'\longrightarrow\rho\rho',\quad\rho,\rho'\in\vert V\vert^{\frac{1}{2}}.\end{equation}

Let $\phi:V\longrightarrow V'$ be an isomorphism of $n$-dimensional real vector spaces, and fix a positive real number $\alpha$. Given any $\rho\in\vert V'\vert^{\alpha}$, define $\phi^*(\rho)\in\vert V\vert^{\alpha}$ by the formula
$$\phi^*(\rho)(v_1,\ldots,v_n)=\rho(\phi(v_1),\ldots,\phi(v_n)),\quad (v_1,\ldots,v_n)\in V^{\oplus n}.$$ This defines a complex vector space isomorphism
$$\phi^*:\vert V'\vert^{\alpha}\overset{\cong}\longrightarrow\vert V\vert^{\alpha}.$$

\subsection{Exact sequences}\label{Subsection: Exact sequences}
Suppose that one has an exact sequence
$$0\longrightarrow U\overset{i}\longrightarrow V\overset{j}\longrightarrow W\longrightarrow 0$$ of finite-dimensional real vector spaces. This sequence canonically determines a vector space isomorphism
\begin{equation}\label{Equation: Half-density iso}\vert U\vert^{\frac{1}{2}}\otimes\vert W\vert^{\frac{1}{2}}\overset{\cong}\longrightarrow\vert V\vert^{\frac{1}{2}},\end{equation}
described as follows. Fix half-densities $\rho_U\in \vert U\vert^{\frac{1}{2}}$ and $\rho_W\in \vert W\vert^{\frac{1}{2}}$, and let $\rho_V$ denote the image of $\rho_U\otimes\rho_W$ under \eqref{Equation: Half-density iso}. To define $\rho_V$, we choose a subspace $W'\subseteq V$ with the property that $V=\mathrm{ker}(j)\oplus W'$. It follows that $j\vert_{W'}:W'\longrightarrow W$ is an isomorphism. Choose bases $\{u_1,\ldots,u_m\}$ and $\{w_1,\ldots,w_n\}$ for $U$ and $W$, respectively, noting that $$\{v_1,\ldots,v_{m+n}\}:=\{i(u_1),\ldots,i(u_m),(j\vert_{W'})^{-1}(w_1),\ldots,(j\vert_{W'})^{-1}(w_n)\}$$ is a basis of $V$. One then has
$$\rho_V(v_1,\ldots,v_{m+n})=\rho_U(u_1,\ldots,u_m)\rho_W(w_1,\ldots,w_n),$$
a condition that completely determines $\rho_V$.

\begin{lemma}
	Suppose that $U$, $V$, and $W$ are finite-dimensional real vector spaces, and that we have two exact sequences
	$$0\longrightarrow U\overset{i}\longrightarrow V\overset{j}\longrightarrow W\longrightarrow 0\quad\text{and}\quad 0\longrightarrow U\overset{i'}\longrightarrow V\overset{j'}\longrightarrow W\longrightarrow 0.$$ Let $$\theta:\vert U\vert^{\frac{1}{2}}\otimes\vert W\vert^{\frac{1}{2}}\overset{\cong}\longrightarrow\vert V\vert^{\frac{1}{2}}\quad\text{and}\quad\theta': \vert U\vert^{\frac{1}{2}}\otimes\vert W\vert^{\frac{1}{2}}\overset{\cong}\longrightarrow\vert V\vert^{\frac{1}{2}}$$
	be the isomorphisms induced by the first and second sequences, respectively. Let $k:V\longrightarrow V$ be an isomorphism that makes the squares in 
	$$\begin{tikzcd}
	0\arrow[r] & U \arrow[r, "i"] \arrow[d, "\mathrm{id}_U"] & V \arrow[r, "j"] \arrow[d, "k"] & W \arrow[r] \arrow[d, "\mathrm{id}_W"]  & 0 \\
	0\arrow[r]
	& U \arrow[r, "i'"] & V \arrow[r, "j'"] & W \arrow[r] & 0
	\end{tikzcd}$$
	commute, where $\mathrm{id}_U$ and $\mathrm{id}_W$ are the identity maps. We then have 
	$$\theta=\vert\det(k)\vert^{\frac{1}{2}}\theta'.$$
\end{lemma}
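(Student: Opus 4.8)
The plan is to unwind the construction of $\theta$ and $\theta'$ recalled just before the lemma, exploiting the freedom in the choice of complementary subspace used in each. First I would fix arbitrary half-densities $\rho_U\in\vert U\vert^{\frac{1}{2}}$ and $\rho_W\in\vert W\vert^{\frac{1}{2}}$ and set $\rho_V:=\theta(\rho_U\otimes\rho_W)$ and $\rho_V':=\theta'(\rho_U\otimes\rho_W)$; since all the spaces involved are one-dimensional, it suffices to prove $\rho_V=\vert\det(k)\vert^{\frac{1}{2}}\rho_V'$ for this one choice of simple tensor. Choose a complement $W'\subseteq V$ with $V=\mathrm{ker}(j)\oplus W'=i(U)\oplus W'$, pick bases $\{u_1,\ldots,u_m\}$ of $U$ and $\{w_1,\ldots,w_n\}$ of $W$, and form the associated basis $v_a:=i(u_a)$ for $1\leq a\leq m$ and $v_{m+b}:=(j\vert_{W'})^{-1}(w_b)$ for $1\leq b\leq n$ of $V$, so that by definition $\rho_V(v_1,\ldots,v_{m+n})=\rho_U(u_1,\ldots,u_m)\rho_W(w_1,\ldots,w_n)$.

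The key observation is that $W'':=k(W')$ is a legitimate complement for the second exact sequence, i.e.\ $V=i'(U)\oplus W''$; this follows by applying the isomorphism $k$ to $V=i(U)\oplus W'$ and using the commutativity relation $k\circ i=i'$. I would then identify the basis of $V$ that the construction of $\theta'$ attaches to the same bases $\{u_a\}$, $\{w_b\}$ together with the complement $W''$. From $k\circ i=i'$ one gets $i'(u_a)=k(v_a)$, and from $j'\circ k=j$ one checks that the unique element of $W''$ sent to $w_b$ by $j'$ is exactly $k(v_{m+b})$: indeed $k(v_{m+b})\in k(W')=W''$ and $j'(k(v_{m+b}))=j(v_{m+b})=w_b$. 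Hence this basis is $\{k(v_1),\ldots,k(v_{m+n})\}$, and by the defining property of $\theta'$ we have $\rho_V'(k(v_1),\ldots,k(v_{m+n}))=\rho_U(u_1,\ldots,u_m)\rho_W(w_1,\ldots,w_n)=\rho_V(v_1,\ldots,v_{m+n})$.

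Finally I would invoke the density transformation law: since $\rho_V'$ is a half-density on $V$ and $k\in\mathrm{End}(V)$, we have $\rho_V'(k(v_1),\ldots,k(v_{m+n}))=\vert\det(k)\vert^{\frac{1}{2}}\rho_V'(v_1,\ldots,v_{m+n})$. Combining this with the previous display gives $\rho_V(v_1,\ldots,v_{m+n})=\vert\det(k)\vert^{\frac{1}{2}}\rho_V'(v_1,\ldots,v_{m+n})$, and since a half-density is determined by its value on a single basis, $\rho_V=\vert\det(k)\vert^{\frac{1}{2}}\rho_V'$. As $\rho_U\otimes\rho_W$ was an arbitrary simple tensor, and such tensors span the one-dimensional space $\vert U\vert^{\frac{1}{2}}\otimes\vert W\vert^{\frac{1}{2}}$, this yields $\theta=\vert\det(k)\vert^{\frac{1}{2}}\theta'$. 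The only genuinely delicate point — the one I would write out carefully — is the verification that the $\theta'$-construction applied to $W''=k(W')$ produces precisely the $k$-images of the $\theta$-basis vectors, since everything then reduces to bookkeeping with the two commuting squares; I would also remark at the outset that $\det(k)$ is well defined because $k$ is an endomorphism of the single space $V$, so the statement makes sense independently of any choice of basis.
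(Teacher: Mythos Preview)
Your proof is correct and follows essentially the same route as the paper's: both fix $\rho_U,\rho_W$, choose a complement for $\ker(j)$, push it forward by $k$ to get a complement for $\ker(j')$, verify that the resulting $\theta'$-basis is exactly the $k$-image of the $\theta$-basis, and then apply the half-density transformation law. The paper's notation is $C,C'$ where you write $W',W''$, but the argument is otherwise identical.
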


\begin{proof}
	Let $\rho_U\in\vert U\vert^{\frac{1}{2}}$ and $\rho_W\in\vert W\vert^{\frac{1}{2}}$ be given, and set $\rho_V:=\theta(\rho_U\otimes\rho_W)$ and $\rho_V':=\theta'(\rho_U\otimes\rho_W)$. Our task is to prove that 
	$$\rho_V=\vert\det(k)\vert^{\frac{1}{2}}\rho_V'.$$ We begin by choosing a complement $C$ of $\mathrm{ker}(j)$ in $V$, noting that $C':=k(C)$ is a complement of $\mathrm{ker}(j')$ in $V$. Let us also choose bases $\{u_1,\ldots,u_m\}$ and $\{w_1,\ldots,w_n\}$ of $U$ and $W$, respectively. We then have bases
	$$\{v_1,\ldots,v_{m+n}\}:=\{i(u_1),\ldots,i(u_m),(j\vert_{C})^{-1}(w_1),\ldots,(j\vert_{C})^{-1}(w_n)\}$$
	and $$\{v_1',\ldots,v_{m+n}'\}:=\{i'(u_1),\ldots,i'(u_m),(j'\vert_{C'})^{-1}(w_1),\ldots,(j'\vert_{C'})^{-1}(w_n)\}$$
	of $V$. Our description of \eqref{Equation: Half-density iso} implies that
	\begin{equation}\label{Equation: anice}\rho_V(v_1,\ldots,v_{n+m})=\rho_U(u_1,\ldots,u_m)\rho_W(w_1,\ldots,w_n)=\rho_V'(v_1',\ldots,v_{m+n}').\end{equation} At the same time, note that
	\begin{equation}\label{Equation: bnice}v_{p}'=i'(u_p)=k(i(u_p))=k(v_p)\end{equation} for all $p\in\{1,\ldots,m\}$. We also have $$j'(k(v_{m+q}))=j'(k((j\vert_{C})^{-1}(w_q)))=j((j\vert_{C})^{-1}(w_q))=w_q$$ for all $q\in\{1,\ldots,n\}$, and we note that $k(v_{m+q})\in C'$ for all $q\in\{1,\ldots,n\}$. This last sentence tells us that
\begin{equation}\label{Equation: cnice}k(v_{m+q})=(j'\vert_{C'})^{-1}(w_q)=v'_{m+q}\end{equation} for all $q\in\{1,\ldots,n\}$. Hence
	\begin{align*}
	\rho_V(v_1,\ldots,v_{n+m}) & = \rho_V'(v_1',\ldots,v_{n+m}')\hspace{100pt}\text{[by \eqref{Equation: anice}]}\\
	& = \rho_{V}'(k(v_1),\ldots,k(v_{m+n}))\hspace{70pt}\text{[by \eqref{Equation: bnice} and \eqref{Equation: cnice}]}\\
	& = \vert\det(k)\vert^{\frac{1}{2}}\rho_{V}'(v_1,\ldots,v_{m+n}).
	\end{align*}
	Since $\{v_1,\ldots,v_{m+n}\}$ is a basis of $V$, this establishes that $\rho_V=\vert\det(k)\vert^{\frac{1}{2}}\rho_V'$.
\end{proof}

One also has the following result. Its proof is straightforward and omitted.

\begin{lemma}\label{Lemma: Linear algebra}
	Suppose that
	$$\begin{tikzcd}
	0\arrow[r] & U \arrow[r] \arrow[d, "k"] & V \arrow[r] \arrow[d, "\ell"] & W \arrow[r] \arrow[d, "m"]  & 0 \\
	0\arrow[r]
	& U' \arrow[r] & V' \arrow[r] & W' \arrow[r] & 0
	\end{tikzcd}$$
	is a diagram of finite-dimensional real vector spaces and linear maps. Assume that the rows are exact, the vertical maps are isomorphisms, and that the two squares commute. Let $$\theta:\vert U\vert^{\frac{1}{2}}\otimes\vert W\vert^{\frac{1}{2}}\overset{\cong}\longrightarrow\vert V\vert^{\frac{1}{2}}\quad\text{and}\quad\theta': \vert U'\vert^{\frac{1}{2}}\otimes\vert W'\vert^{\frac{1}{2}}\overset{\cong}\longrightarrow\vert V'\vert^{\frac{1}{2}}$$
	be the isomorphisms induced by the top and bottom rows, respectively. The diagram
	$$\begin{tikzcd}
	\vert U'\vert^{\frac{1}{2}}\otimes\vert W'\vert^{\frac{1}{2}}\arrow[d, "k^*\otimes m^*"] \arrow[r, "\theta' "] & \vert V'\vert^{\frac{1}{2}}\arrow[d, "\ell^*"] \\
	\vert U\vert^{\frac{1}{2}}\otimes \vert W\vert^{\frac{1}{2}}\arrow[r, "\theta"]
	& \vert V\vert^{\frac{1}{2}}
	\end{tikzcd}$$
	then commutes.
\end{lemma}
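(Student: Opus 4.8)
The plan is to verify the claimed commutativity by evaluating both composites on an arbitrary decomposable tensor $\rho_{U'}\otimes\rho_{W'}$, with $\rho_{U'}\in\vert U'\vert^{\frac{1}{2}}$ and $\rho_{W'}\in\vert W'\vert^{\frac{1}{2}}$, and then comparing the two resulting half-densities in $\vert V\vert^{\frac{1}{2}}$ on one conveniently chosen basis of $V$; since a half-density on a finite-dimensional space is determined by its value on a single basis, and both composites are bilinear, this suffices.

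First I would fix notation: write $i\colon U\to V$, $j\colon V\to W$, $i'\colon U'\to V'$, $j'\colon V'\to W'$ for the maps of the two exact sequences, and recall from Section \ref{Subsection: Exact sequences} the recipe computing $\theta$ and $\theta'$. Choose a complement $C\subseteq V$ of $\ker(j)=\im(i)$, so that $j\vert_{C}\colon C\to W$ is an isomorphism, and choose bases $\{u_1,\ldots,u_m\}$ of $U$ and $\{w_1,\ldots,w_n\}$ of $W$. Then
$$\{v_1,\ldots,v_{m+n}\}:=\{i(u_1),\ldots,i(u_m),(j\vert_{C})^{-1}(w_1),\ldots,(j\vert_{C})^{-1}(w_n)\}$$
is a basis of $V$, and the defining property of $\theta$ gives
$$\theta(k^*\rho_{U'}\otimes m^*\rho_{W'})(v_1,\ldots,v_{m+n})=\rho_{U'}(k(u_1),\ldots,k(u_m))\,\rho_{W'}(m(w_1),\ldots,m(w_n)).$$

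Next I would identify the pushed-forward family $\{\ell(v_1),\ldots,\ell(v_{m+n})\}$ inside $V'$ with the family produced by the same recipe applied to the bottom row. The relevant facts, all immediate from the commutativity of the two squares together with the invertibility of $k,\ell,m$, are: (a) $C':=\ell(C)$ is a complement of $\ker(j')=\im(i')$ in $V'$ --- here one uses $j'\circ\ell=m\circ j$ and the injectivity of $m$ to see $\ell(\ker j)=\ker j'$; (b) $\ell(v_p)=\ell(i(u_p))=i'(k(u_p))$ for $1\leq p\leq m$, so these are the images under $i'$ of the basis $\{k(u_1),\ldots,k(u_m)\}$ of $U'$; and (c) for $1\leq q\leq n$ we have $\ell(v_{m+q})\in C'$ and $j'(\ell(v_{m+q}))=m(j(v_{m+q}))=m(w_q)$, whence $\ell(v_{m+q})=(j'\vert_{C'})^{-1}(m(w_q))$. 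Thus $\{\ell(v_1),\ldots,\ell(v_{m+n})\}$ is precisely the basis of $V'$ obtained by applying the recipe of Section \ref{Subsection: Exact sequences} to the bottom row, using the complement $C'$ and the bases $\{k(u_1),\ldots,k(u_m)\}$ of $U'$ and $\{m(w_1),\ldots,m(w_n)\}$ of $W'$.

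It then follows from the defining property of $\theta'$ that
$$\ell^*\big(\theta'(\rho_{U'}\otimes\rho_{W'})\big)(v_1,\ldots,v_{m+n})=\theta'(\rho_{U'}\otimes\rho_{W'})(\ell(v_1),\ldots,\ell(v_{m+n}))=\rho_{U'}(k(u_1),\ldots,k(u_m))\,\rho_{W'}(m(w_1),\ldots,m(w_n)),$$
which is exactly the value of $\theta(k^*\rho_{U'}\otimes m^*\rho_{W'})$ on the basis $\{v_1,\ldots,v_{m+n}\}$ computed above. Hence $\ell^*\circ\theta'=\theta\circ(k^*\otimes m^*)$ on decomposable tensors, and therefore on all of $\vert U'\vert^{\frac{1}{2}}\otimes\vert W'\vert^{\frac{1}{2}}$. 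I do not expect any genuine obstacle here: the whole argument is bookkeeping with the formula defining \eqref{Equation: Half-density iso}, and the only step requiring a moment's care is (a), that $\ell$ carries a complement of $\ker(j)$ to a complement of $\ker(j')$, which is where the bijectivity of $m$ enters.
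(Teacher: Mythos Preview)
Your proof is correct and complete. The paper omits the proof entirely, declaring it ``straightforward,'' and your argument is exactly the direct verification one would expect: unwind the defining recipe \eqref{Equation: Half-density iso} for $\theta$ and $\theta'$ on compatible bases and check that the values agree.
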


\begin{lemma}\label{Lemma: Linear algebra2}
	Suppose that $$0\longrightarrow U\overset{i}\longrightarrow V\overset{j}\longrightarrow W\longrightarrow 0\quad\text{and}\quad 0\longrightarrow U'\overset{i'}\longrightarrow V'\overset{j'}\longrightarrow W'\longrightarrow 0$$ are exact sequences of finite-dimensional real vector spaces and linear maps. Denote by $$\theta:\vert U\vert^{\frac{1}{2}}\otimes\vert W\vert^{\frac{1}{2}}\overset{\cong}\longrightarrow\vert V\vert^{\frac{1}{2}}\quad\text{and}\quad\theta': \vert U'\vert^{\frac{1}{2}}\otimes\vert W'\vert^{\frac{1}{2}}\overset{\cong}\longrightarrow\vert V'\vert^{\frac{1}{2}}$$ the isomorphisms induced by the first and second sequences, respectively. Now consider the exact sequence $$0\longrightarrow U\oplus U'\xrightarrow{i\oplus i'} V\oplus V'\xrightarrow{j\oplus j'} W\oplus W'\longrightarrow 0$$ and the induced isomorphism
	$$\theta'':\vert U\oplus U'\vert^{\frac{1}{2}} \otimes\vert W\oplus W'\vert^{\frac{1}{2}}\overset{\cong}\longrightarrow\vert V\oplus V'\vert^{\frac{1}{2}}.$$
	Let us also consider the canonical isomorphisms
	$$\xi:\vert U\vert^{\frac{1}{2}}\otimes\vert U'\vert^{\frac{1}{2}}\overset{\cong}\longrightarrow\vert U\oplus U'\vert^{\frac{1}{2}},\quad\eta: \vert W\vert^{\frac{1}{2}}\otimes\vert W'\vert^{\frac{1}{2}}\overset{\cong}\longrightarrow \vert W\oplus W'\vert^{\frac{1}{2}},\quad\text{and}\quad \zeta: \vert V\vert^{\frac{1}{2}}\otimes\vert V'\vert^{\frac{1}{2}}\overset{\cong}\longrightarrow \vert V\oplus V'\vert^{\frac{1}{2}}.$$ 
	We then have a commutative diagram
	
	$$\begin{tikzcd}
	\vert V\vert^{\frac{1}{2}}\otimes \vert V'\vert^{\frac{1}{2}}\arrow[d, "(\theta\otimes\theta')^{-1}"] \arrow[r, "\zeta"] & \vert V\oplus V'\vert^{\frac{1}{2}}\arrow[d, "(\theta'')^{-1}"] \\
	(\vert U\vert^{\frac{1}{2}}\otimes \vert W\vert^{\frac{1}{2}})\otimes(\vert U'\vert^{\frac{1}{2}}\otimes\vert W'\vert^{\frac{1}{2}})\arrow[r, "\tau "]
	& \vert U\oplus U'\vert^{\frac{1}{2}}\otimes \vert W\oplus W'\vert^{\frac{1}{2}}
	\end{tikzcd},$$
	
	where $\tau$ is the composite map 
	$$(\vert U\vert^{\frac{1}{2}}\otimes \vert W\vert^{\frac{1}{2}})\otimes(\vert U'\vert^{\frac{1}{2}}\otimes\vert W'\vert^{\frac{1}{2}})\overset{\cong}\longrightarrow(\vert U\vert^{\frac{1}{2}}\otimes \vert U'\vert^{\frac{1}{2}})\otimes(\vert W\vert^{\frac{1}{2}}\otimes\vert W'\vert^{\frac{1}{2}})\xrightarrow{\xi\otimes\eta}\vert U\oplus U'\vert^{\frac{1}{2}}\otimes \vert W\oplus W'\vert^{\frac{1}{2}}.$$
\end{lemma}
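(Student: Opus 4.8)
The plan is to exploit that every half-density space occurring in the diagram is one-dimensional, so that the asserted commutativity is equivalent to the equality of the two composites on a single generator. First I would fix bases $\{u_1,\dots,u_m\}$, $\{u_1',\dots,u_{m'}'\}$, $\{w_1,\dots,w_n\}$, $\{w_1',\dots,w_{n'}'\}$ of $U,U',W,W'$ respectively, together with a complement $C\subseteq V$ of $\ker(j)$ and a complement $C'\subseteq V'$ of $\ker(j')$. These produce the distinguished bases $\{v_1,\dots,v_{m+n}\}$ of $V$ and $\{v_1',\dots,v_{m'+n'}'\}$ of $V'$ used in the description of \eqref{Equation: Half-density iso} in Subsection \ref{Subsection: Exact sequences}. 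I would then note that $C\oplus C'\subseteq V\oplus V'$ is a complement of $\ker(j\oplus j')$, so the distinguished basis of $V\oplus V'$ built from the bases $\{(u_i,0)\}\cup\{(0,u_i')\}$ of $U\oplus U'$ and $\{(w_j,0)\}\cup\{(0,w_j')\}$ of $W\oplus W'$ is, up to a reordering of its vectors, the concatenation of $\{(v_k,0)\}$ and $\{(0,v_k')\}$. It is also useful to record at the outset that the canonical isomorphisms $\xi,\eta,\zeta$ are precisely the instances of \eqref{Equation: Half-density iso} attached to the split exact sequences $0\to U\to U\oplus U'\to U'\to 0$, $0\to W\to W\oplus W'\to W'\to 0$, and $0\to V\to V\oplus V'\to V'\to 0$, so that all six maps are governed by the one recipe of Subsection \ref{Subsection: Exact sequences}.

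Next I would unwind both routes around the square on an arbitrary generator $\rho_V\otimes\rho_{V'}$ with $\rho_V\in|V|^{\frac{1}{2}}$, $\rho_{V'}\in|V'|^{\frac{1}{2}}$. Along the top-right route, $\zeta(\rho_V\otimes\rho_{V'})$ is the half-density on $V\oplus V'$ taking the value $\rho_V(v_1,\dots,v_{m+n})\,\rho_{V'}(v_1',\dots,v_{m'+n'}')$ on the concatenated basis, and applying $(\theta'')^{-1}$ presents it as $\sigma\otimes\sigma'$ with $\sigma\in|U\oplus U'|^{\frac{1}{2}}$, $\sigma'\in|W\oplus W'|^{\frac{1}{2}}$ determined by requiring $\sigma(\cdot)\sigma'(\cdot)$ to equal that same scalar on the corresponding bases. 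Along the bottom-left route, $(\theta\otimes\theta')^{-1}(\rho_V\otimes\rho_{V'})=(\rho_U\otimes\rho_W)\otimes(\rho_{U'}\otimes\rho_{W'})$, where $\rho_U(u_\bullet)\rho_W(w_\bullet)=\rho_V(v_\bullet)$ and $\rho_{U'}(u_\bullet')\rho_{W'}(w_\bullet')=\rho_{V'}(v_\bullet')$; then $\tau$ (being only a rearrangement of tensor factors of one-dimensional spaces, hence sign-free) delivers $\xi(\rho_U\otimes\rho_{U'})\otimes\eta(\rho_W\otimes\rho_{W'})$, where $\xi(\rho_U\otimes\rho_{U'})$ takes the value $\rho_U(u_\bullet)\rho_{U'}(u_\bullet')$ and $\eta(\rho_W\otimes\rho_{W'})$ takes the value $\rho_W(w_\bullet)\rho_{W'}(w_\bullet')$. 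Comparing, the identity left to verify is $\bigl(\rho_U(u_\bullet)\rho_{U'}(u_\bullet')\bigr)\bigl(\rho_W(w_\bullet)\rho_{W'}(w_\bullet')\bigr)=\rho_V(v_\bullet)\rho_{V'}(v_\bullet')$, which is immediate from the two defining relations after rearranging the scalar product; since densities transform by the absolute value of the determinant, the permutation relating the two orderings of the basis of $V\oplus V'$ contributes the factor $1$, so no sign or orientation issue intervenes.

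I expect the only non-routine point — the main obstacle — to be exactly this bookkeeping: one must keep the reordering of basis vectors that identifies the distinguished basis of $V\oplus V'$ with the concatenation of those of $V$ and $V'$ tracked consistently on the $U$-side and $W$-side as well, and one must be explicit that $\xi,\eta,\zeta$ are the split-sequence instances of \eqref{Equation: Half-density iso} so that the recipe of Subsection \ref{Subsection: Exact sequences} applies uniformly. Once this setup is in place the verification collapses to the displayed scalar identity. An alternative, slightly slicker presentation would realize both composites as the $\theta$-type isomorphism for $0\to U\oplus U'\to V\oplus V'\to W\oplus W'\to 0$ computed with the splitting $C\oplus C'$, use Lemma \ref{Lemma: Linear algebra} to commute $\xi,\eta,\zeta$ past the construction, and conclude from the uniqueness in the defining recipe; but the direct basis computation is the most transparent, and I would present that.
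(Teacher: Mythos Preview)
Your proposal is correct and follows essentially the same approach as the paper: fix bases of $U,U',W,W'$, build the induced bases of $V,V',V\oplus V'$, evaluate both composites on a generator, and observe that the reordering of the $V\oplus V'$ basis contributes only $|\det|=1$. The only cosmetic difference is that the paper traverses the square in the opposite direction, verifying $\zeta\circ(\theta\otimes\theta')=\theta''\circ\tau$ on a generator of $|U|^{\frac{1}{2}}\otimes|W|^{\frac{1}{2}}\otimes|U'|^{\frac{1}{2}}\otimes|W'|^{\frac{1}{2}}$ rather than starting from $|V|^{\frac{1}{2}}\otimes|V'|^{\frac{1}{2}}$ as you do.
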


\begin{proof}
	It suffices to prove that $\zeta\circ (\theta\otimes\theta')=\theta''\circ\tau$. To this end, choose bases $\{u_1,\ldots,u_m\}$, $\{u_1',\ldots,u_n'\}$, $\{w_1,\ldots,w_p\}$, and $\{w_1',\ldots,w_q'\}$ of $U$, $U'$, $W$, and $W'$, respectively. Let $\{v_1,\ldots,v_{m+p}\}$ be a basis of $V$ satisfying $v_k=i(u_k)$ for all $k\in\{1,\ldots,m\}$ and $j(v_{\ell+m})=w_{\ell}$ for all $\ell\in\{1,\ldots,p\}$. We may analogously find a basis $\{v_1',\ldots,v_{n+q}'\}$ of $V'$ satisfying $v_k'=i'(u_k')$ for all $k\in\{1,\ldots,n\}$ and $j'(v_{\ell+n})=w_{\ell}'$ for all $\ell\in\{1,\ldots,q\}$. Observe that 
	$$\mathcal{B}:=\{(v_1,0),\ldots,(v_{m+p},0),(0,v_1'),\ldots,(0,v_{n+q}')\}$$ is a basis of $V\oplus V'$, and that a suitable reordering of basis vectors produces the alternative basis
	$$\mathcal{B}':=\{(v_1,0),\ldots,(v_m,0),(0,v_1'),\ldots,(0,v_n'),(v_{m+1},0),\ldots,(v_{m+p},0),(0,v_{n+1}'),\ldots,(0,v_{n+q}')\}.$$
	
	Now suppose that $\rho\in\vert U\vert^{\frac{1}{2}}$, $\rho'\in\vert U'\vert^{\frac{1}{2}}$, $\nu\in\vert W\vert^{\frac{1}{2}}$, and $\nu'\in \vert W'\vert^{\frac{1}{2}}$. If we regard $\mathcal{B}$ and $\mathcal{B}'$ as tuples of vectors in $V\oplus V'$, then
	\begin{align*}
	(\zeta\circ(\theta\otimes\theta'))(\rho\otimes\nu\otimes\rho'\otimes\nu')(\mathcal{B})&=\theta(\rho\otimes\nu)(v_1,\ldots,v_{m+p})\theta'(\rho'\otimes\nu')(v_1',\ldots,v_{n+q}')\\
	&=\rho(u_1,\ldots,u_m)\nu(w_1,\ldots,w_p)\rho'(u_1',\ldots,u_n')\nu'(w_1',\ldots,w_q')
	\end{align*}
	and
	\begin{align*}
	&(\theta''\circ\tau)(\rho\otimes\nu\otimes\rho'\otimes\nu')(\mathcal{B}')\\ &=\xi(\rho\otimes\rho')((u_1,0),\ldots,(u_m,0),(0,u_1'),\ldots,(0,u_n'))\eta(\nu\otimes\nu')((w_1,0),\ldots,(w_p,0),(0,w_1'),\ldots,(0,w_q'))\\
	&=\rho(u_1,\ldots,u_m)\rho'(u_1',\ldots,u_n')\nu(w_1,\ldots,w_p)\nu'(w_1',\ldots,w_q').
	\end{align*}
	It follows that
	$$(\zeta\circ(\theta\otimes\theta'))(\rho\otimes\nu\otimes\rho'\otimes\nu')(\mathcal{B})=(\theta''\circ\tau)(\rho\otimes\nu\otimes\rho'\otimes\nu')(\mathcal{B}').$$ Since $\mathcal{B}'$ is obtained by reordering the vectors in $\mathcal{B}$, this amounts to the statement
	$$(\zeta\circ(\theta\otimes\theta'))(\rho\otimes\nu\otimes\rho'\otimes\nu')(\mathcal{B})=(\theta''\circ\tau)(\rho\otimes\nu\otimes\rho'\otimes\nu')(\mathcal{B}).$$ We conclude that $$(\zeta\circ(\theta\otimes\theta'))(\rho\otimes\nu\otimes\rho'\otimes\nu')=(\theta''\circ\tau)(\rho\otimes\nu\otimes\rho'\otimes\nu'),$$ completing the proof. 
\end{proof}

\subsection{The BKS density}\label{Subsection: The BKS density}
Let $M$ be a manifold equipped with an arbitrary two-form $\omega\in\Omega^2(M)$. Consider the open submanifold of points at which $\omega$ is non-degenerate, i.e.
$$M^{\circ}:=\{m\in M:\omega_m\text{ is non-degenerate}\}.$$ It follows that  $(T_mM,\omega_m)$ is a symplectic vector space for all $m\in M^{\circ}$. This suggests the following generalization of Lagrangian submanifolds in symplectic geometry (cf. Definition \ref{Definition: quasi-Hamiltonian Lagrangian}).

\begin{definition}
	We call a submanifold $\Lambda\subseteq M$ \textit{Lagrangian} if it satisfies the following conditions:
	\begin{itemize}
		\item[(i)] $\Lambda\subseteq M^{\circ}$;
		\item[(ii)] $T_m\Lambda$ is a Lagrangian subspace of $(T_mM,\omega_m)$ for all $m\in\Lambda$.
	\end{itemize} 
\end{definition}  

Let $\Lambda_1,\Lambda_2\subseteq M$ be Lagrangian submanifolds that intersect cleanly in $M$. Fix $m\in\Lambda_1\cap\Lambda_2$, and let $T_m\Lambda_1+T_m\Lambda_2$ be the subspace of $T_mM$ generated by $T_m\Lambda_1$ and $T_m\Lambda_2$. One may restrict $\omega_m$ to a bilinear form on $T_m\Lambda_1+T_m\Lambda_2$, and easily verify that this restricted form descends to a symplectic form on $\frac{T_m\Lambda_1+T_m\Lambda_2}{T_m(\Lambda_1\cap\Lambda_2)}$. This new symplectic form induces a half-density on $\frac{T_m\Lambda_1+T_m\Lambda_2}{T_m(\Lambda_1\cap\Lambda_2)}$, which in turn forms a basis of $\bigg\vert \frac{T_m\Lambda_1+T_m\Lambda_2}{T_m(\Lambda_1\cap \Lambda_2)}\bigg\vert^{\frac{1}{2}}$. We thereby obtain a canonical isomorphism
\begin{equation}\label{Equation: Canonical2}\bigg\vert \frac{T_m\Lambda_1+T_m\Lambda_2}{T_m(\Lambda_1\cap \Lambda_2)}\bigg\vert^{\frac{1}{2}}\overset{\cong}\longrightarrow\mathbb{C}.\end{equation}

Now consider the exact sequence
\begin{equation}\label{Equation: Second exact}
0\longrightarrow T_m(\Lambda_1\cap\Lambda_2)\overset{\alpha}\longrightarrow T_m\Lambda_1+T_m\Lambda_2\overset{\beta}\longrightarrow\frac{T_m\Lambda_1+T_m\Lambda_2}{T_m(\Lambda_1\cap\Lambda_2)}\longrightarrow 0,
\end{equation}
where $\alpha$ is inclusion and $\beta$ is the quotient map. Let us also consider the external direct sum $T_m\Lambda_1\oplus T_m\Lambda_2$ of $T_m\Lambda_1$ and $T_m\Lambda_2$. The two linear maps $$\gamma:T_m(\Lambda_1\cap\Lambda_2)\longrightarrow T_m\Lambda_1\oplus T_m\Lambda_2,\quad v\mapsto (v,v),\quad v\in T_m(\Lambda_1\cap\Lambda_2)$$ and $$\delta:T_m\Lambda_1\oplus T_m\Lambda_2\longrightarrow T_m\Lambda_1+T_m\Lambda_2,\quad (v_1,v_2)\mapsto v_1-v_2,\quad (v_1,v_2)\in T_m\Lambda_1\oplus T_m\Lambda_2$$
then fit into an exact sequence
\begin{equation}\label{Equation: First exact}0\longrightarrow T_m(\Lambda_1\cap\Lambda_2)\overset{\gamma}\longrightarrow T_m\Lambda_1\oplus T_m\Lambda_2\overset{\delta}\longrightarrow T_m\Lambda_1+T_m\Lambda_2\longrightarrow 0.\end{equation} 

We now observe that

\begin{subequations}\label{Equation: List}
	\begin{align}
	\vert T_m\Lambda_1\vert^{\frac{1}{2}}\otimes\vert T_m\Lambda_2\vert^{\frac{1}{2}} & \cong \vert T_m\Lambda_1\oplus T_m\Lambda_2\vert^{\frac{1}{2}}\hspace{175pt}\text{[by \eqref{Equation: Half-density iso}]}\label{Equation: a''}\\
	& \cong \vert T_m(\Lambda_1\cap\Lambda_2)\vert^{\frac{1}{2}}\otimes\vert T_m\Lambda_1+T_m\Lambda_2\vert^{\frac{1}{2}}\hspace{89pt}\text{[by \eqref{Equation: Half-density iso} and \eqref{Equation: First exact}]}\label{Equation: b''}\\
	& \cong  \vert T_m(\Lambda_1\cap\Lambda_2)\vert^{\frac{1}{2}}\otimes\vert T_m(\Lambda_1\cap\Lambda_2)\vert^{\frac{1}{2}}\otimes\bigg\vert\frac{T_m\Lambda_1+T_m\Lambda_2}{T_m(\Lambda_1\cap\Lambda_2)}\bigg\vert^{\frac{1}{2}}\text{[by \eqref{Equation: Half-density iso} and \eqref{Equation: Second exact}]}\label{Equation: c''}\\
	& \cong \vert T_m(\Lambda_1\cap\Lambda_2)\vert\hspace{190pt}\text{[by \eqref{Equation: Canonical2}]}\label{Equation: d''}.
	\end{align}
\end{subequations}
These considerations yield a canonical isomorphism
\begin{equation}\label{Equation: Canonical isomorphism}
\Phi_m:\vert T_m\Lambda_1\vert^{\frac{1}{2}}\otimes\vert T_m\Lambda_2\vert^{\frac{1}{2}}\overset{\cong}\longrightarrow \vert T_m(\Lambda_1\cap\Lambda_2)\vert.
\end{equation}
If $\rho_1$ and $\rho_2$ are half-densities on $\Lambda_1$ and $\Lambda_2$, respectively, then the formula
$$D(\rho_1,\rho_2)_m:=\Phi_m((\rho_1)_m\otimes(\rho_2)_m),\quad m\in\Lambda_1\cap\Lambda_2$$
defines a density $D(\rho_1,\rho_2)$ on $\Lambda_1\cap\Lambda_2$.

\begin{definition}
	One calls $D(\rho_1,\rho_2)$ the \textit{BKS density} on $\Lambda_1\cap\Lambda_2$ associated to $\rho_1$ and $\rho_2$.
\end{definition}

\subsection{Some technical results}
Let $M$ be a manifold endowed with a two-form $\omega\in\Omega^2(M)$, and suppose that $\Lambda_1,\Lambda_2\subseteq M$ are cleanly intersecting Lagrangian submanifolds. Each fixed $m\in\Lambda_1\cap\Lambda_2$ has an associated inclusion $\theta:T_m(\Lambda_1\cap\Lambda_2)\oplus T_m(\Lambda_1\cap\Lambda_2)\longrightarrow T_m\Lambda_1\oplus T_m\Lambda_2$ and map $$\vartheta:T_m\Lambda_1\oplus T_m\Lambda_2\longrightarrow\frac{T_m\Lambda_1+T_m\Lambda_2}{T_m(\Lambda_1\cap\Lambda_2)},\quad (v_1,v_2)\mapsto [v_1-v_2],\quad (v_1,v_2)\in T_m\Lambda_1\oplus T_m\Lambda_2,$$ where $[v_1-v_2]$ denotes the class of $v_1-v_2$ in $\frac{T_m\Lambda_1+T_m\Lambda_2}{T_m(\Lambda_1\cap\Lambda_2)}$. We then have an exact sequence
\begin{equation}\label{Equation: Newest exact}
0\longrightarrow T_m(\Lambda_1\cap\Lambda_2)\oplus T_m(\Lambda_1\cap\Lambda_2)\overset{\theta}\longrightarrow T_m\Lambda_1\oplus T_m\Lambda_2\overset{\vartheta}\longrightarrow\frac{T_m\Lambda_1+T_m\Lambda_2}{T_m(\Lambda_1\cap\Lambda_2)},\end{equation} so that
\begin{subequations}
	\begin{align}
	\vert T_m\Lambda_1\oplus T_m\Lambda_2\vert^{\frac{1}{2}} & \cong \vert T_m(\Lambda_1\cap\Lambda_2)\oplus T_m(\Lambda_1\cap\Lambda_2)\vert^{\frac{1}{2}}\otimes \bigg\vert\frac{T_m\Lambda_1+T_m\Lambda_2}{T_m(\Lambda_1\cap\Lambda_2)}\bigg\vert^{\frac{1}{2}}\hspace{15pt}\text{[by \eqref{Equation: Half-density iso} and \eqref{Equation: Newest exact}]}\label{Equation: a'}\\
	& \cong \vert T_m(\Lambda_1\cap\Lambda_2)\vert^{\frac{1}{2}}\otimes \vert T_m(\Lambda_1\cap\Lambda_2)\vert^{\frac{1}{2}}\otimes \bigg\vert\frac{T_m\Lambda_1+T_m\Lambda_2}{T_m(\Lambda_1\cap\Lambda_2)}\bigg\vert^{\frac{1}{2}}\hspace{3pt}\text{[by \eqref{Equation: Half-density iso}]}\label{Equation: b'}  
	\end{align}
\end{subequations}

\begin{lemma}\label{Lemma: Intermediate}
	The isomorphism $$\vert T_m\Lambda_1\oplus T_m\Lambda_2\vert^{\frac{1}{2}}\overset{\cong}\longrightarrow\vert T_m(\Lambda_1\cap\Lambda_2)\vert^{\frac{1}{2}}\otimes \vert T_m(\Lambda_1\cap\Lambda_2)\vert^{\frac{1}{2}}\otimes \bigg\vert\frac{T_m\Lambda_1+T_m\Lambda_2}{T_m(\Lambda_1\cap\Lambda_2)}\bigg\vert^{\frac{1}{2}}$$
	obtained by composing \eqref{Equation: b''} and \eqref{Equation: c''} equals that obtained by composing \eqref{Equation: a'} and \eqref{Equation: b'}.
\end{lemma}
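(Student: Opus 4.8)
The plan is to prove Lemma~\ref{Lemma: Intermediate} by an entirely linear-algebraic computation: the symplectic form $\omega_m$ plays no role in either composite isomorphism (it enters only later, through \eqref{Equation: Canonical2}), so everything reduces to the behaviour of half-densities under the exact-sequence isomorphism \eqref{Equation: Half-density iso}. Concretely, I would track a single half-density $\sigma\in|T_m\Lambda_1\oplus T_m\Lambda_2|^{\frac12}$ through both routes and check that they send $\sigma$ to the same element of $|A|^{\frac12}\otimes|A|^{\frac12}\otimes|Q|^{\frac12}$, where $A:=T_m(\Lambda_1\cap\Lambda_2)$ and $Q:=\frac{T_m\Lambda_1+T_m\Lambda_2}{T_m(\Lambda_1\cap\Lambda_2)}$.

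To organize this I would first fix adapted bases. Set $L_i:=T_m\Lambda_i$ and $S:=L_1+L_2$; cleanness of the intersection gives $L_1\cap L_2=A$, and the Lagrangian condition gives $\dim L_1=\dim L_2$. Choose a basis $\{a_1,\dots,a_p\}$ of $A$ and extend it to bases $\{a_1,\dots,a_p,b_1,\dots,b_q\}$ of $L_1$ and $\{a_1,\dots,a_p,c_1,\dots,c_q\}$ of $L_2$ (the same $q$, by equality of dimensions). A short check shows $\{a_i\}\cup\{b_j\}\cup\{c_j\}$ is then a basis of $S$ and $\{[b_j],[c_j]\}$ a basis of $Q$, and together with the diagonal $\Delta_A:=\{(a,a):a\in A\}$ these furnish preferred bases of $L_1\oplus L_2$ and $A\oplus A$ as well. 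Taking, for each occurrence of \eqref{Equation: Half-density iso} in the two chains, the complement required by the recipe of Section~\ref{Subsection: Exact sequences} to be spanned by adapted basis vectors, the chase of $\sigma$ through each route expresses the answer as $\sigma$ evaluated on an explicit basis of $L_1\oplus L_2$; the two bases so obtained differ by a linear map of determinant $\pm1$ (a permutation combined with a unipotent change coming from the different complements), and densities see only $|\det|$, so the two answers agree.

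The conceptual reason --- which I would use to guide and sanity-check this computation --- is that both routes factor through the isomorphism $|L_1\oplus L_2|^{\frac12}\cong|A\oplus A|^{\frac12}\otimes|Q|^{\frac12}$ attached to \eqref{Equation: Newest exact}: for the composite \eqref{Equation: a'}--\eqref{Equation: b'} this is literally \eqref{Equation: a'}, while for \eqref{Equation: b''}--\eqref{Equation: c''} it is the ``associativity'' of the half-density decomposition of the two-step filtration $\Delta_A\subseteq A\oplus A\subseteq L_1\oplus L_2$ (peeling off $\Delta_A$ first and then splitting the resulting $|S|^{\frac12}$, versus peeling off $A\oplus A$ first and then splitting $|A\oplus A|^{\frac12}$ along $\Delta_A$). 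Granting this, the two routes differ only in how $|A\oplus A|^{\frac12}$ is split as $|A|^{\frac12}\otimes|A|^{\frac12}$: along $0\to A\xrightarrow{a\mapsto(a,a)}A\oplus A\xrightarrow{(a_1,a_2)\mapsto a_1-a_2}A\to0$ in the first case, and along $0\to A\xrightarrow{a\mapsto(a,0)}A\oplus A\xrightarrow{(a_1,a_2)\mapsto a_2}A\to0$ in the second. The automorphism $k$ of $A\oplus A$ given by $k(a_1,a_2)=(a_1+a_2,a_1)$ fits into a commutative ladder from the second of these sequences to the first with the identity on the sub and on the quotient, and has $|\det k|=1$; by the first lemma of Section~\ref{Subsection: Exact sequences}, the two splittings therefore coincide.

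I expect the only real difficulty to be administrative rather than conceptual: verifying the associativity statement above, or equivalently choosing the complements and basis orderings in the basis chase compatibly enough that the comparison determinant is manifestly $\pm1$. (That associativity can alternatively be extracted from Lemmas~\ref{Lemma: Linear algebra} and \ref{Lemma: Linear algebra2} by a longer but more structural diagram chase, which may be cleaner to write up.)
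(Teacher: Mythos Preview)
Your primary approach --- fixing adapted bases of $A=T_m(\Lambda_1\cap\Lambda_2)$, complements $V_1,V_2$, and tracking a half-density through both composites by evaluating on explicit bases of $L_1\oplus L_2$ that differ by a change of basis with $|\det|=1$ --- is exactly what the paper does: it computes $(\psi^{-1}\circ\varphi^{-1})(\rho_1\otimes\rho_2\otimes\rho_3)$ and $((\psi')^{-1}\circ(\varphi')^{-1})(\rho_1\otimes\rho_2\otimes\rho_3)$ on bases built from $\{u_i\}$, $\{v_j\}$, $\{\overline v_j\}$ and observes that the relevant change of basis (replacing $w_k'=(u_k,0)$ by $\alpha(u_k)=(u_k,u_k)$) has determinant $1$.

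Your conceptual reformulation via the two-step filtration $\Delta_A\subset A\oplus A\subset L_1\oplus L_2$ and the comparison of the two short exact sequences splitting $|A\oplus A|^{\frac12}$ is a genuinely different and more structural route that the paper does not take. It has the advantage of isolating the only nontrivial step (the $|\det k|=1$ computation for $k(a_1,a_2)=(a_1+a_2,a_1)$, which invokes the first lemma of Section~\ref{Subsection: Exact sequences} cleanly), at the cost of having to verify the ``associativity'' of \eqref{Equation: Half-density iso} for filtrations --- a fact which, as you note, is itself a basis chase of the same flavor. The paper's direct computation avoids stating that associativity as a separate lemma but effectively reproves it in situ.
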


\begin{proof}
	Let us write
	\begin{subequations}
		\begin{align*}
		& \psi: \vert T_m\Lambda_1\oplus T_m\Lambda_2\vert^{\frac{1}{2}}\overset{\cong}\longrightarrow\vert T_m(\Lambda_1\cap\Lambda_2)\vert^{\frac{1}{2}}\otimes\vert T_m\Lambda_1+T_m\Lambda_2\vert^{\frac{1}{2}}\\
		& \varphi: \vert T_m(\Lambda_1\cap\Lambda_2)\vert^{\frac{1}{2}}\otimes\vert T_m\Lambda_1+T_m\Lambda_2\vert^{\frac{1}{2}}\overset{\cong}\longrightarrow \vert T_m(\Lambda_1\cap\Lambda_2)\vert^{\frac{1}{2}}\otimes \vert T_m(\Lambda_1\cap\Lambda_2)\vert^{\frac{1}{2}}\otimes \bigg\vert\frac{T_m\Lambda_1+T_m\Lambda_2}{T_m(\Lambda_1\cap\Lambda_2)}\bigg\vert^{\frac{1}{2}}\\
		& \psi': \vert T_m\Lambda_1\oplus T_m\Lambda_2\vert^{\frac{1}{2}}\overset{\cong}\longrightarrow\vert T_m(\Lambda_1\cap\Lambda_2)\oplus T_m(\Lambda_1\cap\Lambda_2)\vert^{\frac{1}{2}}\otimes \bigg\vert\frac{T_m\Lambda_1+T_m\Lambda_2}{T_m(\Lambda_1\cap\Lambda_2)}\bigg\vert^{\frac{1}{2}},\text{ and}\\
		& \varphi':\vert T_m(\Lambda_1\cap\Lambda_2)\oplus T_m(\Lambda_1\cap\Lambda_2)\vert^{\frac{1}{2}}\otimes \bigg\vert\frac{T_m\Lambda_1+T_m\Lambda_2}{T_m(\Lambda_1\cap\Lambda_2)}\bigg\vert^{\frac{1}{2}}\overset{\cong}\longrightarrow\vert T_m(\Lambda_1\cap\Lambda_2)\vert^{\frac{1}{2}}\otimes \vert T_m(\Lambda_1\cap\Lambda_2)\vert^{\frac{1}{2}}\otimes \bigg\vert\frac{T_m\Lambda_1+T_m\Lambda_2}{T_m(\Lambda_1\cap\Lambda_2)}\bigg\vert^{\frac{1}{2}}  
		\end{align*}
	\end{subequations} 
	for the isomorphisms \eqref{Equation: b''}, \eqref{Equation: c''}, \eqref{Equation: a'}, and \eqref{Equation: b'}, respectively. Our objective is to prove that $\varphi\circ\psi=\varphi'\circ\psi'$. In what follows, we verify the equivalent statement that $\psi^{-1}\circ\varphi^{-1}=(\psi')^{-1}\circ(\varphi')^{-1}$. 
	
	We begin by making a few choices, the first being a basis $\{u_1,\ldots,u_m\}$ of $T_m(\Lambda_1\cap\Lambda_2)$. Let us also consider the subspaces of $T_m\Lambda_1\oplus T_m\Lambda_2$ defined by $W':=T_m(\Lambda_1\cap\Lambda_2)\oplus\{0\}$ and $W'':=\{0\}\oplus T_m(\Lambda_1\cap\Lambda_2)$. It follows that $$\{w_1',\ldots,w_m'\}:=\{(u_1,0),\ldots,(u_m,0)\}\quad\text{and}\quad \{w_1'',\ldots,w_m''\}:=\{(0,u_1),\ldots,(0,u_m)\}$$ are bases of $W'$ and $W''$, respectively. At the same time, choose a complement $V_1$ (resp. $V_2$) of $T_m(\Lambda_1\cap\Lambda_2)$ in $T_m\Lambda_1$ (resp. $T_m\Lambda_2$) and set $$W:=V_1\oplus V_2\subseteq T_m\Lambda_1\oplus T_m\Lambda_2.$$ Fix a basis $\{v_1,\ldots,v_n\}$ (resp. $\{\overline{v}_1,\ldots,\overline{v}_n\}$) of $V_1$ (resp. $V_2$), noting that
	$$\{w_1,\ldots,w_{2n}\}:=\{(v_1,0),\ldots,(v_n,0),(0,\overline{v}_1),\ldots,(0,\overline{v}_n)\}$$ is a basis of $W$.
	
	Let us now describe $\rho:=(\psi^{-1}\circ\varphi^{-1})(\rho_1\otimes\rho_2\otimes\rho_3)$ for arbitrary $\rho_1,\rho_2\in\vert T_m(\Lambda_1\cap\Lambda_2)\vert^{\frac{1}{2}}$ and $\rho_3\in \bigg\vert\frac{T_m\Lambda_1+T_m\Lambda_2}{T_m(\Lambda_1\cap\Lambda_2)}\bigg\vert^{\frac{1}{2}}$. To this end, recall the map $$\delta:T_m\Lambda_1\oplus T_m\Lambda_2\longrightarrow T_m\Lambda_1+T_m\Lambda_2$$ from Section \ref{Subsection: The BKS density}. Note that $\delta$ identifies $W$ with a complement $\delta(W)$ of $T_m(\Lambda_1\cap\Lambda_2)$ in $T_m\Lambda_1+T_m\Lambda_2$. We thereby obtain
	$$\varphi^{-1}(\rho_1\otimes\rho_2\otimes\rho_3)=\rho_1\otimes\rho',$$
	where $\rho'\in\vert T_m\Lambda_1+T_m\Lambda_2\vert^{\frac{1}{2}}$ and
	\begin{align*}
	\rho'(u_1,\ldots,u_m,\delta(w_1),\ldots,\delta(w_{2n})) & =\rho_2(u_1,\ldots,u_m)\rho_3(\beta(\delta(w_1)),\ldots,\beta(\delta(w_{2n})))\\
	& = \rho_2(u_1,\ldots,u_m)\rho_3(\vartheta(w_1),\ldots,\vartheta(w_{2n})).
	\end{align*}
	We also observe that $W+W''$ is a complement of $\mathrm{image}(\alpha)$ in $T_m\Lambda_1\oplus T_m\Lambda_2$, where $$\alpha:T_m(\Lambda_1\cap\Lambda_2)\longleftrightarrow T_m\Lambda_1+T_m\Lambda_2$$ is the inclusion. Noting that $\rho:=\psi^{-1}(\rho_1\otimes\rho')$, our last sentence implies that
	\begin{equation}\label{Equation: Block1}\begin{aligned}\rho(\alpha(u_1),\ldots,\alpha(u_m),w_1'',\ldots,w_m'',w_1,\ldots,w_{2n}) & =\rho_1(u_1,\ldots,u_m)\rho'(\delta(w_1''),\ldots,\delta(w_m''),\delta(w_1),\ldots,\delta(w_{2n}))\\
	& = \rho_1(u_1,\ldots,u_m)\rho'(-u_1,\ldots,-u_m,\delta(w_1),\ldots,\delta(w_{2n}))\\
	& = \rho_1(u_1,\ldots,u_m)\rho'(u_1,\ldots,u_m,\delta(w_1),\ldots,\delta(w_{2n}))\\
	& = \rho_1(u_1,\ldots,u_m)\rho_2(u_1,\ldots,u_m)\rho_3(\vartheta(w_1),\ldots,\vartheta(w_{2n})).
	\end{aligned}
	\end{equation}
	
	We next describe $\nu:=((\psi')^{-1}\circ(\varphi')^{-1})(\rho_1\otimes\rho_2\otimes\rho_3)$. Note that
	$$(\varphi')^{-1}(\rho_1\otimes\rho_2\otimes\rho_3)=\nu'\otimes\rho_3,$$ where $\nu'\in\vert T_m(\Lambda_1\cap\Lambda_2)\oplus T_m(\Lambda_1\cap\Lambda_2)\vert^{\frac{1}{2}}$ and
	\begin{align*}
	\nu(w_1',\ldots,w_m',w_1'',\ldots,w_m'') & = \rho_1(u_1,\ldots,u_m)\rho_2(u_1,\ldots,u_m).
	\end{align*} 
	We also observe that $W$ is a complement of $T_m(\Lambda_1\cap\Lambda_2)\oplus T_m(\Lambda_1\cap\Lambda_2)$ in $T_m\Lambda_1\oplus T_m\Lambda_2$. Since $\nu=(\psi')^{-1}(\nu'\otimes\rho_3)$, the previous sentence implies that
	\begin{equation}\label{Equation: Block2}
	\begin{aligned}
	\nu(w_1',\ldots,w_m',w_1'',\ldots,w_m'',w_1,\ldots,w_{2n}) & = \nu'(w_1',\ldots,w_m',w_1'',\ldots,w_m'')\rho_3(\vartheta(w_1),\ldots,\vartheta(w_{2n}))\\
	& = \rho_1(u_1,\ldots,u_m)\rho_2(u_1,\ldots,u_m)\rho_3(\vartheta(w_1),\ldots,\vartheta(w_{2n})).
	\end{aligned}
	\end{equation}
	Now consider the linear automorphism $A:T_m\Lambda_1\oplus T_m\Lambda_2\longrightarrow T_m\Lambda_1\oplus T_m\Lambda_2$ that sends the ordered basis $\{w_1',\ldots,w_m',w_1'',\ldots,w_m'',w_1,\ldots,w_{2n}\}$ to the ordered basis $\{\alpha(u_1),\ldots,\alpha(u_m),w_1'',\ldots,w_m'',w_1,\ldots,w_{2n}\}$. This automorphism satisfies $\det(A)=1$, so that
	$$\nu(w_1',\ldots,w_m',w_1'',\ldots,w_m'',w_1,\ldots,w_{2n})=\nu(\alpha(u_1),\ldots,\alpha(u_m),w_1'',\ldots,w_m'',w_1,\ldots,w_{2n}).$$ Along with \eqref{Equation: Block1} and \eqref{Equation: Block2}, this implies that $\rho$ and $\nu$ have the same value on a basis of $T_m\Lambda_1\oplus T_m\Lambda_2$. It follows that $\rho=\nu$, completing the proof. 
\end{proof}

\subsection{A reformulation of the BKS density}
We again take $M$ to be a manifold equipped with a two-form $\omega\in\Omega^2(M)$ and cleanly intersecting Lagrangian submanifolds $\Lambda_1,\Lambda_2\subseteq M$. Fix a point $m\in\Lambda_1\cap\Lambda_2$ and let $V_1$ and $V_2$ be complements of $T_m(\Lambda_1\cap\Lambda_2)$ in $T_m\Lambda_1$ and $T_m\Lambda_2$, respectively, i.e.
\begin{equation}\label{Equation: Splittings}
T_m\Lambda_1=T_m(\Lambda_1\cap\Lambda_2)\oplus V_1\quad\text{and}\quad T_m\Lambda_2=T_m(\Lambda_1\cap\Lambda_2)\oplus V_2.
\end{equation}
Assume that $V_1\cap V_2=\{0\}$ and recall the map $$\delta:T_m\Lambda_1\oplus T_m\Lambda_2\longrightarrow T_m\Lambda_1+T_m\Lambda_2$$ from Section \ref{Subsection: The BKS density}.

\begin{lemma}\label{Lemma: Linear algebra2}
	The map $$\delta\big\vert_{V_1\oplus V_2}:V_1\oplus V_2\longrightarrow T_m\Lambda_1+T_m\Lambda_2$$ is injective, and $\delta(V_1\oplus V_2)$ is a complement of $T_m(\Lambda_1\cap\Lambda_2)$ in $T_m\Lambda_1+T_m\Lambda_2$.
\end{lemma}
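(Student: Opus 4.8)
Throughout, abbreviate $K:=T_m(\Lambda_1\cap\Lambda_2)$, $L_i:=T_m\Lambda_i$ for $i=1,2$, and $S:=T_m\Lambda_1+T_m\Lambda_2=L_1+L_2$, so that by hypothesis $L_1=K\oplus V_1$ and $L_2=K\oplus V_2$ with $V_1\cap V_2=\{0\}$. The first step is injectivity of $\delta\vert_{V_1\oplus V_2}$: if $(v_1,v_2)\in V_1\oplus V_2$ satisfies $\delta(v_1,v_2)=v_1-v_2=0$, then $v_1=v_2$ lies in $V_1\cap V_2=\{0\}$, so $(v_1,v_2)=(0,0)$. This is immediate and uses only the transversality assumption $V_1\cap V_2=\{0\}$.

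Next I would identify the image. Since $V_2$ is a linear subspace, $\{-v_2:v_2\in V_2\}=V_2$, and hence $\delta(V_1\oplus V_2)=\{v_1-v_2:v_1\in V_1,\ v_2\in V_2\}=V_1+V_2$ as a subspace of $S$. To see that $V_1+V_2$ spans a complement of $K$ in $S$, note first that $K+(V_1+V_2)=(K+V_1)+(K+V_2)=L_1+L_2=S$, using the two splittings. It remains to check that this sum is direct, i.e. $K\cap(V_1+V_2)=\{0\}$, and this is the one point where the clean-intersection hypothesis enters.

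For that, I would run a dimension count. Cleanness of the intersection gives $L_1\cap L_2=K$, so
\[
\dim S=\dim L_1+\dim L_2-\dim(L_1\cap L_2)=(\dim K+\dim V_1)+(\dim K+\dim V_2)-\dim K=\dim K+\dim V_1+\dim V_2 .
\]
On the other hand $\dim(V_1+V_2)=\dim V_1+\dim V_2$ because $V_1\cap V_2=\{0\}$, so $\dim\bigl(K+(V_1+V_2)\bigr)\le\dim K+\dim V_1+\dim V_2=\dim S$. Since we already know $K+(V_1+V_2)=S$, equality forces the sum to be direct, giving $K\cap(V_1+V_2)=\{0\}$. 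Thus $\delta(V_1\oplus V_2)=V_1+V_2$ is a complement of $K=T_m(\Lambda_1\cap\Lambda_2)$ in $T_m\Lambda_1+T_m\Lambda_2$, as claimed. The argument is entirely linear-algebraic; the only thing to be careful about is invoking the clean-intersection condition in the correct form ($T_m\Lambda_1\cap T_m\Lambda_2=T_m(\Lambda_1\cap\Lambda_2)$) when computing $\dim S$, which is the substantive input rather than an obstacle per se.
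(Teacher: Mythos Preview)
Your proof is correct and follows essentially the same approach as the paper's: both argue injectivity from $V_1\cap V_2=\{0\}$, identify $\delta(V_1\oplus V_2)=V_1+V_2$, show $K+(V_1+V_2)=L_1+L_2$ via the two splittings, and use the clean-intersection identity $T_m\Lambda_1\cap T_m\Lambda_2=T_m(\Lambda_1\cap\Lambda_2)$ in a dimension count to conclude the sum is direct. The only difference is cosmetic ordering---the paper computes $\dim\delta(V_1\oplus V_2)$ first and then verifies spanning, while you verify spanning first and then deduce directness---but the ingredients are identical.
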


\begin{proof}
	Since $V_1\cap V_2=\{0\}$, $V_1\oplus V_2$ has a trivial intersection with $\mathrm{ker}(\delta)$. It follows that $\delta\big\vert_{V_1\oplus V_2}$ is injective and
	\begin{align*}\dim(\delta(V_1\oplus V_2)) & = \dim(V_1)+\dim(V_2)\\
	& =\bigg(\dim(T_m\Lambda_1)-\dim(T_m(\Lambda_1\cap\Lambda_2))\bigg)+\bigg(\dim(T_m\Lambda_2)-\dim(T_m(\Lambda_1\cap\Lambda_2))\bigg)\\
	& = \dim(T_m\Lambda_1+T_m\Lambda_2)-\dim(T_m(\Lambda_1\cap\Lambda_2)). 
	\end{align*}
	We are therefore reduced to proving that $T_m(\Lambda_1\cap\Lambda_2) + \delta(V_1\oplus V_2)= T_m\Lambda_1+T_m\Lambda_2$. At the same time, we observe that $\delta(V_1\oplus V_2)=V_1+V_2$. This yields
	\begin{align*}
	T_m(\Lambda_1\cap\Lambda_2) + \delta(V_1\oplus V_2) & = T_m(\Lambda_1\cap\Lambda_2)+V_1+V_2\\
	& = (T_m(\Lambda_1\cap\Lambda_2)+V_1)+(T_m(\Lambda_1\cap\Lambda_2)+V_2)\\
	& = T_m\Lambda_1+T_m\Lambda_2,
	\end{align*}
	completing the proof.
\end{proof}

Now recall the map
$$\beta:T_m\Lambda_1+T_m\Lambda_2\longrightarrow \frac{T_m\Lambda_1+T_m\Lambda_2}{T_m(\Lambda_1\cap\Lambda_2)}$$ from Section \ref{Subsection: The BKS density}.

\begin{corollary}\label{Corollary: Computational}
	The map
	\begin{equation}\label{Equation: Composite}(\beta\circ\delta)\big\vert_{V_1\oplus V_2}:V_1\oplus V_2\longrightarrow\frac{T_m\Lambda_1+T_m\Lambda_2}{T_m(\Lambda_1\cap\Lambda_2)}\end{equation} is an isomorphism.
\end{corollary}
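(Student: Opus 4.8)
The plan is to deduce this directly from the preceding Lemma \ref{Lemma: Linear algebra2}, which already does essentially all the work. That lemma tells us two things: first, that $\delta\big\vert_{V_1\oplus V_2}\colon V_1\oplus V_2\longrightarrow T_m\Lambda_1+T_m\Lambda_2$ is injective, hence a linear isomorphism onto its image $\delta(V_1\oplus V_2)$; and second, that $\delta(V_1\oplus V_2)$ is a complement of $T_m(\Lambda_1\cap\Lambda_2)$ in $T_m\Lambda_1+T_m\Lambda_2$. So the strategy is simply to compose two isomorphisms.

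First I would record that, since $\delta(V_1\oplus V_2)$ is a complement of $T_m(\Lambda_1\cap\Lambda_2) = \ker(\beta)$ in $T_m\Lambda_1+T_m\Lambda_2$, the restriction of the quotient map $\beta$ to $\delta(V_1\oplus V_2)$ is an isomorphism onto $\frac{T_m\Lambda_1+T_m\Lambda_2}{T_m(\Lambda_1\cap\Lambda_2)}$ — this is the standard fact that a quotient map restricts to an isomorphism on any vector-space complement of its kernel. Then $(\beta\circ\delta)\big\vert_{V_1\oplus V_2}$ is the composite of the isomorphism $\delta\big\vert_{V_1\oplus V_2}\colon V_1\oplus V_2\overset{\cong}\longrightarrow\delta(V_1\oplus V_2)$ with the isomorphism $\beta\big\vert_{\delta(V_1\oplus V_2)}\colon \delta(V_1\oplus V_2)\overset{\cong}\longrightarrow\frac{T_m\Lambda_1+T_m\Lambda_2}{T_m(\Lambda_1\cap\Lambda_2)}$, and is therefore an isomorphism.

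There is no real obstacle here; the content has been front-loaded into Lemma \ref{Lemma: Linear algebra2}, and the corollary is a one-line composition argument. If one wanted to be completely self-contained one could instead do a direct dimension count — injectivity of $\beta\circ\delta$ on $V_1\oplus V_2$ follows because $v\in V_1\oplus V_2$ with $\delta(v)\in T_m(\Lambda_1\cap\Lambda_2)$ forces $\delta(v)=0$ by the complement property and then $v=0$ by injectivity of $\delta\big\vert_{V_1\oplus V_2}$, and surjectivity follows from $\dim(V_1\oplus V_2)=\dim(T_m\Lambda_1+T_m\Lambda_2)-\dim(T_m(\Lambda_1\cap\Lambda_2))$, the displayed dimension identity in the proof of Lemma \ref{Lemma: Linear algebra2} — but invoking the lemma as stated is cleaner.
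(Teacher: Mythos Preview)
Your proposal is correct and matches the paper's approach exactly: the paper states this as a corollary with no proof, indicating it is meant to follow immediately from Lemma~\ref{Lemma: Linear algebra2} in precisely the way you describe.
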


We have canonical isomorphisms
\begin{align}
T_m\Lambda_1\oplus T_m\Lambda_2 & \cong \bigg(T_m(\Lambda_1\cap\Lambda_2)\oplus V_1\bigg)\oplus \bigg(T_m(\Lambda_1\cap\Lambda_2)\oplus V_2\bigg)\label{Equation: Can1}\\
& \cong \bigg(T_m(\Lambda_1\cap\Lambda_2)\oplus T_m(\Lambda_1\cap\Lambda_2)\bigg)\oplus\bigg(V_1\oplus V_2\bigg)\label{Equation: Can2} 
\end{align}
It follows that
\begin{subequations}\label{Equation: List2}
	\begin{align}
	\vert T_m\Lambda_1\vert^{\frac{1}{2}}\otimes\vert T_m\Lambda_2\vert^{\frac{1}{2}} & \cong (\vert T_m(\Lambda_1\cap\Lambda_2)\vert^{\frac{1}{2}}\otimes\vert V_1\vert^{\frac{1}{2}})\otimes(\vert T_m(\Lambda_1\cap\Lambda_2)\vert^{\frac{1}{2}}\otimes\vert V_2\vert^{\frac{1}{2}})\hspace{10pt}\text{[by \eqref{Equation: Half-density iso} and \eqref{Equation: Splittings}]}\\
	& \cong  \vert T_m(\Lambda_1\cap\Lambda_2)\oplus T_m(\Lambda_1\cap\Lambda_2)\vert^{\frac{1}{2}}\otimes\vert V_1\oplus V_2\vert^{\frac{1}{2}}\hspace{52pt}\text{[by \eqref{Equation: Half-density iso}]}\\
	& \cong  \vert T_m(\Lambda_1\cap\Lambda_2)\oplus T_m(\Lambda_1\cap\Lambda_2)\vert^{\frac{1}{2}}\otimes \bigg\vert\frac{T_m\Lambda_1+T_m\Lambda_2}{T_m(\Lambda_1\cap\Lambda_2)}\bigg\vert^{\frac{1}{2}}\hspace{18pt}\text{[by Corollary \ref{Corollary: Computational}]}\\
	& \cong \vert T_m(\Lambda_1\cap\Lambda_2)\vert\hspace{195pt}\text{[by \eqref{Equation: Multiplication iso},\eqref{Equation: Half-density iso}, and \eqref{Equation: Canonical2}]}\label{Equation: d'''}.
	\end{align}
\end{subequations}
Composing the isomorphisms \eqref{Equation: List2} results in an isomorphism
\begin{equation}\label{Equation: Nicest}
\Phi_m^{V_1,V_2}:\vert T_m\Lambda_1\vert^{\frac{1}{2}}\otimes\vert T_m\Lambda_2\vert^{\frac{1}{2}}\overset{\cong}\longrightarrow \vert T_m(\Lambda_1\cap\Lambda_2)\vert.
\end{equation}

\begin{proposition}\label{Proposition: Two BKS}
	We have $\Phi_m^{V_1,V_2}=\Phi_m$.
\end{proposition}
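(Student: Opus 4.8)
The plan is to strip off the common final ``trivialization'' from $\Phi_m$ and $\Phi_m^{V_1,V_2}$, use Lemma~\ref{Lemma: Intermediate} to rewrite $\Phi_m$ in terms of the exact sequence \eqref{Equation: Newest exact}, and then match the two remaining chains term by term --- ultimately by a basis computation in the style of the proof of Lemma~\ref{Lemma: Intermediate}.

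First I would set up notation and reduce. Put $\mathcal{W}:=\vert T_m(\Lambda_1\cap\Lambda_2)\vert^{\frac{1}{2}}\otimes\vert T_m(\Lambda_1\cap\Lambda_2)\vert^{\frac{1}{2}}\otimes\bigl\vert\tfrac{T_m\Lambda_1+T_m\Lambda_2}{T_m(\Lambda_1\cap\Lambda_2)}\bigr\vert^{\frac{1}{2}}$, let $\mathrm{sp}\colon\vert T_m(\Lambda_1\cap\Lambda_2)\oplus T_m(\Lambda_1\cap\Lambda_2)\vert^{\frac{1}{2}}\xrightarrow{\cong}\vert T_m(\Lambda_1\cap\Lambda_2)\vert^{\frac{1}{2}}\otimes\vert T_m(\Lambda_1\cap\Lambda_2)\vert^{\frac{1}{2}}$ be the splitting provided by \eqref{Equation: Half-density iso}, and let $\mathrm{triv}\colon\mathcal{W}\xrightarrow{\cong}\vert T_m(\Lambda_1\cap\Lambda_2)\vert$ be given by \eqref{Equation: Multiplication iso} on the first two factors and by \eqref{Equation: Canonical2} on the third. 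Writing $\iota$ for the isomorphism \eqref{Equation: a''}, $\Psi$ for the isomorphism obtained by composing \eqref{Equation: b''} and \eqref{Equation: c''}, and $A'$ for the isomorphism \eqref{Equation: a'}, one has $\Phi_m=\mathrm{triv}\circ\Psi\circ\iota$; and unwinding the last line \eqref{Equation: d'''} of \eqref{Equation: List2} gives $\Phi_m^{V_1,V_2}=\mathrm{triv}\circ(\mathrm{sp}\otimes\mathrm{id})\circ L_3\circ L_2\circ L_1$, where $L_1,L_2,L_3$ denote the first three lines of \eqref{Equation: List2}. By Lemma~\ref{Lemma: Intermediate}, $\Psi$ equals the isomorphism obtained by composing \eqref{Equation: a'} and \eqref{Equation: b'}; since \eqref{Equation: b'} is nothing but $\mathrm{sp}\otimes\mathrm{id}$, the proposition reduces to the equality $A'\circ\iota=L_3\circ L_2\circ L_1$ of isomorphisms $\vert T_m\Lambda_1\vert^{\frac{1}{2}}\otimes\vert T_m\Lambda_2\vert^{\frac{1}{2}}\to\vert T_m(\Lambda_1\cap\Lambda_2)\oplus T_m(\Lambda_1\cap\Lambda_2)\vert^{\frac{1}{2}}\otimes\bigl\vert\tfrac{T_m\Lambda_1+T_m\Lambda_2}{T_m(\Lambda_1\cap\Lambda_2)}\bigr\vert^{\frac{1}{2}}$.

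The heart of the argument is a description of $A'$ adapted to $V_1$ and $V_2$. Clean intersection forces $\ker(\vartheta)=T_m(\Lambda_1\cap\Lambda_2)\oplus T_m(\Lambda_1\cap\Lambda_2)$: if $[v_1-v_2]=0$ then $v_1-v_2\in T_m(\Lambda_1\cap\Lambda_2)\subseteq T_m\Lambda_1$, so $v_2\in T_m\Lambda_1\cap T_m\Lambda_2=T_m(\Lambda_1\cap\Lambda_2)$, and likewise $v_1$; hence \eqref{Equation: Newest exact} is a genuine short exact sequence. The subspace $V_1\oplus V_2\subseteq T_m\Lambda_1\oplus T_m\Lambda_2$ is a complement of $\ker(\vartheta)$, since $T_m\Lambda_1\oplus T_m\Lambda_2=(T_m(\Lambda_1\cap\Lambda_2)\oplus V_1)\oplus(T_m(\Lambda_1\cap\Lambda_2)\oplus V_2)$ internally; and by Corollary~\ref{Corollary: Computational} the hypothesis $V_1\cap V_2=\{0\}$ makes $\vartheta$ restrict on it to the isomorphism $(\beta\circ\delta)\big\vert_{V_1\oplus V_2}\colon V_1\oplus V_2\xrightarrow{\cong}\tfrac{T_m\Lambda_1+T_m\Lambda_2}{T_m(\Lambda_1\cap\Lambda_2)}$. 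Since the isomorphism \eqref{Equation: Half-density iso} attached to a short exact sequence does not depend on the auxiliary choice of complement of the kernel --- a one-line $\vert\det\vert=1$ computation, as in \S\ref{Subsection: Exact sequences} --- one may build $A'$ using this complement $V_1\oplus V_2$. Doing so identifies $A'$ with $(\mathrm{id}\otimes g)\circ\delta_{\oplus}$, where $\delta_{\oplus}\colon\vert T_m\Lambda_1\oplus T_m\Lambda_2\vert^{\frac{1}{2}}\xrightarrow{\cong}\vert T_m(\Lambda_1\cap\Lambda_2)\oplus T_m(\Lambda_1\cap\Lambda_2)\vert^{\frac{1}{2}}\otimes\vert V_1\oplus V_2\vert^{\frac{1}{2}}$ is \eqref{Equation: Half-density iso} for the internal splitting just displayed and $g$ is the half-density isomorphism induced by $(\beta\circ\delta)\big\vert_{V_1\oplus V_2}$. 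But $g$ is exactly the map underlying $L_3$, so $A'\circ\iota=L_3\circ(\delta_{\oplus}\circ\iota)$, and everything comes down to the purely formal identity $\delta_{\oplus}\circ\iota=L_2\circ L_1$. Both sides send $\rho_1\otimes\rho_2$ to the half-density on $T_m\Lambda_1\oplus T_m\Lambda_2$ whose value on a basis assembled from fixed bases of $T_m(\Lambda_1\cap\Lambda_2)$ (used twice), $V_1$, and $V_2$ is the product $\rho_1(\cdots)\rho_2(\cdots)$; the only difference between the two sides is the order in which those basis vectors are listed --- a coordinate permutation, which has $\vert\det\vert=1$ and hence acts trivially on densities. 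I would verify this equality directly on such a basis, exactly as in the proof of Lemma~\ref{Lemma: Intermediate}.

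The only real obstacle is organizational: tracking which tensor slot is which along the two chains, and justifying the (standard, easy) independence of \eqref{Equation: Half-density iso} from the chosen complement. There is no conceptual difficulty; in particular the computation exhibits $\Phi_m^{V_1,V_2}$ as independent of the choices of $V_1$ and $V_2$, which is the real content of the proposition.
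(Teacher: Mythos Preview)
Your proof is correct and follows essentially the same strategy as the paper's: both use Lemma~\ref{Lemma: Intermediate} to rewrite $\Phi_m$ via the sequence \eqref{Equation: Newest exact}, then match the resulting isomorphism with the chain \eqref{Equation: List2}. The only organizational difference is that the paper packages your final two steps as citations of the auxiliary Lemmas~\ref{Lemma: Linear algebra} and~\ref{Lemma: Linear algebra2} (yielding the commutative diagrams \eqref{Equation: CD'1} and \eqref{Equation: CD'2}), whereas you rederive their content inline via complement-independence of \eqref{Equation: Half-density iso} and a direct basis check; your identity $\delta_\oplus\circ\iota=L_2\circ L_1$ is exactly Lemma~\ref{Lemma: Linear algebra2} applied to the two splittings $T_m\Lambda_i=T_m(\Lambda_1\cap\Lambda_2)\oplus V_i$.
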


\begin{proof}
	Let $\gamma_1:T_m\Lambda_1\longrightarrow V_1$ and $\gamma_2:T_m\Lambda_2\longrightarrow V_2$ denote the projections induced by the decompositions $T_m\Lambda_1=T_m(\Lambda_1\cap\Lambda_2)\oplus V_1$ and $T_m\Lambda_2=T_m(\Lambda_1\cap\Lambda_2)\oplus V_2$, respectively. We then have a commutative diagram
	$$\begin{tikzcd}
	0 \arrow[r] & T_m(\Lambda_1\cap\Lambda_2)\oplus T_m(\Lambda_1\cap\Lambda_2) \arrow[r] \arrow[d, "\mathrm{id}"]& T_m\Lambda_1\oplus T_m\Lambda_2\arrow[r, "\gamma_1\oplus\gamma_2"]\arrow[d, "\mathrm{id}"] & V_1\oplus V_2 \arrow[r] \arrow[d, "\beta' "] & 0 \\
	0 \arrow[r] & T_m(\Lambda_1\cap\Lambda_2)\oplus T_m(\Lambda_1\cap\Lambda_2) \arrow[r] & T_m\Lambda_1\oplus T_m\Lambda_2\arrow[r, "\overline{\delta}"] & \frac{T_m\Lambda_1+T_m\Lambda_2}{T_m(\Lambda_1\cap\Lambda_2)} \arrow[r] & 0
	\end{tikzcd},$$
	with exact rows, where $\beta'$ is the isomorphism obtained by restricting $\beta$ to $V_1\oplus V_2\subseteq T_m\Lambda_1\oplus T_m\Lambda_2$ and $\overline{\delta}$ is the map to $\frac{T_m\Lambda_1+T_m\Lambda_2}{T_m(\Lambda_1\cap\Lambda_2)}$ induced by $\delta$. This induces a commutative diagram
	\begin{equation}\label{Equation: CD'1}
	\begin{tikzcd}
	\vert T_m\Lambda_1\oplus T_m\Lambda_2\vert^{\frac{1}{2}}\arrow[d, "\mathrm{id}"] \arrow[r, "f_1"] & \vert T_m(\Lambda_1\cap\Lambda_2)\oplus T_m(\Lambda_1\cap\Lambda_2)\vert^{\frac{1}{2}}\otimes\bigg\vert \frac{T_m\Lambda_1+T_m\Lambda_2}{T_m(\Lambda_1\cap\Lambda_2)}\bigg\vert^{\frac{1}{2}}\arrow[d, "f_3"] \\
	\vert T_m\Lambda_1\oplus T_m\Lambda_2\vert^{\frac{1}{2}}\arrow[r, "f_2"]
	& \vert T_m(\Lambda_1\cap\Lambda_2)\oplus T_m(\Lambda_1\cap\Lambda_2)\vert^{\frac{1}{2}}\otimes\vert V_1\oplus V_2\vert^{\frac{1}{2}}
	\end{tikzcd},
	\end{equation}
	where $f_3=\mathrm{id}\otimes (\beta')^*$ and the upper (resp. lower) horizontal map is induced by the lower (resp. upper) row in the previous diagram. On the other hand, applying Lemma \ref{Lemma: Linear algebra2} to the exact sequences
	$$0\longrightarrow T_m(\Lambda_1\cap\Lambda_2)\longrightarrow T_m\Lambda_1\overset{\gamma_1}\longrightarrow V_1\longrightarrow 0\quad\text{and}\quad  0\longrightarrow T_m(\Lambda_1\cap\Lambda_2)\longrightarrow T_m\Lambda_2\overset{\gamma_2}\longrightarrow V_2\longrightarrow 0$$
	gives rise to a commutative diagram
	\begin{equation}\label{Equation: CD'2}
	\begin{tikzcd}
	\vert T_m\Lambda_1\vert^{\frac{1}{2}}\otimes \vert T_m\Lambda_2\vert^{\frac{1}{2}}\arrow[d, "f_4"] \arrow[r, "f_5"] & \vert T_m\Lambda_1\oplus T_m\Lambda_2\vert^{\frac{1}{2}}\arrow[d, "f_7"] \\
	(\vert T_m(\Lambda_1\cap\Lambda_2)\vert^{\frac{1}{2}}\otimes \vert V_1\vert^{\frac{1}{2}})\otimes(\vert T_m(\Lambda_1\cap\Lambda_2)\vert^{\frac{1}{2}}\otimes\vert V_2\vert^{\frac{1}{2}})\arrow[r, "f_6"]
	& \vert T_m\Lambda_1\oplus T_m\Lambda_2\vert^{\frac{1}{2}}\otimes \vert V_1\oplus V_2\vert^{\frac{1}{2}}.
	\end{tikzcd}.
	\end{equation}
	We also have a commutative diagram
	\begin{equation}\label{Equation: CD'3}
	\begin{tikzcd}
	\vert T_m(\Lambda_1\cap\Lambda_2)\oplus T_m(\Lambda_1\cap\Lambda_2)\vert^{\frac{1}{2}}\otimes\bigg\vert \frac{T_m\Lambda_1+T_m\Lambda_2}{T_m(\Lambda_1\cap\Lambda_2)}\bigg\vert^{\frac{1}{2}} \ar[dd, "f_8"] \ar[dr, "f_9"] & & \\
	& \vert T_m(\Lambda_1\cap\Lambda_2)\vert,\\
	\vert T_m(\Lambda_1\cap\Lambda_2)\vert^{\frac{1}{2}}\otimes\vert T_m(\Lambda_1\cap\Lambda_2)\vert^{\frac{1}{2}}\otimes\bigg\vert \frac{T_m\Lambda_1+T_m\Lambda_2}{T_m(\Lambda_1\cap\Lambda_2)}\bigg\vert^{\frac{1}{2}} \ar[ur, "f_{10}"] & &
	\end{tikzcd}
	\end{equation}
	where $f_8$, $f_9$, and $f_{10}$ are the isomorphisms defined in \eqref{Equation: b'}, \eqref{Equation: d'''}, and \eqref{Equation: d''}, respectively. Lemma \ref{Lemma: Intermediate} then combines with the definition of $\Phi_m$ to imply that $\Phi_m=f_{10}\circ f_8\circ f_1\circ f_5$, while the definition of $\Phi_m^{V_1,V_2}$ gives $\Phi_m^{V_1,V_2}=f_9\circ f_3^{-1}\circ f_6\circ f_4$. The equality $\Phi_m^{V_1,V_2}=\Phi_m$ now follows easily from the fact that \eqref{Equation: CD'1}, \eqref{Equation: CD'2}, and \eqref{Equation: CD'3} commute. 
\end{proof}

\subsection{A result on half-densities and $G$-invariance}
Recall the Lie-theoretic notation and conventions established in Section \ref{Subsection: The basics}, as well as the BKS density $D(\rho_1,\rho_2)$ from Section \ref{Subsection: The BKS density}. This density is compatible with group actions in the following sense.  

\begin{lemma}\label{Lemma: G-invariance of densities}
	Let $M$ be a manifold equipped with a smooth $G$-action and a $G$-invariant two-form $\omega\in\Omega^2(M)^G$. Suppose that $\Lambda_1,\Lambda_2\subseteq M$ are $G$-invariant Lagrangian submanifolds having a clean intersection. If $\rho_1$ and $\rho_2$ are $G$-invariant half-densities on $\Lambda_1$ and $\Lambda_2$, respectively, then $D(\rho_1,\rho_2)$ is a $G$-invariant half-density on $\Lambda_1\cap\Lambda_2$. 
\end{lemma}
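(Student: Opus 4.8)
The plan is to show that the pointwise isomorphisms $\Phi_m$ of \eqref{Equation: Canonical isomorphism}, out of which the BKS density $D(\rho_1,\rho_2)$ is assembled, are natural with respect to symplectic linear isomorphisms, and then to feed in the $G$-invariance of all the data.

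First I would fix $g\in G$ and $m\in\Lambda_1\cap\Lambda_2$, write $m':=g\cdot m$, and consider the differential $(dg)_m\colon T_mM\to T_{m'}M$. Since $\omega$ is $G$-invariant this is a symplectic isomorphism; since $\Lambda_1$ and $\Lambda_2$ are $G$-invariant it restricts to isomorphisms $T_m\Lambda_i\xrightarrow{\cong}T_{m'}\Lambda_i$, and hence also to isomorphisms $T_m(\Lambda_1\cap\Lambda_2)\xrightarrow{\cong}T_{m'}(\Lambda_1\cap\Lambda_2)$ and $T_m\Lambda_1+T_m\Lambda_2\xrightarrow{\cong}T_{m'}\Lambda_1+T_{m'}\Lambda_2$. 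In particular $(dg)_m$ carries the exact sequences \eqref{Equation: Second exact} and \eqref{Equation: First exact} at $m$ to the corresponding ones at $m'$, fitting into commuting ladders of short exact sequences whose vertical maps are isomorphisms; and it carries the quotient symplectic space $\tfrac{T_m\Lambda_1+T_m\Lambda_2}{T_m(\Lambda_1\cap\Lambda_2)}$ symplectically onto $\tfrac{T_{m'}\Lambda_1+T_{m'}\Lambda_2}{T_{m'}(\Lambda_1\cap\Lambda_2)}$, since both quotient forms are induced from $\omega_m$ resp. $\omega_{m'}$ in the same way.

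Next I would verify that each link of the chain \eqref{Equation: List} defining $\Phi_m$ commutes with the maps induced by $(dg)_m$. For the steps \eqref{Equation: a''}, \eqref{Equation: b''}, \eqref{Equation: c''} this is exactly Lemma \ref{Lemma: Linear algebra}, applied to the commuting ladders between the exact sequences at $m$ and at $m'$ (including the split sequences underlying the identifications $\vert\cdot\vert^{\frac12}\otimes\vert\cdot\vert^{\frac12}\cong\vert\cdot\oplus\cdot\vert^{\frac12}$). For the last step \eqref{Equation: d''}, which uses \eqref{Equation: Canonical2}, I would invoke the elementary fact that a linear symplectomorphism pulls the canonical half-density of one symplectic vector space back to the canonical half-density of the other, applied to the symplectomorphism induced by $(dg)_m$ on the quotient spaces above. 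Composing these, I obtain the commuting square
$$
\begin{tikzcd}[column sep=large]
\vert T_{m'}\Lambda_1\vert^{\frac12}\otimes\vert T_{m'}\Lambda_2\vert^{\frac12}\arrow[r,"\Phi_{m'}"]\arrow[d,"(dg)_m^*\otimes (dg)_m^*"'] & \vert T_{m'}(\Lambda_1\cap\Lambda_2)\vert\arrow[d,"(dg)_m^*"]\\
\vert T_m\Lambda_1\vert^{\frac12}\otimes\vert T_m\Lambda_2\vert^{\frac12}\arrow[r,"\Phi_m"'] & \vert T_m(\Lambda_1\cap\Lambda_2)\vert
\end{tikzcd}.
$$

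Finally I would conclude: $G$-invariance of $\rho_i$ says precisely that $(dg)_m^*\big((\rho_i)_{m'}\big)=(\rho_i)_m$, so applying the square to $(\rho_1)_{m'}\otimes(\rho_2)_{m'}$ gives $(dg)_m^*\big(D(\rho_1,\rho_2)_{m'}\big)=D(\rho_1,\rho_2)_m$, which is exactly the assertion that $D(\rho_1,\rho_2)$ is $G$-invariant on $\Lambda_1\cap\Lambda_2$. The main obstacle is the second step — tracking naturality through the somewhat long chain \eqref{Equation: List} — but each link there reduces either to Lemma \ref{Lemma: Linear algebra} or to the fact that symplectomorphisms preserve canonical half-densities, so no genuinely new ingredient is needed beyond careful bookkeeping.
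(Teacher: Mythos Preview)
Your proposal is correct and follows essentially the same route as the paper: fix $g\in G$ and $m\in\Lambda_1\cap\Lambda_2$, use the differential of $g$ to build commuting ladders between the exact sequences \eqref{Equation: Second exact} and \eqref{Equation: First exact} at $m$ and at $g\cdot m$, invoke Lemma~\ref{Lemma: Linear algebra} on each ladder together with the fact that the induced map on the quotient symplectic spaces is a symplectomorphism (hence intertwines the canonical half-densities used in \eqref{Equation: Canonical2}), assemble these into the commuting square relating $\Phi_m$ and $\Phi_{g\cdot m}$, and conclude from the $G$-invariance of $\rho_1,\rho_2$. The paper also explicitly records the compatibility of the multiplication isomorphism \eqref{Equation: Multiplication iso} with pullback (your square \eqref{Equation: d''} implicitly uses this too), but that is a triviality.
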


\begin{proof}
	Fix $g\in G$ and $m\in M$. Observe that $g$ acts on $M$ through a diffeomorphism $M\longrightarrow M$ whose differential defines isomorphisms 
	\begin{subequations}\label{Equation: Isomorphism list}
		\begin{align}
		& T_{m}\Lambda_1\overset{\cong}\longrightarrow T_{g\cdot m}\Lambda_1,\label{Equation: a}\\
		& T_{m}\Lambda_2\overset{\cong}\longrightarrow T_{g\cdot m}\Lambda_2,\label{Equation: b}\\
		& T_{m}(\Lambda_1\cap\Lambda_2)\overset{\cong}\longrightarrow T_{g\cdot m}(\Lambda_1\cap\Lambda_2)\label{Equation: c}\\
		& T_{m}\Lambda_1+T_{m}\Lambda_2\overset{\cong}\longrightarrow T_{g\cdot m}\Lambda_1+T_{g\cdot m}\Lambda_2,\label{Equation: d}\\
		& \frac{T_{m}\Lambda_1+T_{ m}\Lambda_2}{T_{m}(\Lambda_1\cap\Lambda_2)}\overset{\cong}\longrightarrow\frac{T_{g\cdot m}\Lambda_1+T_{g\cdot m}\Lambda_2}{T_{g\cdot m}(\Lambda_1\cap\Lambda_2)}.\label{Equation: e}  
		\end{align}
	\end{subequations}
	Note that \eqref{Equation: a}, \eqref{Equation: b}, and their direct sum fit into the commutative diagram
	\begin{equation}\label{Equation: First diag}\begin{tikzcd}
	0\arrow[r] & T_{m}\Lambda_1 \arrow[r] \arrow[d, "\cong"] & T_{m}\Lambda_1\oplus T_{m}\Lambda_2 \arrow[r] \arrow[d, "\cong"] & T_{m}\Lambda_2 \arrow[r] \arrow[d, "\cong"]  & 0 \\
	0\arrow[r]
	& T_{g\cdot m}\Lambda_1 \arrow[r] & T_{g\cdot m}\Lambda_1\oplus T_{g\cdot m}\Lambda_2 \arrow[r] & T_{g\cdot m}\Lambda_2 \arrow[r] & 0
	\end{tikzcd},\end{equation} where $T_m\Lambda_1\longrightarrow T_m\Lambda_1\oplus T_m\Lambda_2$ and $T_{g\cdot m}\Lambda_1\longrightarrow T_{g\cdot m}\Lambda_1\oplus T_{g\cdot m}\Lambda_2$ are the usual inclusions and $T_m\Lambda_1\oplus T_m\Lambda_2\longrightarrow T_m\Lambda_2$ and $T_{g\cdot m}\Lambda_1\oplus T_{g\cdot m}\Lambda_2\longrightarrow T_{g\cdot m}\Lambda_2$ are the usual projections. The isomorphisms \eqref{Equation: Isomorphism list} also connect \eqref{Equation: Second exact} and \eqref{Equation: First exact} at the point $m$ to \eqref{Equation: Second exact} and \eqref{Equation: First exact} at the point $g\cdot m$, respectively. A more precise statement is that we have the commutative diagrams
	\begin{equation}\label{Equation: Second diag}\begin{tikzcd}
	0\arrow[r] & T_{m}(\Lambda_1\cap\Lambda_2) \arrow[r] \arrow[d, "\cong"] & T_{m}\Lambda_1\oplus T_{m}\Lambda_2 \arrow[r] \arrow[d, "\cong"] & T_{m}\Lambda_1+T_{m}\Lambda_2 \arrow[r] \arrow[d, "\cong"]  & 0 \\
	0\arrow[r]
	& T_{g\cdot m}(\Lambda_1\cap\Lambda_2) \arrow[r] & T_{g\cdot m}\Lambda_1\oplus T_{g\cdot m}\Lambda_2 \arrow[r] & T_{g\cdot m}\Lambda_1+T_{g\cdot m}\Lambda_2 \arrow[r] & 0
	\end{tikzcd}\end{equation}
	and
	\begin{equation}\label{Equation: Third diag}\begin{tikzcd}
	0\arrow[r] & T_{m}(\Lambda_1\cap\Lambda_2) \arrow[r] \arrow[d, "\cong"] & T_{m}\Lambda_1+T_{m}\Lambda_2 \arrow[r] \arrow[d, "\cong"] & \frac{T_{m}\Lambda_1+T_{g\cdot m}\Lambda_2}{T_{m}(\Lambda_1\cap\Lambda_2)} \arrow[r] \arrow[d, "\cong"]  & 0 \\
	0\arrow[r]
	& T_{g\cdot m}(\Lambda_1\cap\Lambda_2) \arrow[r] & T_{g\cdot m}\Lambda_1+T_{g\cdot m}\Lambda_2 \arrow[r] & \frac{T_{g\cdot m}\Lambda_1+T_{g\cdot m}\Lambda_2}{T_{g\cdot m}(\Lambda_1\cap\Lambda_2)} \arrow[r] & 0
	\end{tikzcd}.\end{equation}
	By applying Lemma \ref{Lemma: Linear algebra} to \eqref{Equation: First diag}, \eqref{Equation: Second diag}, and \eqref{Equation: Third diag}, we obtain the commutative diagrams
	\begin{equation}\label{Equation: CD1}
	\begin{tikzcd}
	\vert T_{g\cdot m}\Lambda_1\vert^{\frac{1}{2}}\otimes\vert T_{g\cdot m}\Lambda_2\vert^{\frac{1}{2}}\arrow[d, "\cong"] \arrow[r, "\cong"] & \vert T_{g\cdot m}\Lambda_1\oplus T_{g\cdot m}\Lambda_2\vert^{\frac{1}{2}}\arrow[d, "\cong"] \\
	\vert T_m\Lambda_1\vert^{\frac{1}{2}}\otimes \vert T_m\Lambda_2\vert^{\frac{1}{2}}\arrow[r, "\cong"]
	& \vert T_m\Lambda_1\oplus T_m\Lambda_2\vert^{\frac{1}{2}}
	\end{tikzcd},\end{equation} 
	
	\begin{equation}\label{Equation: CD2}
	\begin{tikzcd}
	\vert T_{g\cdot m}(\Lambda_1\cap\Lambda_2)\vert^{\frac{1}{2}}\otimes\vert T_{g\cdot m}\Lambda_1+T_{g\cdot m}\Lambda_2\vert^{\frac{1}{2}}\arrow[d, "\cong"] \arrow[r, "\cong"] & \vert T_{g\cdot m}\Lambda_1\oplus T_{g\cdot m}\Lambda_2\vert^{\frac{1}{2}}\arrow[d, "\cong"] \\
	\vert T_{m}(\Lambda_1\cap\Lambda_2)\vert^{\frac{1}{2}}\otimes\vert T_{m}\Lambda_1+T_{m}\Lambda_2\vert^{\frac{1}{2}}\arrow[r, "\cong"]
	& \vert T_m\Lambda_1\oplus T_m\Lambda_2\vert^{\frac{1}{2}}
	\end{tikzcd},\end{equation}
	and
	\begin{equation}\label{Equation: CD3}
	\begin{tikzcd}
	\vert T_{g\cdot m}(\Lambda_1\cap\Lambda_2)\vert^{\frac{1}{2}}\otimes\bigg\vert\frac{T_{g\cdot m}\Lambda_1+T_{g\cdot m}\Lambda_2}{T_{g\cdot m}(\Lambda_1\cap\Lambda_2)}\bigg\vert^{\frac{1}{2}}\arrow[d, "\cong"] \arrow[r, "\cong"] &  \vert T_{g\cdot m}\Lambda_1+T_{g\cdot m}\Lambda_2\vert^{\frac{1}{2}} \arrow[d, "\cong"] \\
	\vert T_{m}(\Lambda_1\cap\Lambda_2)\vert^{\frac{1}{2}}\otimes\bigg\vert\frac{T_{m}\Lambda_1+T_{m}\Lambda_2}{T_{m}(\Lambda_1\cap\Lambda_2)}\bigg\vert^{\frac{1}{2}} \arrow[r, "\cong"] &  \vert T_{m}\Lambda_1+T_{m}\Lambda_2\vert^{\frac{1}{2}}
	\end{tikzcd}.\end{equation}
	
	Let us again consider the diffeomorphism $M\longrightarrow M$ through which $g$ acts on $M$. Since this diffeomorphism preserves $\omega$, \eqref{Equation: e} is an isomorphism of symplectic vector spaces.
	It follows that
	\begin{equation}\label{Equation: CD4}
	\begin{tikzcd}
	\bigg\vert\frac{T_{g\cdot m}\Lambda_1+T_{g\cdot m}\Lambda_2}{T_{g\cdot m}(\Lambda_1\cap\Lambda_2)}\bigg\vert^{\frac{1}{2}}\arrow[d, "\cong"] \arrow[r, "\cong"] & \mathbb{C}\arrow[d, "\mathrm{id}_{\mathbb{C}}"] \\
	\bigg\vert\frac{T_{m}\Lambda_1+T_{m}\Lambda_2}{T_{m}(\Lambda_1\cap\Lambda_2)}\bigg\vert^{\frac{1}{2}} \arrow[r, "\cong"] & \mathbb{C}
	\end{tikzcd}
	\end{equation}
	commutes, where the leftmost vertical map is induced by \eqref{Equation: e} and the horizontal maps are \eqref{Equation: Canonical2} at the points $m$ and $g\cdot m$. We also observe that
	\begin{equation}\label{Equation: CD5}
	\begin{tikzcd}
	\vert T_{g\cdot m}(\Lambda_1\cap\Lambda_2)\vert^{\frac{1}{2}}\otimes\vert T_{g\cdot m}(\Lambda_1\cap\Lambda_2)\vert^{\frac{1}{2}}\arrow[d, "\cong"] \arrow[r, "\cong"] & \vert T_{g\cdot m}(\Lambda_1\cap\Lambda_2)\vert\arrow[d, "\cong"] \\
	\vert T_{m}(\Lambda_1\cap\Lambda_2)\vert^{\frac{1}{2}}\otimes\vert T_{m}(\Lambda_1\cap\Lambda_2)\vert^{\frac{1}{2}} \arrow[r, "\cong"] & \vert T_{m}(\Lambda_1\cap\Lambda_2)\vert
	\end{tikzcd}
	\end{equation} 
	commutes, where the vertical maps are induced by \eqref{Equation: c} and the horizontal maps are specific instances of \eqref{Equation: Multiplication iso}.  
	
	Now recall that \eqref{Equation: Canonical isomorphism} is obtained by composing the isomorphisms \eqref{Equation: List}. With this point in mind, one can combine \eqref{Equation: CD1}, \eqref{Equation: CD2}, \eqref{Equation: CD3}, \eqref{Equation: CD4}, and \eqref{Equation: CD5} to produce the following commutative diagram:
	\begin{equation}
	\begin{tikzcd}
	\vert T_{g\cdot m}\Lambda_1\vert^{\frac{1}{2}}\otimes\vert T_{g\cdot m}\Lambda_2\vert^{\frac{1}{2}}\arrow[d, "\cong"] \arrow[r, "\psi'"] & \vert T_{g\cdot m}(\Lambda_1\cap\Lambda_2)\vert\arrow[d, "\cong"] \\
	\vert T_{m}\Lambda_1\vert^{\frac{1}{2}}\otimes\vert T_{m}\Lambda_2\vert^{\frac{1}{2}} \arrow[r, "\psi"] & \vert T_{m}(\Lambda_1\cap\Lambda_2)\vert,
	\end{tikzcd}
	\end{equation}
	where horizontal maps are \eqref{Equation: Canonical isomorphism} at $m$ and $g\cdot m$, the leftmost vertical map comes from \eqref{Equation: a} and \eqref{Equation: b}, and the rightmost vertical map is induced by \eqref{Equation: c}. Since $\rho_1$ and $\rho_2$ are $G$-invariant, $(\rho_1)_m\otimes(\rho_2)_m$ is the image of $(\rho_1)_{g\cdot m}\otimes(\rho_2)_{g\cdot m}$ under the leftmost vertical map.  It follows that $\psi((\rho_1)_m\otimes(\rho_2)_m)$ is the image of $\psi'((\rho_1)_{g\cdot m}\otimes(\rho_2)_{g\cdot m})$ under the rightmost vertical map. One also has $\psi((\rho_1)_m\otimes(\rho_2)_m)=D(\rho_1,\rho_2)_m$ and $\psi'((\rho_1)_{g\cdot m}\otimes(\rho_2)_{g\cdot m})=D(\rho_1,\rho_2)_{g\cdot m}$, which combine with the previous sentence to show that $D(\rho_1,\rho_2)$ is $G$-invariant.
\end{proof}
     
\section*{Notation}
\begin{itemize}
	\item $Q(M)$ --- quantization of $M$
	\item $M_1\overset{\Lambda}\Longrightarrow M_2$ --- Lagrangian relation
	\item $M^{\circ}$ --- open submanifold of $M$ on which a given two-form is non-degenerate
	\item $D(\rho_1,\rho_2)$ --- BKS density determined by the half-densities $\rho_1$ and $\rho_2$
	\item $G$ --- compact, connected, simply-connected, simple Lie group
	\item $n$ --- dimension of $G$
	\item $r$ --- rank of $G$
	\item $\theta^L$ --- left-invariant Maurer--Cartan form on $G$
	\item $\theta^R$ --- right-invariant Maurer--Cartan form on $G$
	\item $G_{\text{reg}}$ --- regular elements of $G$
	\item $\mathcal{C}$ --- conjugacy class in $G$
	\item $\Lambda_{\mathcal{C}}$ --- submanifold of points $(g,h)\in D(G)$ satisfying $ghg^{-1}=h$ and $h\in\mathcal{C}$
	\item $G_x$ --- $G$-stabilizer of a point $x$ in a $G$-manifold
	\item $Gx$ --- $G$-orbit of a point $x$ in a $G$-manifold
	\item $\mathfrak{g}$ --- Lie algebra of $G$
	\item $\mathfrak{g}_x$ --- Lie algebra of $G_x$
	\item $\xi_M$ --- fundamental vector field of $\xi\in\mathfrak{g}$ on a $G$-manifold $M$ 
	\item $\Theta_{G_{\xi}}$ --- left-invariant form volume form on $G_{\xi}$ that induces the Haar measure, where $\xi\in\mathfrak{g}$
	\item $\Theta_{\xi}$ --- value of $\Theta_{G_{\xi}}$ at $e\in G_{\xi}$, where $\xi\in\mathfrak{g}$
	\item $\langle\cdot,\cdot\rangle$ --- normalized inner product on $\mathfrak{g}$
	\item $V^{\perp}$ --- orthogonal complement of a subspace $V\subseteq\mathfrak{g}$ with respect to $\langle\cdot,\cdot\rangle$
	\item $\mathfrak{g}_{\text{reg}}$ --- regular elements of $\mathfrak{g}$
	\item $\exp$ --- exponential map $\mathfrak{g}\longrightarrow G$
	\item $\mathrm{Ad}_g(\xi)$ --- adjoint action of $g\in G$ on $\xi\in\mathfrak{g}$
	\item $\mathcal{O}$ --- adjoint orbit of $G$ in $\mathfrak{g}$
	\item $\Lambda_{\mathcal{O}}$ --- submanifold of points $(g,\xi)\in G\times\mathfrak{g}$ satisfying $\mathrm{Ad}_g(\xi)=\xi$ and $\xi\in\mathcal{O}$
	\item $\omega_{\mathcal{O}}$ --- KKS symplectic form on $\mathcal{O}$
	\item $\omega_{\xi}$ --- value of $\omega_{\mathcal{O}}$ at $\xi\in\mathcal{O}$
	\item $\Omega_{\mathcal{O}}$ --- highest non-zero wedge power of $\omega_{\mathcal{O}}$
	\item $\Omega_{\xi}$ --- value of $\Omega_{\mathcal{O}}$ at $\xi\in\mathcal{O}$
	\item $\rho_{\mathcal{O}}$ --- Guillemin--Sternberg half-density on $\Lambda_{\mathcal{O}}$
	\item $\psi_{\mathcal{O}}$ --- phase function on $\Lambda_{\mathcal{O}}$
	\item $\rho_{\mathcal{C}}$ --- half-density on $\Lambda_{\mathcal{C}}$
	\item $\psi_{\mathcal{C}}$ --- phase function on $\Lambda_{\mathcal{C}}$
	\item $R$ --- Dehn twist automorphism $D(G)\longrightarrow D(G)$ defined by $R(g,h)=(g,hg^{-1})$
	\item $\nu_{\mathcal{C}}$ --- half-density on $R(\Lambda_{\mathcal{C}})$ determined by $\mathcal{O}$
	\item $\vartheta_{\mathcal{C}}$ --- phase function on $R(\Lambda_{\mathcal{C}})$
	\item $\omega_{D(G)}$ --- two-form on the quasi-Hamiltonian $G$-space $D(G)$
	\item $\mu_{D(G)}$ --- moment map on the quasi-Hamiltonian $G$-space $D(G)$
	\item $\mu_{D(G)}^{-1}(e)_{\text{reg}}$ --- intersection of $\mu_{D(G)}^{-1}(e)$ and $D(G)_{\text{reg}}$ in $D(G)$
	\item $\overline{\omega_{D(G)}}$ --- symplectic form on $\mu_{D(G)}^{-1}(e)_{\text{reg}}/G$
	\item $T$ --- maximal torus in $G$
	\item $\mathfrak{t}$ --- Lie algebra of $T$
	\item $W$ --- Weyl group of $(G,T)$
	\item $\Phi$ --- roots of $(G,T)$
	\item $\Delta$ --- chosen set of simple roots in $\Phi$
	\item $\ell(w)$ --- length of $w\in W$ with respect to $\Delta$
	\item $T_{\text{reg}}$ --- intersection of $T$ and $G_{\text{reg}}$ in $G$
	\item $\omega_{T\times T}$ --- symplectic form on $T\times T$
	\item $\overline{\omega_{T\times T}}$ --- symplectic form on $(T\times T_{\text{reg}})/W$
\end{itemize} 
\bibliographystyle{acm} 
\bibliography{CrW}

\end{document}